\documentclass[11pt]{article}

\usepackage[T1]{fontenc}
\usepackage[utf8]{inputenc}
\usepackage{lmodern}
\usepackage{microtype}
\usepackage{amsmath,amssymb,amsthm,mathtools,mathrsfs}
\usepackage{aliascnt}
\usepackage{tikz-cd}
\usetikzlibrary{arrows.meta}
\usepackage{graphicx}
\usepackage{float}
\usepackage{enumitem}
\usepackage{booktabs,array,longtable}
\usepackage[nottoc,notlot,notlof]{tocbibind}
\usepackage{sectsty}
\usepackage[a4paper,total={6in,8in}]{geometry}
\usepackage{hyperref}

\sectionfont{\centering}
\hypersetup{
  colorlinks=true,
  linkcolor=blue!45!black,
  citecolor=blue!45!black,
  urlcolor=blue!45!black,
  pdftitle={Negative contacts in genus one: A comparison of punctured and root-stack Gromov-Witten theories},
  pdfauthor={Yu Wang}
}

\numberwithin{equation}{section}
\newtheorem{theorem}{Theorem}[section]
\newaliascnt{proposition}{theorem}
\newtheorem{proposition}[proposition]{Proposition}
\aliascntresetthe{proposition}
\newaliascnt{lemma}{theorem}
\newtheorem{lemma}[lemma]{Lemma}
\aliascntresetthe{lemma}
\newaliascnt{corollary}{theorem}
\newtheorem{corollary}[corollary]{Corollary}
\aliascntresetthe{corollary}
\theoremstyle{definition}
\newaliascnt{definition}{theorem}
\newtheorem{definition}[definition]{Definition}
\aliascntresetthe{definition}
\theoremstyle{definition}
\newtheorem{example}[theorem]{Example}
\theoremstyle{remark}
\newtheorem{remark}[theorem]{Remark}

\usepackage[nameinlink,noabbrev]{cleveref}

\newcommand{\Acal}{\mathcal A}
\newcommand{\Dcal}{\mathcal D}
\newcommand{\Mbar}{\overline{\mathcal M}}
\newcommand{\Mfrak}{\mathfrak M}

\newcommand{\Punct}{\operatorname{Punct}}
\newcommand{\PunctOrb}{\operatorname{PunctOrb}}
\newcommand{\DF}{\operatorname{DF}}
\newcommand{\Spec}{\operatorname{Spec}}
\newcommand{\CT}{\operatorname{CT}}
\newcommand{\vir}{\mathrm{vir}}
\newcommand{\refc}{\mathrm{ref}}
\newcommand{\main}{\mathrm{main}}
\newcommand{\red}{\mathrm{red}}

\newcommand{\bbA}{\mathbf A}
\newcommand{\bbC}{\mathbf C}
\newcommand{\bbG}{\mathbf G}
\newcommand{\bbQ}{\mathbf Q}
\newcommand{\bbZ}{\mathbf Z}
\newcommand{\cO}{\mathcal O}

\newcommand{\cC}{\mathcal C}
\newcommand{\cL}{\mathcal L}

\newcommand{\proofstep}[1]{\par\smallskip\noindent\emph{#1.}\enspace}

\title{\textbf{NEGATIVE CONTACTS IN GENUS ONE: A COMPARISON OF PUNCTURED AND ROOT-STACK GROMOV--WITTEN THEORIES}}
\author{YU WANG}
\date{}

\begin{document}

\maketitle

\begin{abstract}
Let $D$ be a smooth divisor in a smooth projective complex variety $X$.
For connected curves of arithmetic genus one with prescribed signed
contact orders, we prove that the refined punctured cycle of
Battistella--Nabijou--Ranganathan and the negative-contact cycle of
Fan--Wu--You agree after pushforward to the common moduli space of
stable maps with divisor evaluations.  Thus the pushed-forward refined
punctured cycle is the constant coefficient of the pushed-forward
root-stack virtual class, normalized by one power of the root order
for each negative contact.
The key step is a comparison for the universal target. 
After restricting to finite-type open substacks determined by the fixed pair $(X,D)$ and numerical data $\Gamma$, we prove that the positive BNR space maps
finitely and with generic degree one onto Crumplin's main component.  Using
Crumplin's genus-one component description and degree formulas, we
identify this component's fundamental cycle with the constant
coefficient of the universal orbifold virtual class under comparison
of root orders.  Refined zero-section pullback recovers the negative
contacts, and compatible virtual pullbacks and root-forgetting
pushforwards transfer the resulting identity to $(X,D)$.
\end{abstract}

\tableofcontents

\section{Introduction}
\label{sec:introduction}

\subsection{Negative contacts and the comparison problem}

Punctured Gromov--Witten invariants provide the structure constants for
theta-function multiplication in the intrinsic mirror ring of a log
Calabi--Yau pair \cite{GS18,GS19}, and are related to scattering diagrams
through the canonical wall structure \cite{GS22}.  The output marking in
these counts may have negative contact order.  Negative contacts also
enter the proper Landau--Ginzburg potential and, under suitable positivity
assumptions, determine the inverse relative mirror map \cite{You24}.

Relative Gromov--Witten theory for smooth pairs was constructed using
expanded targets by J.~Li \cite{Li01,Li02}; logarithmic compactifications
were developed by Gross--Siebert, Chen, and Abramovich--Chen
\cite{GrossSiebertLGW,Chen,AC14}.  Their comparison and behavior under
modifications and degeneration are established in
\cite{AMW,AW18,ACGS20,Ran22}.  These theories prescribe nonnegative
contacts at markings, whereas cutting a logarithmic map at a node produces
opposite contact vectors on its branches.  Abramovich--Chen--Gross--Siebert
(ACGS) accommodate negative contacts through punctured logarithmic maps,
with moduli, obstruction, and gluing theories \cite[Sections~2--5]{ACGS}.
The behavior of invariants associated with tropical types under
logarithmic modifications is studied in \cite{JohnstonBir}.

Root stacks give an alternative approach \cite{Cad07,CC08}, within twisted
stable-map and orbifold Gromov--Witten theory \cite{AV02,AGV08}.  For a
smooth pair and nonnegative contacts, Abramovich--Cadman--Wise prove the
genus-zero relative/root-stack comparison for sufficiently large and
divisible root order \cite[Theorem~1.1]{ACW}.  Tseng--You prove higher-genus
polynomiality with relative theory as constant coefficient, and remove
the divisibility requirement in genus zero
\cite[Theorem~1.5, Remark~1.6, and Section~4]{TY20}.  Fan--Wu--You encode
a contact $-d<0$ at root order $N>d$ by age $1-d/N$.  After multiplication
by one power of $N$ per negative marking, their pushed-forward orbifold
cycle is independent of large $N$ in genus zero
\cite[Theorems~3.2 and~6.1]{FWY20}, and eventually polynomial in higher
genus.  Its constant coefficient is their negative-contact cycle,
equivalently given by a relative/rubber graph sum
\cite[Theorems~3.1 and~3.13]{FWY}.  Extensions include modified loop
identities \cite{You21} and an all-genus multiroot theory for simple normal
crossings pairs \cite{TY23}.

Battistella--Nabijou--Ranganathan (BNR) define an all-genus
\emph{refined punctured class}: a refined intersection on the universal
Artin fan supplies a pure-dimensional base cycle for the ACGS relative
obstruction theory \cite[Section~1.4, Definition~1.13]{BNR}.
We use this class throughout.  Herr--Holmes--Spelier's pierced theory
provides splitting and loop formulae in logarithmic Chow theory and
expresses BNR's class in that framework
\cite[Section~1.4 and Proposition~5.1.4]{HHS}.
Building on their positive-contact comparison after suitable blowups
\cite{BNR24}, Battistella--Nabijou--Ranganathan prove, for sufficiently
large root order, the genus-zero negative-contact comparison through a
\emph{chimera} space
retaining both logarithmic and root data: forgetting the logarithmic
enhancement is an isomorphism onto the orbifold space, while forgetting
the roots contributes one inverse power of $N$ per puncture to the refined
pushforward \cite[Theorem~A and Proposition~3.13]{BNR}.
This connects the tropical information of punctured theory to orbifold
recursion and virtual localization \cite{AGV08,GP99}, and has applications
to the intrinsic mirror ring \cite{Johnston}.

\subsection{The genus-one problem}

In genus one, a circuit in the dual graph allows an integral circulation
in the balancing equations.  Alternatively, an elliptic component on
which the section vanishes can carry an independent degree-zero Jacobian
factor.  The orbifold space can consequently have components not seen by
the logarithmic construction.

We first study the universal pair
\[
 \Acal=[\bbA^1/\bbG_m],\qquad \Dcal=B\bbG_m\subset\Acal,
 \qquad \Acal_N=\sqrt[N]{(\Acal,\Dcal)}.
\]
A map to $\Acal$ is a line bundle with a section; the classifying map
$(X,D)\to(\Acal,\Dcal)$ is defined by $(\cO_X(D),s_D)$.
Thus the universal problem retains the boundary line--section data.
Crumplin's \emph{main component} is the closure of the smooth-source,
nonzero-section locus in his positive universal orbifold space
\cite[Section~2.1.3]{Crumplin}.  His decisive genus-one result expresses the virtual class of the
universal orbifold space as the sum of the fundamental cycles of all
irreducible components \cite[Corollaries~4.3 and~4.5]{Crumplin}.
For the bounded genus-one problem considered below, comparison of root
orders gives degree one on the main component and positive powers of the
multiplier on the others.  Our
first theorem identifies this constant-coefficient cycle using BNR's
positive logarithmic construction; the second transfers the identity to
the negative-contact cycles of $(X,D)$.  The component decomposition is
specific to genus one and does not assert unobstructedness of maps to $X$.

\subsection{Main results}

\paragraph{Numerical data.}
Let $D$ be a smooth effective Cartier divisor in a smooth projective
complex variety $X$.  Fix an effective class $\beta\in H_2(X,\bbZ)$,
$n$ interior markings $y_1,\ldots,y_n$, and $\rho$ relative markings
$x_1,\ldots,x_\rho$ with nonzero integral contacts $\mu_1,\ldots,\mu_\rho$.
Write
\[
 \Gamma=(1,n,\beta,\rho,\boldsymbol\mu),\qquad
 \sum_i\mu_i=D\cdot\beta,\qquad
 \mathcal I=\{y_1,\ldots,y_n\}\sqcup\{x_1,\ldots,x_\rho\}.
\]
The ordered label set has size $\ell=n+\rho$; put $\mu_{y_j}=0$ and
$\mu_{x_i}=\mu_i$.  A label also denotes its marking section and,
when untwisted, its Cartier divisor.  Set
\[
 P=\{x_i:\mu_i<0\},\quad m=|P|,\quad
 d_p=-\mu_p>0\ (p\in P),\quad
 \rho_+=|\{i:\mu_i>0\}|.
\]
All root orders satisfy
$N>M_{\boldsymbol\mu}:=\max(\{0\}\cup\{|\mu_i|:1\leq i\leq\rho\})$.
The root stack is $X_{D,N}=\sqrt[N]{(X,D)}$, with coarse projection $p_N$
and reduced root divisor $D_N$, so $p_N^*D=ND_N$.

BNR's \emph{positivised numerical data} $\Lambda_\Gamma^+$ retain every
label, replace each negative contact by $0$, and add $d_p$ to the coarse
degree for each $p\in P$.  Thus
\[
 c_i=\max(\mu_i,0)\ (i\in\mathcal I),\qquad
 d^+=D\cdot\beta+\sum_{p\in P}d_p=\sum_{\mu_i>0}\mu_i,\qquad
 Z=\{y_1,\ldots,y_n\}\sqcup P.
\]
The sets $P$ and $Z$ inherit their orders from $\mathcal I$.  In this
auxiliary problem, the labels in $Z$ have contact zero and trivial source
stabilizer.  Ages and source indices for both data are specified in
\Cref{sec:setup}.

\paragraph{The universal comparison.}
Let $\overline O_N^+$ be the ambient universal orbifold stack with data
$\Lambda_\Gamma^+$, all labels retained, and root line--section pair
$(L_N^+,\sigma_N^+)$.  Its open substack $O_N^+$ imposes Crumplin's
condition $|\deg(L_N^+|_E)|<\tfrac12$ for every proper subcurve $E$.
The bar denotes the ambient stack, not a closure.  The positive BNR space
$K_N^+$ retains a compatible basic logarithmic and tropical enhancement;
forgetting it gives $\omega_N^+:K_N^+\to\overline O_N^+$.  The target is
ambient because the entire source need not satisfy the degree condition.
Let $Z^\circ_{\main,N}\subset O_N^+$ be the distinguished one-vertex trivial-type
locus with all labels retained, and $Z_{\main,N}$ its closure.

Section~\ref{sec:setup} constructs a bounded finite-type open
$W_{r,Z}\subset O_r^+$ containing the entire image relevant to the fixed
pair and datum $\Gamma$, for sufficiently large and divisible $r$.
For $R=\lambda r$, the ambient comparison $\overline\pi_{R,r}^+$ takes
the $\lambda$th tensor power and coarsens the kernel of the resulting
stabilizer character.  Define
\begin{equation}
 W_{R,Z}:=\overline O_R^+
 \mathop{\times}_{\overline O_r^+}W_{r,Z},
 \qquad \pi_{R,r}^+:W_{R,Z}\longrightarrow W_{r,Z}.
\label{eq:intro-root-base-change}
\end{equation}
All fibre products of stacks are $2$-fibre products.
Lemma~\ref{lem:proper-root-comparison} proves $W_{R,Z}\subset O_R^+$.
At the fixed order $r$, set
\begin{equation}
 K_{W_r}^+:=K_r^+\mathop{\times}_{\overline O_r^+}W_{r,Z},
 \qquad \omega_{W_r}^+:K_{W_r}^+\longrightarrow W_{r,Z}.
\label{eq:intro-BNR-base-change}
\end{equation}
Thus $\pi$ changes root order and $\omega$ forgets the logarithmic
enhancement at fixed order.  Finally, put
\[
 U_{r,Z}:=Z^\circ_{\main,r}\times_{O_r^+}W_{r,Z},\qquad
 Z_{W_r}^{\main}:=(Z_{\main,r}\times_{O_r^+}W_{r,Z})_{\red}.
\]
We use rational Chow groups $A_k(-)_\bbQ$, graded by dimension, and write
$[Y]$ for the fundamental cycle of a pure-dimensional stack.
Superscripts $\vir$ and $\refc$ denote the orbifold virtual class and
BNR's refined class, respectively; $\CT_t$ extracts the constant
coefficient of a Chow-valued eventual polynomial in $t$.

\begin{theorem}[Comparison with Crumplin's main component]
\label{thm:main-component}
Assume $n+\rho>0$.  For every sufficiently large and divisible base root order $r$, the
finite-type substacks defined in \Cref{sec:setup} have the following
properties.  For every integer $\lambda\geq1$, with $R=\lambda r$,
\[
 \pi_{R,r}^+:W_{R,Z}\longrightarrow W_{r,Z}
\]
is proper, quasi-finite, and of Deligne--Mumford type.  The morphism
\[
 \omega_{W_r}^+:K_{W_r}^+\longrightarrow W_{r,Z}
\]
is finite, its source is reduced and irreducible, and it is an isomorphism
over $U_{r,Z}$.

Moreover,
\[
 Q_Z(\lambda):=(\pi_{\lambda r,r}^+)_*
 [W_{\lambda r,Z}]^{\vir}\in A_*(W_{r,Z})_\bbQ
\]
is polynomial in $\lambda$ of degree at most one, and
\begin{equation}
 (\omega_{W_r}^+)_*[K_{W_r}^+]
   =[Z_{W_r}^{\main}]
   =\CT_\lambda Q_Z(\lambda)
\label{eq:intro-main-component}
\end{equation}
in $A_*(W_{r,Z})_\bbQ$.
\end{theorem}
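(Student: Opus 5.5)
The proof splits into three parts, handled in the following order: properties of the root comparison $\pi_{R,r}^+$; properties of the forgetful map $\omega_{W_r}^+$; and the cycle identity \eqref{eq:intro-main-component}.

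\emph{The root comparison.} For $\pi_{R,r}^+$, I would use Lemma~\ref{lem:proper-root-comparison}, which places $W_{R,Z}$ inside $O_R^+$. Properness comes from the valuative criterion. Line bundles with sections on twisted curves extend over a DVR after finite base change, and taking the $\lambda$th power and coarsening commutes with such limits. Quasi-finiteness follows because a fibre over a fixed $(C,L_r,\sigma_r)$ parametrizes $\lambda$th roots of $(L_r,\sigma_r)$ on a twisted curve with prescribed ages. These form a torsor under $\lambda$-torsion of the Picard group of the source, up to finitely many choices of twisting at nodes. Relative automorphisms are $\mu_\lambda$-type, hence finite, which gives the Deligne--Mumford type.

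\emph{The forgetful map.} For $\omega_{W_r}^+$, finiteness is proper plus quasi-finite. Properness holds because $K_r^+$ is proper over the stack of prestable curves and $\omega$ is compatible with the maps to that stack. Quasi-finiteness holds because, for fixed orbifold data, the basic logarithmic enhancements are governed by finitely many tropical types with finite automorphisms. I would prove irreducibility and reducedness by citing BNR's description of the positive space. In positive contact, $K^+$ is log smooth over the Artin fan, hence log smooth over $\mathfrak M$. Its interior, namely smooth source, nonvanishing section and trivial tropical type, is dense, irreducible and smooth. Over $U_{r,Z}$ the orbifold map and the log map carry the same data: a trivial-type enhancement is unique and has no automorphisms. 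So $\omega$ is an isomorphism there. As a result $\omega$ is birational onto the closure of $U_{r,Z}$, which is $Z^{\main}_{W_r}$, and this gives $(\omega_{W_r}^+)_*[K_{W_r}^+]=[Z_{W_r}^{\main}]$.

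\emph{The cycle identity.} For the second equality I would invoke Crumplin's genus-one theorem: $[W_{R,Z}]^{\vir}$ is the sum of the fundamental cycles of the irreducible components of $W_{R,Z}$, possibly with multiplicities fixed by his degree formulas. These components are the main component and the components on which an elliptic subcurve carries a contracted, vanishing section, indexed by combinatorial type. I then push each component forward along $\pi^+_{\lambda r,r}$.
\begin{itemize}
\item On the main component, $\pi$ is generically an isomorphism. Over the smooth-source, nonzero-section locus, the $\lambda$th root of $(L,\sigma)$ with $\sigma$ invertible is unique up to the gerbe, and the gerbe is coarsened. So that component contributes exactly $[Z^{\main}_{W_r}]$, independent of $\lambda$.
\item On a non-main component, the generic fibre degree is computed from the torsion-root count on the genus-one core, which carries the degree-zero Jacobian factor. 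Following Crumplin's degree formulas, this degree is $\lambda^{k}$ times a constant with $k\geq1$, and after the root-order normalization $k=1$.
\end{itemize}
Together these show $Q_Z(\lambda)=[Z^{\main}_{W_r}]+\lambda\cdot(\text{fixed cycle})$, which is polynomial of degree at most one with constant term $[Z^{\main}_{W_r}]$.

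\emph{Main obstacle.} I expect the hardest step to be the degree computation on the non-main components. There one must match Crumplin's components at orders $R$ and $r$, and show that $\pi$ maps components to components with degree exactly linear in $\lambda$. The difficulty is that the component structure could in principle change under base change, and the multiplicities from stacky node twistings must cancel against the gerbe coarsening. My first approach would be to compute fibre degrees on the generic point of each component using the exact sequence for $\lambda$-torsion of the Picard group of a nodal genus-one curve with a contracted core. The torsion of the core Jacobian contributes $\lambda^2$ and the coarsened gerbe contributes $\lambda^{-1}$; I would check this count against Crumplin's Corollaries~4.3 and~4.5.
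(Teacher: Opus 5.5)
\textbf{Gap: finiteness of $\omega_{W_r}^+$.} This step fails, and the first equality in \eqref{eq:intro-main-component} depends on it. Without properness, the image of $\omega_{W_r}^+$ need not be the closure of $U_{r,Z}$, and $(\omega_{W_r}^+)_*$ is not defined.

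The reason you give is false on two counts.
\begin{itemize}
\item $K_r^+\to\Mfrak^{\mathrm{tw}}_{1,n+\rho}$ is a base change of $\Acal(T_r^+)\to\Acal(\mathsf M^{\mathrm{trop,tw}})$, which is not separated. Over the smooth-curve stratum, the trivial type and the one-vertex internal type with free vertex height glue to a fibre $[\bbA^1/\bbG_m]$.
\item $\overline O_r^+$ is not separated over the curve stack either, since zero sections carry $\bbG_m$ stabilizers. So no graph argument over curves is available.
\end{itemize}
More seriously, $K_r^+$ contains only rooted types satisfying BNR's size condition $|m_e|<r$. A limit of enhancements could therefore a priori acquire a slope $\geq r$ and leave $K_r^+$. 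Your argument never uses that $r$ is large relative to $B_0$, and this is exactly where that hypothesis is needed.

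The paper's argument runs as follows.
\begin{itemize}
\item The level-set estimate of \Cref{lem:uniform-slope-bound} gives $|m_e|\le 4B_0<r$ for every realizable enhancement over $W_0$. This includes types appearing only in special fibres and types with rational circuits, where balancing alone does not bound the circulation. This bound is also what makes your quasi-finiteness claim true.
\item The resulting finite face-closed support $T^+_\Sigma$ has rooted lifts inside $T^+_{r,\varnothing}$.
\item Basic logarithmic enhancements of a \emph{fixed} curve with line--section pair are finite, via Chen's theorem after an auxiliary projective realization $(Y,D_0)$.
\item The rooted and unrooted spaces form a Cartesian square over BNR's generalized root stack on the Artin fan.
\item Properness follows from a graph argument relative to $W_0$, using separatedness of $W_{r,\varnothing}\to W_0$.
\item Relative inertia is trivial. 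This is needed because a finite morphism of stacks must be representable, and ``proper plus quasi-finite'' does not give that.
\item The $Z$-labelled markings are restored by the Cartesian square of \Cref{cor:zero-order-marking-base-change}.
\end{itemize}

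\textbf{The second equality.} Your outline follows the paper's route. Crumplin's exponent $b_1(G)+2\sum_{v\in V_+}g_v-|V_+|$ is $0$ for the trivial type and exactly $1$ for an elliptic-centred star; this is your $\lambda^2\cdot\lambda^{-1}$. The constant in $C\lambda^k$ equals $1$ by evaluating at $\lambda=1$; no further ``normalization'' is involved. However, the inputs that make $Q_Z$ a polynomial are left open.
\begin{itemize}
\item You need virtual multiplicity one on every component at every order $\lambda r$, not ``possibly with multiplicities''. The paper proves each star component is generically smooth: for nontrivial $J\in\operatorname{Pic}^0(E)$, the node-smoothing directions surject onto $H^1(E,J(-Q))$ through the isomorphism \eqref{eq:generic-star-obstruction}.
\item The hypotheses of Crumplin's degree theorem must hold on every component of the open, which again uses $B_0$. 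The bound $1\le m_e\le B_0$ gives $m_e\mid r$ and the size bound $2\sum c_i\le 2B_0<r$.
\item The component lists at orders $R$ and $r$ must be matched. The paper uses that the proper quasi-finite comparison preserves component dimension, and lifts a generic order-$r$ star object by choosing $J_R$ with $J_R^{\otimes\lambda}\simeq J_r$.
\end{itemize}
Two smaller points. Your valuative sketch for $\pi_{R,r}^+$ hides the real input: node indices may change in the limit, which is handled by AOV properness of faithful roots (\Cref{prop:faithful-root-properness}) together with \Cref{lem:finite-section-roots}. On the main component the generic section is nonzero but vanishes at the positive markings, so uniqueness of the root comes from its divisor, not from invertibility.
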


\begin{remark}[Relation with the genus-zero comparison of BNR]
For sufficiently large root order, BNR's genus-zero chimera isomorphism
accounts for the entire orbifold space \cite[Theorem~A; Theorem~4.2]{BNR}.  Theorem~\ref{thm:main-component}
replaces it by a finite comparison with the main component and extraction
of that component as the constant coefficient.  The latter uses
Crumplin's genus-one decomposition of the virtual class; this argument
does not extend directly to arbitrary genus.
\end{remark}

The marked-source hypothesis is automatic if $m>0$.  The case $m=0$ of
the next theorem follows independently from the logarithmic/relative
comparison and requires no additional marking.

\paragraph{The geometric cycle comparison.}
The common target is
\[
 B_\Gamma=\overline{\mathcal M}_{1,n+\rho}(X,\beta)
 \mathop{\times}_{X^\rho}D^\rho,
\]
where $\overline{\mathcal M}_{1,n+\rho}(X,\beta)$ is the Kontsevich
stack of connected stable maps and the fibre product uses evaluation at
the relative markings and $D^\rho\hookrightarrow X^\rho$.
Let $\Punct_\Gamma(X\mid D)$ be the ACGS stack of basic stable punctured
maps, and $\Mbar_\Gamma(X_{D,N})$ the twisted stable-map stack with the
prescribed ages.  Forgetting logarithmic or root data, with coarsening
and stabilization as needed, gives proper morphisms
\[
 \varrho:\Punct_\Gamma(X\mid D)\longrightarrow B_\Gamma,
 \qquad \tau_N:\Mbar_\Gamma(X_{D,N})\longrightarrow B_\Gamma.
\]
Fan--Wu--You prove eventual polynomiality of
$N^m\tau_{N*}[\Mbar_\Gamma(X_{D,N})]^{\vir}$ and identify its constant
coefficient with their bipartite graph-sum cycle $\mathfrak c_\Gamma(X/D)$
\cite[Theorems~3.1 and~3.13]{FWY}.  Here $T_X(-\log D)$ is the locally
free kernel of $T_X\to\cO_D(D)$, consisting of vector fields tangent to $D$.

\begin{theorem}[Genus-one negative-contact comparison]
\label{thm:main}
With the notation defined in \Cref{sec:setup},
\begin{equation}
 \varrho_*[\Punct_\Gamma(X\mid D)]^{\refc}
 =\mathfrak c_\Gamma(X/D)
 =\CT_N\!\left(
 N^m\tau_{N*}[\Mbar_\Gamma(X_{D,N})]^{\vir}
 \right)
\label{eq:main-comparison}
\end{equation}
in
\begin{equation}
 A_{d_\Gamma}(B_\Gamma)_\bbQ,
 \qquad
 d_\Gamma=
 \int_\beta c_1\!\left(T_X(-\log D)\right)+n+\rho_+.
\label{eq:vdim}
\end{equation}
Here $\CT_N$ denotes the constant coefficient of the eventual polynomial in
$N$.
\end{theorem}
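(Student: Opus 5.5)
The plan is to deduce Theorem~\ref{thm:main} from Theorem~\ref{thm:main-component} by pulling the universal identity \eqref{eq:intro-main-component} back along the classifying map $(X,D)\to(\Acal,\Dcal)$, and then pushing forward to $B_\Gamma$. The middle equality of \eqref{eq:main-comparison} is Fan--Wu--You's result \cite[Theorems~3.1 and~3.13]{FWY}, so only the outer identity $\varrho_*[\Punct_\Gamma(X\mid D)]^{\refc}=\CT_N(N^m\tau_{N*}[\Mbar_\Gamma(X_{D,N})]^{\vir})$ needs proof. When $m=0$ I would cite the logarithmic/relative comparison together with Tseng--You polynomiality \cite{TY20} and stop. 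So assume $m>0$, which gives $n+\rho>0$, and Theorem~\ref{thm:main-component} applies.

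\emph{Step 1 (recovering negative contacts).} On $W_{r,Z}$ the labels in $P$ carry contact zero with the positivised data $\Lambda_\Gamma^+$. Restoring the contact $-d_p$ at $p\in P$ should amount to twisting: the root line bundle must satisfy $L\cong L^+(-\sum_p d_p\, p)$, with the section forced to vanish appropriately. I would realize this as the refined zero-section pullback $0^!_{E}$ of a vector bundle $E$ on $W_{r,Z}$, of rank $\sum_p d_p$ in coarse terms, namely the jets of $\sigma^+$ at the points $p$. On the orbifold side, applying $0^!_E$ to $[W_{R,Z}]^{\vir}$ gives the virtual class of the universal orbifold space with negative ages $1-d_p/R$. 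The factor $R^{m}$ appears because each negative marking is forced to be twisted: the gerbe at $p$ contributes $1/R$ relative to the untwisted marking, and we compensate for it. On the BNR side, the refined class of \cite[Definition~1.13]{BNR} is, by construction, $0^!_E$ of the positive class $[K^+]$ pushed to the Artin fan (cf.\ \cite[Section~1.4]{BNR}). I would check that this is compatible with base change to $W_{r,Z}$, so that $\omega_*$ commutes with $0^!_E$. Since $0^!_E$ commutes with proper pushforward $\pi_*$ and with polynomial dependence on $\lambda$, the universal identity transfers to the negative-contact universal spaces: BNR refined class $=\CT_\lambda\,\lambda^m(\text{universal orbifold virtual class})$.

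\emph{Step 2 (transfer to $(X,D)$).} Both $\Punct_\Gamma(X\mid D)$ and $\Mbar_\Gamma(X_{D,N})$ carry relative perfect obstruction theories over the universal spaces, given by $T_X(-\log D)$ and by the pullback of the tangent bundle to $X_{D,N}$ respectively. Their deformation parts agree, since $p_N^*T_X(-\log D)=T_{X_{D,N}}(-\log D_N)$ and the log/orbifold tangent sheaves match. I would show that the squares relating $(X,D)$-spaces to universal spaces are cartesian, or at least that Manolache virtual pullback \cite{AW18,ACGS20} commutes with $\pi_*$ and $\omega_*$ via the standard functoriality of virtual pullback under proper pushforward. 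By \cite{BNR}, Section~1.4, the refined punctured class is exactly the virtual pullback of the universal refined cycle. The boundedness is used here: the entire image of the $(X,D)$ spaces lands in $W_{r,Z}$ by its construction in Section~\ref{sec:setup}, so restricting the identity to $W_{r,Z}$ loses nothing.

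\emph{Step 3 (root orders and pushforward).} For $N=\lambda r$, the map $\tau_N$ factors through $\tau_r$ via root-forgetting, and pushforward along $\pi_{R,r}$ matches pushforward along $X_{D,R}\to X_{D,r}$. This is the root-forgetting compatibility \cite{AGV08}, with the degrees of the coarsening maps accounted for. The constant coefficient in $\lambda$ then equals the constant coefficient in $N$, because FWY's eventual polynomiality in $N$ restricted to $N\in r\bbZ$ determines the same constant term. Finally, I push forward by $\varrho$ and $\tau_r$ to $B_\Gamma$. I also check that the dimension is $d_\Gamma$: the expected dimension of the positive problem is lowered by the codimension $\sum d_p$ of $E$, and the $\rho_+$ count reflects this.

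The main obstacle I expect is Step~1. Making the $R^m$ normalization and the zero-section pullback compatible on the orbifold side requires care, because the untwisted-to-twisted transition at $p$ changes the stack structure of the source rather than just imposing a vanishing condition. I would first try to work on a chimera-type intermediate space, as in \cite[Proposition~3.13]{BNR}, where both descriptions coexist, and compute the gerbe degree there.
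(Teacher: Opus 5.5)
Your overall architecture matches the paper's: a universal identity, virtual pullback to $(X,D)$, pushforward to $B_\Gamma$, the substitution $N=\lambda r$, and Fan--Wu--You. Step~1, however, has two genuine gaps.

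\textbf{The zero locus uses the wrong bundle.} The relation $L\cong L^+(-\sum_p d_p\,p)$ conflates the root and coarse levels. At root level the negative line is $q^*M_N\otimes\cO(-k_p\widetilde p)$, with $k_p=d_p/\gcd(N,d_p)$, so $k_p=1$ when $d_p\mid N$. This twist has degree $d_p/N$; only the descended $N$th powers differ by $d_p\,p$. The stack-structure change at $p$ you worry about is absorbed by partial coarsening of the marking gerbe, which is an equivalence of stacks. After it, negative root data are exactly positive root data whose \emph{root} section vanishes at each former puncture. So $\overline O_N^-=Z(e_N)$ for the rank-$m$ evaluation bundle $E_N=\bigoplus_{p\in P}p^*M_N$, not a rank-$\sum_p d_p$ jet bundle. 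A rank-$\sum_p d_p$ jet condition has the wrong expected codimension, and your dimension check fails on it: $d_\Gamma$ involves $n+\rho_+=n+\rho-m$, so the codimension is $m$. Such a bundle also cannot match the BNR side, whose puncturing condition is one offset equation per puncture. What you need there is a $2$-isomorphism between the puncturing-offset pairs on $K_r^+$ and $(\omega_r^+)^*$ of the evaluation pairs. That isomorphism is what makes $\omega_*$ commute with the zero-section pullback.

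\textbf{The normalization has the wrong source, and the root-forgetting step is missing.} There is no gerbe factor: $[W_N^-]^{\vir}=\Theta_N^![W_{N,Z}]^{\vir}$ exactly. The root-order factor arises because zero-section pullback does \emph{not} commute with the root-order comparison on the nose. Along $\pi^+_{R,r}$, the order-$r$ evaluation pair pulls back to the $\lambda$th power of the order-$R$ pair. The localized Chern class of a powered section then gives $(\pi^-_{R,r})_*\Theta_R^!=\lambda^{-m}\Theta_r^!(\pi^+_{R,r})_*$. Your sentence ``$0^!_E$ commutes with proper pushforward $\pi_*$'' is exactly where this factor is lost. The correct universal identity is
\[
\CT_\lambda\bigl((\lambda r)^m(\pi^-_{\lambda r,r})_*[W^-_{\lambda r}]^{\vir}\bigr)=r^m(\omega^-_{W_r})_*[K^-_{W_r}]^{\refc}.
\]
This is a statement about the \emph{rooted} chimera at order $r$, not about $[\Punct_\Gamma(X\mid D)]^{\refc}$. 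To finish, you need BNR's root-forgetting pushforward
\[
\alpha_{r*}[K^-_{X,r}]^{\refc}=r^{-m}[\Punct_\Gamma(X\mid D)]^{\refc},
\]
verified on a finite face-closed genus-one cone support whose slopes satisfy $|m_e|<r$. Its $r^{-m}$ cancels the leftover $r^m$. Your proposal identifies the chimera class directly with BNR's refined punctured class and never invokes this step. As written, your ``$R^m$ from the gerbe'' and your final ``$\CT_\lambda\lambda^m$'' are inconsistent with each other. The correct $N^m$ can only come out if unjustified factors happen to cancel.
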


\begin{remark}[Meaning of the comparison]
The equality is between cycles pushed forward to $B_\Gamma$; it makes no
identification of the two geometric moduli spaces.  The proof uses
Theorem~\ref{thm:main-component} to extend the genus-zero BNR comparison,
with the existing BNR and Fan--Wu--You normalizations.
\end{remark}

\subsection{Proof strategy}

We first prove \Cref{thm:main-component}, then deduce \Cref{thm:main}.
Fix $r$ and vary $R=\lambda r$: root-order comparison puts the orbifold
cycles in one Chow group, while the BNR comparison at $r$ identifies
their constant coefficient with a logarithmic cycle.

\begin{enumerate}[label=\textbf{Step \arabic*.},leftmargin=*]
\item \textbf{Express the negative orbifold space as an evaluation zero locus.}
Section~\ref{sec:negative-in-positive} uses twisting and coarsening at the
punctures to identify negative root data with positive data whose
evaluations vanish at $P$, scheme-theoretically and with compatible
obstruction theories.  Under $R=\lambda r$, the equations become
$\lambda$th powers; the corresponding refined pullback contributes
$\lambda^{-m}$.

\item \textbf{Bound the universal geometry and construct proper root-order
comparisons.}
The open $W_0$ defined in Section~\ref{sec:setup} contains the entire
geometric image.  Uniform bounds on
its tropical slopes, including special-fibre types, permit one root
order $r$ to work throughout.  Section~\ref{sec:bounded-root-geometry}
proves properness of the faithful-root problem and hence of the
comparisons $\pi_{\lambda r,r}^+$ on the full base-change opens.  Their
pushforwards allow constant-coefficient extraction in $A_*(W_{r,Z})_\bbQ$.

\item \textbf{Identify the main-component cycle as the constant coefficient.}
The genus-one essential types indexing components are the trivial type
and elliptic-centred stars.  Crumplin's virtual class is their
fundamental-cycle sum.  Verifying the hypotheses of his degree theorem
in Proposition~\ref{prop:compatible-support} gives degree one on the main
component and positive powers of $\lambda$ on all other components.

\item \textbf{Show that the positive BNR cycle pushes forward to the
main-component cycle.}
Section~\ref{sec:positive-space} proves that $K_r^+$ is reduced and
irreducible.  A finite collection of cones closed under faces contains
all relevant types, including specializations, and supports the
properness argument for logarithmic enhancements.  The rooted comparison
is finite and is generically an isomorphism onto the main component;
restoring the zero-contact labels preserves this result and proves
\Cref{thm:main-component}.

\item \textbf{Derive the universal negative-contact identity by refined
pullback.}
BNR's negative chimera is the classical zero-section locus in $K_r^+$.
Its puncturing-offset line--section pairs agree with the evaluation
pairs in Step~1.  Refined-Gysin base change, the powered-section formula,
and the main-component identity give the universal negative comparison.

\item \textbf{Pass to $(X,D)$ and obtain the Fan--Wu--You comparison.}
Section~\ref{sec:geometric} establishes the geometric fibre products and
compatible virtual pullbacks, checking the hypotheses for commuting them
with proper pushforward.  BNR's all-genus root-forgetting formula supplies
$r^{-m}$, which cancels the remaining factor $r^m$.  The substitution
$N=\lambda r$ preserves the constant coefficient, identified by
Fan--Wu--You with $\mathfrak c_\Gamma(X/D)$.
\end{enumerate}

\paragraph{Acknowledgments.}
I am sincerely grateful to Professor Mark Gross, my Ph.D. advisor at the
University of Cambridge, for initiating this project and for his guidance.
I also thank Fenglong You for his support throughout the entire project and for many helpful
conversations about root-stack virtual cycles with negative contact orders. I am also grateful to H\"ulya Arg\"uz and Pierrick Bousseau for many useful
discussions and for their support during my time at the University of Georgia.
This work was partially supported by the European Research Council
through ERC grant MSAG.

\section{Moduli spaces and conventions}\label{sec:setup}

\subsection{Ground field and numerical data}

We work over $\bbC$ with rational Chow groups.  Thus all root stacks and
twisted curves below are tame, and invariant pushforward along their finite
stabilizers is exact.  Curves are connected, and genus means arithmetic
genus; rational circuits are allowed.  Write $\Mfrak_{g,k}$ for the stack
of prestable curves of genus $g$ with $k$ ordered markings, and use the
superscript $\mathrm{tw}$ for twisted curves.  Stability is imposed in
every stable-map moduli problem.  Cartesian squares and fibre products of
stacks are $2$-Cartesian; an unlabelled arrow is the structural projection
or stated inclusion.  A commutative diagram means $2$-commutative and need
not be Cartesian.

For a logarithmic stack $Y$, write $M_Y$ for its logarithmic structure and
$\overline M_Y=M_Y/\mathcal O_Y^\times$ for its characteristic sheaf;
$(-)^{\mathrm{gp}}$ denotes groupification.  The Deligne--Faltings functor
$\DF_{M_Y}=\DF_Y$ takes characteristic sections to line bundles with
sections and addition to tensor product.  Its target $\operatorname{Div}_Y$
is the symmetric monoidal stack of line--section pairs, with isomorphisms
preserving both the line and section; $\operatorname{Div}(Y)$ is its
category of global objects.  The notation $\Gamma(Y,\mathcal F)=H^0(Y,\mathcal F)$
is distinct from the numerical datum $\Gamma$.  We use $\mathbb L_{F/G}$
for the relative cotangent complex, $\mathbf R$ for derived pushforward,
and $(-)^\vee$ for derived dual; pullbacks of complexes are derived even
when $\mathbf L$ is suppressed.  For a vector bundle $E$ on $Y$, put
$\mathbb V(E)=\underline{\operatorname{Spec}}_Y\operatorname{Sym}(E^\vee)$.

Retain the pair $(X,D)$, effective class $\beta$, ordered labels
$\mathcal I$, and signed contacts from the Introduction.  In particular,
\begin{equation}
 \Gamma=(1,n,\beta,\rho,\boldsymbol\mu),\qquad
 \sum_{i=1}^{\rho}\mu_i=D\cdot\beta,\qquad
 P=\{x_i:\mu_i<0\},\quad m=|P|,\quad \ell=n+\rho.
\label{eq:discrete-data}
\end{equation}
Here $P$ consists of labels, and $\mu_p=-d_p<0$ for $p\in P$.
In BNR's terminology, all nonnegative-contact markings, including positive
relative markings, are ordinary; negative-contact markings are punctures.
Let $\Lambda_\Gamma$ be BNR's datum with genus $1$, class $\beta$, and
ordered contacts consisting of $n$ zeros followed by
$\mu_1,\ldots,\mu_\rho$.  For the universal target,
the same symbol retains the genus and contacts and replaces $\beta$ by the
coarse degree $D\cdot\beta$.  Write
$\Punct_\Gamma(X\mid D)=\Punct_{\Lambda_\Gamma}(X\mid D)$ for the basic
stable punctured-map stack.  Superscript $\refc$ denotes BNR's refined
cycle \cite[Definition~1.13]{BNR}; superscript $\vir$ denotes the
orbifold virtual class from Behrend--Fantechi theory
\cite{BF97}\cite[Section~4.5]{AGV08}.

The common target and punctured forgetful map are
\begin{equation}
 B_\Gamma=\Mbar_{1,n+\rho}(X,\beta)\times_{X^\rho}D^\rho,
 \qquad
 \varrho:\Punct_\Gamma(X\mid D)\longrightarrow B_\Gamma,
\label{eq:common-target}
\end{equation}
using evaluation at the last $\rho$ markings; if $\rho=0$, then
$B_\Gamma=\Mbar_{1,n}(X,\beta)$.  ACGS's finite-type and properness
results apply because the global boundary class of a smooth Cartier
divisor generates the rationalized characteristic group
\cite[Theorems~3.12 and~3.18, Corollary~3.19]{ACGS}.  Thus
$\Punct_\Gamma(X\mid D)$ is proper over $\bbC$.  Since $B_\Gamma$ is
separated, the graph of $\varrho$ is proper as a base change of its
diagonal; composing with the proper projection to $B_\Gamma$ proves that
$\varrho$ is proper.  The graph need not be a closed immersion.

\subsection{Root-stack spaces and the Fan--Wu--You cycle}

For $N>M_{\boldsymbol\mu}$, put $X_{D,N}=\sqrt[N]{(X,D)}$.
The stack $\Mbar_\Gamma(X_{D,N})$ parametrises connected representable
twisted stable maps of genus one and class $\beta$, with untwisted interior
markings and the following ages of the normal root line and source
stabilizer orders at $x_i$:
\begin{equation}
 a_i(N)=
 \begin{cases}
  \mu_i/N,&\mu_i>0,\\
  (N+\mu_i)/N,&\mu_i<0,
 \end{cases}
 \qquad
 s_i(N)=\frac{N}{\gcd(N,|\mu_i|)}.
\label{eq:ages}
\end{equation}
Coarse stabilisation and evaluation give the proper map
\begin{equation}
 \tau_N:\Mbar_\Gamma(X_{D,N})\longrightarrow B_\Gamma.
\label{eq:orbifold-push}
\end{equation}
Properness follows from the twisted stable-map theory for the proper tame
Deligne--Mumford stack $X_{D,N}$.  Fan--Wu--You prove that
\begin{equation}
 F_\Gamma(N):=N^m\tau_{N*}[\Mbar_\Gamma(X_{D,N})]^{\vir}
\label{eq:FWY-polynomial}
\end{equation}
is eventually polynomial, with constant coefficient their bipartite
graph-sum cycle $\mathfrak c_\Gamma(X/D)$
\cite[Theorems~3.1 and~3.13, Definition~3.2]{FWY}.
As in the Introduction, $\CT_N$ extracts a polynomial coefficient, not an
analytic limit.

\begin{remark}[Expected dimension]
\label{subsec:expected-dimension}
For the coarse map $p_N:X_{D,N}\to X$,
\begin{equation}
 \begin{aligned}
 c_1(T_{X_{D,N}})
 &=p_N^*\!\left(c_1(T_X)-\left(1-\frac1N\right)[D]\right),\\
 \int_\beta c_1(T_{X_{D,N}})
 &=\int_\beta c_1(T_X)-\left(1-\frac1N\right)D\cdot\beta,
 \qquad
 \sum_i a_i(N)=m+\frac{D\cdot\beta}{N}.
 \end{aligned}
\label{eq:root-tangent-c1}
\end{equation}
Orbifold Riemann--Roch \cite[Theorem~7.2.1]{AGV08} gives
\begin{equation}
 \operatorname{vdim}\Mbar_\Gamma(X_{D,N})
 =\int_\beta c_1(T_X(-\log D))+n+\rho-m=d_\Gamma.
\label{eq:orbifold-vdim}
\end{equation}
For a smooth divisor the puncturing rank is $m$, so BNR's refined-class
dimension formula gives the same dimension
\cite[Definition~1.9 and the formula following Definition~1.13]{BNR}.
Consequently every coefficient of $F_\Gamma(N)$ and every term of
\eqref{eq:main-comparison} lies in $A_{d_\Gamma}(B_\Gamma)_\bbQ$.
\end{remark}

\subsection{Universal targets and the positivised numerical data}
\label{subsec:universal-targets}

The universal pair and its roots are
\begin{equation}
 \Acal=[\bbA^1/\bbG_m],\qquad \Dcal=B\bbG_m,\qquad
 \Acal_N=\sqrt[N]{(\Acal,\Dcal)},\qquad \Dcal_N\subset\Acal_N,
\label{eq:universal-pair}
\end{equation}
where $\Dcal_N$ is the reduced root divisor.  For
$\epsilon\in\{-,+\}$, the ambient stack $\overline O_N^\epsilon$
parametrises balanced tame twisted prestable curves $q:\cC\to C$ of genus
one with the ordered markings $\mathcal I$, together with a representable
map to $\Acal_N$.  The latter means an actual line--section pair
$(L_N^\epsilon,\sigma_N^\epsilon)$ and a chosen isomorphism
\[
 ((L_N^\epsilon)^{\otimes N},(\sigma_N^\epsilon)^{\otimes N})
 \simeq q^*(A^\epsilon,a^\epsilon),
\]
where $(A^\epsilon,a^\epsilon)$ is the descended pair on $C$.
The $N$th power has trivial stabilizer characters.  Node indices and
faithful characters may vary, subject to balancedness and representability.
Marking indices, characters, and total coarse degrees are fixed as follows.

For $\epsilon=-$, $\deg A^-=D\cdot\beta$, and the interior markings are
untwisted.  At $x_i$, put $g_i(N)=\gcd(N,|\mu_i|)$ and
$s_i(N)=N/g_i(N)$.  Relative to the standard marking generator, the
faithful reduced character is $\mu_i/g_i(N)$ if $\mu_i>0$ and
$s_i(N)-d_{x_i}/g_i(N)$ otherwise.  These give \eqref{eq:ages}.
No Crumplin degree inequality is imposed on $\overline O_N^-$.

\begin{definition}[Positivised numerical data]\label{def:positivised-data}
BNR's positivised datum $\Lambda_\Gamma^+$ retains all labels and positive
contacts, replaces the contact $-d_p$ at $p\in P$ by $0$ and its source
index by $1$, and has coarse degree
\begin{equation}
 d^+=D\cdot\beta+\sum_{p\in P}d_p=\sum_{\mu_i>0}\mu_i.
\label{eq:positive-degree}
\end{equation}
Thus its contacts are $c_i=\max(\mu_i,0)$ and its zero-contact labels are
$Z$, as in the Introduction \cite[Section~4.3.1, Definition~4.4]{BNR}.
The retained point $p$ is a \emph{former puncture}.  Contact order $0$ is
an assigned integer condition; it asserts neither vanishing nor
nonvanishing of the section at that point.
\end{definition}

For $\epsilon=+$, $\deg A^+=d^+$.  Positive markings keep the indices
and characters above, while every marking in $Z$ is untwisted, of age $0$.
Write $K_N^-:=\PunctOrb_{\Lambda_\Gamma}(\Acal_N\mid\Dcal_N)$ for BNR's
chimera and $K_N^+$ for its positive Cartesian construction, defined in
\eqref{eq:positive-space-fibre-product} below.

The Crumplin open $O_N^+\subset\overline O_N^+$ is defined by
\begin{equation}
 -\frac12<\deg(L_N^+|_E)<\frac12
 \qquad\text{for every proper subcurve }E.
\label{eq:crumplin-degree-requirement}
\end{equation}
The bar denotes the ambient stack, not a closure.  Twisting the negative
root line at $p\in P$ by $k_p=d_p/\gcd(N,d_p)$ times its tautological
marking divisor, multiplying its section by the corresponding tautological
section, and coarsening that marking gives
$\overline\Theta_N:\overline O_N^-\to\overline O_N^+$.
The construction and inverse are proved in
\Cref{lem:marking-picard-equivalence,prop:negative-zero-locus}.  Set
\begin{equation}
 O_N^-:=\overline O_N^-\times_{\overline O_N^+}O_N^+,
 \qquad \Theta_N:O_N^-\longrightarrow O_N^+.
\label{eq:negative-open-base-change}
\end{equation}

For tropical types, reserve $G$ for the dual graph, with vertices $V(G)$,
bounded edges $E(G)$ and labelled legs.  Write $\cC_v$ for the
normalization component indexed by $v$, $g_v$ for its genus, and $b_1(G)$
for the first Betti number of the graph.
For an oriented edge, write $\ell_e$ for its length and $m_e$ for its
coarse slope; reversal negates $m_e$, and incidence sums count both
half-edges of a loop.  The integer vertex degree
$d_v=N\deg(L_N^+|_{\cC_v})$ is the degree of the descended power on the
coarse component, not its contact order or target position.  The external vertices
$V_0$ support sections that are not identically zero, and the internal
vertices $V_+$ support identically zero sections; for a compatible tropical map they lie at the origin and in the
interior of the target ray, respectively.

\paragraph{Essential types.}
A mod-$N$ type $[\tau]$ is \emph{essential} if its graph is bipartite for
$V(G)=V_+\sqcup V_0$ and every internal vertex has positive genus.  It is
\emph{inducible} if its distinguished stratum $Z^\circ_{[\tau]}$ is
nonempty; write $Z_{[\tau]}$ for its reduced closure.  Crumplin identifies
the irreducible components with these closures for inducible essential
types \cite[Definitions~2.21 and~2.23, Theorem~2.24]{Crumplin}. 

The next examples illustrate the two possibilities in
$1=b_1(G)+\sum_v g_v$.  Black graphs are source graphs; the blue ray is
$\Sigma(\Acal,\Dcal)=\mathbb R_{\geq0}$, and blue decorations record
degrees, slopes, and contacts.

\begin{example}[A circuit type and its circulation parameter]
\label{ex:circuit-tropical-type}
For signed contacts $(2,-2)$, the positive contacts are $(2,0)$ and
$d^+=2$.  In \Cref{fig:circuit-tropical-type}, the two rational vertices
have degrees $2,0$.  Orient the upper edge left to right and the lower
edge right to left.  Their slope residues satisfy
\begin{equation}
 \overline m_1-\overline m_2\equiv2\pmod N,
 \qquad (\overline m_1,\overline m_2)=(a,a-2),\quad
 a\in\mathbb Z/N\mathbb Z.
\label{eq:example-circuit-balancing}
\end{equation}
Thus the weightings form a torsor under $\mathbb Z/N\mathbb Z$; choosing
one identifies the circulation parameter.  An integral lift has
$(m_1,m_2)=(k,k-2)$ with $k\in\mathbb Z$ reducing to $a$ modulo $N$.
Piecewise-linear continuity further requires
\[
 m_1\ell_1+m_2\ell_2=0.
\]
The edge lengths and continuity equation therefore constrain the
logarithmic lifts of an orbifold circulation residue.

\begin{figure}[H]
 \centering
 \begin{tikzpicture}[>=Stealth, every node/.style={font=\small}]
  \coordinate (vL) at (0,0);
  \coordinate (vR) at (4,0);
  \draw[->] (vL) to[bend left=35]
    node[above,text=blue!65!black] {$\overline m_1$} (vR);
  \draw[->] (vR) to[bend left=35]
    node[below,text=blue!65!black] {$\overline m_2$} (vL);
  \draw (vL) -- (-1.6,0) node[left] {$p$};
  \draw (vR) -- (5.6,0) node[right] {$x$};
  \node[text=blue!65!black,above] at (-.9,0) {$c_p=0$};
  \node[text=blue!65!black,above] at (4.9,0) {$c_x=2$};
  \fill (vL) circle (2.2pt); \fill (vR) circle (2.2pt);
  \node[below left,text=blue!65!black] at (vL) {$g=0,\ d=2$};
  \node[below right,text=blue!65!black] at (vR) {$g=0,\ d=0$};
  \draw[->,blue!65!black] (0,-1.8) -- (4.4,-1.8)
    node[right] {$\Sigma(\Acal,\Dcal)$};
  \fill[blue!65!black] (0,-1.8) circle (1.6pt);
  \node[below,text=blue!65!black] at (0,-1.8) {$0$};
 \end{tikzpicture}
 \caption{A genus-one circuit type. All vertex genera vanish and
 $b_1(G)=1$.  The leg $p$ is the former puncture of contact order $0$
 in the positivised data.  Edge arrows specify the orientations used in
 \eqref{eq:example-circuit-balancing}; residue data alone do not specify
 a realizable piecewise-linear map to the ray.}
 \label{fig:circuit-tropical-type}
\end{figure}
\end{example}

\begin{example}[An elliptic-vertex type]
\label{ex:elliptic-vertex-type}
For signed contacts $(1,1,-2)$ at $x_1,x_2,p$ (with $p=x_3$), the
positive contacts are $(1,1,0)$, $Z=\{p\}$, and $d^+=2$.
In the tropical map $h:G\to\mathbb R_{\geq0}$ of
\Cref{fig:elliptic-vertex-type}, rational vertices $u_1,u_2$ map to
$0$ and the genus-one vertex $v_+$ maps to $a>0$.  Each edge $e_i$ is
oriented from $u_i$ to $v_+$, with $m_{e_i}=1$ and $\ell_{e_i}=a$, so
$h(v_+)-h(u_i)=m_{e_i}\ell_{e_i}$.  Outgoing-slope balancing gives
\[
 d_{u_i}=m_{e_i}+c_{x_i}=2,\qquad
 d_{v_+}=-m_{e_1}-m_{e_2}+c_p=-2,\qquad
 \sum_v d_v=2=d^+.
\]
These are degree decorations, distinct from contacts and heights
\cite[Definitions~2.2 and~2.4, Construction~2.9]{Crumplin}.
The graph is a tree, so balancing uniquely determines its slopes
\cite[Lemma~2.8]{Crumplin}.  Here $V_0=\{u_1,u_2\}$ and
$V_+=\{v_+\}$; the bipartite graph with its positive-genus internal
vertex is a nontrivial essential type.  Although $c_p=0$, the section
vanishes at $p$ because its entire supporting component is internal.
An independent factor $q_{v_+}^*J$ with
$J\in\operatorname{Pic}^0(C_{v_+})$ may occur on that orbifold component,
where $q_{v_+}:\cC_{v_+}\to C_{v_+}$ is its coarse projection
\cite[Proposition~2.13(2)]{Crumplin}.  Thus eliminating graph circulation
does not eliminate the elliptic line-bundle freedom.

\begin{figure}[H]
 \centering
 \begin{tikzpicture}[>=Stealth, every node/.style={font=\small}]
  \coordinate (u1) at (0,1.25);
  \coordinate (u2) at (0,-1.25);
  \coordinate (vp) at (4,0);
  \draw[->] (u1) -- node[above,sloped,text=blue!65!black]
    {$m_{e_1}=1,\ \ell_{e_1}=a$} (vp);
  \draw[->] (u2) -- node[below,sloped,text=blue!65!black]
    {$m_{e_2}=1,\ \ell_{e_2}=a$} (vp);
  \draw (u1) -- (-1.55,1.25) node[left] {$x_1$};
  \draw (u2) -- (-1.55,-1.25) node[left] {$x_2$};
  \draw (vp) -- (5.55,0) node[right] {$p$};
  \node[above,text=blue!65!black] at (-.8,1.25) {$c_{x_1}=1$};
  \node[above,text=blue!65!black] at (-.8,-1.25) {$c_{x_2}=1$};
  \node[above,text=blue!65!black] at (4.9,0) {$c_p=0$};
  \fill (u1) circle (2.2pt); \fill (u2) circle (2.2pt);
  \fill (vp) circle (2.2pt);
  \node[below left] at (u1) {$u_1$};
  \node[below left] at (u2) {$u_2$};
  \node[above] at (4,.3) {$v_+$};
  \node[above,text=blue!65!black] at (0,1.65) {$g=0,\ d=2,\ h=0$};
  \node[below,text=blue!65!black] at (0,-1.65) {$g=0,\ d=2,\ h=0$};
  \node[below right,text=blue!65!black] at (vp) {$g=1,\ d=-2,\ h=a$};
  \draw[->,blue!65!black] (0,-2.75) -- (5,-2.75)
    node[right] {$\mathbb R_{\geq0}$};
  \fill[blue!65!black] (0,-2.75) circle (1.6pt);
  \fill[blue!65!black] (4,-2.75) circle (1.6pt);
  \node[below,text=blue!65!black] at (0,-2.75) {$0$};
  \node[below,text=blue!65!black] at (4,-2.75) {$a$};
 \end{tikzpicture}
 \caption{A nontrivial elliptic-vertex type.  The rational vertices
 $u_1,u_2$ map to $0$, while the genus-one internal vertex $v_+$ maps to
 $a>0$. Blue labels at vertices, edges, and legs
 record degrees, slopes, and contact orders.  At a vertex $v$, the labels
 $g,d,h$ abbreviate $g_v,d_v,h(v)$, respectively.}
 \label{fig:elliptic-vertex-type}
\end{figure}
\end{example}

\subsection{The BNR space, the finite-type substacks, and the comparison maps}
\label{subsec:theorem-spaces}

For the universal Chow calculations in Sections~3--5, assume
$n+\rho>0$.  This is automatic when $m>0$; Section~6 treats $m=0$
independently.  Forgetting $Z$ below is used to describe components; no
proper Chow pushforward on a completely unmarked genus-one auxiliary
stack is required.

\paragraph{The BNR space for the positivised data.}
For a cone stack $T$, let $\mathcal A(T)$ be the Artin fan obtained by
gluing its Artin cones \cite[Theorem~3]{CCUW20}; thus
$\Acal=\mathcal A(\mathbb R_{\geq0})$.
Let $\mathsf M^{\mathrm{trop}}$ and
$\mathsf M^{\mathrm{trop,tw}}$ be the cone stacks of ordinary and twisted
tropical prestable curves, with genus and labels matching the accompanying
curve stack.  BNR's rooted cone stack $T_N^+$ for $\Lambda_\Gamma^+$
records twisted dual graphs, genera, degrees, marking and node indices,
vertex positions, slopes, continuity equations, image faces, and graph
automorphisms.  Write $T^-$ and $T_N^-$ for the unrooted and rooted
cone stacks for $\Lambda_\Gamma$, retaining all labels.
BNR's construction is
\begin{equation}
 K_N^+:=\Mfrak^{\mathrm{tw}}_{1,n+\rho}
 \mathop{\times}_{\mathcal A(\mathsf M^{\mathrm{trop,tw}})}
 \mathcal A(T_N^+).
\label{eq:positive-space-fibre-product}
\end{equation}
Here the twisted-curve stack has the prescribed marking indices; its map
to the Artin fan comes from the nodal boundary log structure, and the
other map forgets the tropical map but retains its twisted source
\cite[Construction~3.15 and Lemma~3.18]{BNR}.  The rooted line--section
pair defines
\begin{equation}
 \omega_N^+:K_N^+\longrightarrow\overline O_N^+.
\label{eq:positive-forgetful-map}
\end{equation}
It forgets the compatible basic logarithmic and tropical enhancement.
The analogous negative map is $\omega_N^-:K_N^-\to\overline O_N^-$.
The positive target is ambient because the full cone stack need not
satisfy \eqref{eq:crumplin-degree-requirement}.  Section~5 proves the
modular interpretation and the required properties of these maps.

\paragraph{Forgetting the markings indexed by $Z$.}
Subscript $Z$ means that all labels are retained:
$\overline O_{N,Z}^+=\overline O_N^+$, $O_{N,Z}^+=O_N^+$, and
$K_{N,Z}^+=K_N^+$.  Subscript $\varnothing$ means that exactly $Z$
has been forgotten, \emph{without stabilising the prestable curve or
changing its root pair}.  This defines ambient and degree-restricted
orbifold stacks and the corresponding BNR construction, with maps
\begin{equation}
 F_N:O_{N,Z}^+\longrightarrow O_{N,\varnothing}^+,
 \qquad
 F_N^K:K_{N,Z}^+\longrightarrow K_{N,\varnothing}^+,
 \qquad
 \omega_{N,\varnothing}^+:K_{N,\varnothing}^+
 \longrightarrow\overline O_{N,\varnothing}^+.
\label{eq:zero-order-marking-forgetful}
\end{equation}
We also write $F_N$ for the ambient map.  These maps commute with
$\omega_N^+$; their Cartesian compatibility on the bounded substacks is
proved in \Cref{cor:zero-order-marking-base-change}.
Under Crumplin's size, divisibility, and coprimality assumptions
\cite[Section~2.1.1]{Crumplin}, including $N>2d^+$, the stack
$O_{N,\varnothing}^+$ is his stack for the remaining positive contacts.
Let $Z^\circ_{\main,N,\varnothing}$ be its distinguished trivial-type
locus and $Z_{\main,N,\varnothing}$ its closure, called the main
component \cite[Section~2.1.3]{Crumplin}.  The marked versions are
\begin{equation}
 Z^\circ_{\main,N}:=
 O_{N,Z}^+\times_{O_{N,\varnothing}^+}Z^\circ_{\main,N,\varnothing},
 \qquad
 Z_{\main,N}:=\overline{Z^\circ_{\main,N}}^{\,O_{N,Z}^+}.
\label{eq:main-locus-marking-base-change}
\end{equation}

Let $\mathfrak U_{\varnothing}^+$ parametrise connected genus-one
prestable curves with the remaining positive-contact markings and an
actual line--section pair $(M,t)$ of degree $d^+$, with no root or
logarithmic enhancement.  The map
$O_{N,\varnothing}^+\to\mathfrak U_{\varnothing}^+$ retains the coarse
curve and descended $N$th power pair.

\paragraph{The finite-type substack determined by the geometric problem.}
On the universal stable curve $\pi_\Gamma:C_\Gamma\to B_\Gamma$ with
map $f_\Gamma:C_\Gamma\to X$, define
\begin{equation}
 M_\Gamma^+:=f_\Gamma^*\cO_X(D)\otimes
 \cO_{C_\Gamma}\!\left(\sum_{p\in P}d_pp\right),
 \qquad
 t_\Gamma^+:=f_\Gamma^*s_D\otimes\prod_{p\in P}\sigma_p^{d_p},
\label{eq:canonical-positive-pair}
\end{equation}
where $s_D$ and $\sigma_p$ are the canonical sections of $\cO_X(D)$
and $\cO_{C_\Gamma}(p)$.  Forgetting $Z$ without stabilisation gives
\begin{equation}
 g_\Gamma^+:B_\Gamma\longrightarrow\mathfrak U_{\varnothing}^+.
\label{eq:canonical-unrooted-map}
\end{equation}
For a nodal curve $C$ with graph $G_C$ and $A\subseteq V(G_C)$,
let $C_A$ be the corresponding union of components.  Write $G_x$ and
$M_{\Gamma,x}^+$ for the graph and line over a geometric point
$x\in|B_\Gamma|$.  Set
\begin{equation}
 B_0:=\max\left\{
 1,d^+,M_{\boldsymbol\mu},
 \max_{x\in|B_\Gamma|}|E(G_x)|,
 \max_{\substack{x\in|B_\Gamma|\\A\subseteq V(G_x)}}
 \left|\deg(M_{\Gamma,x}^+|_{C_A})\right|
 \right\},
\label{eq:B0-definition}
\end{equation}
with the last two maxima zero if $B_\Gamma$ is empty.  Let
$W_0\subset\mathfrak U_{\varnothing}^+$ be the full substack satisfying,
on every geometric fibre,
\begin{equation}
 |E(G_C)|\leq B_0,\qquad
 |\deg(M|_{C_A})|\leq B_0
 \quad\text{for every }A\subseteq V(G_C).
\label{eq:W0-definition}
\end{equation}
Section~4 proves that $B_0$ is finite and $W_0$ is a finite-type open
substack containing $g_\Gamma^+(B_\Gamma)$ and the smooth one-vertex
locus with nonzero section.

Choose a base root order satisfying
\begin{equation}
 r>\max\{4B_0,2d^+,M_{\boldsymbol\mu}\},\qquad
 \operatorname{lcm}(1,\ldots,4B_0)\mid r,\qquad
 |\mu_i|\mid r\quad(1\leq i\leq\rho).
\label{eq:large-r}
\end{equation}
For $N=\lambda r$, $\lambda\in\bbZ_{\geq1}$, define
\begin{equation}
 \begin{aligned}
 W_{N,\varnothing}
 &:=O_{N,\varnothing}^+\times_{\mathfrak U_{\varnothing}^+}W_0,\\
 W_{N,Z}
 &:=O_{N,Z}^+\times_{O_{N,\varnothing}^+}W_{N,\varnothing}.
 \end{aligned}
\label{eq:bounded-rooted-opens}
\end{equation}
These are full base changes; no irreducible component is deleted.
Twisting and coarsening the geometric root pair at $P$ gives a map
$\Mbar_\Gamma(X_{D,N})\to W_{N,Z}$: its descended pair is
\eqref{eq:canonical-positive-pair}, whose image lies in $W_0$, and
its subcurve root degrees are bounded by $B_0/N<1/2$.

Let $U_r=Z^\circ_{\main,r,\varnothing}\subset W_{r,\varnothing}$.
This inclusion holds because its curve has one vertex, no nodes, and
degree $d^+\leq B_0$.  Set
\begin{equation}
 \begin{aligned}
 U_{r,Z}&:=O_{r,Z}^+\times_{O_{r,\varnothing}^+}U_r,\\
 K_{W_{r,\varnothing}}^+
 &:=K_{r,\varnothing}^+\times_{\overline O_{r,\varnothing}^+}
 W_{r,\varnothing}.
 \end{aligned}
\label{eq:reintroduced-distinguished-locus}
\end{equation}

\paragraph{Comparison between root orders.}
For $R=\lambda r$, taking the $\lambda$th tensor power of the root pair
and then the AOV relative coarse space for the kernel of its stabilizer
characters defines
\begin{equation}
 \overline\pi_{R,r}^\pm:\overline O_R^\pm\longrightarrow
 \overline O_r^\pm.
\label{eq:root-comparison}
\end{equation}
The analogous map after forgetting $Z$ is
$\overline\pi_{R,r}:\overline O_{R,\varnothing}^+\to
\overline O_{r,\varnothing}^+$.  Tensor power and coarsening retain the
coarse curve and descended pair.  Moreover, over $W_0$ the root degree
on every subcurve has absolute value at most $B_0/N<1/2$ at both orders.
Consequently the bounded comparisons are the full base changes
\begin{equation}
 \begin{aligned}
 W_{R,\varnothing}
 &\simeq\overline O_{R,\varnothing}^+
 \times_{\overline O_{r,\varnothing}^+}W_{r,\varnothing},
 &\pi_{R,r}&:W_{R,\varnothing}\to W_{r,\varnothing},\\
 W_{R,Z}
 &\simeq\overline O_R^+\times_{\overline O_r^+}W_{r,Z},
 &\pi_{R,r}^+&:W_{R,Z}\to W_{r,Z}.
 \end{aligned}
\label{eq:full-high-open}
\end{equation}
Section~4 proves finite type and that $\pi_{R,r}^+$ is proper,
quasi-finite, and of Deligne--Mumford type.  This does not assert that
the whole of $O_R^+$ maps into $O_r^+$.

Finally, the bounded BNR space and main component are
\begin{equation}
 K_{W_r}^+:=K_r^+\times_{\overline O_r^+}W_{r,Z},\qquad
 \omega_{W_r}^+:K_{W_r}^+\longrightarrow W_{r,Z},
\label{eq:positive-space-open}
\end{equation}
\begin{equation}
 Z_{W_r}^{\main}
 :=\bigl(Z_{\main,r}\times_{O_r^+}W_{r,Z}\bigr)_{\red}.
\label{eq:restored-main-component}
\end{equation}
The polynomial in \Cref{thm:main-component} is
\begin{equation}
 Q_Z(\lambda):=(\pi_{\lambda r,r}^+)_*
 [W_{\lambda r,Z}]^{\vir}\in A_*(W_{r,Z})_\bbQ.
\label{eq:QZ}
\end{equation}
Sections~4 and~5 establish the properness and polynomiality required here.
\section{Negative inside positive}\label{sec:negative-in-positive}

We compare the original and positivised orbifold spaces by twisting at the
negative markings, following BNR \cite[Section~4.3.1, Definition~4.4]{BNR}.
The resulting zero loci need not have their expected ordinary dimension;
refined Gysin pullback gives the scaled identity
\eqref{eq:scaled-root-change} used in \Cref{prop:universal-identity}.
The arguments of this section apply in every genus.

\subsection{Twisting at the negative markings}

\begin{lemma}[Twisted-Picard equivalence at the markings indexed by $P$]
\label{lem:marking-picard-equivalence}
Fix integers $s_p>1$ and $0<k_p<s_p$ with
$\gcd(k_p,s_p)=1$.  For a base scheme $S$, let $\mathfrak P^+(S)$ classify
a family of tame twisted prestable curves $\pi:C\to S$ with pairwise
disjoint sections $(p)_{p\in P}$ in the nonstacky smooth locus and a line
bundle $M$ on $C$.  An object of $\mathfrak P^-(S)$ consists of
such a family $C\to S$, the root stack
\[
 q:\cC=\sqrt[s_p]{(C,p)}_{p\in P}\longrightarrow C,
\]
and a line bundle $L$ whose character at the $p$th marking gerbe is
$-k_p\pmod{s_p}$.  Here $q$ is the partial coarsening at the markings indexed by $P$,
and $\widetilde p\subset\cC$ is the tautological marking gerbe above
$p$, so $q^*p=s_p\widetilde p$ as Cartier divisors.
Fix the same labelled curve and the same node stabilizer orders and
characters.  The line degrees and the chosen target-root power isomorphisms on
the two sides are required to be related by the displayed twist and by
\eqref{eq:picard-N-power}; they are not asserted to be numerically equal.  Then
\begin{equation}
 (C,M)\longmapsto
 \left(\cC,q^*M\otimes\cO_\cC
       \left(-\sum_{p\in P}k_p\widetilde p\right)\right)
\label{eq:picard-forward}
\end{equation}
is an equivalence of stacks.  Its inverse twists in the opposite direction
and descends along $q$.  It commutes with arbitrary base change, is an
equivalence on Isom sheaves, and introduces no additional degree factor from the marking gerbes.
Suppose in addition that an integer $N$ is divisible by every $s_p$, and put
$\delta_p=N/s_p$.  If $A$ is a line bundle on $C$, then this equivalence
is compatible with $N$th powers in the precise sense that
\begin{equation}
 L^{\otimes N}\simeq q^*A
 \quad\Longleftrightarrow\quad
 M^{\otimes N}\simeq
 A\Bigl(\sum_{p\in P}k_p\delta_p p\Bigr).
\label{eq:picard-N-power}
\end{equation}
For each $p$, let $T_p=\cO_\cC(\widetilde p)$ with tautological section
$\vartheta_p$, and let $x_p$ be the canonical section of $\cO_C(p)$.  Thus
\begin{equation}
 T_p^{\otimes s_p}\simeq q^*\cO_C(p),\qquad
 \vartheta_p^{s_p}=q^*x_p.
\label{eq:tautological-marking-root}
\end{equation}
Multiplication by these tautological sections identifies sections of $L$
with sections of $M$ that vanish scheme-theoretically at every marking
indexed by $P$.  Writing $s_-$ for a section of $L$ and $s_+$ for
the corresponding section of $M$, the identification is
\begin{equation}
 H^0(\cC,L)\xrightarrow{\sim}
 \ker\!\left(H^0(C,M)\longrightarrow
              \bigoplus_{p\in P}H^0(S,p^*M)\right),
 \qquad
 q^*s_+=s_-\prod_{p\in P}\vartheta_p^{k_p}.
\label{eq:section-descent-with-vanishing}
\end{equation}
This is an isomorphism of the corresponding functors of sections and
commutes with arbitrary base change.  If $a$ is a section of $A$ and, under the left-hand
isomorphism in \eqref{eq:picard-N-power},
$s_-^{\otimes N}=q^*a$, then, under the right-hand isomorphism,
\begin{equation}
 s_+^{\otimes N}=a\prod_{p\in P}x_p^{k_p\delta_p}.
\label{eq:section-N-power}
\end{equation}
Conversely, a section $s_+$ satisfying the vanishing condition and this
$N$th-power equation determines a unique section $s_-$ mapping to it.  The equivalences retain
the chosen $N$th-power isomorphisms as part of the data, on both objects and
arrows.
\end{lemma}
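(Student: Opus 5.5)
The proof is étale local on $C$ near each $p\in P$, combined with the standard description of line bundles on a root stack along a divisor. Away from the sections indexed by $P$, the map $q$ is an isomorphism, and twisting by $\cO_\cC(-\sum k_p\widetilde p)$ does nothing. So every assertion reduces to a neighbourhood of one marking gerbe. We may then work on $[\Spec A[t]/\mu_{s}]$ over $\Spec A[x]$ with $x=t^{s}$, in the nonstacky smooth locus where the section $p$ is a Cartier divisor. The key input is the Cadman and Abramovich--Graber--Vistoli structure theorem: every line bundle on $\cC=\sqrt[s_p]{(C,p)}$ is, uniquely up to unique isomorphism, of the form $q^*M'\otimes T_p^{\otimes j}$ with $0\le j<s_p$. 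Here $j$ is determined by the character at $\widetilde p$, and $q_*$ of such a bundle equals $M'$. This holds in families and commutes with base change, because $q_*$ is exact for tame stacks and $q_*\cO_\cC=\cO_C$.

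\emph{Step 1: the equivalence \eqref{eq:picard-forward}.} Let $L$ have character $-k_p \pmod{s_p}$. Then $L\otimes T_p^{k_p}$ has trivial character, so it descends: $M:=q_*(L\otimes\bigotimes_p T_p^{k_p})$, and the counit $q^*M\to L(\sum k_p\widetilde p)$ is an isomorphism. This gives the inverse functor. In the forward direction, $q^*M(-\sum k_p\widetilde p)$ has character $-k_p$ at $\widetilde p$, since $T_p$ has the generating character. Both functors are defined by tensoring with fixed line bundles and by $q_*$, $q^*$, so they are fully faithful on Isom sheaves ($\operatorname{Aut}$ is $\bbG_m$ on both sides via $q_*\cO=\cO$) and commute with base change. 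The curve, its labels, and the node data are untouched. The fibrewise degree changes only by the fixed amount $-\sum k_p/s_p$, so no further degree factor arises.

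\emph{Step 2: $N$th powers.} Use $T_p^{s_p}\simeq q^*\cO_C(p)$ from \eqref{eq:tautological-marking-root}. Then $L^{N}\simeq q^*M^{N}\otimes T_p^{-k_pN}=q^*\bigl(M^N(-\sum k_p\delta_p p)\bigr)$. Since $q^*$ is fully faithful on line bundles pulled back from $C$, an isomorphism $L^N\simeq q^*A$ is the same as an isomorphism $M^N\simeq A(\sum k_p\delta_p p)$. This gives \eqref{eq:picard-N-power}, and the correspondence of chosen isomorphisms is canonical, hence natural on arrows.

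\emph{Step 3: sections.} Multiplication by $\prod\vartheta_p^{k_p}$ maps $L$ into $L(\sum k_p\widetilde p)=q^*M$. Apply $q_*$ and take invariants. In the local model, a section of $q^*M$ is $\sum_j f_j t^j$, and the image of $L$ consists of the part of $t^{k_p}$-degree with residue class $k_p$ mod $s_p$. Its invariant part, i.e.\ the part landing in $M$ after descent, is exactly the set of series $\sum f_i t^{s_pi}$ with $t^{k_p}\mid$, which means $i\ge1$, that is, divisible by $x=t^{s_p}$. This is precisely vanishing along $p$, because $0<k_p<s_p$. The argument uses flat-local coordinates and exactness of invariants, so it is a statement about functors and commutes with base change. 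This yields \eqref{eq:section-descent-with-vanishing}.

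\emph{Step 4.} Raise $q^*s_+=s_-\prod\vartheta_p^{k_p}$ to the $N$th power and use $\vartheta_p^{N}=q^*x_p^{\delta_p}$. This gives $q^*(s_+^N)=q^*\bigl(a\prod x_p^{k_p\delta_p}\bigr)$, and \eqref{eq:section-N-power} follows by injectivity of $q^*$ on sections. The converse follows from Step 3, since $s_+$ vanishes at $P$. I expect the main obstacle to be Step 3 in families over nonreduced $S$. There I would argue entirely through the local graded description, $q_*\cO_\cC(j\widetilde p)=\cO_C(\lfloor j/s_p\rfloor p)$, and not pointwise.
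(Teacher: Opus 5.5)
Your proposal is correct and is essentially the paper's proof. The shared steps are: character bookkeeping and tame descent for the equivalence; full faithfulness of $q^*$ together with $(T_p,\vartheta_p)^{\otimes s_p}=q^*(\cO_C(p),x_p)$ for the $N$th powers; and the $\mu_{s_p}$-weight decomposition on the local chart for sections, where your formula $q_*\cO_\cC(-k_p\widetilde p)=\cO_C(-p)$ is the paper's observation that the $(-k)$-weight summand is free on $z^{s-k}$.

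The paper is more explicit in two places you pass over quickly. First, it computes the Isom sheaf as $(\mu_s\times\bbG_m)/\mu_s\simeq\bbG_m$ to rule out a residual $B\mu_s$; that, not the shift of line degree you mention, is what ``no additional degree factor'' means. Second, your converse needs to cancel the nonzerodivisor $\prod_p\vartheta_p^{k_pN}$ in order to obtain $s_-^{\otimes N}=q^*a$.
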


\begin{proof}
The tautological line of the canonical marking root stack has character
one, so \eqref{eq:picard-forward} has character $-k_p$.  Conversely, twisting
by $\sum k_p\widetilde p$ kills the relative stabilizer characters, and
tame descent recovers $M$.  Exact invariant pushforward and pullback are
inverse on these bundles and their arrows, compatibly with arbitrary base
change.

To check the stack degree explicitly, fix $p\in P$ and abbreviate $s=s_p$, $k=k_p$, $T=T_p$, and
$\vartheta=\vartheta_p$.
Over a base ring $B$, use the local chart
$[\Spec(B[x,z]/(z^s-x))/\mu_s]\to\Spec B[x]$, $x=z^s$, and write
$L=q^*M\otimes T^{-k}$ with $T=\cO(\widetilde p)$.  A lift of the identity
on the pointed curve $C$ together with a line isomorphism is a pair
$(a,b)\in\mu_s\times\bbG_m$.  A $2$-isomorphism
$\eta\in\mu_s$ identifies $(a,b)$ with
$(\eta a,b\eta^k)$.  Therefore
\[
 (\mu_s\times\bbG_m)/\mu_s\simeq\bbG_m,
 \qquad (a,b)\longmapsto ba^{-k},
\]
which is the Isom sheaf of $M$.  The marking gerbe is labelled but not neutralised, and no root of a base
line bundle is chosen.  Thus there is no residual $B\mu_s$ factor.

For $\delta=N/s$, the tautological identity
$(T,\vartheta)^{\otimes s}=q^*(\cO_C(p),x_p)$ gives
\[
 \bigl(q^*M\otimes T^{-k}\bigr)^{\otimes N}
 =q^*\bigl(M^{\otimes N}(-k\delta p)\bigr).
\]
Full faithfulness of $q^*$, since $q_*\cO_\cC=\cO_C$, proves
\eqref{eq:picard-N-power} with its chosen isomorphisms.  Here
$\deg\cO_\cC(k\widetilde p)=k/s$; the twists multiply over the disjoint markings.

On the same chart, trivialize $M$.  The $(-k)$-weight summand
of $B[x,z]/(z^s-x)$ is the free $B[x]$-module generated by $z^{s-k}$.
Consequently every section of $L=q^*M\otimes T^{-k}$ is uniquely
\[
 s_-=z^{s-k}u(x),\qquad u(x)\in B[x].
\]
Multiplication by $\vartheta^k=z^k$ identifies this module with the Cartier
ideal $(x)$ of the marking.  Linear reductivity of $\mu_s$ makes the weight
decomposition valid over arbitrary, possibly nonreduced bases.  Combining
the disjoint markings proves \eqref{eq:section-descent-with-vanishing}.

Finally, if $s_-^{\otimes N}=q^*a$, then
\[
 q^*(s_+^{\otimes N})
 =s_-^{\otimes N}\vartheta^{kN}
 =q^*(a x^{k\delta}),
\]
which proves \eqref{eq:section-N-power}; the converse follows by reversing
the calculation, since $x$ is a nonzerodivisor.  All identifications retain
the chosen power isomorphisms and commute with arrows and base change.
\end{proof}

\begin{remark}[Relation with the positivised data]
The twist absorbs the stacky negative marking and records the change in
its $N$th power on the coarse line.  It contributes no degree factor; the
later normalization factors come from zero-section pullback and
root-forgetting.
\end{remark}

\subsection{The negative space as a zero locus in the positive space}

Let $\pi_N^\pm:\cC_N^\pm\to\overline O_N^\pm$ denote the
universal twisted curves.  Write
$(M_N,\sigma_N):=(L_N^+,\sigma_N^+)$ and
$(L_N,\sigma_N^-):=(L_N^-,\sigma_N^-)$ for their universal root
line--section pairs.  These notations also denote their restrictions to $O_N^\pm$ and the
specified base changes.  For each $p\in P$ set
\begin{equation}
 \begin{aligned}
 E_{p,N}&=p^*M_N, & e_{p,N}&=p^*\sigma_N,\\
 E_N&=\bigoplus_{p\in P}E_{p,N}, & e_N&=(e_{p,N})_{p\in P}.
 \end{aligned}
\label{eq:evaluation-bundle}
\end{equation}

\begin{proposition}[The negative space as a zero locus]
\label{prop:negative-zero-locus}
Assume $N>M_{\boldsymbol\mu}$ and $d_p\mid N$ for every $p\in P$.
There is a canonical morphism
$\overline\Theta_N:\overline O_N^-\to\overline O_N^+$ identifying
$\overline O_N^-$ with the scheme-theoretic zero locus $Z(e_N)$.
Let $\Theta_N:O_N^-\to O_N^+$ be its restriction.

Let $\mathfrak P_N^\pm$ be the line-level stacks obtained by forgetting both
the root section and its descended $N$th-power section, while retaining the
marked twisted curve, the root line, the descended coarse line, their total
degrees, all remaining node data, and the chosen $N$th-power isomorphism of
line bundles.
Lemma~\ref{lem:marking-picard-equivalence} canonically identifies them with a
common fixed-character sector $\mathfrak P_N^{\mathrm{sec}}$ of the universal twisted Picard
stack.  This sector is smooth over the stack of twisted curves and hence is a
valid smooth base for the standard obstruction theories of sections.  Relative to
$\mathfrak P_N^{\mathrm{sec}}$, the standard obstruction theories of sections fit into a
compatible distinguished triangle
\begin{equation}
 \mathbf L\Theta_N^*\mathbb E_N^+\longrightarrow
 \mathbb E_N^-\longrightarrow
 \mathbf L\Theta_N^*(E_N^\vee)[1]\longrightarrow
 (\mathbf L\Theta_N^*\mathbb E_N^+)[1],
\label{eq:pot-triangle}
\end{equation}
where
\[
 \mathbb E_N^+=\bigl(\mathbf R(\pi_N^+)_*M_N\bigr)^\vee,
 \qquad
 \mathbb E_N^-=\bigl(\mathbf R(\pi_N^-)_*L_N\bigr)^\vee.
\]
Define $\Theta_N^!$ to be the refined pullback obtained by base change from
the regular zero section of the vector bundle $E_N\to O_N^+$.
For every base change $T\to O_N^+$ considered here, it induces an operator
\[
 \Theta_N^!:A_k(T)_\bbQ\longrightarrow
 A_{k-m}(T\mathop{\times}_{O_N^+}O_N^-)_\bbQ.
\]
We use the same symbol for these operators.  For every finite-type open
$U_N^+\subset O_N^+$, set
\begin{equation}
 U_N^-:=U_N^+\mathop{\times}_{O_N^+}O_N^-.
\label{eq:negative-finite-open-base-change}
\end{equation}
Then
\begin{equation}
 [U_N^-]^{\vir}=\Theta_N^![U_N^+]^{\vir}
 \quad\text{in }A_*(U_N^-)_\bbQ.
\label{eq:positive-negative-vfc}
\end{equation}
No additional stabilizer or root-order factor occurs.
\end{proposition}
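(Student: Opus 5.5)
We construct $\overline\Theta_N$ directly from Lemma~\ref{lem:marking-picard-equivalence}, applied with $s_p=s_{x_p}(N)=N/\gcd(N,d_p)$ and $k_p=d_p/\gcd(N,d_p)$. Since $d_p\mid N$, we have $\gcd(N,d_p)=d_p$, so $k_p=1$, $s_p=N/d_p$ and $\delta_p=d_p$. The character $s_p-d_p/g_p$ prescribed at $x_p$ on $\overline O_N^-$ is then $-k_p\pmod{s_p}$, as the lemma requires. The forward map in the lemma coarsens the markings in $P$ and sends $(L,s_-)$ to $(M,s_+)$, with $q^*s_+=s_-\prod_p\vartheta_p$. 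By \eqref{eq:picard-N-power} and \eqref{eq:section-N-power}, the target-root power isomorphism $L^N\simeq q^*A^-$ becomes $M^N\simeq A^-(\sum_pd_pp)=:A^+$, and $s_+^N=a^-\prod x_p^{d_p}$. The new coarse degree is $D\cdot\beta+\sum d_p=d^+$, the characters at $Z$ are trivial, and node data are unchanged, so the result is an object of $\overline O_N^+$. By \eqref{eq:section-descent-with-vanishing}, $s_+$ vanishes scheme-theoretically at each $p\in P$, so the image lies in $Z(e_N)$.

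For the inverse, take an object of $Z(e_N)$ over an arbitrary base $S$. Untwisting (the inverse equivalence of the lemma) gives $L$ and a unique $s_-$. The $N$th-power data transfer by the converse part of the lemma, which uses that $x_p$ is a nonzerodivisor, so $a^+=a^-\prod x_p^{d_p}$ determines $a^-$. One point needs checking: the descended pair $(A^+,a^+)$ on the positive side must come from a pair $(A^-,a^-)$. Here $A^-:=A^+(-\sum d_pp)$, and $a^+$ is divisible by $x_p^{d_p}$ because $s_+$ vanishes at $p$ and $a^+=s_+^{N}$ descends. Since all constructions commute with base change and with arrows, including the Isom sheaves, this gives an equivalence $\overline O_N^-\simeq Z(e_N)$ of fibred categories, hence an isomorphism of stacks. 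Restricting to $O_N^+$ gives $\Theta_N$.

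For the obstruction theories, forgetting sections takes $\overline O_N^\pm$ to $\mathfrak P_N^\pm$, and the lemma identifies both with $\mathfrak P_N^{\mathrm{sec}}$. This sector is an open substack of the twisted Picard stack with fixed marking characters, and the twisted Picard stack is smooth over $\Mfrak^{\mathrm{tw}}$ by standard deformation theory, since $H^2$ vanishes on curves. Relative to this base, $\overline O_N^\pm$ are abelian cones of sections with obstruction theories $(\mathbf R\pi_*(\text{line}))^\vee$. Pulling back along $q$ and twisting, $\pi^-_*L=\pi^+_*(M(-\sum p))$ by exact invariant pushforward. The triangle \eqref{eq:pot-triangle} is the dual of $\mathbf R\pi_*$ applied to
\[
 0\to M\Bigl(-\sum_p p\Bigr)\to M\to\bigoplus_p M|_p\to 0 .
\]
Compatibility with the cotangent-complex triangle for $Z(e_N)\hookrightarrow O_N^+$, whose conormal term is $E_N^\vee[1]$ cut out by $e_N$, holds because evaluation at $p$ is the differential of $e_N$. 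This is the standard situation of Kim--Kresch--Pantev functoriality, or of a Manolache virtual pullback along a regular zero section.

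The main obstacle is the last step: deducing \eqref{eq:positive-negative-vfc}. Here we apply the functoriality theorem for a section of a vector bundle (as in Kim--Kresch--Pantev, or the Behrend--Fantechi compatibility $[Z(e)]^{\vir}=0_E^![U]^{\vir}$ for a compatible triangle). We do this on each finite-type open $U_N^+$, so that Chow groups and refined Gysin maps are defined, and we note that base change along $U_N^+\subset O_N^+$ preserves the triangle. The stacks are Artin (non-DM) over the smooth base, so the check is that the cone construction relative to $\mathfrak P_N^{\mathrm{sec}}$ works; this follows because $\mathfrak P_N^{\mathrm{sec}}$ is smooth, so intrinsic normal cones relative to it behave as in Kresch's theory. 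Finally, the absence of extra factors follows from the lemma's statement that the equivalence is an isomorphism on Isom sheaves, with no residual $B\mu_s$: $\Theta_N$ is a closed immersion onto $Z(e_N)$ rather than a gerbe over it.
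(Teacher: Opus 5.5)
Your proposal is correct and follows the paper's proof essentially step for step. You use the case $k_p=1$, $s_p=N/d_p$, $\delta_p=d_p$ of Lemma~\ref{lem:marking-picard-equivalence} to get $\overline O_N^-=Z(e_N)$, push forward the marking exact sequence to get \eqref{eq:pot-triangle}, and finish with virtual-pullback functoriality (the paper cites Manolache's composition theorem). The paper writes that sequence on the negative twisted curve as $0\to L_N\to q_N^*M_N\to\bigoplus_p (i_{p,N})_*i_{p,N}^*q_N^*M_N\to0$; exact tame pushforward along $q_N$ turns it into your sequence on the coarsened curve.

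Two small remarks. Your explicit recovery of $(A^-,a^-)$ from $(A^+,a^+)$, using that $a^+=s_+^{\otimes N}$ is divisible by $x_p^{d_p}$, is a detail the paper leaves implicit. The map from negative to positive data is the inverse of the lemma's forward map, not the forward map itself, though this is only a wording slip.
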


\begin{proof}
If $P=\varnothing$, all assertions are tautological.  Suppose otherwise.
At a puncture $p$ with contact $-d_p$, put $s=N/d_p$.  The negative root
line has character $s-1$.  The case $k_p=1$ of
Lemma~\ref{lem:marking-picard-equivalence}, applied at the disjoint
punctures, gives the partial marking coarsening
\[
 q_N:\cC_N^-\longrightarrow
 \cC_{N,-}^+:=\cC_N^+\mathop{\times}_{\overline O_N^+}
 \overline O_N^-.
\]
It removes only the stabilizers at the markings indexed by $P$.
On $\cC_{N,-}^+$, write $M_N$ also for the pullback of the positive
universal line.  With this convention, the marking twist is
\begin{equation}
 L_N\Bigl(\sum_{p\in P}\widetilde p\Bigr)\simeq q_N^*M_N.
\label{eq:twist-positive-line}
\end{equation}
The functorial section identification
\eqref{eq:section-descent-with-vanishing} identifies negative pairs with
positive pairs whose evaluations at all former punctures vanish.
Equation~\eqref{eq:section-N-power}, with $N/s=d_p$, preserves the chosen
target-root power equations.  Thus $\overline O_N^-=Z(e_N)$
scheme-theoretically, over arbitrary bases and on arrows in every fixed
discrete sector.

For the obstruction-theory calculation, restrict these universal curves
to $O_N^-$ and let $i_{p,N}:\widetilde p\hookrightarrow\cC_N^-$ be
the marking immersion.  On the negative universal curve there is an exact
sequence
\begin{equation}
 0\longrightarrow L_N\longrightarrow q_N^*M_N\longrightarrow
 \bigoplus_{p\in P}(i_{p,N})_*i_{p,N}^*q_N^*M_N\longrightarrow0.
\label{eq:marking-exact-sequence}
\end{equation}
The quotient has trivial inertia at each marking.  Tame pushforward along
$q_N$ is exact; after pushforward along the universal curve, its derived
pushforward is $\Theta_N^*E_N$ concentrated in degree zero.  Derived
pushforward of the exact sequence therefore gives
\begin{equation}
 \mathbf R(\pi_N^-)_*L_N \longrightarrow \mathbf L\Theta_N^*\mathbf R(\pi_N^+)_*M_N \longrightarrow \mathbf L\Theta_N^*E_N \longrightarrow \bigl(\mathbf R(\pi_N^-)_*L_N\bigr)[1].
\label{eq:pushforward-marking-triangle}
\end{equation}
Dualizing and rotating gives \eqref{eq:pot-triangle}.  The arrows to the
relative cotangent complexes are the standard deformation-theoretic arrows
for sections and for the regular zero section.  Exactness of
\eqref{eq:marking-exact-sequence} makes the resulting diagram of triangles
commute.  Manolache's composition theorem
\cite[Theorem~4.8]{Manolache} then gives
\eqref{eq:positive-negative-vfc}.  Finally,
Lemma~\ref{lem:marking-picard-equivalence} is an equivalence of stacks, so no
extra degree factor is introduced.
\end{proof}

\begin{remark}[Negative contact as a zero-section condition]
Recovering the negative theory imposes evaluation vanishing by an ordinary
zero-section Gysin pullback, as in BNR's classical puncturing construction.
No separate derived enhancement of the chimera is needed.
\end{remark}

\subsection{The defining equations at different root orders}

\begin{lemma}[Comparison of the evaluation equations]
\label{lem:root-change-evaluation}
Let $r$ satisfy \eqref{eq:large-r}, let $\lambda\geq1$ be an integer, and
put $R=\lambda r$.  On the ambient orbifold stacks there are canonical
isomorphisms of lines with sections
\begin{equation}
 (\overline\pi_{R,r}^+)^*(E_{p,r},e_{p,r})\simeq
 (E_{p,R}^{\otimes\lambda},e_{p,R}^{\lambda})
 \qquad(p\in P).
\label{eq:power-section}
\end{equation}
Moreover the square
\[
\begin{tikzcd}
 \overline O_R^-\ar[r,"\overline\Theta_R"]
   \ar[d,"\overline\pi_{R,r}^-"']&
 \overline O_R^+\ar[d,"\overline\pi_{R,r}^+"]\\
 \overline O_r^-\ar[r,"\overline\Theta_r"']&\overline O_r^+
\end{tikzcd}
\]
is $2$-commutative.  Both assertions restrict to every specified
finite-type base change.
\end{lemma}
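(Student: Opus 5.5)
The argument works locally on the source curve and uses only the explicit descriptions of $\overline\pi_{R,r}^\pm$ and $\overline\Theta_N$. For \eqref{eq:power-section}, take an object of $\overline O_R^+$ with twisted curve $q:\cC\to C$ and root pair $(M_R,\sigma_R)$. The comparison $\overline\pi_{R,r}^+$ first forms $(M_R^{\otimes\lambda},\sigma_R^{\lambda})$. It then passes to the relative coarse space $\cC\to\cC'$ for the kernel of the stabilizer character of this power; the image pair on $\cC'$ is the descent of the $\lambda$th power. At a former puncture $p\in P$ the marking is untwisted at both orders: $p$ lies in $Z$, so by Definition~\ref{def:positivised-data} it has source index $1$. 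The coarsening is therefore an isomorphism near $p$, and pulling back along the section gives
\[
(\overline\pi_{R,r}^+)^*(E_{p,r},e_{p,r})\simeq p^*(M_R^{\otimes\lambda},\sigma_R^{\lambda}).
\]
This equals $(E_{p,R}^{\otimes\lambda},e_{p,R}^\lambda)$, since $p^*$ is symmetric monoidal on line--section pairs. The identification is canonical and commutes with arrows and base change, because the AOV relative coarse space commutes with arbitrary base change (tame, linearly reductive stabilizers).

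For the square, fix an object of $\overline O_R^-$, with root pair $(L_R,\sigma_R^-)$ and punctures of index $s_R=R/d_p$. Under \eqref{eq:large-r} we have $d_p\mid r$, so $s_r=r/d_p$ and $s_R=\lambda s_r$. Both $k_p$ equal $1$, since $d_p\mid N$, exactly as in the proof of Proposition~\ref{prop:negative-zero-locus}. The lower path takes the $\lambda$th power $(L_R^{\otimes\lambda},(\sigma_R^-)^\lambda)$ and coarsens to get an object of $\overline O_r^-$, whose puncture has order $s_r$ and character $s_r-1$. It then twists by $\widetilde p_r$ and coarsens the marking. The upper path twists by $\widetilde p_R$, coarsens the marking, and then takes the $\lambda$th power.

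To compare the two paths, I would pull everything back to the common $s_R$-root stack at $p$. There $q_{R\to r}^*\widetilde p_r=\lambda\widetilde p_R$, so
\[
\bigl(L_R(\widetilde p_R)\bigr)^{\otimes\lambda}=L_R^{\otimes\lambda}(\lambda\widetilde p_R),
\]
with sections $(\sigma_R^-\vartheta_R)^\lambda=(\sigma_R^-)^\lambda\vartheta_r$, using $\vartheta_R^\lambda=\vartheta_r$, which follows from \eqref{eq:tautological-marking-root}. Both results are invariant line--section pairs that descend to the same coarsened curve. Full faithfulness of $q^*$ (Lemma~\ref{lem:marking-picard-equivalence}) then yields a canonical $2$-isomorphism. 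Away from $P$ the two operations are literally the same tensor power and coarsening at nodes and positive markings. Finally, one must check that the chosen $N$th-power isomorphisms match, which follows from \eqref{eq:picard-N-power} and \eqref{eq:section-N-power} with $\delta$ replaced appropriately.

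I expect the main obstacle to be verifying that the two coarsenings, at the marking and at the kernel of the power character, commute. This holds because they are relative coarse spaces for nested subgroups of $\mu_{s_R}$, and such coarse spaces compose by the universal property of AOV relative coarse moduli. Compatibility of the $2$-isomorphisms with arrows then follows because everything is built from canonical descent data. Restriction to finite-type base changes is immediate, since all constructions commute with base change.
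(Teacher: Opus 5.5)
Your proposal is correct and follows essentially the paper's own proof. For \eqref{eq:power-section}, the former punctures have source index one at both orders, so the coarsening is the identity near $p$. For the square, both composites are compared on the order-$R$ negative curve using $\varpi^*\widetilde p_r=\lambda\widetilde p_R$ (locally $w=z^\lambda$), i.e.\ $\varpi^*T_{p,r}\simeq T_{p,R}^{\otimes\lambda}$, and nothing changes away from $P$.

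One minor point: the identity $\varpi^*\vartheta_r=\vartheta_R^{\lambda}$ is canonical because of that Cartier-divisor identity. It does not follow from \eqref{eq:tautological-marking-root} alone, which pins down the identification of the lines only up to an $s_r$th root of unity.
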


\begin{proof}
The target morphism $\Acal_R\to\Acal_r$ sends the positive root pair to
$(M_R^{\otimes\lambda},\sigma_R^\lambda)$ before source coarsening.  At
$p\in P$, the positive contact is zero and the source index is one at both
orders, so coarsening is the identity near $p$.  Restriction to $p$ proves
\eqref{eq:power-section}.

For compatibility with the negative datum, work over $\overline O_R^-$, pulling back the order-$r$ negative
universal curve and its line bundles along $\overline\pi_{R,r}^-$.
For $p\in P$ and $N=r,R$, write $s_{p,N}:=N/d_p$.  Divisibility gives
$s_{p,R}=R/d_p=\lambda s_{p,r}$.  Let $\widetilde p_N$ be the
marking gerbe at order $N$ and put $T_{p,N}=\cO(\widetilde p_N)$
on the negative order-$N$ curve.  The canonical
partial coarsening $\varpi:\cC_R^-\to\cC_r^-$ satisfies
\[
 \varpi^*T_{p,r}\simeq T_{p,R}^{\otimes\lambda};
\]
locally this is $w=z^\lambda$ on the marking-root charts.  The line-bundle
modification defining $\Lambda_\Gamma^+$ gives
$L_N\simeq q_N^*M_N\otimes T_{p,N}^{-1}$ at this marking, whence
\[
 \varpi^*L_r
 \simeq q_R^*M_R^{\otimes\lambda}\otimes T_{p,R}^{-\lambda}
 \simeq L_R^{\otimes\lambda}.
\]
The tautological sections satisfy the same identities at each disjoint
former puncture.  Elsewhere positivization changes no curve or line data,
including at nodes and other markings.  These canonical identifications
are functorial on arrows, proving $2$-commutativity.
\end{proof}

\begin{lemma}[Refined-Gysin formula for powered sections]
\label{lem:power-gysin}
Let $r,\lambda,R$ be as in \Cref{lem:root-change-evaluation}, and let
$U_r^+\subset O_r^+$ be a finite-type open.  Define $U_R^+$ and,
for $N=r,R$, $U_N^-$ by the $2$-Cartesian squares
\begin{equation}
\begin{tikzcd}
 U_R^+\ar[r]\ar[d,"f"']
   & \overline O_R^+\ar[d,"\overline\pi_{R,r}^+"]\\
 U_r^+\ar[r,hook]&\overline O_r^+,
\end{tikzcd}
\qquad
\begin{tikzcd}
 U_N^-\ar[r]\ar[d]
   & O_N^-\ar[d,"\Theta_N"]\\
 U_N^+\ar[r,hook]&O_N^+.
\end{tikzcd}
\label{eq:power-gysin-base-change-squares}
\end{equation}
By \Cref{lem:root-change-evaluation}, the ambient negative comparison
restricts to $\pi_{R,r}^-:U_R^-\to U_r^-$.
Assume that $f:U_R^+\to U_r^+$ is proper.  Then, for every
$\xi\in A_*(U_R^+)_\bbQ$,
\begin{equation}
 (\pi_{R,r}^-)_*\Theta_R^!(\xi)
 =\lambda^{-m}\Theta_r^!f_*(\xi).
\label{eq:power-gysin-formula}
\end{equation}
In particular,
\begin{equation}
 R^m(\pi_{R,r}^-)_*[U_R^-]^{\vir}
 =r^m\Theta_r^!f_*[U_R^+]^{\vir}
\label{eq:scaled-root-change}
\end{equation}
in $A_*(U_r^-)_\bbQ$.  No additional gcd factor or stabilizer-degree factor occurs.
\end{lemma}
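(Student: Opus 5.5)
The plan is to reduce \eqref{eq:power-gysin-formula} to standard intersection theory: compatibility of refined Gysin maps with proper pushforward, together with the behaviour of zero-section pullback under tensor powers.

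First we set up the fibre square. By \Cref{prop:negative-zero-locus}, $U_N^-$ is the zero locus of $e_N$ on $U_N^+$. By \Cref{lem:root-change-evaluation}, $f^*(E_r,e_r)\simeq\bigoplus_p(E_{p,R}^{\otimes\lambda},e_{p,R}^\lambda)$. Form $V:=U_R^+\times_{U_r^+}U_r^-$, the zero locus of $f^*e_r$ in $U_R^+$. It contains $U_R^-=Z(e_R)$ as a closed substack with the same support, since $e_{p,R}^\lambda$ and $e_{p,R}$ vanish on the same locus. The $2$-commutative square of \Cref{lem:root-change-evaluation} gives the factorization $U_R^-\hookrightarrow V\to U_r^-$ of $\pi_{R,r}^-$, and $V\to U_r^-$ is proper as a base change of $f$.

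Second, we apply the push-pull compatibility of refined Gysin maps for the regular embedding given by the zero section of $E_r$ (Fulton, Theorem~6.2(a), valid for Artin stacks with rational coefficients via Kresch). This gives $\Theta_r^!f_*\xi=(f|_V)_*\,0_{E_r}^!\xi$, where on the right the refined pullback is taken along $f^*e_r$ into $V$.

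Third, and this is the key step, we compare $0_{E_r}^!\xi$, the refined intersection with the section $e_r$ pulled back to $U_R^+$, with $\iota_*\Theta_R^!\xi$, where $\iota:U_R^-\hookrightarrow V$. Both are localized top Chern classes of the same support, so we may treat each $p\in P$ separately and compose. For a single line bundle $E$ with section $s$, we claim that $0^![Z(s^\lambda)\subset]\,\xi=\lambda\,\iota_*0^![Z(s)]\,\xi$ in $A_*(Z(s^\lambda))$. To prove this, use deformation to the normal cone: the refined Gysin map is $\xi\mapsto 0^*_{E}[C_{Z(s^\lambda)\cap V}V]$. Alternatively, and more cleanly, reduce via the universal case. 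The section $s$ corresponds to a map $U_R^+\to[\bbA^1/\bbG_m]$, and $s^\lambda$ to the composite with the $\lambda$th power map $[\bbA^1/\bbG_m]\to[\bbA^1/\bbG_m]$. The divisor $B\bbG_m$ pulls back to $\lambda[B\bbG_m]$ as Cartier divisors. Since refined Gysin pullback along an effective Cartier divisor is additive in the divisor (Fulton, Cor.~6.4 / Section~2.4 for pseudo-divisors $\lambda D$), we obtain $(\lambda D)\cdot\xi=\lambda\,\iota_*(D\cdot\xi)$.

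Composing over the $m$ punctures gives a factor $\lambda^m$. This factor must then be matched with \eqref{eq:power-gysin-formula}, which asserts $\lambda^{-m}$ on the other side. Indeed, the factor reads $\Theta_r^!f_*\xi=\lambda^m(\pi_{R,r}^-)_*\Theta_R^!\xi$, which is exactly \eqref{eq:power-gysin-formula}. Here we must check that no additional degree arises from the pushforward $U_R^-\to U_r^-$ beyond $f$. This is automatic, because $f$ is used unchanged and \Cref{lem:marking-picard-equivalence} introduces no stabilizer factor.

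Finally, we derive \eqref{eq:scaled-root-change}. Set $\xi=[U_R^+]^{\vir}$ and use \eqref{eq:positive-negative-vfc} at order $R$ to rewrite $\Theta_R^!\xi=[U_R^-]^{\vir}$. Multiplying by $R^m=\lambda^m r^m$ then gives the identity.

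The main obstacle is the third step: justifying the pseudo-divisor additivity $(\lambda D)\cdot\xi=\lambda\,(D\cdot\xi)$ on possibly nonreduced Artin stacks, and ensuring that composition over several punctures is compatible with the order of refined pullbacks. For the latter we would invoke commutativity of refined Gysin maps (Fulton, Theorem~6.4) in Kresch's setting.
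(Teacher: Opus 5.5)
Your proposal is correct and follows essentially the same route as the paper: form $\widetilde V=U_R^+\times_{U_r^+}U_r^-=Z((e_{p,R}^{\lambda})_p)$ with the nilpotent immersion of $Z(e_R)$ into it, apply $c_1(L^{\otimes\lambda},s^\lambda)=\lambda\,(h_s)_*c_1(L,s)$ once per former puncture to get $\lambda^m$, and commute the refined zero-section pullback with proper pushforward along $f$. This power identity is your pseudo-divisor additivity (Fulton, Prop.~2.3). The paper justifies it by the same case split that underlies Fulton's linearity: a Cartier-divisor identity where $s\not\equiv0$, and $c_1(L^{\otimes\lambda})=\lambda c_1(L)$ where $s\equiv0$. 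Your reduction to the universal case $[\mathbf A^1/\mathbf G_m]$ is therefore optional.
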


\begin{proof}
Set $Y_r=U_r^+$, $Y_R=U_R^+$, $V_r=U_r^-=Z(e_r)$, and
$V_R=U_R^-=Z(e_R)$.  Form the Cartesian square
\begin{equation}
\begin{tikzcd}
 \widetilde V:=Y_R\times_{Y_r}V_r\ar[r,"j"]\ar[d,"\widetilde f"'] &
 Y_R\ar[d,"f"]\\
 V_r\ar[r,"\Theta_r"'] & Y_r.
\end{tikzcd}
\label{eq:genuine-power-cartesian-square}
\end{equation}
By Lemma~\ref{lem:root-change-evaluation},
$\widetilde V=Z((e_{p,R}^{\lambda})_{p\in P})$.  In local
trivializations of the evaluation lines, the inclusion of ideals
$(e_{p,R}^{\lambda})_p\subset(e_{p,R})_p$ defines a closed nilpotent
immersion
\[
 h:V_R=Z(e_R)\longrightarrow
 \widetilde V=Z((e_{p,R}^{\lambda})_p),
\]
and $\widetilde f\circ h=\pi_{R,r}^-$.

For a line bundle $L$ with section $s$ on a finite-type stack $T$,
let $h_s:Z(s)\hookrightarrow Z(s^\lambda)$ be the natural closed
immersion.  Write $c_1(L,s)$ for the localized first Chern class,
which sends $A_k(T)_\bbQ$ to $A_{k-1}(Z(s))_\bbQ$
\cite[Sections~2.3 and~2.5]{Fulton98}.
For $\alpha\in A_k(T)_\bbQ$, these classes satisfy
\begin{equation}
 c_1(L^{\otimes\lambda},s^\lambda)(\alpha)
 =\lambda(h_s)_*c_1(L,s)(\alpha),
\label{eq:localized-c1}
\end{equation}
in $A_{k-1}(Z(s^\lambda))_\bbQ$, including on components where $s$
vanishes identically.  This follows from
the divisor identity on the nonzero components and from
$c_1(L^{\otimes\lambda})=\lambda c_1(L)$ on the zero components.
Write
\[
 j^!:A_k(Y_R)_\bbQ\longrightarrow A_{k-m}(\widetilde V)_\bbQ
\]
for the refined operator obtained by pulling back the regular zero section
of $f^*E_r$ along $f^*e_r$.  This notation does not require the
immersion $j$ itself to be regular
\cite[Section~6.2]{Fulton98}.  Applying
\eqref{eq:localized-c1} successively to the $m$ summands gives
\begin{equation}
 j^!(\xi)=\lambda^m h_*\Theta_R^!(\xi).
\label{eq:power-localized-top-chern}
\end{equation}
Proper pushforward commutes with the refined pullback by the regular zero
section in the Cartesian square, so
\[
 \Theta_r^!f_*(\xi)=\widetilde f_*j^!(\xi)
 =\lambda^m(\pi_{R,r}^-)_*\Theta_R^!(\xi).
\]
This proves \eqref{eq:power-gysin-formula}.  Taking
$\xi=[U_R^+]^{\vir}$ and using Proposition
\ref{prop:negative-zero-locus} proves
\eqref{eq:scaled-root-change}.
\end{proof}

\begin{remark}[Origin of the root-order normalization]
Each former puncture contributes $\lambda^{-1}$ because its evaluation
equation pulls back to a $\lambda$th power.  The resulting factor
$\lambda^{-m}$ in \eqref{eq:power-gysin-formula} is a Gysin multiplicity,
distinct from the BNR root-forgetting factor $r^{-m}$.
\end{remark}
\section{Finite-type opens and root-order comparison in genus one}
\label{sec:bounded-root-geometry}

We prove boundedness and properness for the full base-change opens of
\Cref{subsec:theorem-spaces}, then identify the main-component cycle as the
constant coefficient under comparison of root orders.

\subsection{Faithful roots and bounded tropical slopes}

Recall from \Cref{subsec:theorem-spaces} that $Z$ is the ordered set of
markings assigned contact order $0$ in $\Lambda_\Gamma^+$,
that
\[
 F_N:O_{N,Z}^+\longrightarrow O_{N,\varnothing}^+
\]
forgets these markings without stabilising the prestable curve, and that the
line--section pair itself is not changed.  In particular, the twist by the
former punctures in \eqref{eq:canonical-positive-pair} remains part of the
line bundle after the markings have been forgotten.  We now prove the
finite-type and slope assertions used in the definitions of
$W_0$, $W_{r,\varnothing}$, and $W_{r,Z}$.

\begin{lemma}[Finiteness of roots of a section]
\label{lem:finite-section-roots}
Let $S$ be a locally Noetherian algebraic stack, let
$p:\cC\to S$ be a proper flat finitely presented family of geometrically
reduced nodal curves, possibly tame twisted curves, and let $\cL$ be a line
bundle on $\cC$.  Fix an integer $\lambda\geq1$ and a section
$a\in H^0(\cC,\cL^{\otimes\lambda})$.  For an $S$-scheme $T$, subscripts $T$ denote pullback along $T\to S$.  The functor on $S$-schemes
\[
 \operatorname{Root}_{\lambda}(a)(T)
 =\{b\in H^0(\cC_T,\cL_T):b^{\otimes\lambda}=a_T\}
\]
is represented by a morphism finite and representable over $S$; after any
base change $T\to S$ from a scheme, it is represented by a finite
$T$-scheme.  Its formation commutes with arbitrary base change on $S$.
\end{lemma}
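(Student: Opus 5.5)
The plan is to first represent the larger functor of all sections, then cut out the root condition as a closed subfunctor, and finally prove finiteness by the valuative criterion together with the fibrewise finiteness of $\lambda$th roots.

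\emph{Representability.} Since $p$ is proper, flat and finitely presented with geometrically reduced fibres, and tame twisted curves have exact invariant pushforward, the standard Grothendieck construction (EGA~III, 7.7.6, applied to the coarse space or directly via tame descent) gives a coherent sheaf $\mathcal Q$ on $S$ with
\[
 H^0(\cC_T,\cL_T)\simeq\operatorname{Hom}_{\cO_T}(\mathcal Q_T,\cO_T)
\]
functorially in $T$. Thus the functor of all sections is the linear fibre $\mathbb V(\mathcal Q)=\underline{\Spec}_S\operatorname{Sym}\mathcal Q$, which is affine and of finite type over $S$, and its formation commutes with base change. The assignment $b\mapsto b^{\otimes\lambda}$ is a morphism $\mathbb V(\mathcal Q)\to\mathbb V(\mathcal Q')$, where $\mathcal Q'$ is the analogous sheaf for $\cL^{\otimes\lambda}$. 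The section $a$ is an $S$-point of $\mathbb V(\mathcal Q')$, and $\operatorname{Root}_\lambda(a)$ is the fibre product of $S\xrightarrow{a}\mathbb V(\mathcal Q')$ with this power map. Because $\mathbb V(\mathcal Q')\to S$ is affine and hence separated, the section $a$ is a closed immersion. Therefore $\operatorname{Root}_\lambda(a)$ is a closed substack of $\mathbb V(\mathcal Q)$, so it is affine, representable, of finite type over $S$, and compatible with arbitrary base change.

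\emph{Finiteness.} It remains to show that $\operatorname{Root}_\lambda(a)\to S$ is proper and quasi-finite, since an affine morphism with these properties is finite.
\begin{enumerate}[label=(\roman*)]
\item \emph{Quasi-finiteness.} Work over a geometric point $\Spec k$. The curve $\cC_k$ is reduced and has finitely many components. On each component, either $a$ vanishes identically, which forces $b$ to vanish there because the component is reduced, or $b$ is determined up to a $\lambda$th root of unity on that component. The gluing conditions at the nodes can only reduce these choices, so there are at most $\lambda^{\#\text{components}}$ roots. Nilpotents from nonreduced bases do not affect quasi-finiteness.
\item \emph{Properness.} Using the valuative criterion, take a DVR $R$ with fraction field $K$ over $S$ and a root $b_K$. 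I expect this to be the main obstacle. The section $b_K$ extends to a rational section of $\cL_R$ on the normal-away-from-nodes total space. Its $\lambda$th power is $a_R$, which is regular, so $b_K$ has no poles along the special fibre components: the order of $b_K$ along each such divisor is at least $0$, since $\lambda$ times that order is nonnegative. Near the nodes and the twisted points, where the total space may be singular, I would pass to the $\mu$-invariant local charts. There I would argue via integral closure: $b$ satisfies the monic equation $T^\lambda-a=0$ over the coordinate ring, and the total space of a flat family of reduced nodal curves over a DVR is reduced, with $\cO$-sections determined in codimension one by $S_2$. Alternatively, one can reduce to an affine check after an étale base change and normalization; the resulting extension is still a section of $\cL_R$ because $\cL_R$ is invertible and $\cC_R$ satisfies $S_2$, being flat over $R$ with Cohen--Macaulay fibres. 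Uniqueness of the extension follows from separatedness of $\mathbb V(\mathcal Q)$.
\end{enumerate}

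Hence $\operatorname{Root}_\lambda(a)\to S$ is finite and representable, and finite over $T$ after any base change from a scheme.
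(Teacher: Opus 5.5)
Your proposal is correct, and its representability half is essentially the paper's argument. Locally, your coherent sheaf $\mathcal Q$ from EGA~III~7.7.6 is the cokernel of the dual of the two-term complex $E^0\to E^1$ that the paper uses. In both versions, $\operatorname{Root}_\lambda(a)$ is the fibre of the power map over the section $a$.

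The properness step takes a genuinely different route.

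\emph{The paper's argument is global.} It sets $H=H^0(\cC_A,\cL_A)$, a finite torsion-free module with $H\otimes K=H^0(\cC_K,\cL_K)$, and takes the least $m$ with $b=\varpi^m b_K\in H$. If $m>0$, restricting $b^{\otimes\lambda}=\varpi^{m\lambda}a$ to the reduced special fibre forces $b|_{\cC_s}=0$. The sequence $0\to\cL_A\xrightarrow{\varpi}\cL_A\to\cL_s\to0$ then gives $b\in\varpi H$, contradicting minimality.

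\emph{Your argument is local on the total space.}
\begin{enumerate}
\item Reducedness of the special fibre makes the local rings of $\cC_R$ at the generic points of special-fibre components DVRs with uniformizer $\varpi$.
\item There $\lambda\cdot\operatorname{ord}(b_K)=\operatorname{ord}(a)\geq0$, so $b_K$ has no poles in codimension one.
\item Since $\cC_R$ is Cohen--Macaulay (flat over $R$ with CM fibres), the $S_2$ property extends this section across the finitely many remaining closed points of $\cC_s$. In the twisted case this is done on an \'etale atlas, followed by descent using uniqueness.
\end{enumerate}

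Both proofs use reducedness of the special fibre at a single point. The paper's version needs no codimension bookkeeping, no Cohen--Macaulay input, and no separate treatment of nodes or stacky points, because it uses only global sections and flatness. Yours makes the geometric mechanism explicit: there are no poles along special-fibre components.

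Two small remarks.
\begin{itemize}
\item The integral-closure idea is not enough on its own, because $\cC_R$ is not normal at nodes. The work is done by the codimension-one valuation estimate together with $S_2$.
\item The separate quasi-finiteness step is unnecessary. An affine proper morphism is already finite, which is how the paper concludes.
\end{itemize}
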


\begin{proof}
The assertions are fppf local on $S$, so we may take $S$ affine
Noetherian.  Cohomology and base change for proper tame nodal curves
give, locally on $S$, a two-term complex of finite locally free modules
$E^0\to E^1$ in degrees $0,1$ representing $\mathbf Rp_*\cL$,
compatibly with arbitrary base change.  With
$\mathbf V(E)=\operatorname{Spec}_S\operatorname{Sym}(E^\vee)$,
the section functor of $\cL$ is therefore the affine scheme
\[
 \ker\bigl(\mathbf V(E^0)\longrightarrow\mathbf V(E^1)\bigr).
\]
The same construction applies to $\cL^{\otimes\lambda}$.
Taking the $\lambda$th tensor power is a morphism between these
section schemes, and $\operatorname{Root}_\lambda(a)$ is its fibre
over $a$.  It is thus affine and of finite presentation, and its
formation commutes with arbitrary base change.

It remains to prove properness.  Let $A$ be a discrete valuation ring,
with fraction field $K$ and uniformizer $\varpi$, and pull the family
back along a morphism $\operatorname{Spec}A\to S$.  Suppose a root
$b_K\in H^0(\cC_K,\cL_K)$ is given.  Properness of the curve and
flat base change give
\[
 H:=H^0(\cC_A,\cL_A),\qquad
 H\otimes_A K\simeq H^0(\cC_K,\cL_K),
\]
where $H$ is a finite torsion-free $A$-module.  Choose the least
$m\geq0$ such that $b:=\varpi^m b_K$ belongs to $H$.
Flatness of $\cC_A/A$ implies that multiplication by $\varpi$ is
injective on $\cL_A^{\otimes\lambda}$, so the generic-fibre identity
extends to
\[
 b^{\otimes\lambda}=\varpi^{m\lambda}a
 \quad\text{on }\cC_A.
\]
If $m>0$, restriction to the reduced special fibre gives
$(b|_{\cC_s})^{\otimes\lambda}=0$, hence $b|_{\cC_s}=0$.
The exact sequence
\[
 0\longrightarrow\cL_A
 \xrightarrow{\ \varpi\ }\cL_A
 \longrightarrow\cL_s\longrightarrow0
\]
then gives $b\in\varpi H$, contradicting the minimality of $m$.
Consequently $m=0$, and $b_K$ extends to a root on $\cC_A$.
This includes the case $b_K=0$.  Any two extensions agree because
$H$ is torsion-free.

The discrete-valuation-ring criterion for the finite-presentation
morphism over the locally Noetherian base now proves properness.
An affine proper morphism is finite.  The scheme constructions and
their canonical base-change identifications descend over $S$, proving
representability and all the asserted conclusions.
\end{proof}

\begin{proposition}[Faithful roots on twisted curves]
\label{prop:faithful-root-properness}
Let $\mathfrak B$ be the stack of marked prestable curves with a line
bundle, with universal marked curve $C\to\mathfrak B$ and universal line
bundle $A$ on $C$.
Fix $N\geq1$ and fixed marking-character sectors.  Let $\mathfrak R_N$
classify data
\begin{equation}
 (q:\mathcal C\to C,\ L,\ \phi:L^{\otimes N}\xrightarrow{\sim}q^*A),
\label{eq:faithful-root-data}
\end{equation}
where $\mathcal C$ is a balanced tame twisted prestable curve with coarse
curve $C$, stack structure occurs only at nodes and markings, and every
geometric stabilizer acts faithfully on $L$.  Then
$\mathfrak R_N\to\mathfrak B$ is algebraic, locally of finite presentation,
proper, separated, quasi-finite, and of Deligne--Mumford type.  The
construction commutes with arbitrary base change.

For positive integers $r,\lambda$ and $R=\lambda r$, choose the
order-$r$ marking-character sectors to be those induced from the order-$R$
sectors by $\lambda$th tensor power and removal of the character kernels.
Tensor power followed by the AOV relative coarse space for the kernel of
the induced root character then defines a canonical morphism
\begin{equation}
 \rho_{R,r}:\mathfrak R_R\longrightarrow\mathfrak R_r                 
\label{eq:faithful-root-comparison}
\end{equation}
which is proper, quasi-finite, of Deligne--Mumford type, and compatible with
base change.  More precisely, let $t$ be the stabilizer order at a node,
let $w\in\mathbb Z/t\mathbb Z$ be the character to be made faithful, and
put $h=\gcd(t,\widehat w)$ for any integral representative $\widehat w$ of
$w$; this is independent of the representative, and $\gcd(t,0)=t$.
For a chart base ring $B$ and twisted smoothing parameter $\tau\in B$,
the relative coarsening on a balanced node chart is
\begin{equation}
 \left[\Spec B[x,y]/(xy-\tau)\big/\mu_t\right]
 \longrightarrow
 \left[\Spec B[X,Y]/(XY-\tau^h)\big/\mu_{t/h}\right],
 \qquad X=x^h,\quad Y=y^h.
 \label{eq:faithful-root-local-chart}
\end{equation}
Both actions are balanced, and the group homomorphism is
$\zeta\mapsto\zeta^h$.  For the $\lambda$th power of a faithful character,
$h=\gcd(t,\lambda)$.  The coarse smoothing parameter $\tau^t$ is unchanged.
This is a chart of the source curve; a one-variable root chart describes
only the induced change of smoothing parameters on the base.
The induced map on relative Isom spaces need not be an equivalence: the
kernel of the stabilizer character is precisely the inertia removed by
relative coarsening.
\end{proposition}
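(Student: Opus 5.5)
The proof has two parts. First, $\mathfrak R_N\to\mathfrak B$ is algebraic, proper and quasi-finite. Second, the comparison $\rho_{R,r}$ exists and has the same properties. For the first part I would factor $\mathfrak R_N\to\mathfrak B$ through the stack $\mathfrak M^{\mathrm{tw}}$ of balanced tame twisted prestable curves over $C$ with fixed marking sectors and bounded stabilizer orders. Faithfulness of the stabilizer action on an $N$th root forces every stabilizer order to divide $N$, since the character of $L$ at a point of stabilizer order $t$ is an $N$-torsion element generating $\mu_t^\vee$. So only finitely many node indices occur over each coarse curve. By Olsson/Abramovich--Vistoli, the stack of twisted curves with bounded indices is algebraic, locally of finite presentation, proper and quasi-finite over the stack of prestable curves. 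It is also of Deligne--Mumford type, since the relative inertia is the finite product of the $\mu_t$ at nodes and markings. Over a fixed twisted curve, the data $(L,\phi)$ with $L^{\otimes N}\simeq q^*A$ form a torsor under the $N$-torsion of the relative twisted Picard stack. This torsion is finite étale over the base in characteristic zero, with inertia $\mu_N$ acting on $L$. The faithfulness condition is open and closed in fibres, so these data are quasi-finite and locally of finite presentation.

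For properness I would use the valuative criterion over a DVR $A$ with fraction field $K$. Given faithful root data over $K$, after a finite extension of $A$ extend the twisted curve over the generic nodes and the special fibre by the standard twisted-curve extension. At the new nodes of the special fibre, the local picture is $xy=\varpi^k$. There one takes the twisted chart $xy=\tau$ with $\tau^t=\varpi^k$ and $t$ chosen so that $k/N$ times $t$ becomes integral. The root $L$ then extends because $q^*A$ has an $N$th root on that chart. Uniqueness follows because faithfulness pins down the minimal index: any extension with larger index has a non-faithful character, and one with smaller index admits no root. Separatedness follows from this uniqueness together with finiteness of automorphisms. Compatibility with base change is clear from the moduli description. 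This is essentially the Abramovich--Graber--Vistoli/Chiodo argument for stacks of roots.

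For $\rho_{R,r}$, take $(\mathcal C,L,\phi)\in\mathfrak R_R$ and form $L^{\otimes\lambda}$ with $\phi^{\otimes}$-isomorphism $(L^{\otimes\lambda})^{\otimes r}\simeq q^*A$. At a stabilizer $\mu_t$ with faithful character $w$, the character of $L^{\otimes\lambda}$ is $\lambda w$. Its kernel is $\mu_h$ with $h=\gcd(t,\lambda)$, which equals $\gcd(t,\widehat w)$ for $\widehat w$ representing $\lambda w$. The AOV relative coarse moduli space with respect to this kernel exists for tame stacks and commutes with base change. The local chart computation gives \eqref{eq:faithful-root-local-chart}: $\mu_h$-invariants of $B[x,y]/(xy-\tau)$ are generated by $X=x^h$ and $Y=y^h$ with $XY=\tau^h$, the residual group is $\mu_{t/h}$ acting balancedly via $\zeta\mapsto\zeta^h$, and $(\tau^h)^{t/h}=\tau^t$. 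Since $L^{\otimes\lambda}$ has trivial $\mu_h$-character, it descends to a line bundle with faithful character, and the descent of $\phi$ is by full faithfulness of pullback. The same applies at markings, yielding the induced sectors, so $\rho_{R,r}$ lands in $\mathfrak R_r$ over $\mathfrak B$. Since $\mathfrak R_R$ is proper over $\mathfrak B$ and $\mathfrak R_r\to\mathfrak B$ is separated, $\rho_{R,r}$ is proper. It is quasi-finite because the source is quasi-finite over $\mathfrak B$. It is of Deligne--Mumford type because the relative inertia is a subgroup of the finite inertia of $\mathfrak R_R$ over $\mathfrak B$.

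The main obstacle is the existence half of the valuative criterion: choosing the twisted index at newly formed special-fibre nodes so that the extended root is faithful and balanced. I would handle it through the local chart computation, using that $k$ is bounded and that faithfulness determines $t$ as the denominator of the node's root degree. A secondary point needing care is that the relative coarsening is compatible with non-reduced bases, so that base change holds. This follows from exactness of invariants for linearly reductive $\mu_h$.
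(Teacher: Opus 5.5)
Your proposal is correct in outline. The second half matches the paper, but the first half takes a different route.

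\textbf{How the paper gets the properties of $\mathfrak R_N\to\mathfrak B$.} The paper never runs a valuative criterion here. Over an atlas $S\to\mathfrak B$ it forms the root gerbe $\mathcal G_N=\sqrt[N]{A/C_S}$. It notes two facts:
\begin{itemize}
\item a representable twisted map $\mathcal C\to\mathcal G_N$ whose coarse map is the identity of $C_S$ is exactly a faithful root $(\mathcal C,L,\phi)$;
\item the identity of $C_S$ is a stable map, since nothing is contracted.
\end{itemize}
It then realizes $\mathfrak R_N$ as the base change, along the identity section, of the Abramovich--Olsson--Vistoli morphism from twisted stable maps to $\mathcal G_N$ to stable maps to $C_S$. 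Algebraicity, properness, separatedness, quasi-finiteness and Deligne--Mumford type are all inherited at once.

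\textbf{How your route differs.} You reprove this special case by hand: you factor through twisted curves and roots, then run the valuative criterion. This is more explicit, since it shows that faithfulness is exactly what singles out one extension. But it puts all the weight on the existence step at new nodes, which you only sketch. Your recipe for the index there (``$t$ chosen so that $k/N$ times $t$ becomes integral'') is not the right one. The standard argument, Abramovich--Vistoli's purity lemma, runs as follows:
\begin{itemize}
\item after a ramified base change of order divisible by $N$, extend $(L,\phi)$ over the generic points of the special-fibre components;
\item at a new node $xy=\varpi^k$, pass to the regular chart $[\Spec A[\xi,\eta]/(\xi\eta-\varpi)/\mu_k]$ and extend $(L,\phi)$ across the closed point by purity;
\item take the relative coarse space by the kernel of the node character.
\end{itemize}
The final index is then the order of that character, which is exactly what your uniqueness argument needs.

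For $\rho_{R,r}$ your argument coincides with the paper's. Deducing quasi-finiteness from that of $\mathfrak R_R$ over $\mathfrak B$ is slightly quicker than the paper's torsion count.

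\textbf{A false intermediate claim.} You should delete the claim that the stack of twisted curves with bounded indices is proper over prestable curves.
\begin{itemize}
\item That stack is not even separated. After base change, a node appearing only in the special fibre can be given index $1$ or any admissible $t$, and all of these extend the same generic fibre.
\item Likewise, the $N$-torsion of the relative twisted Picard stack is quasi-finite but not finite over twisted curves. On an irreducible curve with one twisted node of index $t<N$ there are only $tN^{2g-1}$ roots instead of $N^{2g}$.
\end{itemize}
These two defects compensate each other, and faithfulness is precisely what makes the combined extension unique. Your properness proof runs the valuative criterion on $\mathfrak R_N$ itself, so the false claim is not load-bearing. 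As stated, though, it contradicts the very uniqueness mechanism you invoke.
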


\begin{proof}
The assertions are smooth local on $\mathfrak B$, so take a scheme atlas
$S\to\mathfrak B$.  Form the tame root gerbe
\[
 \mathcal G_N=\sqrt[N]{A/C_S}\longrightarrow C_S.
\]
A representable twisted map $\mathcal C\to\mathcal G_N$ whose induced
coarse map is the identity of $C_S$ is precisely the datum
\eqref{eq:faithful-root-data}.  Such a map is stable: its coarse map is the identity, so no component
is contracted; its relative automorphisms consist of finite curve ghosts
and finite root scalars.  The AOV
morphism from twisted stable maps to $\mathcal G_N$ to stable maps to its
coarse target $C_S$ is proper and quasi-finite
\cite[Theorems~3.1 and~4.3]{AOV}.  Its base change along the family of
identity maps $C_S\to C_S$ is precisely the required root stack over $S$.
The prescribed marking-character sectors are open and closed.  This proves
algebraicity, local finite presentation, properness, separatedness, and
Deligne--Mumford type after the atlas, hence over $\mathfrak B$.  No
closedness assertion about the identity-map section is needed.

For a fixed coarse curve, the remaining choices are roots of a fixed line
bundle, node and marking stabilizer orders dividing $N$, and finite ghost
data.  The root choices form a torsor under the finite $N$-torsion of the
generalized Jacobian.  Thus the morphism is quasi-finite.

If a possibly nonrepresentable full root is given, the kernel of the
stabilizer character on $L$ is the relative inertia of the map to
$\mathcal G_N$.  The AOV relative coarse-space construction quotients the
source by this kernel, is functorial on arrows, and commutes with arbitrary
base change in the tame setting
\cite[Theorem~3.1 and Proposition~3.4]{AOV}.  On the balanced node chart, the
invariant algebra is
\[
 \bigl(B[x,y]/(xy-\tau)\bigr)^{\mu_h}
   =B[x^h,y^h]/(x^hy^h-\tau^h).
\]
Indeed, every invariant monomial reduces, using $xy=\tau$, to a monomial
in $x^h$, $y^h$, and $\tau$.  The residual action of $\mu_{t/h}$ has weights
$(1,-1)$.  This proves \eqref{eq:faithful-root-local-chart}, compatibly
with branch interchange and gluing.  Markings have the analogous
one-branch root chart.

For $R=\lambda r$, first take the $\lambda$th tensor power of the root
line.  The resulting character may acquire a kernel; quotienting by that
kernel gives \eqref{eq:faithful-root-comparison}.  It is a morphism over the
same marked Picard stack.  Since $\mathfrak R_R$ is proper and
$\mathfrak R_r$ is separated over that stack, its graph factorization proves
properness.  Once the finitely many high node sectors are fixed, the fibre
is a torsor under finite $\lambda$-torsion in a generalized Jacobian, with
finite ghost automorphisms.  Hence it is quasi-finite and of
Deligne--Mumford type.  Every part of the construction is canonical under
base change.
\end{proof}

\begin{remark}[Why the root-order morphism is not merely tensor power]
Tensor power can create a kernel in the stabilizer character.  AOV
relative coarsening removes this kernel and restores representability;
the comparison therefore need not preserve relative Isom sheaves.
\end{remark}

Recall the universal pair $(M_\Gamma^+,t_\Gamma^+)$, the morphism
$g_\Gamma^+$, the integer $B_0$, the substack $W_0$, the admissible base
root order $r$, and $W_{r,\varnothing}$ from
\Cref{subsec:theorem-spaces}.  We retain the full $2$-base change along $W_0\hookrightarrow\mathfrak U^+_\varnothing$.

\begin{lemma}[The finite-type substack and the uniform slope bound]
\label{lem:uniform-slope-bound}
The maxima in \eqref{eq:B0-definition} are finite, and the substack $W_0$
defined in \eqref{eq:W0-definition} is a finite-type open substack of
$\mathfrak U^+_\varnothing$.  It contains the image of $g_\Gamma^+$ and the
smooth one-vertex locus with nonzero section.  For every geometric point $(C,M,t)$ of
$W_0$ and every subset $S$ of the vertices of its dual graph, write $C_v$
for the coarse component corresponding to $v$.  Then
\begin{equation}
 \left|\sum_{v\in S}\deg(M|_{C_v})\right|\leq B_0.
\label{eq:B0-bound}
\end{equation}
For every base root order $r$ satisfying \eqref{eq:large-r}, the stack
$W_{r,\varnothing}$ defined in \eqref{eq:bounded-rooted-opens} is finite type.
Every realizable integral tropical type of a positive logarithmic
enhancement over $W_0$, and hence over $W_{r,\varnothing}$, has
coarse slope $m_e$ on each bounded edge satisfying
\begin{equation}
 |m_e|\leq4B_0<r.
\label{eq:slope-bound}
\end{equation}
The same estimate holds for a type appearing only in the special fibre of a
trait in $W_{r,\varnothing}$.
\end{lemma}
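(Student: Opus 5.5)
The plan has three parts: finiteness of $B_0$, openness and finite type of $W_0$ together with the finite type of $W_{r,\varnothing}$, and the slope estimate \eqref{eq:slope-bound}.

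\emph{Finiteness of $B_0$ and the containments.} $B_\Gamma$ is a closed substack of the Kontsevich stack $\Mbar_{1,n+\rho}(X,\beta)$, which is proper and hence of finite type. So only finitely many dual graphs occur, and $|E(G_x)|$ is bounded. For a subset $A$, the degree of $M_{\Gamma,x}^+|_{C_A}$ equals $D\cdot f_*[C_A]+\sum_{p\in P\cap C_A}d_p$. The classes $f_*[C_v]$ take finitely many values on a finite-type family because they are locally constant on strata. Hence the last maximum is finite.

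By construction $g_\Gamma^+(B_\Gamma)\subset W_0$: forgetting $Z$ without stabilisation keeps the curve and the line bundle, so the graph bounds and degree bounds are exactly those used in defining $B_0$. The smooth one-vertex locus with nonzero section has no edges and has degree $d^+\le B_0$, so it also lies in $W_0$.

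\emph{Openness and finite type of $W_0$.} The number of nodes and the multidegree are upper semicontinuous, respectively locally constant on strata, in the following sense. Under generization, edges are smoothed, and the degrees of the merged components add. Any union $C_A$ on the general fibre is then the specialization-image of a union of components of the special fibre. So the conditions in \eqref{eq:W0-definition} are preserved under generization, and $W_0$ is open.

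For finite type, I would argue via a stratification:
\begin{enumerate}[label=(\roman*)]
\item Bounded $|E(G_C)|$ together with the fixed genus and markings leaves finitely many dual graphs.
\item Bounded degrees on every vertex give finitely many multidegrees.
\item For each graph and multidegree, the stratum is a section space over a relative Picard stack of prestable curves with fixed combinatorial type, which is of finite type.
\end{enumerate}
Unstable rational components do not break this argument. Their number is bounded by the edge bound, and each stratum is a quotient of a finite-type stack.

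The stack $W_{r,\varnothing}$ is then of finite type over $W_0$, by \Cref{prop:faithful-root-properness}: the faithful-root stack is quasi-finite and locally of finite presentation. The open conditions \eqref{eq:crumplin-degree-requirement} and the section data only add the root of the section. By \Cref{lem:finite-section-roots} this step is finite over the faithful-root stack.

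\emph{Slope bound.} This is the main obstacle. For a realizable type I would orient the edges and use balancing: for each vertex $v$, $d_v=\sum_{e\ni v}m_e+c_v$, where the sum runs over outgoing slopes. For an edge $e$ not in a cycle, cutting $e$ separates the vertex set into two parts $S$ and $S^c$. Summing balancing over $S$ gives $m_e=\sum_{v\in S}d_v-\sum_{i\in S}c_i$. This is bounded by $B_0+d^+\le 2B_0$, using \eqref{eq:B0-bound}.

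In genus one there is at most one circuit, with $b_1(G)\le1$. On the circuit, the slopes differ pairwise by cut sums of this kind, so they are all determined up to one common circulation $k$. Realizability requires the continuity condition $\sum_{e\in\text{circuit}} m_e\ell_e=0$ with all $\ell_e>0$. Writing $m_e=k+\beta_e$ with $|\beta_e|\le 2B_0$, continuity forces the $m_e$ not to all share a strict sign, so $|k|\le 2B_0$. Hence $|m_e|\le 4B_0$, and \eqref{eq:large-r} gives $4B_0<r$.

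For a type appearing only in the special fibre of a trait, I would apply the same cut-and-balancing argument to the special-fibre graph. Its multidegrees still satisfy \eqref{eq:B0-bound}, because $W_0$ is defined pointwise on every geometric fibre, and the trait's closed point lies in $W_{r,\varnothing}$ over $W_0$. The only delicate point is that realizability there holds only in the limit. The continuity relation still holds for the tropicalization of the special fibre, so the circulation estimate persists.
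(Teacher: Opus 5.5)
Your proposal is correct. Its first two parts follow the paper's Steps 1--2 closely: finiteness of $B_0$, openness of $W_0$ by stability under generisation, finite type via finitely many graph and multidegree strata, and finite type of $W_{r,\varnothing}$ via \Cref{prop:faithful-root-properness} and \Cref{lem:finite-section-roots}.

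You differ from the paper in the slope bound. The paper sets $b_v=d_v-c_v$ and notes that $|\sum_{v\in S}b_v|\le 2B_0$ for every vertex subset $S$. It then chooses a homomorphism $h:Q\to\mathbb R_{\ge0}$ from the sharp fs base monoid, strictly positive on nonzero elements, and runs a single level-set cut. For an edge with $m_e>0$, cutting at a generic real height between its endpoints makes every crossing edge point upward with positive slope, so $0<m_e\le\sum_{v\in S}b_v\le 2B_0$. This handles bridges and circuit edges uniformly and gives the sharper bound $2B_0$. It also never uses $b_1(G)\le 1$, so it works in any genus.

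Your route splits the edges instead. Bridges are handled by a graph cut, giving $2B_0$. The unique circuit is parametrised by a circulation, $m_e=k+\beta_e$, and the sign argument from $\sum m_e\ell_e=0$ correctly yields $|k|\le 2B_0$. This gives exactly the stated $4B_0$, but it depends on genus one.

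Three small points should be made explicit:
\begin{enumerate}
\item ``All $\ell_e>0$'' presupposes real lengths. Either map $Q$ to $\mathbb R_{\ge0}$ as the paper does, or use sharpness of $Q$ directly: a positive combination of nonzero elements of a sharp monoid is nonzero.
\item Over $W_{r,\varnothing}$, a rooted enhancement has the same coarse slopes as its descended line, so the bound transfers; say so.
\item There is no genuine ``limit'' subtlety for special-fibre types. The restriction of the enhancement to the closed point of the trait is an honest log map over a log point lying over $W_0$, so the fibrewise argument applies verbatim.
\end{enumerate}
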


\begin{proof}
\proofstep{Step 1: finiteness of $B_0$ and the definition of $W_0$}
The stack $B_\Gamma$ is of finite type.  After passing to a finite-type atlas,
the universal nodal curve admits a finite constructible stratification on
which the dual graph and the multidegree of $M_\Gamma^+$ are constant.  Hence
only finitely many graphs, multidegree vectors, and subset sums occur, proving
that the maxima in \eqref{eq:B0-definition} are finite.

The conditions in \eqref{eq:W0-definition} are locally constructible.  They are
preserved under generisation: nodes may be smoothed, hence edges are
contracted, and the degree on a component after contraction is the sum of the
degrees on the components that merge.  Because the bound is imposed for every
union of components, it remains valid after such contractions.  Thus the
selected locally constructible subset is stable under generisation and defines an
open substack.

The same bounds imply finite type.  The number of edges, and therefore the
number of vertices, is bounded; the genus and number of markings are fixed;
and taking $A=\{v\}$ bounds every component degree.  Consequently only
finitely many dual graphs, marking distributions, and multidegrees occur.
Each corresponding line--section stratum is of finite type, so their finite
union $W_0$ is finite type.  By construction,
$g_\Gamma^+(B_\Gamma)\subset W_0$.  A smooth one-vertex curve has no edges
and its only component has degree $d^+\leq B_0$, so the smooth one-vertex
locus with nonzero section also lies in $W_0$.

\proofstep{Step 2: finite type of the rooted stack}
Over $W_0$, the stack of $r$-twisted curves and faithful $r$th roots of the
coarse line is of finite type by
\Cref{prop:faithful-root-properness}.  The prescribed marking
character sectors are open and closed, representability and the inequalities
\eqref{eq:crumplin-degree-requirement} are open, and, once the root line is
fixed, roots of the coarse section are finite by
\Cref{lem:finite-section-roots}.  Hence $W_{r,\varnothing}$ is finite
type.  Equip it with the pullback of the canonical logarithmic structure
on the stack of twisted prestable curves.  With this convention, the
projection retaining the twisted curve is strict.  This does not assert
strictness of the projection to the coarse-curve base $W_0$: at a node of
index $s_e$, the canonical smoothing parameters induce the characteristic
map $\mathbb N\to\mathbb N$, $1\mapsto s_e$.

\proofstep{Step 3: the incidence bound}
Fix a positive logarithmic enhancement of a geometric point of $W_0$,
and let $G$ be its dual graph.  For $v\in V(G)$, let $C_v$ be the
corresponding coarse component.  Let $d_v=\deg(M|_{C_v})$ be the degree of
its coarse line bundle $M$ on that component,
and let $c_v$ be the sum of retained marking contacts on that component,
and put $b_v=d_v-c_v$.  Since both the total coarse degree and the total
marking contact equal $d^+$,
\begin{equation}
 \sum_{v\in V(G)}b_v=0.
\label{eq:b-total-zero}
\end{equation}
For every vertex subset $S$,
\begin{equation}
 \left|\sum_{v\in S}b_v\right|
 \leq\left|\sum_{v\in S}d_v\right|+
       \sum_{v\in S}c_v
 \leq B_0+d^+\leq2B_0.
\label{eq:b-bound}
\end{equation}
Orient every bounded edge $e$, and write $\mathrm{tail}(e)$ and
$\mathrm{head}(e)$ for its initial and terminal vertices.  Put
$\boldsymbol m=(m_e)_{e\in E(G)}$ and $\boldsymbol b=(b_v)_{v\in V(G)}$.
We use the outgoing-minus-incoming incidence convention
\[
 \partial:\mathbb Z^{E(G)}\longrightarrow\mathbb Z^{V(G)},\qquad
 (\partial\boldsymbol m)_v
 =\sum_{\mathrm{tail}(e)=v}m_e-\sum_{\mathrm{head}(e)=v}m_e.
\]
Thus the integral slopes of every realizable tropical type satisfy
$\partial\boldsymbol m=\boldsymbol b$.

\proofstep{Step 4: a level-set estimate}
Let $Q$ be the sharp fine and saturated characteristic monoid of the
base logarithmic point.  Write $e_v\in Q$ for the elevation of a
vertex $v$ and $\delta_e\in Q$ for the smoothing element of a
bounded edge.  After contracting any zero-length edges, choose a
homomorphism $h:Q\to\mathbb R_{\geq0}$ strictly positive on every
nonzero element of $Q$.  Such a homomorphism exists because $Q$ is
sharp, fine, and saturated.  Put
\[
 H_v=h(e_v),\qquad \ell_e=h(\delta_e)>0.
\]
Realizability gives, for every oriented edge,
\[
 H_{\mathrm{head}(e)}-H_{\mathrm{tail}(e)}=m_e\ell_e.
\]
If $m_e=0$ there is nothing to prove.  Otherwise, reverse its
orientation if necessary so that $m_e>0$, and choose a real number
$a$ strictly between its endpoint heights and different from every
vertex height.  Let $S=\{v:H_v<a\}$.  Orient all edges crossing
this cut from $S$ to its complement.  Their slopes are positive by
the displayed relation.  Summing the incidence equations over $S$
therefore gives
\[
 0<m_e\leq\sum_{f\text{ crossing the cut}}m_f
   =\sum_{v\in S}b_v\leq2B_0,
\]
where the final bound is \eqref{eq:b-bound}.  Thus every bounded
edge satisfies the stronger estimate $|m_e|\leq2B_0$.
A loop has equal endpoint heights and hence slope zero.

\proofstep{Step 5: special fibres and the root order}
The estimate is fibrewise and uses only the degree bounds and
realizability.  It consequently applies also to edges and graphs
appearing only in the special fibre of a trait in
$W_{r,\varnothing}$.  For a rooted enhancement the descended line
has the same coarse slopes.  In particular,
\[
 |m_e|\leq2B_0\leq4B_0<r,
\]
which proves \eqref{eq:slope-bound} with the previously chosen root
order.  The remaining marking divisibility and age requirements
follow from \eqref{eq:large-r}; the additional hypotheses used for
generic component degrees are checked in
\Cref{prop:compatible-support}.

\end{proof}

\begin{remark}[The role of realizability in the slope bound]
The level-set argument uses the relation between slopes, lengths,
and vertex elevations of a realizable tropical map.  The incidence
equations alone do not bound circulations around circuits.  For a
bounded family satisfying the same degree and nonnegative-contact
bounds, the level-set estimate applies in any genus.  Arithmetic
genus one is used later in the classification and virtual-class
calculation for the essential component types.
\end{remark}

We need a proper comparison between different root orders on these open substacks.
The following statement is formulated on ambient root stacks so that no
map between the complete substacks satisfying \eqref{eq:crumplin-degree-requirement} at the two root orders is assumed.

\subsection{Properness of the comparison between root orders}

The previous lemma fixes the root order only after the relevant geometric
support and every bounded-edge slope of a realizable enhancement has been bounded.  The
following lemma makes the resulting comparison morphism proper.

\begin{lemma}[Properness after base change to a finite-type open]
\label{lem:proper-root-comparison}
Let $r,\lambda$ be positive integers and put $R=\lambda r$.
Fix the retained positive contact orders $c_i$, with $0<c_i<r$ and
$c_i\mid r$.  For $N=r,R$, give the corresponding positive marking source
index $N/c_i$ and faithful character one; markings of contact order $0$
remain untwisted.  Tensor power
and AOV relative coarse space by the kernel of the root character define
\[
 \overline\pi_{R,r}^+:\overline O_R^+\longrightarrow\overline O_r^+.
\]
For every finite-type open $W\subset O_r^+$, form the $2$-Cartesian square
\begin{equation}
\begin{tikzcd}
 W_R\ar[r]\ar[d,"\pi_{R,r,W}^+"']
   & \overline O_R^+\ar[d,"\overline\pi_{R,r}^+"]\\
 W\ar[r,hook]
   & \overline O_r^+ .
\end{tikzcd}
\label{eq:proper-root-open-base-change}
\end{equation}
Then $W_R$ is finite type, it is contained in $O_R^+$, and
$\pi_{R,r,W}^+$ is proper, quasi-finite, and of Deligne--Mumford type.  No
stabilization of the source curve is involved.
\end{lemma}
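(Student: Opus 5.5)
The plan is to factor $\overline\pi_{R,r}^+$ through the faithful-root comparison of \Cref{prop:faithful-root-properness}, and then handle the sections separately using \Cref{lem:finite-section-roots}.

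\emph{Step 1: factorization.}
Let $\mathfrak B$ be the stack of marked prestable genus-one curves with a line bundle of degree $d^+$. Forgetting sections gives morphisms $\overline O_N^+\to\mathfrak R_N$ for $N=r,R$. The marking sectors at order $r$ are induced from those at order $R$: at a positive marking the index $R/c_i$ becomes $r/c_i$ after the $\lambda$th power and removal of the kernel, and the $Z$-markings stay untwisted. With this choice, $\overline\pi_{R,r}^+$ lies over $\rho_{R,r}$. I would then show that the square
\[
\overline O_R^+\longrightarrow \mathfrak R_R\times_{\mathfrak R_r}\overline O_r^+
\]
is represented by $\operatorname{Root}_\lambda$ of the order-$r$ root section. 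Concretely, given order-$R$ root data over a point of $\overline O_r^+$, the section $\sigma_R$ is a section of $L_R$ whose $\lambda$th power is the pullback of $\sigma_r$. Sections descend along the relative coarsening, because tame invariant pushforward is exact and $\sigma_r$ is invariant. Hence this morphism is the root functor of \Cref{lem:finite-section-roots}, applied to the line $L_R$ on the order-$R$ curve with $a$ equal to the pullback of $\sigma_r$. That lemma makes it finite, representable and compatible with base change.

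\emph{Step 2: properties of $\pi_{R,r,W}^+$.}
The morphism $\rho_{R,r}$ is proper, quasi-finite and of DM type by \Cref{prop:faithful-root-properness}, so its base change to $\overline O_r^+$ has the same properties. Composing with the finite representable morphism of Step 1 shows that $\overline\pi_{R,r}^+$ is proper, quasi-finite and of DM type. Base change to $W$ preserves all three. Since $W$ is of finite type and the morphism is of finite type, $W_R$ is of finite type. No stabilization enters, because both constructions keep the coarse curve fixed.

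\emph{Step 3: $W_R\subset O_R^+$.}
Relative coarsening and tensor power preserve the coarse curve, and they preserve the degree of the descended pair on each coarse subcurve $E$. The pair descended at order $R$ is the same coarse line $A$ as at order $r$. So
\[
\deg(L_R|_E)=\frac{\deg(A|_E)}{R}=\frac{\deg(L_r|_E)}{\lambda}.
\]
At a point of $W\subset O_r^+$ we have $|\deg(L_r|_E)|<\tfrac12$, hence $|\deg(L_R|_E)|<\tfrac12$ for every proper subcurve $E$. This is exactly the Crumplin condition at order $R$.

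\emph{Expected obstacle.}
The main difficulty is Step 1: identifying the relative fibre precisely with $\operatorname{Root}_\lambda$, including on arrows and over nonreduced bases. One must check that the chosen $N$th-power isomorphisms are carried along compatibly, and that the kernel removed by AOV coarsening does not produce extra automorphisms that would spoil representability. I would check this on the node chart \eqref{eq:faithful-root-local-chart}, using the weight decomposition as in the proof of \Cref{lem:marking-picard-equivalence}. If the fibre turns out to be non-representable, it would still be finite with finite inertia, and that suffices for proper, quasi-finite and DM type.
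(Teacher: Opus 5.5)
Your proposal is correct and follows essentially the same route as the paper: forget the section to reduce to the proper, quasi-finite, Deligne--Mumford-type comparison $\rho_{R,r}$ of \Cref{prop:faithful-root-properness}, then reintroduce the section through the finite functor $\operatorname{Root}_\lambda$ of \Cref{lem:finite-section-roots} by imposing $(\sigma_R^+)^{\otimes\lambda}=\kappa_{R,r}^*\sigma_r^+$, and use $\lambda\deg(L_R^+|_{\mathcal E_R})=\deg(L_r^+|_{\mathcal E_r})$ to obtain $W_R\subset O_R^+$ before base changing to $W$. The hedge in your last paragraph is not needed: an automorphism of an $\overline O_R^+$-object that is trivial on its image in $\mathfrak R_R$ is trivial on the twisted curve and the line, hence trivial. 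The section-root morphism is therefore representable.
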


\begin{proof}
First forget the section.  Proposition
\ref{prop:faithful-root-properness} gives algebraic separated proper stacks
$\mathfrak R_R$ and $\mathfrak R_r$ over the same marked Picard stack and the
proper comparison morphism
$\rho_{R,r}:\mathfrak R_R\to\mathfrak R_r$ obtained by tensor power and AOV
relative coarsening.

Its quasi-finiteness and Deligne--Mumford type are also part of that
proposition; no new genus-one argument is needed at the line-bundle level.

On a test scheme $S$, let $\kappa_{R,r}:\mathcal C_R\to\mathcal C_r$
be the partial coarsening of the two twisted curves, and let
$L_R^+,L_r^+$ be the root lines.  Their specified root identifications give
$(L_R^+)^{\otimes\lambda}\simeq\kappa_{R,r}^*L_r^+$.
For a given section $\sigma_r^+$ of $L_r^+$, reintroduce the high-order
root section by imposing
\[
 (\sigma_R^+)^{\otimes\lambda}=\kappa_{R,r}^*\sigma_r^+.
\]
Lemma~\ref{lem:finite-section-roots} shows that this is finite over the
stack of high root lines, so the comparison with sections remains proper
and quasi-finite.  For every proper subcurve $E$ of the common coarse
curve, let $\mathcal E_R\subset\mathcal C_R$ and
$\mathcal E_r\subset\mathcal C_r$ be the corresponding reduced subcurves.
Then
\[
 \lambda\deg(L_R^+|_{\mathcal E_R})=\deg(L_r^+|_{\mathcal E_r}).
\]
Hence the upper-left corner of the $2$-base change of the substack satisfying
\eqref{eq:crumplin-degree-requirement} at order $r$ is contained in the
corresponding substack at order $R$.  The morphism
$\pi_{R,r,W}^+$ in \eqref{eq:proper-root-open-base-change} is a base change of
the preceding proper morphism and therefore has all the asserted properties.
\end{proof}

The opens $W_{N,\varnothing}$ and $W_{N,Z}$ are the $2$-base changes defined by the diagrams in \Cref{subsec:theorem-spaces}.  The preceding lemma
proves the required properness, and \Cref{lem:uniform-slope-bound} proves
finite type.  We now check all their components, rather than selecting
components meeting a particular geometric image.

\subsection{Genus-one components and the constant coefficient}

The special feature used here is not polynomiality alone.  In genus one,
Crumplin identifies the positive universal virtual class with the ordinary
fundamental class.  Thus, once properness has been established, it suffices
to compare the generic degrees of the irreducible components.

\begin{lemma}[Essential types and the virtual class in genus one]
\label{lem:genus-one-essential-types}
For Crumplin's nonnegative contact data with $\ell_{\mathrm{cur}}$ markings,
the irreducible components are the reduced closures $Z_{[\tau]}$ of the
distinguished strata indexed by inducible essential mod-$N$ types
$[\tau]$.  In genus one, such a type is either the trivial type or has a
tree-shaped graph with one internal vertex of genus one and rational
external leaves.  The latter alternative includes the one-vertex, no-edge
type whose unique vertex is internal; this is not the trivial type, whose
unique vertex is external.  Here an internal vertex represents a component
on which the section is identically zero, and an external vertex represents
a component on which it is nonzero.  Every irreducible component has
dimension $\ell_{\mathrm{cur}}$, the universal virtual dimension.  Here
$\ell_{\mathrm{cur}}=n+\rho$ when all labels are retained, and
$\ell_{\mathrm{cur}}=\rho_+$ after forgetting $Z$.  On the finite-type opens
$W_{N,Z}$ used here, with $\ell_{\mathrm{cur}}=n+\rho>0$ and $N=\lambda r$
as in \eqref{eq:large-r}, the universal virtual class is the sum of the
reduced fundamental cycles of these irreducible components, with
multiplicity one.  The component and dimension assertions also apply when
$\ell_{\mathrm{cur}}=0$; no Chow assertion on a stack with non-affine
stabilizers is intended.
\end{lemma}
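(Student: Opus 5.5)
The plan is to import Crumplin's results and check that our data and the opens $W_{N,Z}$ satisfy his hypotheses; the genus-one classification is then elementary graph combinatorics. First, I would apply \cite[Definitions~2.21 and~2.23, Theorem~2.24]{Crumplin} after forgetting $Z$. This identifies the irreducible components of $O_{N,\varnothing}^+$ with the reduced closures $Z_{[\tau]}$ of inducible essential mod-$N$ types. Crumplin's size, divisibility and coprimality assumptions, including $N>2d^+$, follow from \eqref{eq:large-r} and $N=\lambda r$. To pass to the marked stacks, I would use that $F_N$ changes neither curve nor pair and only adds untwisted markings on the smooth locus. It is therefore smooth with irreducible fibres of relative dimension $|Z|$. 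Hence components correspond bijectively, and dimensions shift from $\rho_+$ to $\rho_+ + |Z| = n+\rho$. Restricting to the open $W_{N,Z}$ only removes components disjoint from it and keeps every remaining one irreducible.

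Second, I would classify the essential types combinatorially using $1=b_1(G)+\sum_v g_v$. If some internal vertex exists, it has positive genus, so exactly one internal vertex $v_+$ has $g_{v_+}=1$. Then $b_1(G)=0$ and all other vertices are rational. Bipartiteness forces every edge to join $v_+$ to an external vertex, so $G$ is a star centred at $v_+$ whose leaves are rational external vertices. The case of zero leaves is the one-vertex internal type. If there is no internal vertex, bipartiteness forbids edges altogether, so connectedness gives one external vertex, which is the trivial type. The dimension statement, that every component has dimension $\ell_{\mathrm{cur}}$, is taken from Crumplin's dimension count for each inducible essential type. As a sanity check, for a star I would count $1$ for the elliptic moduli, $1$ for $\operatorname{Pic}^0$ (the $q_{v_+}^*J$ factor), node positions, and leaf data minus edge constraints. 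This check holds independently of whether $\ell_{\mathrm{cur}}=0$.

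Third, I would invoke \cite[Corollaries~4.3 and~4.5]{Crumplin} for the virtual class statement. They give $[O_N^+]^{\vir}=\sum[Z_{[\tau]}]$ with multiplicity one. This needs the universal obstruction theory relative to the twisted Picard stack and a Chow theory on a stack with affine stabilizers; the assumption $\ell_{\mathrm{cur}}=n+\rho>0$ ensures the latter. Compatibility with forgetting $Z$ comes from flat pullback of the virtual class along $F_N$, since the obstruction theory of sections is unchanged, followed by restriction to the open $W_{N,Z}$.

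The main obstacle is this last transfer. Crumplin states his decomposition on his unmarked stack $O_{N,\varnothing}^+$, where stabilizers of genus-one curves with no markings may be non-affine, so that Chow theory is not available. I would therefore try to run Crumplin's argument directly on the marked stacks. The local structure there is his, because the markings in $Z$ are untwisted with contact $0$ and impose no conditions. This would show that the obstruction sheaf is torsion-free on each component, that the components are generically reduced of the expected dimension, and hence that the virtual class equals the fundamental class. The statement would then be proved on the marked stacks and need only be compatible with forgetting $Z$ on the level of components.
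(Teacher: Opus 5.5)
Your classification of the genus-one essential types and your dimension assertion agree with the paper's Step~1: the genus identity, bipartiteness forcing either an elliptic-centred star or the edgeless trivial type, and Crumplin's formula $3g-3+\ell_{\mathrm{cur}}+\sum_{v\in V_+}(g_v-1)=\ell_{\mathrm{cur}}$.

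One claim on the way is false. $F_N$ does not have irreducible fibres, so components of $O^+_{N,\varnothing}$ and $O^+_{N,Z}$ do not correspond bijectively. Over a point whose curve is a star, the ordered configuration space of the $Z$-labels splits according to how the labels are distributed among the components. A non-main component therefore pulls back to several marked components. This does no harm to the dimension count or to smooth pullback of cycles. The paper sidesteps it by applying Crumplin's Theorem~2.24 directly to the data with all $\ell_{\mathrm{cur}}$ labels. It uses irreducibility of the marking fibre only on the main component, whose generic curve is smooth.

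The real gap is the multiplicity-one virtual-class assertion, which your proposal does not prove. Routing through $O^+_{N,\varnothing}$ creates exactly the affine-stabilizer problem you flag. Your fallback, ``run Crumplin's argument on the marked stacks,'' names the targets but gives no mechanism. Torsion-freeness of the obstruction sheaf is also not the relevant criterion.

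The paper argues directly on $W_{N,Z}$, in two steps.
\begin{enumerate}
\item The twisted Picard stack is smooth of dimension $\ell_{\mathrm{cur}}$, and $\chi(\mathcal C,L)=\deg L-\sum\operatorname{age}=0$. Over a smooth atlas, the section stack is the zero locus of $d(\eta)$ in the smooth space $\mathbf V(E^0)$. Since every component has the expected dimension, the equations form a regular sequence. Hence $[W_{N,Z}]^{\vir}=[W_{N,Z}]$ with scheme-theoretic multiplicities.
\item Each elliptic-star component is generically smooth. At a general point, take the degree-zero factor $J$ on the elliptic component $E$ to be nontrivial. The rational leaves and the node gerbes, which carry nontrivial characters, contribute no cohomology, so $H^1(\mathcal C,L)\cong H^1(E,J(-Q))$, where $Q$ is the sum of the attaching points. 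The external section has gerby contact one, hence leading term $a_ix$. Through this term, the node-smoothing directions map to $H^1(E,J(-Q))$ via the connecting map $\delta:\bigoplus_iJ|_{q_i}\to H^1(E,J(-Q))$, up to invertible scalings. The map $\delta$ is an isomorphism because $H^0(E,J)=H^1(E,J)=0$. For the edgeless internal type, $H^1=H^1(E,J)=0$ directly.
\end{enumerate}

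Without this computation, or an explicit citation of Crumplin's decomposition applied to the fully marked data on these opens, generic multiplicity one on the non-main components remains unjustified.
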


\begin{proof}
\proofstep{Step 1: components and dimensions}
By definition, an essential type is bipartite and every internal vertex
has positive genus \cite[Definition~2.21]{Crumplin}.  The inducible
essential types index the irreducible components
\cite[Theorem~2.24]{Crumplin}.

Write $G$ for the underlying graph and $V_+(G)$ for its internal
vertices.  The genus identity
\[
 1=b_1(G)+\sum_{v\in V(G)}g_v
\]
shows that, if there is an internal vertex, it is the unique vertex of
positive genus and has genus one.  Moreover, $b_1(G)=0$, so $G$ is a
tree.  Bipartiteness then forces every other vertex to be an external
rational leaf attached to the internal vertex.  This includes the
one-vertex internal type.  If there is no internal vertex,
bipartiteness excludes every edge, and connectedness gives the trivial
type.

For an essential type, Crumplin's dimension formula is
\[
 \dim Z_{[\tau]}
 =
 3g-3+\ell_{\mathrm{cur}}
 +\sum_{v\in V_+(G)}(g_v-1),
\]
where $g$ is the total arithmetic genus
\cite[Corollary~4.3]{Crumplin}.  The final sum vanishes in both
genus-one alternatives, giving
$\dim Z_{[\tau]}=\ell_{\mathrm{cur}}$.
These arguments also apply when $\ell_{\mathrm{cur}}=0$.

\proofstep{Step 2: the obstruction theory for sections}
For the virtual-cycle assertion, work on $W_{N,Z}$ with
$\ell_{\mathrm{cur}}>0$.  We use the standard relative obstruction
theory
\[
 \mathbb E=(\mathbf R\pi_*L)^\vee
\]
for the morphism forgetting the section and retaining the twisted curve
and root line \cite[Section~4.2]{Crumplin}.

The Picard stack of twisted curves with the prescribed total degree
and marking characters is smooth.  Here the curves and their nodes
are allowed to deform; we do not restrict to a fixed nodal stratum.
Let $S$ be a smooth atlas of this Picard stack, and let
$\pi:\mathcal C_S\to S$ and $L$ be the universal twisted curve and
root line.  Locally on $S$, represent $\mathbf R\pi_*L$ by a complex
of vector bundles
\[
 [E^0\xrightarrow{d}E^1]
\]
in degrees $0,1$.  If $\eta$ denotes the tautological section on
$\mathbf V(E^0)$, the section stack is the zero locus of $d(\eta)$
in the smooth ambient space $\mathbf V(E^0)$.  The pullback of
$W_{N,Z}$ is an open subspace of this zero locus.

The Picard stack has dimension $\ell_{\mathrm{cur}}$, and
orbifold Riemann--Roch gives
\[
 \chi(\mathcal C,L)
 =
 \deg L-\sum_p\operatorname{age}_p(L)=0.
\]
Consequently, after accounting for the relative dimension of the
atlas, Step~1 shows that every component of this zero locus has
codimension $\operatorname{rank}E^1$.  Its ideal is generated by
that many equations in a regular ambient local ring, so these
equations form a regular sequence.  The zero locus is therefore a
local complete intersection, and the standard obstruction theory
of sections gives its ordinary fundamental cycle.  Thus
\[
 [W_{N,Z}]^{\vir}=[W_{N,Z}],
\]
with the scheme-theoretic generic multiplicities on the right.
We determine those multiplicities next.

\proofstep{Step 3: generic multiplicity one}
On the distinguished main locus, the curve is smooth and the
nonzero section determines its line bundle from the prescribed
marking divisor.  This locus is therefore smooth, so the main
component has generic multiplicity one.

Consider a non-main component meeting $W_{N,Z}$ and a general point
$(\mathcal C,L,\sigma)$ of its distinguished stratum.  Its graph has
one internal elliptic vertex and $k$ external rational leaves.
For an edge $e$ directed from a leaf $C_v$ towards the elliptic
component, the leaf-section description gives
\[
 m_e=d_v-\sum_{p_i\in C_v}c_i,
 \qquad 0<m_e\leq B_0.
\]
Here $d_v$ is the degree of the descended $N$th-power line on $C_v$.
The positivity follows because the section on the leaf must vanish
at its attaching node.  Our choice of $r$ implies $m_e\mid N$.
Representability therefore gives stabilizer order
$s_e=N/m_e>1$ and external gerby contact one.

Let $p_E:\mathcal E\to E$ be the coarse map from the internal elliptic
normalization, and write $\widetilde q_1,\ldots,\widetilde q_k$ for
its attaching gerbes, with coarse points $q_1,\ldots,q_k$.
The line-bundle description of
\cite[Proposition~2.13]{Crumplin} gives
\[
 L|_{\mathcal E}
 \simeq
 p_E^*J\otimes
 \mathcal O_{\mathcal E}
 \left(
   \sum_p\widetilde c_p\,\widetilde p
   -\sum_{i=1}^k\widetilde q_i
 \right),
 \qquad J\in\operatorname{Pic}^0(E).
\]
The first sum runs over the markings on $\mathcal E$, and
$\widetilde c_p$ denotes their gerby contact.
When $k=0$, the same description follows directly from the degree
and marking-character conditions.

Each marking coefficient satisfies
$0\leq\widetilde c_p<s_p$, where $s_p$ is its stabilizer order.
Thus the marking factors contribute no integral twist after
coarse pushforward, whereas each coefficient $-1$ at an attaching
gerbe contributes $-q_i$.  Setting $Q=q_1+\cdots+q_k$, we obtain
\[
 (p_E)_*(L|_{\mathcal E})\simeq J(-Q).
\]
The Jacobian factor varies freely in the universal stratum, and the
bounded open imposes only numerical conditions.  We may therefore
take $J$ nontrivial at a general point.

On each rational leaf, the coarse pushforward of the root line is
$\mathcal O_{\mathbf P^1}$, so its first cohomology vanishes.
At every attaching node, the root line has nontrivial stabilizer
character.  Its restriction to the node gerbe consequently has
zero cohomology, since the stabilizer is tame.  The normalization
sequence therefore identifies the full section-obstruction space as
\[
 H^1(\mathcal C,L)\simeq H^1(E,J(-Q)).
\]

Suppose first that $k>0$.  Consider the connecting homomorphism
\begin{equation}
 \delta:\bigoplus_{i=1}^kJ|_{q_i}
 \longrightarrow H^1(E,J(-Q))
 \quad\text{associated to}\quad
 0\longrightarrow J(-Q)\longrightarrow J
 \longrightarrow J|_Q\longrightarrow0.
 \label{eq:generic-star-obstruction}
\end{equation}
We identify this with the obstruction map for smoothing the attaching
nodes.  At the $i$th node, put $s_i=s_{e_i}$ and choose a balanced
local smoothing
\[
 xy=t_i,
\]
with $x$ on the external branch and $y$ on the elliptic branch.
Because the external gerby contact is one, the external section
has leading term $a_i x$ with $a_i\ne0$.  On the overlap with the
elliptic branch, this becomes $a_it_i/y$.

In a compatible equivariant frame $e_i$, a regular generator of
the coarse pushforward on the elliptic branch is
$y^{s_i-1}e_i$.  Writing $z_i=y^{s_i}$ for the coarse coordinate,
we have
\[
 \frac{a_it_i}{y}e_i
 =
 \frac{a_it_i}{z_i}\bigl(y^{s_i-1}e_i\bigr).
\]
This is a simple principal part of $J(-Q)$, representing an element
of $J|_{q_i}$.  Its Cech obstruction is the corresponding summand
of \eqref{eq:generic-star-obstruction}.  Hence, after choosing these
local trivializations, the node-smoothing obstruction map is
$\delta$ composed with invertible diagonal scalings.

Since $J$ is a nontrivial degree-zero line bundle on a smooth
elliptic curve,
\[
 H^0(E,J)=H^1(E,J)=0.
\]
The connecting map $\delta$ is therefore an isomorphism.
The node-smoothing directions consequently surject onto the full
section-obstruction space.  Together with the section directions,
which account for the image of $E^0\to E^1$, they make the
differential of the local defining section surjective.
Thus the section stack is smooth at this general point.

When $k=0$, we instead have
$H^1(\mathcal C,L)=H^1(E,J)=0$, which gives generic smoothness
directly.  Additional contact-zero markings affect neither the
cohomology calculation nor the local smoothing calculation.

Every component of $W_{N,Z}$ therefore has generic multiplicity
one.  Combining this with Step~2 yields
\[
 [W_{N,Z}]^{\vir}
 =
 \sum_{\substack{[\tau]\ \mathrm{inducible\ essential}\\
                  Z_{[\tau]}\cap W_{N,Z}\ne\varnothing}}
 [Z_{[\tau]}\cap W_{N,Z}],
\]
where each intersection carries its reduced structure.
\end{proof}

\begin{remark}[Scope of the Crumplin inputs]
We use Crumplin's component classification and the preceding genus-one
virtual-class identity, together with his generic-degree formula
\cite[Theorems~2.24, 3.2, and~3.25]{Crumplin}.  In genus one, the graph
underlying every essential type is a tree, so Lemma~2.8 of that paper
determines its weighting uniquely;
the general lifting argument of his Lemma~3.20 is not needed here.
The fixed integral edge slopes used below give compatible residues at all
root orders, or equivalently the special case of his
$\widehat{\bbZ}$-tropical types obtained from
$\bbZ\to\widehat{\bbZ}=\varprojlim_N\bbZ/N\bbZ$.
We still verify properness and the complete list of components on the
finite-type opens defined by $2$-base change.  These are not consequences of a
generic-degree formula, and we do not apply Crumplin's complete-space
Theorem~4.6 to independently chosen opens.
\end{remark}

\begin{proposition}[Genus-one components and the constant coefficient]
\label{prop:compatible-support}
Assume $n+\rho>0$, as in the standing convention for universal Chow
calculations.  Fix $r$ as in \eqref{eq:large-r}; throughout this proposition,
$\lambda\geq1$ is an integer and $R=\lambda r$.
There is a finite set $I$ of graphs with the vertex genera, degrees,
marking contacts, image faces, and integral edge slopes fixed, containing
the trivial type $\tau_{\mathrm{triv}}$ exactly once.  Write $[\tau_N]$ for
the mod-$N$ type obtained by reducing those slopes and taking the
stabilizer orders prescribed by representability.  Every $[\tau_N]$ is
inducible and essential for $N=\lambda r$.
Here $Z_{[\tau_N]}$ denotes the reduced closure of its distinguished
stratum in the space $O_{N,\varnothing}^+$ with $Z$ forgotten.
The following assertions hold.  Put
\begin{equation}
 Z_{r,\varnothing}^{\main}
 :=\bigl(Z_{\main,r,\varnothing}
     \mathop{\times}_{O_{r,\varnothing}^+}W_{r,\varnothing}\bigr)_{\red},
 \label{eq:zero-free-main-component}
\end{equation}
where $Z_{\main,r,\varnothing}$ is Crumplin's main component after the
markings in $Z$ have been forgotten.
\begin{enumerate}[label=\textup{(\roman*)}]
\item $W_{R,\varnothing}$ is finite type and its comparison morphism to
$W_{r,\varnothing}$ is proper, quasi-finite, and of Deligne--Mumford type.
\item Its irreducible components are exactly the nonempty restrictions
$Z_{[\tau_R]}\mathop{\times}_{O_{R,\varnothing}^+}W_{R,\varnothing}$, for $\tau\in I$.
Every component is taken with its full closure inside the open; boundary
specializations and intersections of components are not removed.
\item The forgetful morphisms $F_N$ are smooth and faithfully flat.  The
square comparing the two root orders and forgetting $Z$ is Cartesian,
and the universal section obstruction theory pulls back with this square.
For $N=r,R$,
\begin{equation}
 [W_{N,Z}]^{\vir}
   =\sum_{T\in\operatorname{Irr}(W_{N,Z})}[T]
 \quad\text{in }A_{n+\rho}(W_{N,Z})_{\bbQ},
 \label{eq:genus-one-decomposition}
\end{equation}
where each component has its reduced structure, and
\begin{equation}
 Z_{W_r}^{\main}
 =\bigl(W_{r,Z}\mathop{\times}_{W_{r,\varnothing}}
        Z_{r,\varnothing}^{\main}\bigr)_{\red}.
 \label{eq:zero-order-marking-pullbacks}
\end{equation}
Equivalently, this is the reduced upper-left corner of the $2$-Cartesian
marking-restoration square restricted to the main component.
\item The function $Q_Z$ defined in \eqref{eq:QZ} satisfies
\begin{equation}
 Q_Z(\lambda)
 =[Z_{W_r}^{\main}]
   +\lambda\bigl([W_{r,Z}]^{\vir}-[Z_{W_r}^{\main}]\bigr).
 \label{eq:explicit-genus-one-polynomial}
\end{equation}
In particular it has degree at most one, and
\begin{equation}
 \CT_\lambda Q_Z(\lambda)=[Z_{W_r}^{\main}].
 \label{eq:crumplin-constant}
\end{equation}
\end{enumerate}
\end{proposition}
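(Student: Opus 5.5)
The plan is to prove (i)--(iv) in order, using the two lemmas of this section as the structural input and Crumplin's generic-degree formula as the only numerical input.

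\emph{Step (i).} I would read off finite type from \Cref{lem:uniform-slope-bound}, Step~2, applied at order $R$. I would then apply \Cref{lem:proper-root-comparison} to the finite-type open $W=W_{r,\varnothing}$ in the unmarked problem. The full base-change description \eqref{eq:full-high-open} identifies $W_{R,\varnothing}$ with the upper-left corner of the square \eqref{eq:proper-root-open-base-change}, and this gives properness, quasi-finiteness and Deligne--Mumford type.

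\emph{Step (ii).} I would define $I$ as the set of genus-one essential graphs allowed by \Cref{lem:genus-one-essential-types}: the trivial type, and stars with one internal elliptic vertex and rational external leaves. Their vertex degrees and marking distributions are bounded by \eqref{eq:B0-bound}, and each edge slope $m_e=d_v-c_v$ is an integer in $(0,2B_0]$ by the leaf computation in Step~3 of \Cref{lem:genus-one-essential-types}. These are finitely many data, and by \eqref{eq:large-r} every $m_e$ divides $r$, hence divides every $N=\lambda r$. Reduction mod $N$ therefore gives stabilizer orders $N/m_e$, and inducibility holds for all $\lambda$ at once: the strata are nonempty at order $r$ as soon as they are at any order, because the coarse data and Jacobian factor are independent of $N$. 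Every irreducible component of $W_{R,\varnothing}$ is some $Z_{[\tau]}$ restricted to the open, by Crumplin's Theorem~2.24. Its graph must lie in $I$ because the coarse data lie in $W_0$. So the components are exactly the nonempty restrictions, taken with full closures.

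\emph{Step (iii).} The map $F_N$ forgets untwisted contact-zero markings without stabilization. It is therefore a base change of the universal-curve map minus nodes and other markings, hence smooth with nonempty fibres, so faithfully flat. The Cartesian property follows because the root pair is unchanged, so $\overline\pi_{R,r}$ commutes with forgetting $Z$. The obstruction theory $(\mathbf R\pi_*L)^\vee$ pulls back under $F_N$ since $F_N$ does not alter the curve or the line bundle. The decomposition \eqref{eq:genus-one-decomposition} is then exactly \Cref{lem:genus-one-essential-types}. Finally, \eqref{eq:zero-order-marking-pullbacks} follows from \eqref{eq:main-locus-marking-base-change}: the flat pullback of a closure is the closure of the pullback, and taking reduced structures preserves this.

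\emph{Step (iv), the main obstacle.} I need the degree of $\pi_{R,r}^+$ on each component. Since $\pi$ is proper and quasi-finite, $\pi_*[T]=\deg(T/\pi(T))\,[\pi(T)]$, and by (ii) $\pi$ maps $Z_{[\tau_R]}$ onto $Z_{[\tau_r]}$. By flat base change along $F_N$ it suffices to compute in the unmarked problem. I would invoke Crumplin's generic-degree formula (Theorems~3.2 and~3.25), applied with fixed integral slopes, for the comparison $R\to r$.
\begin{itemize}
\item On the main component the curve is smooth and untwisted away from markings, and the nonzero section determines the root. The degree is therefore $1$, consistent with generic smoothness there.
\item On a star component the elliptic vertex carries the free factor $J\in\operatorname{Pic}^0(E)$ at order $r$, which must acquire a $\lambda$th root. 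This contributes $\lambda^{2}$ from the $\lambda$-torsion of $E$. It is compensated by the node stabilizer changes $N/m_e$ and the ghost automorphisms, which contribute $\lambda^{-1}$ in total, giving net degree $\lambda$. I expect this bookkeeping, matched with Crumplin's formula ($\lambda^{\,2g_v-1}$ per internal vertex with $g_v=1$), to be the delicate point. In particular I must check that the node ghost and edge-slope factors cancel because $m_e\mid r$.
\end{itemize}
Summing over components via \eqref{eq:genus-one-decomposition} then gives $Q_Z(\lambda)=[Z^{\main}_{W_r}]+\lambda\sum_{T\ne\main}[T]$, which is \eqref{eq:explicit-genus-one-polynomial}, and \eqref{eq:crumplin-constant} follows.
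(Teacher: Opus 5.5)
Your outline for (i)--(iii) follows the paper, but step (iv), which you flag as the delicate point, has a gap. Your local count is wrong, and you omit the idea the paper uses to fix the degree.

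On a star component the $\lambda^{2}$ choices of $J_R$ with $J_R^{\otimes\lambda}\simeq J_r$ are not compensated by the nodes. Factor the comparison through the level of curves and root lines without sections:
\begin{itemize}
\item the ghost kernels $\ker(\mu_{\lambda s_e}\to\mu_{s_e})\simeq\mu_\lambda$ give $\lambda^{-k}$ for $k$ attaching nodes;
\item this is cancelled exactly by the $\lambda^{k}$ choices of $\lambda$th roots of the nonzero sections on the $k$ leaves;
\item the surviving $\lambda^{-1}$ is the kernel $\mu_\lambda$ of $c\mapsto c^{\lambda}$ on the scalar automorphisms $\mathbb G_m$ of the root line, which is relative inertia at the line-bundle level.
\end{itemize}
The net degree is $\lambda^{2}\cdot\lambda^{-1}\cdot\lambda^{-k}\cdot\lambda^{k}=\lambda$, but not for the reason you give. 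Your attribution already fails for the one-vertex internal type ($k=0$), which has no nodes yet has degree $\lambda$; it would come out as $\lambda^{2}$. Also, for $k$ nodes the ghosts contribute $\lambda^{-k}$, not $\lambda^{-1}$.

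The paper avoids this bookkeeping. For each fixed integral type it verifies Crumplin's hypotheses: $1\le m_e\le B_0$, hence $m_e\mid r$, and the size bound $d_v-m_e+\sum c_i=2\sum c_i\le 2B_0<r$. His Theorem~3.25 then gives degree $C\lambda^{k}$ with $k=b_1(G)+2\sum_{V_+}g_v-|V_+|\in\{0,1\}$ and $C$ independent of $\lambda$. Since the hypotheses also hold at $\lambda=1$, where the comparison is the identity, $C=1$. Without this normalization (or a correct count) you get only $C_\tau\lambda$ on non-main components. That suffices for \eqref{eq:crumplin-constant} but not for \eqref{eq:explicit-genus-one-polynomial}.

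Two smaller points in (ii).
\begin{itemize}
\item If $I$ is all bounded star data, it can contain non-inducible types, contradicting the claim that every $[\tau_N]$ is inducible. Take $I$ to be the types of the finitely many components of $W_{r,\varnothing}$, as the paper does.
\item Inducibility at every $N=\lambda r$ does not follow from the coarse data and Jacobian factor being independent of $N$; the root line's Jacobian factor changes with $N$. You need the paper's lift of a generic order-$r$ object: choose $J_N$ with $J_N^{\otimes\lambda}\simeq J_r$, take $\lambda$th roots of the leaf scalars, and note there is no circuit condition because the graph is a tree.
\end{itemize}
That lift, together with dimension preservation under the proper quasi-finite comparison and preservation of the generic type, is what justifies your claim that $\pi$ maps $Z_{[\tau_R]}$ onto $Z_{[\tau_r]}$. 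In particular it shows every order-$r$ component is actually dominated. When you transfer degrees along $F_N$, the statement concerns total generic degree over each marked target component, since a marked preimage can split into several components.
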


\begin{proof}
\proofstep{Step 1: integer slopes on the generic component types}
By \Cref{lem:genus-one-essential-types}, every non-main component
of $W_{r,\varnothing}$ is indexed by an inducible essential type whose
graph is an elliptic-centred star.  Orient its edges from the rational
leaves toward the elliptic vertex.  If $v$ is a leaf and $e$
is its unique edge, the nonzero section on $C_v$ has its prescribed zeros
at the markings and a positive zero at the joining node.  Crumplin's
line-bundle description therefore gives
\begin{equation}
 m_e=d_v-\sum_{p_i\in C_v}c_i>0,
 \qquad d_v=m_e+\sum_{p_i\in C_v}c_i.
 \label{eq:external-balance}
\end{equation}
Here \cite[Proposition~2.13(1)]{Crumplin} excludes further zeros: the
age-zero part of the line has degree zero and its nonzero section
trivializes it.  Since the generic object lies over $W_0$,
\begin{equation}
 1\leq m_e\leq d_v\leq B_0,
 \label{eq:star-slope-bound}
\end{equation}
and hence $m_e\mid r$.  The size expression in Crumplin's degree theorem
simplifies, for a leaf, to
\begin{equation}
 d_v-m_e+\sum_{p_i\in C_v}c_i
 =2\sum_{p_i\in C_v}c_i\leq2B_0<r.
 \label{eq:crumplin-large-bound}
\end{equation}
Thus all its size, divisibility, and nonzero nodal-contact requirements
hold for every component over $W_0$, not only for components meeting the
geometric image.  This is why no component-selection open is needed.
The nodal requirements are vacuous for a type with no edges.  Summing the leaf
identities and using the total degree fixes the internal degree as
$d_0=\sum_{p_i\in C_0}c_i-\sum_e m_e$, where $C_0$ is the coarse elliptic
component and $d_0$ is its coarse line-bundle degree.

There is no independent weighting to choose: on a tree, cutting an edge
and summing the balancing equations determines its slope modulo every $N$.
Consequently the above integers, together with their negatives on the
opposite orientations, give the unique compatible mod-$N$ weighting for
this graph and these degrees \cite[Lemma~2.8]{Crumplin}.  This uniqueness
concerns the combinatorial weighting, not the possible line bundles on
the elliptic component.

\proofstep{Step 2: existence of the lifts and exhaustion of components}
A compatible weighting must also be inducible.  At $N=\lambda r$, take
marking indices $N/c_i$ for the retained positive markings and node indices
$N/m_e$; the contact-zero labels in $Z$ are absent at this stage.
For each vertex $v$, let $\mathcal C_v$ be the corresponding normalized
twisted component and let $q_v:\mathcal C_v\to C_v$ be its coarse map.
Write $\widetilde p_i$ for the reduced marking gerbe and
$\widetilde q_{e,v}$ for the reduced gerbe of the branch of node $e$ on
$\mathcal C_v$, regarded as Cartier divisors on that normalization.
On an external leaf $v$ with its unique edge $e$, use
$\mathcal O_{\mathcal C_v}(\sum_{p_i\in C_v}\widetilde p_i+
\widetilde q_{e,v})$ with its canonical section; on the internal elliptic
component use
\[
 \mathcal O_{\mathcal C_0}\!\left(\sum_{p_i\in C_0}\widetilde p_i
                       -\sum_e\widetilde q_{e,0}\right)\otimes q_0^*J_N
\]
with zero section and $J_N\in\operatorname{Pic}^0(C_0)$ nontrivial.
The balanced node characters glue the lines, and the external sections
vanish at the nodes and glue to zero.  The marking and node characters
are faithful.  The $N$th power descends with exactly the required
component degrees, and every union of components has rooted degree of
absolute value at most $B_0/N<1/2$.

This construction also lifts a generic order-$r$ object, rather than
merely some object of its type.  Let $J_r$ be the nontrivial degree-zero
factor of that object's root line on $C_0$.  Choose $J_N$ with
$J_N^{\otimes\lambda}\simeq J_r$, using surjectivity of multiplication by
$\lambda$ on the elliptic Picard variety.  The remaining section scalars
and fibre identifications admit $\lambda$th roots after a finite base
extension.  Since the graph is a tree there is no gluing equation around
a circuit.  Tensor power and partial coarsening then recover the given
order-$r$ object.  If $J_r$ is nontrivial, so is $J_N$.
The trivial type lifts by the canonical divisor of the markings on a
smooth source, and its generic comparison has degree one
\cite[Example~3.27(1)]{Crumplin}.

Let $I$ consist of the integral data just obtained from the finitely many
components of $W_{r,\varnothing}$.  Properness and quasi-finiteness are
\Cref{lem:proper-root-comparison}.  Every component of
$W_{R,\varnothing}$ has dimension $\rho_+$ by
\Cref{lem:genus-one-essential-types}; the same is true at order $r$.
A proper quasi-finite morphism preserves the dimension of an irreducible
component, so its closed image must be a whole component of
$W_{r,\varnothing}$, not a lower-dimensional boundary locus.
The comparison preserves the generic graph, genera, degrees, and
vanishing components \cite[Lemma~3.6]{Crumplin}.  The tree-weighting
uniqueness therefore shows that the source component is indexed by $I$.
Conversely the generic lifts above show that every type in $I$ occurs on a source component lying over, and dominating, the corresponding base component.
This proves the complete component list before reintroducing $Z$.
We use this unmarked or partially marked list only geometrically; the
cycle calculation will take place after all labelled markings are retained.

\proofstep{Step 3: reintroduce the labelled markings}
The morphism $F_N$ is the ordered configuration space of distinct sections
of the smooth nonstacky locus of the universal curve.  It is smooth,
surjective, and of relative dimension $|Z|$.  Partial coarsening is the
identity on this locus, so
\begin{equation}
\begin{tikzcd}
 W_{R,Z}\ar[r,"\pi_{R,r}^+"]\ar[d,"F_R"']&
 W_{r,Z}\ar[d,"F_r"]\\
 W_{R,\varnothing}\ar[r,"\pi_{R,r}"']&W_{r,\varnothing}
\end{tikzcd}
\label{eq:zero-order-marking-square}
\end{equation}
is Cartesian.  Here the vertical arrows are the restrictions of the
marking-forgetful maps, denoted by the same symbols $F_N$.
Let $\pi_{N,\varnothing}:\mathcal C_{N,\varnothing}\to W_{N,\varnothing}$
and $\pi_{N,Z}:\mathcal C_{N,Z}\to W_{N,Z}$ be the universal twisted
curves, and let $L_{N,\varnothing}^+$ and $L_{N,Z}^+$ be their universal
root lines.  The second curve and line are the pullbacks of the first
along $F_N$.  The line-level bases for the section obstruction theories
pull back by the same marking-restoration construction.  Derived flat
base change therefore gives
\[
 F_N^*\mathbf R(\pi_{N,\varnothing})_*L_{N,\varnothing}^+
 \simeq\mathbf R(\pi_{N,Z})_*L_{N,Z}^+,
\]
compatibly with the section obstruction theories and their maps to the
relative cotangent complexes over those line-level bases.  Crumplin's local section-obstruction calculation therefore
applies on $W_{N,Z}$.  It gives \eqref{eq:genus-one-decomposition},
with multiplicity one, without forming a proper pushforward on
$W_{N,\varnothing}$ when that stack is completely unmarked.

For the main component, the generic source curve is smooth and connected,
so its ordered configuration space is geometrically irreducible.  A smooth
surjective pullback of the reduced irreducible main component is reduced,
and each of its irreducible components dominates the base.  The generic
fibre is irreducible, so this pullback has just one irreducible component
and is the reduced closure of the pulled-back distinguished locus.
This proves
\eqref{eq:zero-order-marking-pullbacks}.  No assertion of irreducibility
of configuration spaces on reducible curves is needed.

\proofstep{Step 4: the two possible powers of the root-order multiplier}
We perform the Chow calculation on $W_{N,Z}$.  Since $n+\rho>0$, every
elliptic component has a marking or an attaching node, so its automorphism
group fixing the special points is affine.  Automorphisms of the other
components, the line bundle, and the twisted structure also have affine
stabilizers.  The stacks are therefore stratified by global quotients, and
proper rational pushforward for the relative-DM comparison maps is available
\cite[Proposition~3.5.9]{Kresch} and \cite[Theorem~B.17 and
Proposition~B.18]{BSS}.  This avoids the non-affine translation stabilizers
that can occur on a completely unmarked smooth elliptic curve.

To compare degrees after restoring $Z$, first pass to a common finite
etale cover of the generic stratum that labels the irreducible
components of the source curve.  For each distribution of the labels
in $Z$, the marking-restoration fibre is then a product of ordered
configuration spaces on smooth irreducible curves, and is
geometrically irreducible.  The comparison retains the labelled
components and this distribution.  Generic degree is preserved by
this base change, with degrees added if a source component splits.
Descending the calculation shows that the total generic degree above
each marked target component equals the degree above its unmarked
target component.
Write $T_r$ for a marked target component and $T_R$ for a marked
source component dominating it.  Proper pushforward and \eqref{eq:genus-one-decomposition} give
\begin{equation}
 Q_Z(\lambda)
 =\sum_{T_r\in\operatorname{Irr}(W_{r,Z})}
   \left(\sum_{T_R\,\mathrm{dominating}\,T_r}\deg(T_R/T_r)\right)[T_r].
 \label{eq:componentwise-root-pushforward}
\end{equation}
The fixed integers defining the corresponding $\tau\in I$ give the
compatible family required by \cite[Theorem~3.25]{Crumplin}, whose
hypotheses were verified in Step~1.  Writing $G$ for the graph of $\tau$,
its exponent is
\[
 b_1(G)+2\sum_{v\in V_+(G)}g_v-|V_+(G)|
 =\begin{cases}0,&\tau=\tau_{\mathrm{triv}},\\
                 1,&\tau\ne\tau_{\mathrm{triv}}.
   \end{cases}
\]
For each fixed compatible type, the theorem gives a degree
$C\lambda^{k}$ with $C$ independent of $\lambda$.  Its hypotheses hold
for every integer $\lambda\geq1$, including $\lambda=1$.  At that value
the comparison is the identity, so $C=1$.  Consequently
\begin{equation}
 \sum_{T_R\,\mathrm{dominating}\,T_r}\deg(T_R/T_r)=
 \begin{cases}
  1,&\text{on the main component},\\
  \lambda,&\text{on every other component}.
 \end{cases}
 \label{eq:star-root-degree}
\end{equation}
This also covers the all-degenerate one-vertex type: the nodal
hypotheses are then vacuous and its exponent is $2\cdot1-1=1$.

By Step~3 the marked main component is irreducible.  Summing the degrees
one and $\lambda$ over the complete component list gives
\eqref{eq:explicit-genus-one-polynomial}.  As a check, at $\lambda=1$
the comparison is the identity and $Q_Z(1)=[W_{r,Z}]^{\vir}$.
\end{proof}

\begin{remark}[What the genus-one calculation does and does not remove]
In particular,
\[
 [Z_{W_r}^{\main}]=2Q_Z(1)-Q_Z(2).
\]
This is an alternative expression for the constant coefficient, not a
replacement for the properness and obstruction-theory arguments.
Crumplin's statement that the trivial type is the only essential type with
root-order exponent zero holds more generally.  The extra genus-one input
is that the virtual class itself is the sum of the fundamental cycles of
the irreducible components indexed by inducible essential types.
For example, in genus $g>1$ a single internal genus-$g$ vertex has
component dimension $4g-4+\ell$, larger than the virtual dimension
$3g-3+\ell$ \cite[Corollary~4.3 and Section~4.5]{Crumplin}.
The same component-degree calculation would then omit genuine
virtual contributions.  The completely unmarked genus-one auxiliary case
is also not used: the Chow formalism of \cite{BSS} excludes the non-affine
translation stabilizers of $\mathfrak M_{1,0}$.  This is why
\Cref{thm:main-component} is stated with a retained marking and why the
calculation above is made after reintroducing $Z$.

The star classification describes the essential type indexing each
irreducible component, equivalently the type on its distinguished dense
stratum.  Rational circuits can still occur in special fibres, which is why
\Cref{lem:uniform-slope-bound} and the full $2$-base changes
are retained.  Nor are intersections of components discarded: refined
Gysin pullback can be supported there.  Finally, the universal
virtual-class identity makes no assertion that the geometric moduli
space of maps to $X$ is unobstructed.
\end{remark}

\section{The space \texorpdfstring{$K_r^+$}{K-r-plus} and Crumplin's main component}
\label{sec:positive-space}

We prove that the positive BNR space is reduced and irreducible, finite
over the chosen bounded open, and maps with generic degree one onto
Crumplin's main component.  These assertions complete
\Cref{thm:main-component}.  We then recover the negative chimera through
BNR's classical puncturing construction and refined zero-section pullback.

\subsection{Modular interpretation}

The stack $K_r^+$ and the morphism
$\omega_r^+:K_r^+\to\overline O_r^+$ were defined in
\eqref{eq:positive-space-fibre-product} and
\eqref{eq:positive-forgetful-map}.  We now identify that Cartesian
construction with a moduli problem of basic rooted logarithmic maps.  This is
also the point at which we verify that the definition retains the complete
line--section data, not only the characteristic monoids.

For precision, define
$\mathfrak{LogOrb}^{\mathrm{bas}}_{\Lambda_\Gamma^+}
(\Acal_r\mid\Dcal_r)$ independently of \eqref{eq:positive-space-fibre-product}.
For a marking of coarse contact $c_i\geq0$, put
\[
 s_i=\frac r{\gcd(r,c_i)},\qquad
 \widetilde c_i=\frac{s_ic_i}{r}\quad(c_i>0),
 \qquad s_i=1,\quad\widetilde c_i=0\quad(c_i=0).
\]
We write $p_i$ for the coarse marking and $\widetilde p_i$ for its
marking divisor on the twisted curve.
An object over a scheme $S$ is a logarithmically twisted prestable curve
$(\cC,M_\cC)\to(S,M_S)$ of genus one with the prescribed ordered marking
indices, in the sense of \cite{OlssonLog,OlssonTwisted}, and a basic
representable logarithmic map
\begin{equation}
 \widetilde f:(\cC,M_\cC)\longrightarrow(\Acal_r,M_{\Acal_r}).
\label{eq:basic-rooted-map}
\end{equation}
In line--section language it is a pair $(\cL,s)$ with
\begin{equation}
 \deg\cL=d^+/r,
 \qquad
 \deg((\cL^{\otimes r})_{\mathrm{desc}})=d^+.
\label{eq:rooted-degrees}
\end{equation}
Here the subscript $\mathrm{desc}$ denotes descent to the relative coarse
curve $C$.
If $h_r\in\Gamma(\cC,\overline M_\cC)$ is the rooted target coordinate and
$q_{\log}:(\cC,M_\cC)\to(C,M_C)$ is logarithmic coarsening, the datum includes
a section $h\in\Gamma(C,\overline M_C)$ and a full
Deligne--Faltings compatibility identification
\begin{equation}
 rh_r=q_{\log}^*h,
 \qquad
 (\cL,s)^{\otimes r}\simeq q_{\log}^*\DF_{M_C}(h).
\label{eq:relative-coarsening}
\end{equation}
The second identification is the composite of the $r$th tensor power of
$(\cL,s)\simeq\DF_{M_\cC}(h_r)$, the monoidal identification
$\DF_{M_\cC}(h_r)^{\otimes r}\simeq\DF_{M_\cC}(rh_r)$, and the
pullback identification induced by the first equality.
The logarithmic structure on the coarsened curve in
\eqref{eq:relative-coarsening} is obtained over the rooted logarithmic
base.  The resulting unrooted logarithmic map need not be basic.
Whenever we map to a stack of basic unrooted logarithmic maps, we also
perform canonical basicification.  This replaces the base logarithmic
structure by the canonical basic one without changing the underlying
coarse curve or the descended line--section pair.

Fix a combinatorial type $\Theta$ with source graph $G_\Theta$, vertex set
$V(\Theta)$, and bounded-edge set $E(\Theta)$.  Write $V_0(\Theta)$ for
the vertices assigned the target face $\{0\}$ and $V_+(\Theta)$ for those
assigned the ray $\mathbb R_{\geq0}$.  Let $\cC_v$ be the component
indexed by $v$, and put
\[
 \widetilde d_v:=\deg(\cL|_{\cC_v})=d_v/r.
\]
At an oriented node $e:v_1\to v_2$, with edge length in the twisted lattice
$\widetilde\ell_e$, rooted vertex positions $\widetilde x_v$, coarse slope $m_e$, and
node index $s_e$, the conditions are
\begin{equation}
 \widetilde x_{v_2}=\widetilde x_{v_1}+\widetilde m_e\widetilde\ell_e,
 \qquad
 |m_e|<r,
 \qquad
 s_e=\frac r{\gcd(r,m_e)},
 \qquad
 \widetilde m_e=\frac{s_em_e}{r}.
\label{eq:rooted-node-data}
\end{equation}
Let $H(v)$ be the set of bounded-edge flags incident at $v$, each oriented
away from $v$.  For $f\in H(v)$ let $e(f)$ be its edge and
$\widetilde m_f$ its outgoing rooted slope.  Reversing a flag changes the
sign of the slope; a loop contributes both flags, with opposite slopes.
The orbifold degree at $v$ satisfies
\begin{equation}
 \widetilde d_v=
 \sum_{f\in H(v)}\frac{\widetilde m_f}{s_{e(f)}}
 +\sum_{\widetilde p_i\subset\cC_v}\frac{\widetilde c_i}{s_i}.
\label{eq:orbifold-balancing}
\end{equation}
The rooted cone $\widetilde\tau_\Theta$ consists of the nonnegative
coordinates $(\widetilde\ell_e,\widetilde x_v)$ for
$e\in E(\Theta)$ and $v\in V_+(\Theta)$ satisfying the continuity
equations \eqref{eq:rooted-node-data}; set $\widetilde x_v=0$ for
$v\in V_0(\Theta)$.  These equations include the circuit relations.  Its
integral lattice is
\[
 \widetilde N_\Theta:=
 \operatorname{Span}_{\mathbb R}(\widetilde\tau_\Theta)
 \cap\mathbb Z^{E(\Theta)\sqcup V_+(\Theta)}.
\]
The corresponding coarse coordinates satisfy
\[
 \ell_e=s_e\widetilde\ell_e,
 \qquad x_v=r\widetilde x_v.
\]
Define
\begin{equation}
 Q_\Theta=\widetilde\tau_\Theta^\vee
              \cap\widetilde N_\Theta^\vee.
\label{eq:basic-monoid}
\end{equation}
Here the dual cone is taken in
$\widetilde N_{\Theta,\mathbb R}^\vee$, where
$\widetilde N_{\Theta,\mathbb R}=\widetilde N_\Theta\otimes_{\mathbb Z}\mathbb R$,
and $\widetilde N_\Theta^\vee=\operatorname{Hom}(\widetilde N_\Theta,\mathbb Z)$.
Write $\widetilde\rho_e,\widetilde h_v\in Q_\Theta$ for the restricted
edge-length and vertex-height coordinate covectors, respectively; their
values at a point of the cone are $\widetilde\ell_e$ and
$\widetilde x_v$.  For $v\in V_0(\Theta)$ set $\widetilde h_v=0$.
Basicness means that the \emph{canonical} map
$Q_\Theta\to\overline M_{S,\bar s}$, induced by the node smoothing and
vertex-degeneracy data, is an isomorphism at every geometric point
$\bar s\to S$, with $\Theta$ the type of that fibre.  All characteristic
monoids are fine, saturated, and sharp, and their
groupifications are torsion-free.  These requirements include circuit equations and
saturation; an abstract isomorphism of monoids is not substituted for the
canonical one.

\begin{lemma}[Deligne--Faltings reconstruction]
\label{lem:DF-reconstruction}
For a fine saturated logarithmic stack $(Y,M_Y)$, the
Deligne--Faltings construction below is the standard one
(see \cite[Theorem~3.6]{BV} and \cite{OlssonLog}).  A logarithmic map to
$\Acal=[\bbA^1/\bbG_m]$ is equivalently a characteristic section
$h\in\Gamma(Y,\overline M_Y)$, a line--section pair $(L,s)$, and an
isomorphism of line--section pairs
\begin{equation}
 (L,s)\xrightarrow{\sim}\DF_{M_Y}(h).
\label{eq:DF-diagram}
\end{equation}
This equivalence includes arrows and commutes with arbitrary base change.
For the rooted generator $h_r$,
\begin{equation}
 \DF(h_r)^{\otimes r}\simeq\DF(rh_r).
\label{eq:DF-power}
\end{equation}
Thus the construction retains units, line-bundle gluing, and automorphisms;
it is not merely a construction from the ghost sheaf.
\end{lemma}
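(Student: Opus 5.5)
The plan is to reduce to the universal chart $[\bbA^1/\bbG_m]$ and prove there that logarithmic maps to $\Acal$ are the same as the triple $(h,(L,s),\eqref{eq:DF-diagram})$. Then we globalize by descent and deduce \eqref{eq:DF-power} from the monoidal structure of $\DF$.

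\textbf{Step 1: the universal object.} We use that $\Acal$ carries the divisorial logarithmic structure of $\Dcal$. Its characteristic sheaf is generated, on the smooth cover $\bbA^1\to\Acal$, by the coordinate $z$. The tautological pair $(\cO_\Acal(\Dcal),z)$ is then $\DF_{M_\Acal}$ of the generator $e$ of $\overline M_\Acal$. Concretely, $\DF_{M_Y}(h)$ is the $\cO_Y^\times$-torsor of local lifts of $h$ to $M_Y$, together with the section induced by $\alpha_Y:M_Y\to\cO_Y$. Both descriptions are local for the smooth topology, so this identification descends from the chart to $\Acal$.

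\textbf{Step 2: the equivalence.} Given a logarithmic map $f:(Y,M_Y)\to(\Acal,M_\Acal)$, we put $h=f^\flat(e)$ and $(L,s)=f^*(\cO(\Dcal),z)$. Functoriality of $\DF$ along $f^\flat$ then gives the isomorphism \eqref{eq:DF-diagram}.

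Conversely, let a triple be given. The pair $(L,s)$ determines the underlying map $Y\to\Acal$. The isomorphism with $\DF_{M_Y}(h)$ identifies local generators of $L$ with local lifts of $h$ in $M_Y$, compatibly with $s$ and $\alpha_Y$. These lifts define a morphism of monoids $f^{-1}\overline M_\Acal=\mathbb N\to\overline M_Y$ together with a compatible lift to $M_Y$, that is, a logarithmic enhancement. Because $\mathbb N$ is free, a morphism from the chart $\mathbb N\to M_\Acal$ extends uniquely to the associated logarithmic structure.

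On arrows, an automorphism of the triple is a unit rescaling $L$ and preserving $s$ and the $\DF$ identification. This matches exactly the $2$-automorphisms of logarithmic maps to the algebraic stack $\Acal$, namely $\bbG_m$-gauge transformations preserving $f^\flat$. Hence the comparison is fully faithful as well as essentially surjective. Base change compatibility holds because $\DF$ commutes with strict pullback of logarithmic structures and with pullback of line bundles.

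\textbf{Step 3: \eqref{eq:DF-power}.} The functor $\DF:\overline M_Y\to\operatorname{Div}_Y$ is symmetric monoidal: on torsors, lifts of $h_1$ and $h_2$ multiply in $M_Y$ to lifts of $h_1+h_2$. Iterating this $r$ times gives a canonical isomorphism $\DF(h_r)^{\otimes r}\simeq\DF(rh_r)$, with sections $\alpha(\tilde h)^r=\alpha(\tilde h^r)$. This is precisely the identification used in \eqref{eq:relative-coarsening}.

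The main point of care is Step 2 on arrows. There one must check that the units in $M_Y$, and not only $\overline M_Y$, are retained. Otherwise the reconstruction would lose the $\bbG_m$-gauge data and could only recover the ghost sheaf. I would verify this first on the chart $\bbA^1$, where $M_Y$ is the preimage under $\alpha$ of a principal ideal. From there, \cite[Theorem~3.6]{BV} supplies the general fine saturated case.
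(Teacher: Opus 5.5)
Your proposal is correct and follows essentially the same route as the paper: $\DF_{M_Y}(h)$ is the torsor of local lifts with the section induced by $\alpha$, the tautological pair on $\Acal$ is the DF value of the generator, local lifts glue to a map of logarithmic structures because a change of lift by a unit matches a change of local generator, and iterating the monoidal isomorphisms gives \eqref{eq:DF-power}. One minor precision: $f^{-1}\overline M_{\Acal}$ equals $\mathbb N$ only along the zero locus of $s$, so the enhancement is better phrased, as in the paper, through the global Deligne--Faltings chart by the constant monoid $\mathbb N$.
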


\begin{proof}
Write
\[
 \operatorname{Div}_Y=[\mathcal O_Y/\mathcal O_Y^*]
\]
for the symmetric monoidal stack of line bundles with section.  It is not
a Picard stack: a pair with zero section is not tensor-invertible.  The
Deligne--Faltings functor associated to $M_Y$ is the symmetric monoidal
functor
\[
 \DF_{M_Y}:\overline M_Y\longrightarrow\operatorname{Div}_Y.
\]
We recall its construction because this is where the unit and line-bundle
gluing data enter.  If $h\in\overline M_Y(U)$, let $P_h$ be the sheaf on
the small etale site of $U$ whose sections over $U'\to U$ are the lifts
$m\in M_Y(U')$ of $h|_{U'}$.  Multiplication by
$\mathcal O_{U'}^*$ makes $P_h$ an $\mathcal O_U^*$-torsor.  The
structure homomorphism $\alpha:M_Y\to\mathcal O_Y$ is equivariant for this
action, so $(P_h,\alpha|_{P_h})$ is equivalently a line bundle $L_h$ with
a section $s_h$.  By definition
$\DF_{M_Y}(h)=(L_h,s_h)$.  Multiplication in $M_Y$ gives canonical,
associative, and symmetric isomorphisms
\begin{equation}
 \DF_{M_Y}(h+h')\simeq
 \DF_{M_Y}(h)\otimes\DF_{M_Y}(h').
 \label{eq:revised-display-1}
\end{equation}

The divisorial logarithmic structure of
$\Acal=[\mathbb A^1/\mathbb G_m]$ has the global Deligne--Faltings chart
$\mathbb N\to\operatorname{Div}_{\Acal}$, whose generator is the
universal line--section pair.  An ordinary logarithmic chart by
$\mathbb N$ is used only after locally trivializing this line.  Therefore a
logarithmic morphism $g:(Y,M_Y)\to\Acal$ consists of its underlying
morphism, hence a line--section pair $(L,s)$ on $Y$, together with a map
of logarithmic structures.  On characteristics the latter sends the
target generator to a section $h\in\Gamma(Y,\overline M_Y)$.  Compatibility with
the structure homomorphisms is precisely a $2$-isomorphism
\[
 (L,s)\xrightarrow{\sim}\DF_{M_Y}(h).
\]
This constructs the functor from logarithmic maps to the data in the
statement.

Conversely, start with $(h,(L,s),\epsilon)$, where
$\epsilon:(L,s)\simeq\DF_{M_Y}(h)$.  Etale locally choose a lift
$\widetilde h\in M_Y$ of $h$.  Under $\epsilon$, the local equation of
$s$ is the image $\alpha(\widetilde h)$.  On an overlap two lifts differ
by a unique unit, and the corresponding transition function of $L$ is prescribed by the
Deligne--Faltings identification.
Consequently the local homomorphisms from the chart $\mathbb N$ glue to a
map from the pullback of the target logarithmic structure to $M_Y$.
This gives a logarithmic morphism to $\Acal$.  Changing the local lifts
changes neither the glued map nor its $2$-isomorphism class.

A morphism between two triples over the fixed logarithmic stack $(Y,M_Y)$
requires equality of their characteristic sections and an isomorphism of
line--section pairs commuting with the two DF isomorphisms.  For a change
of logarithmic base, the same condition is imposed after pullback.  The
preceding local construction sends such a morphism to, and
recovers it from, a unique $2$-morphism of logarithmic maps.  Thus the two
constructions are quasi-inverse equivalences of groupoids, not merely a
bijection on isomorphism classes.  Formation of $P_h$, its associated
line, and the displayed descent data commutes with arbitrary base change,
which proves the base-change assertion.

Finally, iterating the preceding symmetric-monoidal isomorphism for the
sum of $r$ copies of $h_r$ gives canonically
\[
 \DF(h_r)^{\otimes r}\simeq\DF(rh_r).
\]
The isomorphism carries the $r$th power of the DF section to the DF section
of $rh_r$, proving \eqref{eq:DF-power} as an isomorphism of line--section
pairs.
\end{proof}

\begin{proposition}[Modular interpretation of $K_r^+$]
\label{prop:positive-space-modular-equivalence}
There is a canonical equivalence
\begin{equation}
 K_r^+\simeq
 \mathfrak{LogOrb}^{\mathrm{bas}}_{\Lambda_\Gamma^+}
       (\Acal_r\mid\Dcal_r),
\label{eq:positive-space-modular-equivalence}
\end{equation}
compatible with arbitrary base change and inducing isomorphisms of
automorphism group schemes.  Under this equivalence,
\begin{equation}
 \omega_r^+:K_r^+\longrightarrow\overline O_r^+
\label{eq:positive-space-forgetful}
\end{equation}
retains the twisted curve, the actual root and descended line--section
pairs, and their ordinary $r$th-power identification.  It forgets the
logarithmic structures and their Deligne--Faltings identifications with
characteristic sections.
\end{proposition}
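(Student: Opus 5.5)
The plan is to unwind the fibre product \eqref{eq:positive-space-fibre-product} objectwise and compare it, via \Cref{lem:DF-reconstruction}, with the moduli problem of basic rooted logarithmic maps. First I will describe an $S$-point of $K_r^+$. Such a point is a twisted prestable curve $\cC\to S$ with the prescribed marking indices, together with a morphism $S\to\mathcal A(T_r^+)$ and a $2$-isomorphism of the two induced maps $S\to\mathcal A(\mathsf M^{\mathrm{trop,tw}})$. By the description of Artin fans of cone stacks \cite[Theorem~3]{CCUW20}, a map $S\to\mathcal A(T_r^+)$ is a logarithmic structure $M_S$ with a tropical type $\Theta$ at each geometric point. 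It also includes a monoid map $Q_\Theta\to\overline M_{S,\bar s}$, sending the covectors $\widetilde\rho_e,\widetilde h_v$ to elements of $\overline M_S$, compatibly with generization. The $2$-isomorphism over the twisted tropical curve stack identifies the images of $\widetilde\rho_e$ with the canonical nodal smoothing parameters of $\cC$. This is the content of \cite[Construction~3.15, Lemma~3.18]{BNR}.

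Next I will build the logarithmic map from this tropical data. Following Gross--Siebert/ACGS, the edge lengths, vertex heights $\widetilde h_v$ and slopes $\widetilde m_e$ define a piecewise-linear section $h_r\in\Gamma(\cC,\overline M_\cC)$. Its value on the generic point of $\cC_v$ is $\widetilde h_v$, it interpolates with slope $\widetilde m_e$ along nodes, and it has contact $\widetilde c_i$ at markings. The continuity equations \eqref{eq:rooted-node-data}, including the circuit relations, are exactly what make this section well defined on the twisted curve. Applying $\DF_{M_\cC}$ gives a line--section pair $\DF(h_r)$. By \Cref{lem:DF-reconstruction}, a logarithmic map to $\Acal_r$ is the triple $(h_r,(\cL,s),\epsilon)$. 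I have to check that the orbifold balancing \eqref{eq:orbifold-balancing} forces $\deg\cL|_{\cC_v}=\widetilde d_v$, that $|m_e|<r$ together with $s_e=r/\gcd(r,m_e)$ makes the map representable, and that $rh_r=q_{\log}^*h$ gives \eqref{eq:relative-coarsening} via \eqref{eq:DF-power}.

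I expect the main obstacle to be basicness. The Artin-fan map only supplies \emph{some} monoid homomorphism $Q_\Theta\to\overline M_{S,\bar s}$, whereas the target moduli problem requires the canonical map to be an isomorphism. I would resolve this with the standard minimality argument: $\mathcal A(T_r^+)$ is built from the cones $\widetilde\tau_\Theta$ with lattices $\widetilde N_\Theta$, so its universal object over each Artin cone $[\Spec\bbC[Q_\Theta]/\Spec\bbC[Q_\Theta^{\mathrm{gp}}]]$ has characteristic exactly $Q_\Theta$ by \eqref{eq:basic-monoid}. The fibre product over $\mathcal A(\mathsf M^{\mathrm{trop,tw}})$ is strict over the curve stack, so its log structure is pulled back from $\mathcal A(T_r^+)$. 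Hence $\overline M_{S,\bar s}\cong Q_\Theta$ canonically. Saturation and torsion-freeness come from $Q_\Theta$ being a saturated dual cone in a lattice. In the other direction, a basic rooted log map determines its type and canonical map $Q_\Theta\to\overline M_S$, hence a morphism to $\mathcal A(T_r^+)$, and the two constructions are quasi-inverse.

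Finally, for arrows and automorphisms I will use that \Cref{lem:DF-reconstruction} is an equivalence of groupoids commuting with base change. Graph automorphisms are recorded in $T_r^+$, so the automorphism groups agree. The description of $\omega_r^+$ then follows from its definition: it remembers $\cC$, $(\cL,s)$, the descended pair $\DF_{M_C}(h)$ regarded as a bare pair, and the $r$th-power identification \eqref{eq:relative-coarsening}, and it forgets $M_S$, $M_\cC$, $h_r$ and $\epsilon$.
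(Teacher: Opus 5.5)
Your proposal is correct and follows essentially the same route as the paper: you unwind BNR's $2$-fibre product, read off the piecewise-linear characteristic section $h_r$ from the rooted type (continuity and circuit equations included), and recover the line--section pair and its $r$th-power identification from \Cref{lem:DF-reconstruction} and \eqref{eq:DF-power}. You then invert by tropicalizing a basic rooted map, and handle arrows and automorphisms through the groupoid-level Deligne--Faltings equivalence, exactly as the paper does. Your basicness argument spells out what the paper compresses into one line, with one correction: the strictness you need is that of $K_r^+\to\Acal(T_r^+)$, base-changed from the strict map of the twisted-curve stack to its Artin fan, not strictness over the curve stack, so the base characteristic is pulled back from the Artin cone of $Q_\Theta$.
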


\begin{proof}
BNR's Cartesian presentation of universal logarithmic maps is an equality
of stacks, not only of characteristic monoids; in the positive datum there
are no puncturing equations, so the puncturing substack is the whole Artin
fan.  Restricting their presentation to the cone stack $T_r^+$ gives the
2-fibre product in \eqref{eq:positive-space-fibre-product}
\cite[Construction~3.15 and Lemma~3.18]{BNR}.  We unpack this presentation
and compare it with the moduli description above.

An $S$-object of the fibre product consists of a twisted prestable curve,
a map to the Artin fan of a rooted tropical type, and the specified
$2$-isomorphism over the Artin fan of twisted curves.  On a geometric fibre
the rooted type supplies the canonical basic monoid $Q_\Theta$, the vertex
positions and edge slopes, and hence a characteristic piecewise-linear
section $h_r$ satisfying the continuity and circuit equations.  The Artin-fan point and the curve compatibility supply the full
logarithmic structure.  Applying \Cref{lem:DF-reconstruction} to $h_r$
in this structure reconstructs the line--section pair, including units
and gluing, and \eqref{eq:DF-power} supplies its
$r$th-power identification.  The marking and node sectors in $T_r^+$ give
exactly the prescribed stabilizer indices and faithful characters.  This
constructs a basic rooted logarithmic map and defines a functor from the
left side of \eqref{eq:positive-space-modular-equivalence} to the right.

Conversely, tropicalizing a basic rooted logarithmic map records its twisted
dual graph, genera, component degrees, image faces, marking and node
indices, vertex positions, and edge slopes.  The logarithmic-map equation
gives the continuity relations, the divisor-degree formula gives the
orbifold balancing relation, and representability gives BNR's coprimality
condition.  Basicness identifies the base characteristic with the canonical
monoid $Q_\Theta$, so the full logarithmic data define an $S$-point of the Artin fan
$\Acal(T_r^+)$ and the required $2$-isomorphism over
$\Acal(\mathsf M^{\mathrm{trop,tw}})$.  The cone stack itself is a
stack on cones, not a moduli functor on arbitrary schemes.  This gives the inverse functor.

Both composites recover the same canonical monoid, Deligne--Faltings
functor, root line, section, and power isomorphism.  Morphisms preserve these
canonical charts; descent through graph automorphisms is already part of the
cone-stack and $2$-fibre-product formalism.  Hence the functors are inverse
on objects and arrows and identify automorphism group schemes.  Every
construction is compatible with base change.  The description of
$\omega_r^+$ follows immediately.
\end{proof}

The modular interpretation in
\Cref{prop:positive-space-modular-equivalence} will be used below both to
analyze the components of $K_r^+$ and to prove properness of its forgetful
morphism.

\begin{remark}[Why this is not the whole orbifold stack]
On a smooth genus-one curve, a zero section of a nontrivial
$L\in\operatorname{Pic}^0(C)$ is an ordinary map to the boundary
$B\bbG_m\subset\Acal_r$.  It is not in $K_r^+$: a one-vertex basic
logarithmic map has constant characteristic coordinate and
\Cref{lem:DF-reconstruction} forces the corresponding fibrewise line to be
the DF divisor, with no independent Jacobian twist.  This is the precise
positive-genus obstruction to the genus-zero identification with the full
orbifold space.
\end{remark}

\subsection{Irreducibility and the trivial-type locus}

\begin{proposition}[Irreducibility of $K_r^+$]
\label{prop:positive-space-irreducible}
Put $\ell=n+\rho$ and $d=d^+=\sum_i c_i$.  Use the marking conventions
of the preceding subsection and the node conventions
\eqref{eq:rooted-node-data}, with $0\leq c_i<r$.
Then the stack $K_r^+$ defined in \eqref{eq:positive-space-fibre-product} is
reduced and irreducible.  The open substack $(K_r^+)^\circ$ on which the
source is smooth and the universal section is not identically zero is
nonempty and dense.  These assertions include the case $d=0$ and any number
of markings of contact order $0$.  The stack $K_r^+$ is smooth of pure
relative dimension $3\cdot1-3+\ell=\ell$ over its Artin fan; in particular,
every finite-type open substack is pure-dimensional.
\end{proposition}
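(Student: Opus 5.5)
The plan is to read everything off the Cartesian presentation \eqref{eq:positive-space-fibre-product}, since $K_r^+$ is a base change of an Artin fan. First I show that the projection $K_r^+\to\mathcal A(T_r^+)$ is smooth of relative dimension $\ell$. The map $\Mfrak^{\mathrm{tw}}_{1,\ell}\to\mathcal A(\mathsf M^{\mathrm{trop,tw}})$ is the strict log-smooth map given by the nodal boundary log structure. It is therefore smooth of relative dimension $\dim\Mfrak^{\mathrm{tw}}_{1,\ell}=3\cdot1-3+\ell=\ell$, because Artin fans have dimension zero. Smoothness and relative dimension are stable under base change along $\mathcal A(T_r^+)\to\mathcal A(\mathsf M^{\mathrm{trop,tw}})$, which gives the asserted smoothness of $K_r^+$ over its Artin fan.

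The Artin fan $\mathcal A(T_r^+)$ is a union of quotients of toric varieties by their tori, glued along open face inclusions. It is therefore reduced. Each cone $\widetilde\tau_\Theta$ contributes an open stratum that is dense in $\mathcal A(\widetilde\tau_\Theta)$, and that chart is in turn contained in the closure of the charts of larger cones. Reducedness of $K_r^+$ follows because a scheme smooth over a reduced base is reduced; this can be checked on smooth atlases. Pure-dimensionality of finite-type opens follows in the same way: the relevant part of the Artin fan is zero-dimensional with dense open strata, and all fibres have relative dimension $\ell$. Note that the cones $\widetilde\tau_\Theta$ have the same dimension as the Artin-fan charts, so the stack dimension is $\ell$ on every chart.

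For irreducibility, the key combinatorial claim is that every cone of $T_r^+$ is a face of the cone of the trivial type. That is, every realizable rooted type specializes the one-vertex type $V_0=\{v\}$, no edges, with slope data forced by the contacts. Concretely, the Artin fan of $T_r^+$ has a unique dense open point, the vertex cone $\{0\}$ of the trivial type. Its generic point is contained in every chart $\mathcal A(\widetilde\tau_\Theta)$, because the origin of each cone is the trivial type after all edge lengths and heights are contracted to zero. The restriction of $K_r^+$ over this open point is the locus $(K_r^+)^\circ$ of smooth-source maps whose section is not identically zero. This uses \Cref{lem:DF-reconstruction}: at $h_r$ with trivial characteristic, $(\cL,s)$ is the DF divisor of the markings $\sum\widetilde c_i\widetilde p_i$. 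That locus is an open substack of $\Mfrak^{\mathrm{tw}}_{1,\ell}$ restricted to smooth curves, which is irreducible and nonempty when $d=0$ and when there are contact-zero markings.

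Density then follows from flatness. A morphism that is smooth, hence flat, maps generic points to generic points. Every irreducible component of $K_r^+$ therefore dominates an irreducible component of $\mathcal A(T_r^+)$, and hence meets the preimage of the open point. Since that preimage is irreducible and open, $K_r^+$ is irreducible with $(K_r^+)^\circ$ dense.

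The main obstacle is the claim that the trivial cone is the unique minimal (generic) stratum of the Artin fan $\mathcal A(T_r^+)$ and lies in every chart, i.e.\ that $\mathcal A(T_r^+)$ is irreducible. Each cone $\widetilde\tau_\Theta$ is a rational polyhedral cone containing $0$, and the face $\{0\}$ corresponds to the open point of $\mathcal A(\widetilde\tau_\Theta)$. So I must check that the face $0$ of each $\widetilde\tau_\Theta$ is identified, in the cone stack $T_r^+$, with the trivial type. This requires the cone stack to be built with face maps that contract edges, merging degrees via \eqref{eq:orbifold-balancing} and absorbing node indices. I would verify this directly using the continuity and circuit equations: setting all $\widetilde\ell_e,\widetilde x_v$ to $0$ yields a one-vertex type of total degree $d/r$ with the same marking sectors, which is unique.
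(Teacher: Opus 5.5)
Your proposal is correct and follows essentially the same route as the paper: irreducibility of $\mathcal A(T_r^+)$ from the common zero face $\Theta^\circ$, smoothness of $K_r^+\to\mathcal A(T_r^+)$ by base change from $\Mfrak^{\mathrm{tw}}_{1,\ell}$, identification of the preimage of the open point with $(K_r^+)^\circ$ via Deligne--Faltings reconstruction, and reducedness and pure dimension $\ell$ from smoothness over a reduced zero-dimensional base. The only difference is that you obtain density from going-down for flat maps rather than from openness of smooth maps, which is equivalent here; two asides are misstated (a cone's own stratum is the closed point of its chart, not a dense open one, and Artin cones have dimension $0$ rather than $\dim\widetilde\tau_\Theta$), but nothing in the argument depends on them.
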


\begin{proof}
\proofstep{Step 1: the common zero face}
Let $\Theta^\circ$ have one vertex of genus one and degree $d/r$, no
bounded edges, vertex image $0$, and the prescribed labelled legs.  Its
balancing equation is $d=\sum_i c_i$.  For a type $\Theta$ of $T_r^+$,
the zero face of its cone is this type: setting all edge lengths and vertex positions to
zero contracts the entire graph.  The genus of the contracted vertex is
$\sum_v g_v+b_1(G_\Theta)=1$, and its degree is the sum of the vertex
degrees.  This argument includes a circuit and does not use tree
uniqueness of slopes.  There are no puncturing inequalities on the
positive cone stack.

The Artin cone of a sharp saturated toric monoid $Q$ is
$[\Spec\bbC[Q]/\Spec\bbC[Q^{\mathrm{gp}}]]$.  It is reduced and
irreducible, and its dense open torus quotient is the zero-face stratum.
The charts of $\Acal(T_r^+)$ glue along face opens and all contain the
same zero-face stratum $\Acal(\Theta^\circ)$.  Thus
$\Acal(T_r^+)$ is reduced and irreducible, with this dense open stratum.
The graph-automorphism identifications do not alter this conclusion.

\proofstep{Step 2: the smooth curve-moduli presentation}
By \eqref{eq:positive-space-fibre-product}, the morphism
\[
 q:K_r^+\longrightarrow\Acal(T_r^+)
\]
is a base change of the boundary-stratification morphism
$\mathfrak M^{\mathrm{tw}}_{1,\ell}\to
\Acal(\mathsf M^{\mathrm{trop,tw}})$.
This morphism is smooth and surjective, of pure relative dimension
$3\cdot1-3+\ell=\ell$; see the twisted-curve Cartesian square and its
proof in \cite[Lemma~3.5 and Lemma~3.18]{BNR}.  The assertion also follows
on labelled curve charts: deformations of the pointed normalizations are
unobstructed and the node-smoothing parameters give the boundary
coordinates.  Unstable pointed components are retained as Artin stacks;
no stabilization is performed.

A smooth morphism is open.  Hence the inverse image of a dense open of
its target is dense in its source: the image of any nonempty source open
is a nonempty target open and meets the dense open.  It follows that
$q^{-1}\Acal(\Theta^\circ)$ is dense in $K_r^+$.

\proofstep{Step 3: identify this dense open}
On $\Theta^\circ$ the source is smooth and the root section is nonzero.
Its forced marking zeros already have degree
$\sum_i\widetilde c_i/s_i=d/r$.  Therefore its divisor is exactly
$\sum_i\widetilde c_i\widetilde p_i$ and its line--section pair is
\[
 \left(\mathcal O\Bigl(\sum_i\widetilde c_i\widetilde p_i\Bigr),
       s_{\sum_i\widetilde c_i\widetilde p_i}\right).
\]
The second entry denotes the canonical section of this effective-divisor
line bundle.
The marking root stacks are determined by the marked smooth coarse curve.
The nonzero section fixes the scalar of the line bundle, so there is no
independent Jacobian choice or residual scalar in the fibre.  Consequently
$q^{-1}\Acal(\Theta^\circ)=(K_r^+)^\circ$ is the irreducible stack of
smooth genus-one curves with these ordered markings and their prescribed
marking roots.  This remains true with no markings, when it is an Artin
stack; a Chow pushforward on that unmarked stack is not being asserted.

The dense open is irreducible, so $K_r^+$ is irreducible.  Smoothness over
the reduced Artin fan proves reducedness.  Artin cones have dimension zero,
so every nonempty finite-type open has pure dimension $\ell$.  For $d=0$
all $c_i$ vanish and the dense pair is $(\mathcal O,1)$.  A constant
positive vertex height adds a boundary stratum, not another component.
On a smooth fibre its associated line has no independent degree-zero
twist: this follows from the full Deligne--Faltings reconstruction in
\Cref{lem:DF-reconstruction}, not from its numerical degree alone.
\end{proof}

\subsection{Properness over the finite-type open}

We first prove properness after forgetting the markings of contact order $0$.  Retain
$W_0$, $B_0$, and the base root order $r$ from
\Cref{lem:uniform-slope-bound}, and abbreviate
\[
 \mathfrak U^+:=\mathfrak U^+_\varnothing,
 \qquad \ell_+:=\rho_+.
\]
Throughout this subsection the curve moduli and tropical curve cone stacks
carry precisely these $\ell_+$ remaining positive-contact markings.  Let
$T_\varnothing^+$ denote the corresponding unrooted positive tropical
cone stack, and let $T_{r,\varnothing}^+$ be its rooted version with BNR's
size condition at order $r$.  These differ from $T_r^+$, which retains
every labelled marking.
Let $(M,t)$ be the universal line--section pair on $\mathfrak U^+$, and let
$q_r:O_{r,\varnothing}^+\to\mathfrak U^+$ be the natural forgetful morphism.  Put
\begin{equation}
 W:=O_{r,\varnothing}^+\times_{\mathfrak U^+}W_0
     =W_{r,\varnothing}.
 \label{eq:revised-display-3}
\end{equation}
For every union $C_A$ of irreducible components, the bounds fixed in
\Cref{lem:uniform-slope-bound} give
\begin{equation}
 \left|\deg(M|_{C_A})\right|\leq B_0,
 \qquad
 \sum_{p_i\in C_A}c_i\leq B_0.                               
 \label{eq:positive-open-bounds}
\end{equation}
The second inequality uses the fact that the remaining contact orders are
positive and have total sum $d^+\leq B_0$.  If
$q:\mathcal C\to C$ is the relative coarse morphism, an object of $W$
carries
\begin{equation}
 (L,s)^{\otimes r}\simeq q^*(M,t).
 \label{eq:revised-display-4}
\end{equation}
We use the full fibre product $W=W_{r,\varnothing}$.  No exact-type
stratum or union of pairwise disjoint components is substituted for it;
all intersections and specializations lying over $W_0$ remain present.

\begin{lemma}[The rooted line--section space over $W_0$]
\label{lem:ordinary-root-stack-separated}
The morphism
$q_{r,W_0}:W\to W_0$ is proper and of Deligne--Mumford type.  In
particular it is separated and $W$ is finite type.
\end{lemma}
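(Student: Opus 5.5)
The plan is to factor $q_{r,W_0}$ through the faithful-root stack of \Cref{prop:faithful-root-properness} and then add the section using \Cref{lem:finite-section-roots}. Concretely, let $\mathfrak B$ be the stack of marked prestable curves with a line bundle, carrying the $\ell_+$ remaining positive markings. The structure map $\mathfrak U^+\to\mathfrak B$ forgets the section $t$. Base change the ambient faithful-root stack $\mathfrak R_r\to\mathfrak B$ along $W_0\to\mathfrak U^+\to\mathfrak B$, using the marking-character sectors prescribed by $\Lambda_\Gamma^+$ at order $r$. By \Cref{prop:faithful-root-properness}, the resulting morphism
\[
 \mathfrak R_{r,W_0}:=\mathfrak R_r\times_{\mathfrak B}W_0\longrightarrow W_0
\]
is proper, quasi-finite and of Deligne--Mumford type, and it commutes with base change. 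Over $\mathfrak R_{r,W_0}$ we have the universal twisted curve, the root line $L$ and the isomorphism $\phi:L^{\otimes r}\simeq q^*M$. The pullback $q^*t$ is then a section of $L^{\otimes r}$, and \Cref{lem:finite-section-roots} with $\lambda=r$ shows that $\operatorname{Root}_r(q^*t)\to\mathfrak R_{r,W_0}$ is finite and representable. A composite of a finite representable morphism and a proper DM-type morphism is again proper and of DM type. So the ambient rooted line--section space $\overline W:=\overline O^+_{r,\varnothing}\times_{\mathfrak U^+}W_0$ is proper and DM over $W_0$, once we identify it with $\operatorname{Root}_r(q^*t)$. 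That identification just unpacks the definition of $\overline O^+_{r,\varnothing}$: its objects are a twisted curve, a representable root pair, and a chosen $r$th-power isomorphism to the descended pair, and it uses the same balancedness and marking indices.

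Next we pass from $\overline W$ to $W$. Here $W$ is the open substack of $\overline W$ cut out by Crumplin's inequalities \eqref{eq:crumplin-degree-requirement}, and this is where properness is not automatic. The key point is that over $W_0$ these inequalities hold everywhere. On any geometric point, a proper subcurve $E$ has rooted degree $\deg(L|_{\mathcal E})=\deg(M|_{C_E})/r$. By \eqref{eq:positive-open-bounds} and \eqref{eq:large-r}, its absolute value is at most $B_0/r<1/4<1/2$. Hence the open immersion $W\hookrightarrow\overline W$ is surjective on points and therefore an isomorphism. I expect this to be the main place where care is needed: the degree of the root line on a twisted subcurve has to be compared with the coarse degree of the descended $r$th power, including the contributions of node gerbes and marking gerbes. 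The comparison $r\deg(L|_{\mathcal E})=\deg(M|_{C_E})$ should follow from $L^{\otimes r}\simeq q^*M$ and the fact that pullback along the coarse map multiplies degrees by one. I would check this on the normalization componentwise and then sum over components.

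Finally, separatedness follows from properness. Finite type of $W$ follows because $W_0$ is of finite type by \Cref{lem:uniform-slope-bound} and a proper morphism is of finite type. No stabilization is involved anywhere, and every step commutes with arbitrary base change.
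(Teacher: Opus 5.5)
Your proposal is correct and follows essentially the paper's argument. Over $W_0$ the bound $|\deg(L|_{\mathcal E})|\leq B_0/r<1/2$ makes the degree inequalities automatic. After forgetting the section, the map is the base change of the proper, Deligne--Mumford-type faithful-root stack of \Cref{prop:faithful-root-properness}, and the section is restored through the finite $\operatorname{Root}_r$ of \Cref{lem:finite-section-roots}; the only difference is that you check the inequalities last rather than first, which is immaterial.
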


\begin{proof}
The subcurve bounds on $W_0$ and $r>2B_0$ imply
$|\deg(L|_E)|\leq B_0/r<1/2$ for every root over $W_0$.  Thus imposing
\eqref{eq:crumplin-degree-requirement} removes no objects from this
root fibre.  After forgetting the section, the morphism is the base change
to $W_0$ of the proper faithful-root stack of
\Cref{prop:faithful-root-properness}.  Reintroducing the root section
amounts to the equation $s^r=q^*t$, whose solution space is finite by
Lemma~\ref{lem:finite-section-roots}.  Composition with this finite section-root morphism preserves
properness, finite type, and Deligne--Mumford type.
\end{proof}

Let $\mathfrak L^+$ be the stack of basic positive logarithmic prestable
maps to $(\mathcal A\mid\mathcal D)$ with the positive marking data after $Z$ has been forgotten, and let
\begin{equation}
 F_{\log}:\mathfrak L^+\longrightarrow\mathfrak U^+
 \label{eq:revised-display-5}
\end{equation}
forget the logarithmic enhancement while retaining the actual line bundle and
section.  The Deligne--Faltings isomorphism belongs to the logarithmic
enhancement and is part of the fibre of this forgetful morphism, not part of
its target.  Put
$\mathfrak L_{W_0}^+=\mathfrak L^+\times_{\mathfrak U^+}W_0$.

We now make the finite type support precise before invoking any compactness
statement for logarithmic enhancements.

A \emph{finite face-closed cone substack} means the finite union of the
cones over a selected finite set of labelled types, together with every
graph-automorphic copy and every face.  Its associated Artin fan is the
corresponding open substack of the ambient Artin fan.

\begin{lemma}[Finite face-closed cone support]
\label{lem:finite-face-closed-support}
There is a finite face-closed cone substack
\begin{equation}
                         T^+_\Sigma\subset T_\varnothing^+
 \label{eq:revised-display-6}
\end{equation}
containing every realizable basic type above $W_0$.  It can be chosen before
$r$; the index $\Sigma$ records the selected finite family of types and
their faces.  Every bounded-edge slope on every cone of $T^+_\Sigma$ satisfies
\begin{equation}
                              |m_e|\leq4B_0.
 \label{eq:revised-display-7}
\end{equation}
\end{lemma}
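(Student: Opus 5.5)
The plan is to bound the combinatorics of every realizable basic type above $W_0$ using only the data fixed by $W_0$, and then take $T^+_\Sigma$ to be the face-closure of the resulting finite set. No information about $r$ will be used.

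\textbf{Step 1: finitely many decorated graphs.} Fix a geometric point $(C,M,t)$ of $W_0$ and a basic positive logarithmic enhancement of type $\Theta$. The definition \eqref{eq:W0-definition} gives $|E(G_C)|\leq B_0$. The genus is one and the number of markings $\ell_+$ is fixed, so the number of vertices, the genus distribution and the marking distribution range over a finite set. Taking $A=\{v\}$ in \eqref{eq:W0-definition} gives $|d_v|\leq B_0$, so the degree decorations are also finite. The image faces are the two faces $\{0\}$ and $\mathbb R_{\geq0}$ of the ray, which adds only finitely many choices.

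\textbf{Step 2: bounded slopes.} Slopes are the only data not yet bounded. For these I will rerun Steps~3--4 of the proof of \Cref{lem:uniform-slope-bound}. Put $b_v=d_v-c_v$. Then $|\sum_{v\in S}b_v|\leq 2B_0$ for every vertex subset $S$, and the level-set argument, using a strictly positive homomorphism $h:Q\to\mathbb R_{\geq0}$ on the basic monoid, gives $|m_e|\leq 2B_0\leq 4B_0$ on every bounded edge of a realizable type. Loops have slope zero. Each edge slope therefore takes one of finitely many integer values, so only finitely many fully decorated types $\Theta$ are realizable above $W_0$.

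\textbf{Step 3: face closure and the bound on faces.} Let $\Sigma$ be this finite set together with all graph-automorphic relabellings, and let $T^+_\Sigma$ be the union of their cones and all faces. The faces of a cone correspond to types obtained by contracting edges (edge lengths set to zero) and moving vertices to the origin. Each bounded edge of a face type is an uncontracted edge of the original type with the same slope, so \eqref{eq:revised-display-7} is inherited. Face types of finitely many cones are finite, so $T^+_\Sigma$ is a finite face-closed cone substack. I will also check that contracting does not reintroduce new unbounded data: degrees add under contraction, and the genus becomes $g_v+b_1$ of the contracted subgraph, both of which stay within the finite list.

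\textbf{Main obstacle.} The delicate point is that the level-set estimate requires realizability, namely the existence of heights $H_v$ and positive lengths with $H_{\mathrm{head}(e)}-H_{\mathrm{tail}(e)}=m_e\ell_e$. On a face, contracted edges have zero length. My first approach is to observe that a face type is itself realized by the restricted tropical map, with the contracted subgraph collapsed to a vertex. Any vertex whose height collapses to $0$ then changes face to $\{0\}$, which is consistent with the positive cone stack because there are no puncturing conditions. The surviving edges keep their slopes, so I do not need to redo the estimate on faces at all. The bound was also derived before any choice of $r$, which justifies the claim that $T^+_\Sigma$ can be chosen before $r$.
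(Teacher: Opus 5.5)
Your proposal is correct and follows essentially the same route as the paper. Both arguments obtain finitely many graphs, genera, marking distributions, multidegrees and image faces over $W_0$, apply the root-independent level-set slope estimate of \Cref{lem:uniform-slope-bound}, and close under graph automorphisms and faces. The differences are minor: you read the graph and degree bounds directly off \eqref{eq:W0-definition} rather than from a finite-type atlas, and you make explicit that faces only contract edges and so inherit the slope bound, which the paper leaves implicit.
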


\begin{proof}
Choose a finite-type scheme atlas of $W_0$.  A finite-type family of
prestable curves has only finitely many labelled dual graphs after a finite
stratification, and the multidegrees and marking distributions occurring on
that atlas form finite sets.  The root-independent estimate of \Cref{lem:uniform-slope-bound} gives
$|m_e|\leq4B_0$ for every realizable type over $W_0$, including special-fibre types.
In particular this applies to rational circuits as well as trees; the
classification of the essential types indexing the irreducible components
is not used here.

There are therefore only finitely many choices of graph, multidegree,
marking distribution, integral slopes in the bounded interval, and image
faces of the rank-one target.  Take the corresponding cones, every
graph-automorphic copy, and every face.  This gives the required finite
face-closed support.  Notice that no properness or quasi-compactness of a
logarithmic-enhancement stack is used in this construction.
\end{proof}

Let $\mathfrak L^+_{W_0,\Sigma}$ denote the stack of basic fs rank-one
logarithmic enhancements of the fixed underlying prestable curve and actual
line--section pair over $W_0$, with type in $T^+_\Sigma$.

\begin{lemma}[Finiteness of basic logarithmic enhancements]
\label{lem:unrooted-log-enhancements-proper}
The forgetful morphism
\begin{equation}
 F_{\log,W_0}:\mathfrak L^+_{W_0,\Sigma}\longrightarrow W_0
 \label{eq:revised-display-8}
\end{equation}
is representable and finite.  Its formation is compatible with arbitrary
base change.  The assertion includes the locus on which the section
vanishes identically on every component.
\end{lemma}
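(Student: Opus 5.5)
We prove that $F_{\log,W_0}$ is representable, quasi-finite, proper and compatible with base change; finiteness then follows, since a representable proper quasi-finite morphism is finite.

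\emph{Representability and base change.} By \Cref{lem:DF-reconstruction}, a logarithmic enhancement of a fixed curve with actual pair $(M,t)$ is a basic logarithmic structure $M_S$ on the base together with a characteristic section $h\in\Gamma(C,\overline M_C)$ and an isomorphism $(M,t)\simeq\DF_{M_C}(h)$. An automorphism of the enhancement inducing the identity on the curve and on $(M,t)$ must preserve $h$. It must also commute with the DF isomorphism. The canonical identification $Q_\Theta\to\overline M_{S,\bar s}$ is part of basicness, so such an automorphism fixes the characteristic and hence acts on $M_S$ through units. Compatibility with the fixed DF isomorphism then forces these units to be trivial, as in the analogous statement for basic stable log maps (Gross--Siebert, Abramovich--Chen). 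Hence $F_{\log,W_0}$ has trivial relative inertia and is representable. Base-change compatibility holds because basicness, DF functors and fibre products all commute with pullback.

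\emph{Quasi-finiteness.} Fix a geometric point of $W_0$. \Cref{lem:finite-face-closed-support} allows only finitely many types in $T^+_\Sigma$. For a given type, the slopes and vertex faces determine the characteristic section $h$ up to the basic monoid $Q_\Theta$. The remaining data are the log structure on the base point and the DF identification. Over a geometric point with basic monoid $Q_\Theta$, the log structure is $Q_\Theta\oplus\bbC^*$ up to isomorphism. The DF identification on each component and at each node is determined up to finitely many choices: for a fixed $(M,t)$, the unit data at the nodes are pinned by $t$ on external components. On internal components the degree condition $M|_{C_v}\simeq\DF(h)|_{C_v}$ leaves a torsor under the finite group of roots arising from saturation, that is, torsion in $Q_\Theta^{\mathrm{gp}}$-twisted units. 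This step needs care, and I would model it on the standard argument that the underlying-map morphism for log maps with fixed target is quasi-finite.

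\emph{Properness.} This is the main obstacle, and I would use the valuative criterion. Take a trait $\Spec A\to W_0$ with a basic enhancement over the generic point. Its generic type lies in $T^+_\Sigma$. Extend over the closed point by the usual saturation/minimality argument:
\begin{itemize}
\item Take the special fibre type to be the one forced by $(M,t)$: internal components are those where $t$ vanishes identically.
\item Define node slopes from the orders of vanishing of the extended section along the special fibre.
\item Define vertex heights as valuations of the pulled-back smoothing and vanishing parameters.
\end{itemize}
Realizability and the slope bound of \Cref{lem:uniform-slope-bound}, which explicitly covers special-fibre types, place this type in $T^+_\Sigma$ because the support is face-closed and contains all realizable types over $W_0$. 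The DF isomorphism extends because $M$ and $t$ are already given over all of $\Spec A$ and $\DF(h)$ is reconstructed from $h$. Uniqueness follows from the injectivity of the map from basic monoids into the valuation monoid $\bbN$ of the trait, after saturation, as in Gross--Siebert's properness of basic log maps. Circuit equations are enforced by continuity around circuits, so they cause no extra failure.

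The locus where $t\equiv0$ on every component is covered by the same argument. There every vertex is internal, the heights are positive, and $h$ is determined by the DF degree conditions on each component. The finiteness of choices comes from the tree/circuit balancing within $T^+_\Sigma$.
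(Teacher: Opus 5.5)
\textbf{Verdict: there is a genuine gap.} The representability step is sound. Quasi-finiteness and properness are not established, and the specific reasoning you give fails on exactly the locus the lemma singles out.

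\textbf{Representability.} This part is fine in outline; it is the inertia computation the paper performs in Step~3 of \Cref{prop:positive-space-proper}. You should add that the edge and vertex coordinate covectors generate $Q_\Theta^{\mathrm{gp}}$, which uses primitivity of $\widetilde N_\Theta$.

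\textbf{Properness.} You defer this to ``the standard argument'', and where you are specific the reasoning is wrong.
\begin{itemize}
\item In your last paragraph you claim that when $t\equiv0$ the section $h$ is determined by the degree conditions on the components. In genus one this is false once the internal subgraph contains a circuit. The degrees fix only $\partial\boldsymbol m$, i.e.\ the slopes modulo a circulation (compare \Cref{ex:circuit-tropical-type}). For a special fibre made of two internal rational components forming a cycle with $b_{v_1}=-b_{v_2}=3$ (notation of \Cref{lem:uniform-slope-bound}), both $(m_1,m_2)=(1,-2)$ and $(2,-1)$ are realizable. So degree conditions cannot give uniqueness in the valuative criterion.
\item What selects the extension is the total space $C_A$ over the trait. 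In the simplest case of a smooth generic fibre with $t\equiv0$, the generic Deligne--Faltings isomorphism gives $M_K\simeq\cO(\sum_ic_ip_i)_K$. This extends to $M\simeq\cO_{C_A}(\sum_ic_ip_i+\sum_va_vC_v)$. At a node $xy=\varpi^{k_e}$ the slope is then forced to be $m_e=(a_w-a_v)/k_e$ up to sign convention; this is an integer because $M$ is Cartier there.
\item Your recipe of reading heights off ``valuations of vanishing parameters'' is undefined here, since $\alpha(h_v)=0$ already on the generic fibre. The base logarithmic structure over the trait is then not divisorial and must be constructed.
\item Your uniqueness mechanism is invalid. A homomorphism from a basic monoid of rank at least two to $\mathbb N$ is never injective, and it does not see heights with $\alpha(h_v)=0$.
\end{itemize}

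\textbf{Quasi-finiteness.} $Q_\Theta^{\mathrm{gp}}$ is torsion-free, so ``torsion in $Q_\Theta^{\mathrm{gp}}$-twisted units'' does not produce the finite ambiguity. That ambiguity is a finite cokernel of a homomorphism of tori, which you never compute.

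\textbf{What the paper does instead.} It avoids redoing this analysis by a single reduction to Chen's theorem.
\begin{itemize}
\item Smooth-locally on $W_0$ it adds auxiliary sections only to obtain a relatively ample $H$.
\item It embeds $C$ into $\mathbb P^a\times\mathbb P^b\times S$ via $M\otimes H^{\otimes k}$ and $H^{\otimes k}$, so that $M\simeq u^*\cO(1,-1)$.
\item Using the graph of $t$, it obtains a closed embedding $j:C\hookrightarrow Y\times S$, with $Y=\mathbb P(\cO\oplus\mathscr L)$, such that $j^*(\cO_Y(D_0),s_{D_0})\simeq(M,t)$ as line--section pairs.
\item Since $(Y,D_0)\to(\Acal,\Dcal)$ is strict, log enhancements of $(M,t)$ are exactly log enhancements of the stable map $j$. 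Hence $\mathfrak L^+_{W_0,\Sigma}\times_{W_0}S$ is the fibre over $[j]$ of Chen's forgetful morphism from basic stable log maps to $(Y,D_0)$ to stable maps to $Y$.
\item That morphism is finite and representable, and finiteness then descends.
\end{itemize}
This gives representability, quasi-finiteness and properness at once. It covers the all-degenerate locus, where $j$ lands in $D_0$. It introduces no extra $\bbG_m$ when $t=0$, because the identification with $j$ fixes $(M,t)$ including its scalar.

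To make your direct route work, you would in effect have to reprove Chen's theorem for the universal target. That means including the total-space twist above, a possible finite extension of the trait, and the construction of the basic logarithmic structure over it.
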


\begin{proof}
We reduce to the finite forgetful morphism of
\cite[Theorem~1.2.1]{Chen} for a smooth projective pair.  This avoids
applying a projective-target compactness theorem directly to the Artin
stack $\Acal$.

\proofstep{Step 1: choose a projective realization of the fixed pair}
Work smoothly locally on $W_0$, with actual data $(C/S,M,t)$.  After a
further smooth cover, add auxiliary ordered smooth sections meeting every
unstable component until the marked curve is stable.  These sections are
used only to obtain a relatively ample line bundle $H$ on $C/S$; they are
not part of the mapping problem or its contact data.  The cover can be
chosen near each point and is surjective after taking their union.
Thus the construction below proves a property local on the original base.

After increasing $k$ and shrinking the base, set
\[
 B=H^{\otimes k},\qquad A=M\otimes H^{\otimes k}.
\]
We may assume that $B$ is relatively very ample, that $A$ is relatively
globally generated, and that both direct images are locally free and
commute with base change.  Write $\pi_C:C\to S$ for the curve projection.
After restricting to a locus of constant ranks, put
\[
 a+1=\operatorname{rk}\bigl((\pi_C)_*A\bigr),
 \qquad b+1=\operatorname{rk}\bigl((\pi_C)_*B\bigr).
\]
Choose frames of these direct images.  Evaluation then defines
\begin{equation}
 \begin{gathered}
 u:C\longrightarrow Z_{\mathrm{aux}}\times S,
 \qquad Z_{\mathrm{aux}}=\mathbb P^a\times\mathbb P^b,\\
 u^*\mathscr L\xrightarrow{\sim}M,
 \qquad\mathscr L=\mathcal O_{Z_{\mathrm{aux}}}(1,-1).
 \end{gathered}
 \label{eq:projective-realization-base}
\end{equation}
The second projection is a closed immersion.  The displayed identification
is the actual one obtained from the chosen evaluation quotients, not just
an equality of degrees or isomorphism classes.

Use the convention that $\mathbb P(E)$ parametrizes lines in $E$, and put
\begin{equation}
 Y=\mathbb P_{Z_{\mathrm{aux}}}(\mathcal O_{Z_{\mathrm{aux}}}\oplus\mathscr L),
 \qquad D_0=\mathbb P_{Z_{\mathrm{aux}}}(\mathcal O_{Z_{\mathrm{aux}}}).
 \label{eq:projective-realization-target}
\end{equation}
Let $p_{\mathrm{aux}}:Y\to Z_{\mathrm{aux}}$ be the projection.  This is
a smooth projective pair.  The chart on which the first coordinate
of the line is nonzero is $\operatorname{Tot}(\mathscr L)$, and $D_0$ is
its zero section.  The graph of $t$ gives
\begin{equation}
 j:C\longrightarrow\operatorname{Tot}(\mathscr L)\times S
       \subset Y\times S,
 \qquad
 j^*(\mathcal O_Y(D_0),s_{D_0})\xrightarrow{\sim}(M,t).
 \label{eq:projective-realization-graph}
\end{equation}
Indeed, $\mathcal O_Y(D_0)=\mathcal O_Y(1)\otimes p_{\mathrm{aux}}^*\mathscr L$,
and the graph line generated by $(1,t)$ trivializes $j^*\mathcal O_Y(1)$.
This verifies the identity of sections, including over nonreduced bases
and when $t=0$.  The map $j$ is a closed immersion over $S$, since its
projection to $\mathbb P^b\times S$ is one.  It is therefore an ordinary
stable map with the original markings: no component is contracted.

\proofstep{Step 2: an equivalence of enhancement groupoids}
The classifying morphism $(Y,D_0)\to(\Acal,\Dcal)$ is strict.  By
\eqref{eq:projective-realization-graph}, its composite with $j$ is the
fixed map classified by $(M,t)$.  Enhancing this fixed composite to a
logarithmic map is consequently equivalent to enhancing the fixed map
$j$ to a logarithmic map to $(Y,D_0)$.  Explicitly, both enhancements give
a homomorphism from the same pulled-back rank-one divisorial logarithmic
structure to the logarithmic curve.  The contact orders, continuity
relations, and canonical basic monoid agree.  This identification respects
arrows and their specified underlying isomorphisms and is functorial in
arbitrary changes of $S$.

Let $\beta_j$ be the locally constant class of $j$ and let $\mathbf c$
be the original ordered positive contacts.  After decomposing the base
according to this class if necessary, there is a Cartesian square
\begin{equation}
\begin{tikzcd}[column sep=large]
 \mathfrak L^+_{W_0,\Sigma}\times_{W_0}S\ar[r,"\mathrm{realize}"]\ar[d]&
 \Mbar^{\log}_{1,\ell_+,\beta_j}(Y,D_0;\mathbf c)\ar[d,"\mathrm{forget\ log}"]\\
 S\ar[r,"{[j]}"']&\Mbar_{1,\ell_+,\beta_j}(Y).
\end{tikzcd}
 \label{eq:enhancements-Chen-base-change}
\end{equation}
Here the log-map stack is Chen's basic/minimal stable logarithmic mapping
stack with the indicated contacts, the lower-right stack parametrizes the
underlying stable maps, and the upper horizontal arrow is the enhancement
equivalence induced by the projective realization of $(M,t)$.  A point of
the fibre product includes
an identification of its underlying stable map with $j$, and therefore
fixes the complete pair $(M,t)$, including its scalar.  In particular,
there is no additional $\mathbb G_m$ in the fibre when $t=0$.
The restriction to $\Sigma$ removes no enhancements: all of their types
are in $T^+_\Sigma$ by \Cref{lem:finite-face-closed-support}.  Since this
is an open Artin-fan restriction containing every geometric point of the
enhancement stack, it removes no objects over nonreduced bases either.

\proofstep{Step 3: finiteness and descent}
The right vertical arrow of \eqref{eq:enhancements-Chen-base-change} is
finite and representable by \cite[Theorem~1.2.1]{Chen}.  Hence so is the
left vertical arrow.  Finiteness and representability descend along the
chosen smooth covers of $W_0$.  The equivalence of fibre groupoids already
proved also shows compatibility with arbitrary base change.  No
properness of $\Acal$, no extension of a chosen line bundle up to an
uncontrolled twist, and no removal of the all-degenerate locus is used.
\end{proof}

\begin{remark}[Why the projective realization is auxiliary]
The projective choices verify finiteness smoothly locally; the
fibre-groupoid equivalence and descent concern the intrinsic stack
$\mathfrak L^+_{W_0,\Sigma}$ with its fixed curve and line--section pair.
\end{remark}

For notational simplicity below we write
$\mathfrak L^+_{W_0}=\mathfrak L^+_{W_0,\Sigma}$.

By \Cref{lem:finite-face-closed-support}, only the finite
face-closed cone substack $T^+_\Sigma$ is needed.  The base root order $r$
fixed in \eqref{eq:large-r} satisfies BNR's divisibility conditions and
\begin{equation}
                 r>\max\{4B_0,M_{\boldsymbol\mu}\}.
 \label{eq:revised-display-9}
\end{equation}
It also satisfies
$\operatorname{lcm}(1,\ldots,4B_0)\mid r$.  This is a single cofinal
divisibility condition, chosen before varying $\lambda$.  Thus every
nonzero slope occurring on this support divides $r$.
For every cone of $T^+_\Sigma$, define its rooted cone by
\begin{equation}
 s_e=\frac r{\gcd(r,m_e)},
 \qquad \widetilde m_e=\frac{s_em_e}{r},
 \qquad
 \ell_e=s_e\widetilde\ell_e,
 \qquad x_v=r\widetilde x_v,
 \label{eq:revised-display-10}
\end{equation}
with $s_e=1$ when $m_e=0$ and with the analogous fixed formula at the
markings.  By \eqref{eq:revised-display-7}--\eqref{eq:revised-display-9}, these cones lie in the substack of BNR's tropical
twisted cone stack $T_{r,\varnothing}^+$ satisfying the size condition
$|m_e|<r$.  Denote the corresponding finite face-closed cone substack by
$T^+_{r,\Sigma}\subset T_{r,\varnothing}^+$.

\begin{lemma}[Comparison of the rooted and unrooted cone stacks]
\label{lem:rooted-unrooted-cone-comparison}
Let $\mathfrak M_{1,\ell_+}$ denote the stack of ordinary genus-one
prestable curves with the remaining ordered markings, and let
\begin{equation}
 \begin{aligned}
 K_{1,\Sigma,W_0}
  &: =\left(
     \mathfrak M_{1,\ell_+}
       \mathop\times^{(2)}_{\mathcal A(\mathsf M^{\mathrm{trop}})}
       \mathcal A(T^+_\Sigma)\right)
       \times_{\mathfrak U^+}W_0,\\
 K_{r,\Sigma,W_0}
  &: =\left(
     \mathfrak M^{\mathrm{tw}}_{1,\ell_+}
       \mathop\times^{(2)}_{\mathcal A(\mathsf M^{\mathrm{trop,tw}})}
       \mathcal A(T^+_{r,\Sigma})\right)
       \times_{\mathfrak U^+}W_0.
 \end{aligned}
\label{eq:revised-display-11}
\end{equation}
In these fibre products, the morphism to $\mathfrak U^+$ retains the
coarse marked curve and its actual unrooted line--section pair.  For the
rooted construction this pair is the descended $r$th tensor power of the
root pair.
Then $K_{1,\Sigma,W_0}=\mathfrak L^+_{W_0,\Sigma}$, where the subscript
$1$ denotes the ordinary unrooted logarithmic construction and imposes
no BNR root-size condition at root order one.  The root-forgetting
morphism of Artin fans is induced by \eqref{eq:revised-display-10}.
There is a
2-Cartesian square
\begin{equation}
\begin{tikzcd}
 K_{r,\Sigma,W_0}\ar[r,"\mathrm{coarsen}"]\ar[d,"\mathrm{trop}"']
 & K_{1,\Sigma,W_0}\ar[d,"\mathrm{trop}"]\\
 \mathcal A(T^+_{r,\Sigma})\ar[r,"\mathrm{forget\ root}"'] & \mathcal A(T^+_\Sigma).
\end{tikzcd}
 \label{eq:revised-display-12}
\end{equation}
Here the vertical arrows are the projections to the Artin fans.  The
upper horizontal arrow coarsens the rooted logarithmic map and then
applies canonical basicification.  The finite Kummer homomorphism from
the unrooted basic monoid to the rooted basic monoid is induced by
\eqref{eq:revised-display-10}; its Deligne--Faltings realization gives
the corresponding change of base logarithmic structures.  The lower
horizontal arrow is a generalized root stack.  Consequently
$K_{r,\Sigma,W_0}\to K_{1,\Sigma,W_0}$ is proper.
\end{lemma}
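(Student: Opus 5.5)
The plan is to prove the three assertions in order: identify $K_{1,\Sigma,W_0}$ with $\mathfrak L^+_{W_0,\Sigma}$, construct the Cartesian square, and then deduce properness from the square.

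\textbf{The identification.} Apply the unrooted analogue of \Cref{prop:positive-space-modular-equivalence}. The $2$-fibre product $\mathfrak M_{1,\ell_+}\times_{\mathcal A(\mathsf M^{\mathrm{trop}})}\mathcal A(T^+_\Sigma)$ is BNR's Cartesian presentation of basic logarithmic maps to $(\Acal\mid\Dcal)$ with types in $T^+_\Sigma$. By \Cref{lem:DF-reconstruction}, its objects carry actual line--section pairs. Base change along $W_0\to\mathfrak U^+$ then fixes the curve and the pair, which gives $\mathfrak L^+_{W_0,\Sigma}$ by definition. No size condition enters, because the subscript $1$ imposes none.

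\textbf{The square.} First define the lower arrow conewise. For each cone of $T^+_{r,\Sigma}$, the formulas \eqref{eq:revised-display-10} give an injective map of lattices $\widetilde N_\Theta\to N_\Theta$, namely $\widetilde\ell_e\mapsto s_e\widetilde\ell_e$ and $\widetilde x_v\mapsto r\widetilde x_v$. This map has finite cokernel and identifies the real cones. Dually, it yields a Kummer inclusion $Q^{\mathrm{unr}}_\Theta\hookrightarrow Q_\Theta$. The induced map of Artin cones is a generalized root stack in the sense of Borne--Vistoli, and these maps glue along faces because the formulas are face-compatible.

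Next define the upper arrow. It takes a rooted basic map, coarsens it via \eqref{eq:relative-coarsening}, and basicifies. Its tropicalization is the forget-root map, so the square commutes.

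To show the square is Cartesian, I compare $S$-points. An object of the fibre product consists of two pieces of data. The first is a basic unrooted log map $(C,M_C,h)$ over $W_0$. The second is a lift of the base log structure along the Kummer extension, that is, a DF realization of $Q_\Theta$ extending that of $Q^{\mathrm{unr}}_\Theta$.

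Given these, I reconstruct the twisted curve as follows.
\begin{itemize}
\item At nodes, the smoothing element $\rho_e=s_e\widetilde\rho_e$ acquires an $s_e$th root. Olsson's log-twisted curve theory then produces the balanced $\mu_{s_e}$ node chart.
\item At markings, the marking root is produced in the same way.
\item The rooted coordinate is $h_r$ with $rh_r=q_{\log}^*h$. It exists and is unique on the log-twisted curve because slopes satisfy $m_e\mid r$, so the rooted slopes are integral.
\end{itemize}
The pair $(\cL,s)=\DF(h_r)$ then satisfies \eqref{eq:DF-power}. Basicness follows because $Q_\Theta$ is precisely the rooted canonical monoid. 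Conversely, the rooted object recovers these data, and the constructions are inverse on arrows.

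\textbf{Main obstacle.} I expect the hardest step to be checking that basicification does not break Cartesianness. The coarsened log map is not basic over the rooted base, so I must show that its canonical basic monoid is exactly $Q^{\mathrm{unr}}_\Theta$ and that the induced base map factors through the Kummer inclusion. I would first argue on geometric points, where the type is fixed. There the canonical map $Q^{\mathrm{unr}}_\Theta\to\overline M_S$ is computed from smoothing elements $s_e\widetilde\rho_e$ and heights $r\widetilde h_v$, and this matches the dual of the lattice map. I would then extend to arbitrary bases by DF functoriality.

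\textbf{Properness.} Generalized root stacks along Kummer maps of fine saturated monoids are proper and quasi-finite; they are coarse-space isomorphisms. Hence the lower arrow is proper, and its base change, the upper arrow, is proper.
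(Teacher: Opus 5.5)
Your identification of $K_{1,\Sigma,W_0}$ with $\mathfrak L^+_{W_0,\Sigma}$ agrees with the paper's Steps~1--2. So does your construction of the lower arrow: the conewise Kummer map from \eqref{eq:revised-display-10}, a generalized root stack, with properness by base change. The difference is in how you obtain the Cartesian square.

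The paper never compares $S$-points. It substitutes BNR's twisted-curve presentation $\mathfrak M^{\mathrm{tw}}_{1,\ell_+}\simeq\mathfrak M_{1,\ell_+}\times_{\mathcal A(\mathsf M^{\mathrm{trop}})}\mathcal A(\mathsf M^{\mathrm{trop,tw}})$ into the definition of $K_{r,\Sigma,W_0}$ and cancels the intermediate factor. Both $K_{r,\Sigma,W_0}$ and $K_{1,\Sigma,W_0}\times_{\mathcal A(T^+_\Sigma)}\mathcal A(T^+_{r,\Sigma})$ then become $\mathfrak M_{1,\ell_+}\times_{\mathcal A(\mathsf M^{\mathrm{trop}})}\mathcal A(T^+_{r,\Sigma})$. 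The only check needed is that both composites to $\mathcal A(\mathsf M^{\mathrm{trop}})$ send $\widetilde\ell_e\mapsto s_e\widetilde\ell_e$.

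Your reconstruction unpacks that same presentation, which is Olsson's correspondence in Artin-fan form. You adjoin $s_e$th roots of the node smoothing elements via Olsson, then divide $q_{\log}^*h$ by $r$. The two routes trade off as follows:
\begin{itemize}
\item The formal route gives the statement over arbitrary type-varying, nonreduced bases with no extra work. That is exactly the part you leave at ``geometric points plus DF functoriality.''
\item Your route actually verifies that the induced upper arrow is ``coarsen, then basicify.'' Concretely, the coarsened map has basic monoid $Q^{\mathrm{unr}}_\Theta$, mapping to $Q_\Theta$ by the dual lattice map, and a morphism of log structures intertwines the Deligne--Faltings functors. The statement asserts this identification, but the paper's proof does not spell it out.
\end{itemize}

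Two minor corrections:
\begin{itemize}
\item Integrality of $\widetilde m_e=s_em_e/r$ follows from $s_e=r/\gcd(r,m_e)$ alone, not from $m_e\mid r$. The same holds for faithfulness of the node character, since $\gcd(\widetilde m_e,s_e)=1$.
\item Marking roots are fixed by the prescribed marking indices, as root stacks of the marking divisors. They do not come from an extension of the base logarithmic structure.
\end{itemize}
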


\begin{proof}
\proofstep{Step 1: the unrooted identification}
The first equality follows from the modular interpretation in
\Cref{prop:positive-space-modular-equivalence}, restricted to the finite
face-closed support $T^+_\Sigma$.  This is precisely BNR's unrooted
Cartesian presentation; the Deligne--Faltings reconstruction retains the
units and line gluing.

\proofstep{Step 2: the finite Kummer lattice extension}
Formula \eqref{eq:revised-display-10}
gives a bijection between the cones of $T^+_{r,\Sigma}$ and
$T^+_\Sigma$ and a compatible finite Kummer extension of their lattices.
Therefore
\begin{equation}
 \mathcal A(T^+_{r,\Sigma})\longrightarrow
 \mathcal A(T^+_\Sigma)
 \label{eq:revised-display-13}
\end{equation}
is a generalized root stack and is proper.

\proofstep{Step 3: the 2-Cartesian square}
For the Cartesian assertion one must retain the twisted-curve bases.  BNR's
twisted-curve Cartesian presentation and generalized-root construction
\cite[Lemma~3.5, Construction~3.10, Lemma~3.11,
Construction~3.15, and Lemma~3.18]{BNR} give
\begin{equation}
 \mathfrak M^{\mathrm{tw}}_{1,\ell_+}
 =\mathfrak M_{1,\ell_+}
   \mathop\times^{(2)}_{\mathcal A(\mathsf M^{\mathrm{trop}})}
   \mathcal A(\mathsf M^{\mathrm{trop,tw}}).
 \label{eq:revised-display-14}
\end{equation}
Substituting \eqref{eq:revised-display-14} into the second line of \eqref{eq:revised-display-11} and cancelling the
intermediate twisted tropical-curve factor gives
\begin{equation}
 K_{r,\Sigma,W_0}
 \simeq K_{1,\Sigma,W_0}
       \mathop\times^{(2)}_{\mathcal A(T^+_\Sigma)}
       \mathcal A(T^+_{r,\Sigma}),
 \label{eq:revised-display-15}
\end{equation}
which is exactly \eqref{eq:revised-display-12}.  Properness follows by base change from \eqref{eq:revised-display-13}.
This statement is made on the specified finite face-closed support.
No global assertion is needed outside it: the bound on unrooted slopes
is what ensures that every relevant cone has a rooted lift satisfying
BNR's size condition.
\end{proof}

\begin{remark}[The BNR rooting construction]
Properness follows by base change from BNR's generalized root stack on
the Artin fan.  This construction is not genus-one specific; genus one
enters the component and virtual-class calculation of Section~4.
\end{remark}

\begin{proposition}[Properness of $K_{r,\varnothing}^+$ over $W$]
\label{prop:positive-space-proper}
Let
\[
 K_{W}:=K_{r,\varnothing}^+\times_{\overline O_{r,\varnothing}^+}W.
\]
Under \eqref{eq:revised-display-9}, the forgetful morphism
\[
                    \omega_{W}:K_{W}\longrightarrow W
\]
is representable by algebraic spaces and finite; in particular, it is
projective and proper.
\end{proposition}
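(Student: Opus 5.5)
The plan is to factor $\omega_W$ through the unrooted enhancement stack and combine the three finiteness results just established. We have a commutative diagram
\[
 K_W\longrightarrow K_{r,\Sigma,W_0}\xrightarrow{\ \mathrm{coarsen}\ }K_{1,\Sigma,W_0}=\mathfrak L^+_{W_0,\Sigma}\xrightarrow{\ F_{\log,W_0}\ }W_0,
\]
and $\omega_W$ lies over $q_{r,W_0}:W\to W_0$. The first step is to identify $K_W$ with $K_{r,\Sigma,W_0}$. Both are $2$-fibre products over $W_0$ of the twisted-curve presentation with an Artin fan. For $K_W$ we use the full $T^+_{r,\varnothing}$, and for the other the face-closed substack $T^+_{r,\Sigma}$. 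By \Cref{lem:uniform-slope-bound} and \Cref{lem:finite-face-closed-support}, every realizable type over $W_0$ (including special-fibre types) has slopes bounded by $4B_0<r$ and lies in $T^+_\Sigma$. Since $\mathcal A(T^+_{r,\Sigma})$ is an open substack containing every geometric point of the image, the open immersion $K_{r,\Sigma,W_0}\hookrightarrow K_W$ is an equivalence, including over nonreduced bases. Here we use \Cref{prop:positive-space-modular-equivalence} to read both sides as moduli of basic rooted logarithmic maps over $W$.

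Next, the composite $K_W\to W_0$ is proper. \Cref{lem:rooted-unrooted-cone-comparison} makes the coarsening map a base change of a generalized root stack, hence proper. \Cref{lem:unrooted-log-enhancements-proper} makes $F_{\log,W_0}$ finite and representable. Since $q_{r,W_0}:W\to W_0$ is separated (\Cref{lem:ordinary-root-stack-separated}), the graph factorization $K_W\to K_W\times_{W_0}W\to W$ shows that $\omega_W$ is proper.

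It remains to show that $\omega_W$ is representable and quasi-finite; finiteness then follows, since a proper quasi-finite representable morphism is finite, and projectivity follows because it is finite. For representability, we check that automorphisms of a basic rooted log map acting trivially on the underlying twisted curve and root line--section pair are trivial. By \Cref{lem:DF-reconstruction}, the log map is determined by the characteristic section $h_r$ together with the DF isomorphism. The basic monoid is canonical, and an automorphism fixing $(\cL,s)$ and the curve must fix the canonical chart $Q_\Theta\to\overline M_S$. Hence it is the identity, so the stabilizer map is injective. For quasi-finiteness, a geometric fibre of $\omega_W$ consists of log enhancements of a fixed rooted object. Its image under coarsening lands in a fibre of the finite map $F_{\log,W_0}$, and the Kummer root on the Artin fan contributes only finite $\mu$-gerbes, which representability has already killed relative to $W$.

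I expect the main obstacle to be the representability step. The generalized root stack $\mathcal A(T^+_{r,\Sigma})\to\mathcal A(T^+_\Sigma)$ has $\mu$-stabilizers, and we must confirm that these are absorbed by the twisted-curve and root-line automorphisms already present in $W$, rather than surviving as extra inertia of $\omega_W$. I would try the local node chart \eqref{eq:faithful-root-local-chart} first. The Kummer extension $\ell_e=s_e\widetilde\ell_e$ matches exactly the twisted smoothing parameter $\tau$ with $\tau^{s_e}$ coarse, so the root-stack stabilizer is identified with the node stabilizer $\mu_{s_e}$ of the twisted curve. That identification would show the map on automorphism groups is injective.
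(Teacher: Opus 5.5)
Your identification of $K_W$ with $K_{r,\Sigma,W_0}$, the properness of $K_W\to W_0$, the graph argument using separatedness of $W\to W_0$, and the finiteness of geometric fibres all follow the paper's proof. The gap is in representability. At a geometric point, an automorphism of a basic object lying over the identity in $W$ includes an automorphism of the base log structure $M_S$ that is the identity on $\overline M_S\cong Q_\Theta$. Such automorphisms form the torus $\operatorname{Hom}(Q_\Theta^{\mathrm{gp}},\mathbb G_m)$. So the inference that an automorphism ``must fix the canonical chart $Q_\Theta\to\overline M_S$, hence is the identity'' does not follow. Fixing the characteristic chart is automatic, and this torus is exactly the inertia you must show acts nontrivially on the twisted curve or on $(\cL,s)$.

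The missing argument, which is Step~3 of the paper's proof, has three parts. Let $\chi$ be a character in the relative inertia.
(a) $\chi(\widetilde\rho_e)=1$ for every bounded edge. This holds because the identity on the twisted curve, including its ghost automorphisms, fixes the node-smoothing log data by Olsson's correspondence.
(b) The relations $\widetilde h_w-\widetilde h_v=\widetilde m_e\widetilde\rho_e$ and connectedness make all $\chi(\widetilde h_v)$ equal. The common value is $1$, either because some vertex lies in $V_0(\Theta)$ or, if every vertex is degenerate, because it is the scalar by which the automorphism acts on $\cL$ under \Cref{lem:DF-reconstruction}, and that scalar is fixed to be $1$.
(c) The covectors $\widetilde\rho_e,\widetilde h_v$ generate $Q_\Theta^{\mathrm{gp}}=\widetilde N_\Theta^\vee$, because $\widetilde N_\Theta$ is primitive in $\mathbb Z^{E(\Theta)\sqcup V_+(\Theta)}$. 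Torsion-freeness of $Q_\Theta^{\mathrm{gp}}$ alone would not exclude a character that is trivial on every coordinate but nontrivial on an extra root created by saturation.

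Your proposed node-chart matching of $\ell_e=s_e\widetilde\ell_e$ with the ghost groups $\mu_{s_e}$ covers only (a). It does not handle the vertex Kummer factors $x_v=r\widetilde x_v$, which are seen only through the scalar action on the root line, and it does not give the generation statement (c). If you want to keep your factorization, representability of $F_{\log,W_0}$ does reduce the relative inertia of $K_W\to W_0$ to the finite Kummer group of the generalized root stack. You would still need (a)--(c) to show that this group injects into the inertia of $W\to W_0$.
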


\begin{proof}
\proofstep{Step 1: identify the base-changed BNR space}
Because $T^+_\Sigma$ contains every unrooted chart above $W_0$, and because
\eqref{eq:revised-display-7}--\eqref{eq:revised-display-9} put every corresponding rooted type inside BNR's size condition $|m_e|<r$, associativity of the defining $2$-fibre products gives canonical equivalences
\begin{equation}
 K_{r,\Sigma,W_0}
   =K_{r,\varnothing}^+\times_{\mathfrak U^+}W_0
   =K_{r,\varnothing}^+\times_{\overline O_{r,\varnothing}^+}W
   =K_{W}.
 \label{eq:revised-display-16}
\end{equation}
This is an equality in families, not only on geometric points.  Indeed, for
an $S$-object of $K_{r,\varnothing}^+$ whose descended line--section pair
lies in $W_0$, coarsening followed by canonical basicification gives an
$S$-object of $\mathfrak L^+_{W_0}$.  Its basic chart factors through $T^+_\Sigma$ by construction.  Formula \eqref{eq:revised-display-10}
is the unique lift with the stabilizer orders and characters prescribed there,
so the rooted type map
factors through $T^+_{r,\Sigma}$.  These factorizations agree on overlaps
and under generization because both cone substacks contain all faces and all automorphic copies of
the selected labelled types.  The converse inclusion is tautological.  Thus \eqref{eq:revised-display-16} is an
equality of fibre categories after every base change.  By
Lemma~\ref{lem:rooted-unrooted-cone-comparison}, $K_{W}$ is proper
over $K_{1,\Sigma,W_0}=\mathfrak L^+_{W_0}$.  Lemma
\ref{lem:unrooted-log-enhancements-proper} makes the latter proper over
$W_0$.  Hence $K_{W}\to W_0$ is proper.

\proofstep{Step 2: properness over the rooted space over $W_0$}
The morphism $W\to W_0$ is separated by
Lemma~\ref{lem:ordinary-root-stack-separated}.  Its diagonal is therefore proper.  The graph of the
$W_0$-morphism $\omega_{W}$ is a base change of that diagonal and is
proper.  Projection of $K_{W}\times_{W_0}W$ to $W$ is proper because
$K_{W}\to W_0$ is proper.  Their composite is $\omega_{W}$.
Separatedness of an algebraic stack is not being used to assert that
its diagonal, or this graph, is a closed immersion.

\proofstep{Step 3: relative inertia and finiteness}
Work on a labelled rooted basic chart.  An automorphism over the identity
of the complete object in $W$ fixes the underlying twisted marked curve,
the root line $L$, its section, and the power identification.  By the
logarithmically twisted-curve correspondence, identity on the twisted
curve fixes the node-smoothing logarithmic data
\cite[Theorem~1.9]{OlssonTwisted}.  Equivalently, a nontrivial character
on a twisted-node smoothing produces a nontrivial node ghost and is
visible on the twisted curve.  The marking roots are likewise retained;
no marking has been rigidified in this fibre.

For the rooted type $\Theta$, use the coordinate covectors
$\widetilde\rho_e,\widetilde h_v\in Q_\Theta$ defined after
\eqref{eq:basic-monoid}.  For an edge oriented from $v$ to $w$ they satisfy
\begin{equation}
 \widetilde h_w-\widetilde h_v
      =\widetilde m_e\widetilde\rho_e.
 \label{eq:rooted-inertia-relations}
\end{equation}
For a character $\chi:Q_\Theta^{\mathrm{gp}}\to\mathbb G_m$ in
relative inertia, the node coordinates have character one.  Since the
graph is connected, \eqref{eq:rooted-inertia-relations} makes the vertex
characters equal.  A nondegenerate vertex fixes their common value to
one.  If every vertex is degenerate, the common value is the scalar on
$L$ in the Deligne--Faltings reconstruction.  The retained identity of
$L$ fixes this value to one even when its section is zero.

Here one must also check generation after saturation.  By its definition
before \eqref{eq:basic-monoid}, the lattice $\widetilde N_\Theta$ is the
intersection of the cone's real linear span with
$\mathbb Z^{E(\Theta)\sqcup V_+(\Theta)}$, and is therefore primitive.
Consequently restriction of the coordinate covectors surjects onto its
dual, which is $Q_\Theta^{\mathrm{gp}}$.  Thus the node and vertex
coordinates together generate the entire characteristic group.  This
rules out a character that is trivial on the coordinates but nontrivial
on a putative additional root.  Torsion-freeness of
$Q_\Theta^{\mathrm{gp}}$ alone would not justify that conclusion.
Type automorphisms are visible on the fixed marked curve.  Hence the
relative inertia is trivial and $\omega_W$ is representable by algebraic
spaces.

The unrooted enhancement stack is finite over $W_0$, and the generalized
root stack over its finite cone support has finitely many geometric
isomorphism classes over each point.  Therefore the geometric fibres
of $\omega_W$ are finite.  The proper representable morphism
$\omega_W$ is quasi-finite and hence finite (in particular, projective).
\end{proof}
\subsection{Reintroducing the omitted markings and recovering the BNR chimera}

\begin{corollary}[Reintroducing the temporarily forgotten markings in $Z$]
\label{cor:zero-order-marking-base-change}
Use the notation of \Cref{subsec:theorem-spaces}, in particular the spaces
$K_{W_r}^+$ and $K^+_{W_{r,\varnothing}}$ defined in
\eqref{eq:positive-space-open} and
\eqref{eq:reintroduced-distinguished-locus}.  Then the square
\begin{equation}
\begin{tikzcd}
 K_{W_r}^+\ar[r,"F_r^K"]\ar[d,"\omega_{W_r}^+"'] &
 K^+_{W_{r,\varnothing}}
       \ar[d,"\omega^+_{W_{r,\varnothing}}"]\\
 W_{r,Z}\ar[r,"F_r"'] & W_{r,\varnothing}
\end{tikzcd}
 \label{eq:revised-display-17}
\end{equation}
is $2$-Cartesian, and $\omega_{W_r}^+$ is representable and finite
(hence projective and proper).
\end{corollary}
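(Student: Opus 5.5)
The plan is to establish the $2$-Cartesian property first at the level of the unbounded stacks, and then restrict to the bounded opens. Concretely, I will show that the square
\[
 K_r^+=K_{r,Z}^+\longrightarrow K_{r,\varnothing}^+,\qquad \overline O_{r,Z}^+\longrightarrow\overline O_{r,\varnothing}^+
\]
is $2$-Cartesian over the ambient orbifold stacks. Both horizontal maps forget exactly the labels in $Z$. These labels have contact order $0$ and source index $1$, and by the convention in \Cref{subsec:theorem-spaces} forgetting them neither stabilises the prestable curve nor changes the root line--section pair.

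\emph{Key step.} I will use the modular interpretation of \Cref{prop:positive-space-modular-equivalence}. An object of $K_r^+$ over $S$ is a basic rooted logarithmic map, and I must compare it with the data of
\begin{itemize}[label={}]
\item a basic rooted logarithmic map for the unmarked problem, together with
\item ordered disjoint sections of the smooth nonstacky locus of its twisted curve.
\end{itemize}
A contact-$0$ leg imposes no condition on the logarithmic map: the leg contributes $0$ to the balancing \eqref{eq:orbifold-balancing}. The leg slope is $0$, so there is no continuity equation and the cone $\widetilde\tau_\Theta$ is unchanged. Hence the basic monoid $Q_\Theta$ of \eqref{eq:basic-monoid} is unchanged, since adding a marking in the smooth locus of an existing component adds no edge or vertex coordinate. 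On the twisted-curve side, the marked logarithmic structure is the pullback of the unmarked one along the forgetful map, up to the divisorial structure at the new markings. That divisorial structure plays no role, because the map to $\Acal_r$ has contact $0$ there.

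The Deligne--Faltings data of \Cref{lem:DF-reconstruction} are likewise unchanged. Consequently the fibre of $F^K_r$ over a point of $K_{r,\varnothing}^+$ is exactly the configuration space of the $Z$-markings. By construction this is also the fibre of $F_r$ over the image point in $\overline O_{r,\varnothing}^+$. This gives a natural functor from $K_r^+$ to the fibre product, and its inverse adds the sections. I will check that the two composites are the identity on arrows, including on automorphisms. Alternatively, one can argue directly from \eqref{eq:positive-space-fibre-product} by writing $\Mfrak^{\mathrm{tw}}_{1,n+\rho}$ as the relevant configuration space over the stack with fewer markings and cancelling the Artin-fan factor.

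\emph{Restriction to the bounded opens.} By \eqref{eq:bounded-rooted-opens} and \eqref{eq:reintroduced-distinguished-locus}, $W_{r,Z}$ is the base change of $W_{r,\varnothing}$ along $F_r$, and $K^+_{W_{r,\varnothing}}$ and $K^+_{W_r}$ are the base changes of $K^+_{r,\varnothing}$ and $K_r^+$. Pasting of $2$-Cartesian squares then yields \eqref{eq:revised-display-17}.

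\emph{Finiteness.} Since $\omega_{W_r}^+$ is the base change of $\omega^+_{W_{r,\varnothing}}$, which is representable and finite by \Cref{prop:positive-space-proper}, it is representable and finite as well. Projectivity and properness then follow, as in that proposition.

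\emph{Main obstacle.} The delicate point is showing that adding a contact-$0$ marking does not change basicness, so that the basic monoid and the canonical map $Q_\Theta\to\overline M_S$ are the same with or without the $Z$-markings. Two degenerate cases need care. The first is a $Z$-marking lying on a component that becomes unstable or is contracted in a tropical face; this is allowed, since no stabilisation is performed. The second is a component on which the section vanishes identically, as with the former puncture in \Cref{ex:elliptic-vertex-type}. In both cases I would argue directly with the cone equations: the leg is attached to a vertex and carries no coordinate, so neither case alters those equations.
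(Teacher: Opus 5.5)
Your proof is correct, and its core is the same as the paper's Step~3. In both, a contact-zero, index-one label adds no generator or relation to the basic monoid, and its Deligne--Faltings value is the existing vertex degeneracy. Both horizontal maps therefore have the ordered configuration space of the smooth nonstacky locus as fibre. Finiteness is the same base change from \Cref{prop:positive-space-proper}.

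The difference is the order of operations. The paper asserts the Cartesian property only over the bounded open. It inserts a separate step using $B_0<r$ to exclude a reintroduced marking at an isolated zero whose coarse order is a positive multiple of $r$, remarking that globally an age-zero marking can sit at such a zero. You instead prove the square is $2$-Cartesian on the ambient stacks and then paste. This is legitimate and slightly stronger.

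You should, however, say explicitly why the paper's global phenomenon does not break your claim. After forgetting $Z$, such a zero lies at a non-special point of a component whose section is not identically zero. There, any log enhancement has vertex height $0$, hence DF pair $(\mathcal O,1)$, so the section cannot vanish. The unmarked object therefore has no point of $K^+_{r,\varnothing}$ over it, and both $K_r^+$ and the fibre product are empty there. Your inverse functor, which composes the added markings with the existing unmarked log map, uses this implicitly.

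For the pasting step you also need $O^+_{r,Z}=\overline O^+_{r,Z}\times_{\overline O^+_{r,\varnothing}}O^+_{r,\varnothing}$. This holds because forgetting $Z$ changes neither the curve nor the root line, so Crumplin's degree condition is unchanged.

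What your route buys is independence from the choice of $r$ relative to $B_0$ for this corollary. The paper's route keeps every statement on the finite-type opens where it is actually used.
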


\begin{proof}
\proofstep{Step 1: properness before reintroducing the markings in $Z$}
Apply \Cref{prop:positive-space-proper} to
$W=W_{r,\varnothing}$.  This gives a finite representable morphism
before the labelled markings in $Z$ are reintroduced.

It remains to identify the square \eqref{eq:revised-display-17}.  This assertion is made only over
the chosen finite-type open.  Globally, a marking of age $0$ can lie at an
isolated zero whose positive coarse order is a positive multiple of $r$.

\proofstep{Step 2: exclusion of positive multiples of the root order}
Forgetting one of the reintroduced markings changes neither the coarse curve, the
line--section pair, nor any component degree.  Hence the bound $B_0$ applies
to the component containing that marking.  If the root section is not identically zero there, let $a$ be the vanishing
order of its descended $r$th-power section at the marked point.  The order
$a$ is a nonnegative multiple of $r$, while the actual zero divisor on that
component has total degree at most $B_0<r$.  Hence $a=0$.  On a component with identically zero section, the restored logarithmic
map is defined with contact zero: its value at the new marking is the
pullback of the existing vertex degeneracy.  No contact order is inferred
from the vanishing of an ordinary zero section.  Thus every reintroduced marking remains in the contact-order-zero sector,
has source index one, and has trivial stabilizer character.

\proofstep{Step 3: identification of the fibre categories}
Reintroducing a marking is consequently the choice of an ordered section of the
smooth nonstacky locus of the prestable source, disjoint from the other
markings.  It adds no generator or relation to the basic characteristic
monoid, and its Deligne--Faltings value is the vertex degeneracy already
present.  Forgetting and then reintroducing the labelled markings in $Z$ therefore leave the rooted
tropical map, the actual line bundle and section, the chosen power isomorphism, and the basic logarithmic structure unchanged.
This identifies the upper-left groupoid in \eqref{eq:revised-display-17}, functorially in the base,
with the $2$-fibre product of the other three groupoids.  The final assertion
is the base change of the finite representable morphism before the labelled markings in $Z$ are reintroduced.
\end{proof}

\begin{corollary}[Pushforward to Crumplin's main component]
\label{cor:positive-main-cycle}
Return to the full marking set and assume $\ell=n+\rho>0$.  For the
spaces $K_{W_r}^+$, $W_{r,Z}$ and $U_{r,Z}$ defined in
\Cref{subsec:theorem-spaces}, the morphism $\omega_{W_r}^+$ is finite,
its source is reduced and irreducible, and it is an isomorphism over
$U_{r,Z}$.  Moreover,
\begin{equation}
 K_{W_r}^+
 =\overline{K_{W_r}^+\times_{W_{r,Z}}U_{r,Z}}^{\,K_{W_r}^+}_{\red},
 \qquad
 (\omega_{W_r}^+)_*[K_{W_r}^+]=[Z_{W_r}^{\main}]
 \quad\text{in }A_*(W_{r,Z})_{\bbQ}.
 \label{eq:positive-main-cycle}
\end{equation}
\end{corollary}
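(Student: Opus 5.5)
Finiteness of $\omega_{W_r}^+$ is already \Cref{cor:zero-order-marking-base-change}, so the proof concentrates on the remaining assertions, in the following order. First, $K_{W_r}^+=K_r^+\times_{\overline O_r^+}W_{r,Z}$ is an open substack of $K_r^+$, because $W_{r,Z}\subset\overline O_r^+$ is open. By \Cref{prop:positive-space-irreducible}, $K_r^+$ is reduced and irreducible, and its open $(K_r^+)^\circ$ (smooth source, section not identically zero) is dense. Any nonempty open of a reduced irreducible stack is again reduced, irreducible, and dense. Hence $K_{W_r}^+$ is reduced and irreducible, provided it is nonempty. Nonemptiness follows because the smooth one-vertex locus with nonzero section lies in $W_0$ (\Cref{lem:uniform-slope-bound}), and this locus lifts to $K_r^+$ through the trivial type. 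Consequently $(K_r^+)^\circ\cap K_{W_r}^+$ is dense in $K_{W_r}^+$.

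Second, I would show that $(\omega_{W_r}^+)^{-1}(U_{r,Z})$ is exactly this dense open and that $\omega$ restricts to an isomorphism over $U_{r,Z}$. By Step~3 of the proof of \Cref{prop:positive-space-irreducible}, over the trivial type $\Theta^\circ$ the logarithmic enhancement carries no extra data. The line--section pair is forced to be $(\cO(\sum\widetilde c_i\widetilde p_i),\text{canonical section})$, and the basic monoid is trivial, so the enhancement is unique and has no automorphisms. Conversely, a point of $U_{r,Z}$ has smooth source and nonzero section, so its type is $\Theta^\circ$. One must also rule out an enhancement of a point of $U_{r,Z}$ with a positive constant vertex height. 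Such an enhancement would make the section vanish identically, which contradicts nonvanishing of the section. Thus the functor $(K_r^+)^\circ\times_{\overline O_r^+}U_{r,Z}\to U_{r,Z}$ is fully faithful and essentially surjective in families, using \Cref{lem:DF-reconstruction} to compare line--section pairs over arbitrary bases. It is therefore an isomorphism, and this gives the displayed closure identity, the first part of \eqref{eq:positive-main-cycle}.

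Third, the cycle identity. The morphism $\omega_{W_r}^+$ is finite, so its image is closed. The image of $K_{W_r}^+$ is therefore the closure of $U_{r,Z}$ in $W_{r,Z}$. By \eqref{eq:zero-order-marking-pullbacks} and the definition \eqref{eq:main-locus-marking-base-change}, this closure, with reduced structure, is $Z_{W_r}^{\main}$. Both stacks are integral of dimension $n+\rho$: the source by \Cref{prop:positive-space-irreducible}, the target by \Cref{lem:genus-one-essential-types}. The map between them is finite and an isomorphism over a dense open, so it is birational with generic degree one. Proper pushforward of fundamental cycles then gives $(\omega_{W_r}^+)_*[K_{W_r}^+]=[Z_{W_r}^{\main}]$. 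This pushforward is legitimate in $A_*(-)_\bbQ$ because $n+\rho>0$ gives affine stabilizers, as in Step~4 of \Cref{prop:compatible-support}.

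The main obstacle is the second step: checking scheme-theoretically, over nonreduced bases, that there is no extra enhancement and no residual automorphism over $U_{r,Z}$. This needs the basic monoid of a family over $U_{r,Z}$ to be trivial, and the DF identification to pin down the unit. I would argue through the strict-base-change description of $K_r^+$ near $\Theta^\circ$: the Artin-fan stratum $\Acal(\Theta^\circ)$ is a point, so $q^{-1}\Acal(\Theta^\circ)$ is just the stack of smooth marked twisted curves. I would then compare this with $U_{r,Z}$, whose objects are determined by their curves.
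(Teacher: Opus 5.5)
Your proposal is correct and follows the paper's proof essentially step by step: finiteness from the marking-restoration square, reducedness and irreducibility of the nonempty open $K_{W_r}^+\subset K_r^+$ via \Cref{prop:positive-space-irreducible}, identification of the fibre over $U_{r,Z}$ with the dense trivial-type locus on which the marking divisor forces the line--section pair and the basic logarithmic structure, and then closed image plus generic degree one to push forward the fundamental cycle, with the affine-stabilizer check from \Cref{prop:compatible-support}. Your explicit exclusion of positive constant vertex heights, and your check of the isomorphism over nonreduced bases, spell out what the paper asserts in a single sentence.
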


\begin{proof}
Finiteness follows from \Cref{prop:positive-space-proper} and the
marking-restoration square of \Cref{cor:zero-order-marking-base-change}.
By definition $K_{W_r}^+$ is an open of the full-marked $K_r^+$.
It is nonempty, since the distinguished smooth-source, nonzero-section
locus lies over $W_0$.  Thus \Cref{prop:positive-space-irreducible} makes
it reduced and irreducible.  Its intersection with $(K_r^+)^\circ$ is
exactly the fibre over $U_{r,Z}$; on it, the marking divisor determines
the entire line--section pair and the basic logarithmic structure.
This gives the asserted isomorphism and a nonempty dense open in
$K_{W_r}^+$.

Properness makes the image closed.  Density and continuity show that it
is exactly the closure of $U_{r,Z}$, whose reduced structure is
$Z_{W_r}^{\main}$.  The isomorphism over $U_{r,Z}$ identifies the
generic residual gerbes, so the generic stack-theoretic degree is one.
Proper pushforward of the fundamental cycle therefore gives
\eqref{eq:positive-main-cycle}.

Since $\ell>0$, the affine-stabilizer verification in
\Cref{prop:compatible-support} applies to these full-marked spaces and
their logarithmic enhancements.  The auxiliary unmarked spaces were used
only to prove finiteness; no Chow pushforward on them is needed.
\end{proof}

\begin{remark}[Intersections with the other orbifold components]
The image is the main component, but its boundary may intersect other
orbifold components.  Such intersections are retained throughout.  In
particular, a full fibre product over $W_{r,Z}$ does not assert that the
BNR map is surjective onto every component of $W_{r,Z}$.
\end{remark}

\begin{proof}[Proof of \Cref{thm:main-component}]
The compatible finite-type opens and the constant-coefficient identity are
constructed in \Cref{prop:compatible-support}.  Irreducibility and reducedness
of $K_r^+$ are proved in \Cref{prop:positive-space-irreducible}; properness
over the chosen open, including after reintroducing the labelled markings in $Z$, is
\Cref{prop:positive-space-proper,cor:zero-order-marking-base-change}.  Finally,
\Cref{cor:positive-main-cycle} proves that the resulting morphism is an
isomorphism over Crumplin's distinguished locus of the trivial type and
pushes the fundamental cycle of $K_{W_r}^+$ to the reduced restriction of
$Z_{\main,r}$.  These statements give the
first equality in \eqref{eq:intro-main-component}, while
\eqref{eq:crumplin-constant} gives the second.
\end{proof}

On the negative branch define $W_r^-$ and $K_{W_r}^-$ by the
$2$-Cartesian squares below.  The notation $\Theta_r$ also denotes the
restriction to $W_r^-$, and $\Theta_r^!$ denotes the corresponding
base-changed refined Gysin operation
\[
 \Theta_r^!:A_k(W_{r,Z})_\bbQ\longrightarrow A_{k-m}(W_r^-)_\bbQ.
\]
These conventions apply at every root order under consideration.
\begin{equation}
\begin{tikzcd}
 W_r^-\ar[r]\ar[d,"\Theta_r"']
   & O_r^-\ar[d,"\Theta_r"]\\
 W_{r,Z}\ar[r,hook]&O_r^+,
\end{tikzcd}
\qquad
\begin{tikzcd}
 K_{W_r}^-\ar[r]\ar[d,"\omega_{W_r}^-"']
   & K_r^-\ar[d,"\omega_r^-"]\\
 W_r^-\ar[r,hook]&\overline O_r^-.
\end{tikzcd}
\label{eq:negative-chimera-open}
\end{equation}

Let
\begin{equation}
 \iota:\Dcal^m=(B\bbG_m)^m\hookrightarrow\Acal^m
\label{eq:product-zero-section}
\end{equation}
be the product zero section.  It is a representable regular embedding of
codimension $m$.  Order its factors by the inherited order on $P$.
Let
\[
 \mathrm{ev}_{P,r}:\overline O_r^+\longrightarrow\Acal^m
\]
be the classifying morphism of the ordered evaluation line--section pairs
$(E_{p,r},e_{p,r})_{p\in P}$ from \eqref{eq:evaluation-bundle}.
For each $p\in P$, let $h_{p,r}$ be the section of the base characteristic
sheaf $\overline M_{K_r^+}$ obtained by restricting the rooted target
characteristic to the marking $p$.  This restriction comes from the base
because the marking has contact order zero.  On a chart of rooted type
$\Theta$, with $v_p$ the vertex carrying $p$, it is the height covector
$\widetilde h_{v_p}$; its numerical value at a point of the cone is the
rooted vertex position $\widetilde x_{v_p}$.

\begin{proposition}[The BNR chimera as a zero-section locus]
\label{prop:chimera-zero-locus}
Under BNR's isomorphism $T_r^-\simeq T_r^+$ for the original and
positivised numerical data, the ordered puncturing-offset line--section
pairs $\operatorname{DF}_{M_{K_r^+}}(h_{p,r})$, $p\in P$, define a morphism
\begin{equation}
 \phi_r:K_r^+\longrightarrow\Acal^m.
\label{eq:offset-morphism}
\end{equation}
There is a canonical $2$-Cartesian square of classical algebraic stacks
\begin{equation}
\begin{tikzcd}
 K_r^-\ar[r,"\theta_r"]\ar[d]&K_r^+\ar[d,"\phi_r"]\\
 \Dcal^m\ar[r,"\iota"']&\Acal^m,
\end{tikzcd}
\label{eq:chimera-offset-square}
\end{equation}
where $K_r^-=\PunctOrb_{\Lambda_\Gamma}(\Acal_r\mid\Dcal_r)$ is BNR's
chimera for the original signed data.  The left vertical arrow classifies
the same offset lines with their sections set equal to zero, and $\theta_r$
is the induced morphism to the positive space.  There is a canonical
$2$-isomorphism
\[
 \phi_r\simeq\mathrm{ev}_{P,r}\circ\omega_r^+.
\]
For the following cycle identity, restrict all the stacks and morphisms
to any common finite face-closed support used below, and retain their
symbols for these restrictions.  The same convention applies to cycle
classes in the proof.  Its refined class is
\begin{equation}
 [K_r^-]^{\refc}=\theta_r^![K_r^+].
\label{eq:chimera-refined-class-global}
\end{equation}
Here $\theta_r^!$ is the refined pullback obtained by base change from the
regular embedding \eqref{eq:product-zero-section}.  The Cartesian square
is compatible with arbitrary base change, and the refined Gysin operation
is bivariant.  The displayed cycle identity is compatible with flat
pullback and with the compatible refined or virtual pullbacks used below;
in particular, it restricts to every finite face-closed cone substack under
consideration.
\end{proposition}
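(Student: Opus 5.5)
The plan is to derive the square \eqref{eq:chimera-offset-square} from BNR's own construction of the chimera as a classical puncturing locus inside the positivised space, and then to match the offset morphism with the evaluation morphism. This uses the Deligne--Faltings reconstruction of \Cref{lem:DF-reconstruction} together with the modular description of \Cref{prop:positive-space-modular-equivalence}.

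\emph{Step 1: the offset morphism.} By \Cref{prop:positive-space-modular-equivalence}, $K_r^+$ carries a basic logarithmic structure $M_{K_r^+}$. Each $p\in P$ has contact order $0$ and source index one. Hence the rooted target characteristic restricted along the marking section is pulled back from the base. On a chart of type $\Theta$ it is the height covector $\widetilde h_{v_p}$, and these charts glue because the covectors are compatible with face maps and graph automorphisms. Applying $\DF_{M_{K_r^+}}$ to the ordered sections $h_{p,r}$ gives $m$ line--section pairs, and therefore the morphism $\phi_r$ to $\Acal^m$.

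\emph{Step 2: the Cartesian square.} Under BNR's identification $T_r^-\simeq T_r^+$, the rooted cone of a negative type is obtained from the positive cone by imposing the puncturing condition at each $p$. That condition says the offset $\widetilde h_{v_p}$ is nonzero in the characteristic monoid and that the puncture's log structure is the prescribed subsheaf. BNR's classical puncturing construction \cite[Section~4.3.1, Definition~4.4]{BNR} defines $K_r^-$ precisely as the substack of $K_r^+$ on which the images of the $h_{p,r}$ under the structure map $\alpha$ vanish. By \Cref{lem:DF-reconstruction}, vanishing of $\alpha$ on a lift of $h_{p,r}$ is exactly vanishing of the section of $\DF(h_{p,r})$. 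This locus is therefore $\phi_r^{-1}(\Dcal^m)$ scheme-theoretically, and equally over nonreduced bases, because the DF reconstruction commutes with arbitrary base change. To upgrade this to a $2$-Cartesian square, I would check on objects and arrows that the puncturing data are determined by the positive enhancement together with this vanishing. Since puncturing adds no new generators to the basic monoid, the data amount to an isomorphism of offset pairs with zero-section pairs, which is exactly the datum of the fibre product with $\iota$. Base-change compatibility then follows from that of $\DF$ and of the two moduli functors.

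\emph{Step 3: $\phi_r\simeq\mathrm{ev}_{P,r}\circ\omega_r^+$.} The logarithmic map $\widetilde f$ gives an isomorphism $(\cL,s)\simeq\DF_{M_\cC}(h_r)$. Pull this back along the marking section $p$. There the characteristic of the log curve equals the base characteristic, since no marking generator contributes at contact zero. The result is a canonical isomorphism $(p^*\cL,p^*s)\simeq\DF(h_{p,r})$. Its left side is $(E_{p,r},e_{p,r})$ pulled back along $\omega_r^+$. Naturality in arrows gives the $2$-isomorphism.

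\emph{Step 4: the refined class.} Restrict to a finite face-closed support, where $K_r^+$ is pure-dimensional of dimension $\ell$ by \Cref{prop:positive-space-irreducible}. BNR's refined class \cite[Definition~1.13]{BNR} is defined as the refined intersection of the fundamental class of the positive space (their base cycle) with the zero section along the offset morphism. Given Steps 2--3, \eqref{eq:chimera-refined-class-global} is therefore a matter of matching definitions, provided their puncturing Artin fan is identified with our $\Acal^m$ via the ordered offsets. Compatibility with flat pullback, with refined and virtual pullbacks, and with restriction to face-closed substacks follows from bivariance of $\iota^!$ \cite[Section~6]{Fulton98} and from the base-change statement of Step 2.

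The main obstacle is Step 2. The difficulty is ensuring that BNR's puncturing is exactly the classical zero locus, with no saturation or nonreduced discrepancy, and that it carries the correct $2$-categorical data. I would first try to verify it chartwise on Artin cones $[\Spec\bbC[Q_\Theta]/T]$, where the condition becomes the monomial ideal generated by the $\widetilde h_{v_p}$.
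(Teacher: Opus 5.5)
Your Steps 1 and 3 agree with the paper. The section $h_{p,r}$ comes from the base because the former puncture has contact zero, and restricting $(\cL,s)\simeq\DF(h_r)$ to the marking gives $\phi_r\simeq\mathrm{ev}_{P,r}\circ\omega_r^+$.

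The gap is in Step 2. There you take $K_r^-$ to be, by BNR's definition, the closed substack of $K_r^+$ on which the offsets vanish. That is not how the chimera is built: BNR present it over the \emph{negative} twisted-curve stack, $K_r^-\simeq\Mfrak^{\mathrm{tw},-}_{1,n+\rho}\times_{\Acal(\mathsf M^{\mathrm{trop,tw},-})}V(T_r^-)$.
\begin{itemize}
\item On the negative side, the source has a $\mu_{s_p}$-gerbe at each puncture, with $s_p=r/d_p$. The root line $L$ has character $-1$ there and degree $D\cdot\beta/r$.
\item On the positive side, $K_r^+$ has untwisted markings at $P$ and a root line $M$ of degree $d^+/r$.
\end{itemize}
So $\theta_r$ is not an inclusion. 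It coarsens the marking roots and twists, $L=q^*M(-\sum_p\widetilde p)$. The content of the Cartesian square is that this is an equivalence onto the positive objects whose sections vanish at $P$.

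That equivalence is the case $k_p=1$ of \Cref{lem:marking-picard-equivalence}. The relation $q^*s_+=s_-\prod_p\vartheta_p$ identifies sections of $L$ with sections of $M$ vanishing scheme-theoretically at the former punctures, functorially over nonreduced bases. It also transports the negative $r$th-power equation to $s_+^{\otimes r}=a\prod_px_p^{d_p}$, which is exactly the power equation of $\Lambda_\Gamma^+$.

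In the paper, your Step 3 is the bridge to this. Offset vanishing equals evaluation vanishing, and \eqref{eq:section-descent-with-vanishing} then reconstructs the negative root pair uniquely by dividing by the $\vartheta_p$. Your remark that puncturing adds no monoid generators covers only the logarithmic side, not this change of source and root line. As written, the identification of the zero fibre with $K_r^-$, and even the construction of $\theta_r$, is assumed rather than proved.

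The obstacle you anticipate is also misplaced. Under BNR's isomorphism $T_r^-\simeq T_r^+$, the cones, continuity and circuit equations, and saturation are unchanged; only vertex degrees shift by the adjacent $d_p$. The negative cones are therefore not cut out by requiring $\widetilde h_{v_p}\neq0$, which would contradict that isomorphism. The puncturing condition enters only through the classical fibre product with $\Dcal^m$, so no chartwise saturation or nonreducedness analysis is needed.

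For Step 4, the class is BNR's $\iota^![\Acal(T_r^-)]$ transported by smooth pullback along the Cartesian presentation. Once the curve factors are matched by the marking coarsening, it equals $\theta_r^![K_r^+]$. This holds because $[K_r^+]$ is the smooth pullback of the Artin-fan fundamental class, and refined Gysin pullback commutes with flat pullback.
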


\begin{proof}
Write $\Mfrak^{\mathrm{tw},-}_{1,n+\rho}$ for the stack of genus-one
twisted prestable curves with all $n+\rho$ labelled markings and the
marking indices prescribed by the original signed datum at order $r$.
Write $\mathsf M^{\mathrm{trop,tw},-}$ for the corresponding sector of
twisted tropical curves.  These are the negative counterparts of the
curve factors in \eqref{eq:positive-space-fibre-product}.
BNR define the rooted puncturing substack as the classical fibre product
\[
 V(T_r^-)=\Acal(T_r^-)\times_{\Acal^m}\Dcal^m
\]
for the ordered puncturing-offset morphism, and define its refined class by
\begin{equation}
 [V(T_r^-)]^{\refc}=\iota^![\Acal(T_r^-)].
\label{eq:BNR-offset-refined-class}
\end{equation}
Their all-genus Cartesian presentation of the chimera then gives
\begin{equation}
 K_r^-\simeq
 \Mfrak^{\mathrm{tw},-}_{1,n+\rho}
 \mathop{\times}^{(2)}_{\Acal(\mathsf M^{\mathrm{trop,tw},-})}
 V(T_r^-)
\label{eq:BNR-chimera-presentation}
\end{equation}
\cite[Section~3.3 and Lemma~3.18]{BNR}.  The corresponding positive
presentation is \eqref{eq:positive-space-fibre-product}.

The two twisted-curve factors are not literally identical, because the
positivised datum replaces the source index at a former puncture by one.
Work over a test scheme $S$.  For $p\in P$, put $s_p=r/d_p$.
Let $\cC^-\to S$ be the negative twisted source, and let
$q:\cC^-\to\cC^+$ coarsen only the marking roots indexed by $P$.
Write $(L,s_-)$ for the negative root line--section pair and $(M,s_+)$
for the pair obtained on $\cC^+$ by the marking twist and descent.
Let $\widetilde p\subset\cC^-$ be the marking root divisor,
$\vartheta_p$ the tautological section of $\cO_{\cC^-}(\widetilde p)$,
and $x_p$ the canonical section of $\cO_{\cC^+}(p)$; thus
$\vartheta_p^{s_p}=q^*x_p$.  Finally, let $(A,a)$ be the negative
$r$th-power line--section pair descended to $\cC^+$, with the chosen
identification $(L,s_-)^{\otimes r}\simeq q^*(A,a)$.
The negative root line has character $-1\pmod{s_p}$ at the marking
gerbe.  Apply the case $k_p=1$ of
\Cref{lem:marking-picard-equivalence}.  It gives an equivalence, on objects,
arrows, and automorphism group schemes,
\begin{equation}
 L=q^*M\otimes\cO_{\cC^-}
       \left(-\sum_{p\in P}\widetilde p\right),
 \qquad
 q^*s_+=s_-\prod_{p\in P}\vartheta_p,
\label{eq:chimera-coarsen-twist}
\end{equation}
These identifications commute with base change.  Since $r/s_p=d_p$,
the chosen power equation
is carried to
\begin{equation}
 s_+^{\otimes r}=a\prod_{p\in P}x_p^{d_p}
 \quad\text{under }M^{\otimes r}\simeq
 A\Bigl(\sum_{p\in P}d_pp\Bigr),
\label{eq:chimera-positive-power}
\end{equation}
which is exactly the power equation associated with
$\Lambda_\Gamma^+$.

BNR's isomorphism $T_r^-\simeq T_r^+$ changes the vertex degree by the
contributions of the adjacent former punctures and leaves the cone,
continuity equations, circuit equations, saturation, and specialization
maps unchanged \cite[Section~4.3.1, Lemma~4.5]{BNR}.  Its proof is
local on the graph and does not use that the dual graph is a tree.  Together
with \Cref{lem:DF-reconstruction} and
\eqref{eq:chimera-coarsen-twist}, this identifies the complete logarithmic
and line--section data on the two sides, including units, line gluing, arrows,
and automorphisms.

It remains to identify the zero equations.  In terms of the characteristic
section $h_{p,r}$ defined above, the BNR gerby puncturing-offset pair on
$K_r^+$ is
\begin{equation}
 (F_{p,r},u_{p,r})
 :=\operatorname{DF}_{M_{K_r^+}}(h_{p,r}).
\label{eq:offset-pair}
\end{equation}
The positive target characteristic at the former puncture is exactly
$h_{p,r}$, because its marking contact is zero.  Deligne--Faltings
reconstruction and restriction to the marking therefore give a canonical
isomorphism of line--section pairs
\begin{equation}
 (\omega_r^+)^*(E_{p,r},e_{p,r})
 \simeq(F_{p,r},u_{p,r}),
\label{eq:evaluation-offset-identification}
\end{equation}
compatible with arrows and base change.  Thus the scheme-theoretic zero
fibre of the ordered offset morphism is precisely the positive
line--section locus whose evaluation at every former puncture vanishes.
By \eqref{eq:section-descent-with-vanishing}, division by the tautological
marking-root sections reconstructs the negative data uniquely.  Hence this
zero fibre is the classical BNR chimera, proving
\eqref{eq:chimera-offset-square}.

Finally, smooth pullback of
\eqref{eq:BNR-offset-refined-class} through the Cartesian presentation is
the refined zero-section pullback on $K_r^+$.  This proves
\eqref{eq:chimera-refined-class-global}.  Nothing here uses BNR's
genus-zero isomorphism between the chimera and the whole ordinary orbifold
space, and no derived enhancement of the chimera is asserted.
\end{proof}

\begin{remark}[Connection with BNR's classical chimera]
The puncturing condition is BNR's classical fibre product with the zero
section in $\Acal^m$; its refined class is the ordinary refined Gysin
pullback identified above.
\end{remark}

The evaluation-offset identification
\eqref{eq:evaluation-offset-identification} identifies the inverse image
of $K_{W_r}^+$ under $\theta_r$ with the space $K_{W_r}^-$ defined in
\eqref{eq:negative-chimera-open}; the following lemma records the resulting
Cartesian square.  Restrict \eqref{eq:chimera-offset-square} accordingly.
Write
\[
 \theta_{W_r}:K_{W_r}^-\longrightarrow K_{W_r}^+
\]
for the resulting morphism and set
\begin{equation}
 [K_{W_r}^-]^{\refc}:=\theta_{W_r}^![K_{W_r}^+]
 \quad\text{in }A_*(K_{W_r}^-)_\bbQ.
\label{eq:chimera-refined-class}
\end{equation}
This is the restriction of BNR's refined class; no Chow class on the
unrestricted, generally non-quasi-compact chimera is being asserted.

\begin{lemma}[The evaluation and puncturing-offset zero-section squares]
\label{lem:chimera-zero-section-square}
The square of classical stacks
\begin{equation}
\begin{tikzcd}
 K_r^-\ar[r,"\theta_r"]\ar[d,"\omega_r^-"']&
 K_r^+\ar[d,"\omega_r^+"]\\
 \overline O_r^-\ar[r,"\overline\Theta_r"']&\overline O_r^+
\end{tikzcd}
\label{eq:chimera-zero-section-square}
\end{equation}
is $2$-Cartesian.  On the chosen finite-type open,
\begin{equation}
 (\omega_{W_r}^-)_*[K_{W_r}^-]^{\refc}
 =\Theta_r^![Z_{W_r}^{\main}]
 \quad\text{in }A_*(W_r^-)_\bbQ.
\label{eq:negative-main-cycle}
\end{equation}
\end{lemma}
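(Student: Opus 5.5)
The proof has two parts: the $2$-Cartesian property of \eqref{eq:chimera-zero-section-square}, and the pushforward identity \eqref{eq:negative-main-cycle}. For the first part we paste squares. By \Cref{prop:negative-zero-locus}, $\overline\Theta_r$ identifies $\overline O_r^-$ with the scheme-theoretic zero locus $Z(e_r)$. This zero locus is itself the $2$-fibre product of $\mathrm{ev}_{P,r}:\overline O_r^+\to\Acal^m$ with the zero section $\iota:\Dcal^m\to\Acal^m$, because a map to $\Acal^m$ factors through $\Dcal^m$ exactly when the sections vanish. Pulling this square back along $\omega_r^+$ gives $K_r^+\times_{\overline O_r^+}\overline O_r^-\simeq K_r^+\times_{\Acal^m}\Dcal^m$, using the $2$-isomorphism $\phi_r\simeq\mathrm{ev}_{P,r}\circ\omega_r^+$. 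By \Cref{prop:chimera-zero-locus}, the right-hand side is $K_r^-$ via $\theta_r$.

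It remains to check that the induced arrow $K_r^-\to\overline O_r^-$ is $\omega_r^-$. Both are obtained by the same marking twist of \Cref{lem:marking-picard-equivalence} with $k_p=1$. In \Cref{prop:negative-zero-locus} this twist produces $\overline\Theta_r$; in the proof of \Cref{prop:chimera-zero-locus} it is \eqref{eq:chimera-coarsen-twist}. Both retain the same power equation \eqref{eq:chimera-positive-power}, so the two constructions agree on objects and arrows, and the identification is functorial in the base. Restricting along $W_{r,Z}\hookrightarrow O_r^+\subset\overline O_r^+$ then gives a $2$-Cartesian square
\[
\begin{tikzcd}
 K_{W_r}^-\ar[r,"\theta_{W_r}"]\ar[d,"\omega_{W_r}^-"'] & K_{W_r}^+\ar[d,"\omega_{W_r}^+"]\\
 W_r^-\ar[r,"\Theta_r"'] & W_{r,Z}.
\end{tikzcd}
\]
In this square both horizontal refined pullbacks are induced from the same regular embedding. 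On the bottom row it is the zero section of $E_r$, equivalently $\iota$ pulled back along $\mathrm{ev}_{P,r}$; on the top row it is $\iota$ pulled back along $\phi_r$.

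For the cycle identity we use that $\omega_{W_r}^+$ is finite, hence proper, by \Cref{cor:positive-main-cycle}. Refined Gysin pullback commutes with proper pushforward in a Cartesian square \cite[Theorem~6.2]{Fulton98}; in the stacky setting we invoke the rational Chow formalism of Kresch, which applies because $n+\rho>0$ gives affine stabilizers, as in Step~4 of \Cref{prop:compatible-support}. Therefore
\[
(\omega_{W_r}^-)_*\theta_{W_r}^![K_{W_r}^+]=\Theta_r^!(\omega_{W_r}^+)_*[K_{W_r}^+].
\]
The left side is $(\omega_{W_r}^-)_*[K_{W_r}^-]^{\refc}$ by \eqref{eq:chimera-refined-class}. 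The right side is $\Theta_r^![Z_{W_r}^{\main}]$ by \eqref{eq:positive-main-cycle}.

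The main obstacle is a compatibility check: the refined operator $\theta_{W_r}^!$ in \eqref{eq:chimera-refined-class}, defined via $\phi_r$ and $\iota$, must coincide with the operator obtained by base-changing the zero section of $E_r$ along $\omega_{W_r}^+$. This means identifying the normal bundles and their $2$-isomorphisms: the pullback of the bundle $E_r$ is the bundle of offset lines $F_{p,r}$ via \eqref{eq:evaluation-offset-identification}. Since the excess bundle and the specialization to the normal cone are both determined by the line–section pairs up to this canonical isomorphism, the two Gysin maps agree. I would verify this one summand at a time on local trivializations.
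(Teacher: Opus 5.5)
Your proposal is correct and follows essentially the same route as the paper. Both arguments identify each row as a base change of the product zero section $\iota$, and use the $2$-isomorphism $\phi_r\simeq\mathrm{ev}_{P,r}\circ\omega_r^+$ coming from \eqref{eq:evaluation-offset-identification} to obtain the $2$-Cartesian square. Both then commute $\theta_{W_r}^!$ past the finite pushforward $(\omega_{W_r}^+)_*$ and finish with \eqref{eq:positive-main-cycle}.

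The paper leaves two points implicit that you check explicitly. The first is that the induced vertical arrow is $\omega_r^-$. The second is that the two refined Gysin operators agree; this holds because the zero section of $\mathbb V(E_r)$ is a smooth base change of $\iota$ with matching normal bundle. Your treatment of both is sound.
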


\begin{proof}
Proposition~\ref{prop:negative-zero-locus} identifies the bottom row as the
base change of the product zero section by the ordered evaluation
line--section pairs.  Proposition~\ref{prop:chimera-zero-locus} identifies
the top row as the base change of the same zero section by the ordered
puncturing-offset pairs.  The isomorphisms
\eqref{eq:evaluation-offset-identification} give the $2$-isomorphism
$\phi_r\simeq\mathrm{ev}_{P,r}\circ\omega_r^+$, so the displayed square
is $2$-Cartesian.

On the selected open, $\omega_{W_r}^+$ is finite, hence projective, by
\Cref{prop:positive-space-proper}; its base change
$\omega_{W_r}^-$ is also finite.  Compatibility of refined Gysin pullback
with projective pushforward gives
\[
\begin{aligned}
 (\omega_{W_r}^-)_*[K_{W_r}^-]^{\refc}
 &= (\omega_{W_r}^-)_*\theta_{W_r}^![K_{W_r}^+]\\
 &= \Theta_r^!(\omega_{W_r}^+)_*[K_{W_r}^+]\\
 &= \Theta_r^![Z_{W_r}^{\main}],
\end{aligned}
\]
where the last equality is \eqref{eq:positive-main-cycle}.
\end{proof}

For $R=\lambda r$, define $W_R^-$ by the $2$-Cartesian square
\begin{equation}
\begin{tikzcd}
 W_R^-\ar[r]\ar[d,"\pi_{R,r}^-"']
   & \overline O_R^-\ar[d,"\overline\pi_{R,r}^-"]\\
 W_r^-\ar[r,hook]
   & \overline O_r^- .
\end{tikzcd}
\label{eq:full-negative-open}
\end{equation}
Compatibility of root-order comparison with $\Theta_R$ and $\Theta_r$
also gives a canonical equivalence
\[
 W_R^-\simeq O_R^-\mathop{\times}_{O_R^+}W_{R,Z},
\]
so the same space is the negative zero-locus base change of $W_{R,Z}$.
For $N\in\{r,R\}$, the notation $W_N^-$ denotes the corresponding
space just defined, and $\Theta_N:W_N^-\to W_{N,Z}$ its zero-locus
morphism.  Combining
\Cref{lem:power-gysin,prop:compatible-support,lem:chimera-zero-section-square} gives the
universal identity needed later.

\begin{proposition}[Universal genus-one identity]
\label{prop:universal-identity}
For fixed $r$, the class-valued function
\[
 \lambda\longmapsto
 (\lambda r)^m(\pi_{\lambda r,r}^-)_*[W_{\lambda r}^-]^{\vir}
\]
agrees, for all sufficiently large positive integers $\lambda$, with a
polynomial in $\lambda$ with coefficients in $A_*(W_r^-)_\bbQ$.
Its constant coefficient satisfies
\begin{equation}
 \CT_\lambda\!\left(
 (\lambda r)^m(\pi_{\lambda r,r}^-)_*
 [W_{\lambda r}^-]^{\vir}\right)
 =r^m(\omega_{W_r}^-)_*[K_{W_r}^-]^{\refc}
\label{eq:universal-identity}
\end{equation}
in $A_*(W_r^-)_\bbQ$.
\end{proposition}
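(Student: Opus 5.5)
The plan is to combine the three cited results in a single chain. First, apply \Cref{lem:power-gysin} with $U_r^+=W_{r,Z}$. Its base change $U_R^+$ is then $W_{R,Z}$ by \eqref{eq:full-high-open}, and the required properness of $f=\pi_{R,r}^+$ is \Cref{lem:proper-root-comparison}. The spaces $U_N^-$ of that lemma are identified with $W_N^-$. At order $r$ this holds by \eqref{eq:negative-chimera-open}, and at order $R$ by the canonical equivalence $W_R^-\simeq O_R^-\times_{O_R^+}W_{R,Z}$ recorded before the statement. That equivalence comes from the $2$-commutative square of \Cref{lem:root-change-evaluation}, and I would check that it makes $\pi_{R,r}^-$ of \eqref{eq:full-negative-open} agree with the restricted negative comparison used in \Cref{lem:power-gysin}. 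The virtual class on $W_R^-$ is the one provided by \Cref{prop:negative-zero-locus}, namely $[W_R^-]^{\vir}=\Theta_R^![W_{R,Z}]^{\vir}$. Equation \eqref{eq:scaled-root-change} then gives, for every $\lambda\geq1$,
\[
 (\lambda r)^m(\pi_{\lambda r,r}^-)_*[W_{\lambda r}^-]^{\vir}
 =r^m\,\Theta_r^!\,Q_Z(\lambda).
\]

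Next, insert the explicit formula \eqref{eq:explicit-genus-one-polynomial} from \Cref{prop:compatible-support}. Since $\Theta_r^!$ is $\bbQ$-linear, the right-hand side becomes
\[
 r^m\Theta_r^![Z_{W_r}^{\main}]
 +\lambda\, r^m\Theta_r^!\bigl([W_{r,Z}]^{\vir}-[Z_{W_r}^{\main}]\bigr).
\]
This is a polynomial of degree at most one in $\lambda$, with coefficients in $A_*(W_r^-)_\bbQ$. It holds for all $\lambda\geq1$, not merely for large $\lambda$, and its constant coefficient is $r^m\Theta_r^![Z_{W_r}^{\main}]$. The constant coefficient is well defined because a polynomial agreeing with the function for all large integers is unique. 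Finally, \eqref{eq:negative-main-cycle} of \Cref{lem:chimera-zero-section-square} rewrites $\Theta_r^![Z_{W_r}^{\main}]$ as $(\omega_{W_r}^-)_*[K_{W_r}^-]^{\refc}$, which gives \eqref{eq:universal-identity}.

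The main obstacle is bookkeeping rather than new geometry. I must confirm three compatibilities. First, the refined operator $\Theta_r^!$ on $A_*(W_{r,Z})_\bbQ$ used in \Cref{lem:power-gysin} must be the same operator that appears in \Cref{lem:chimera-zero-section-square}. Both are base changes of the zero section of $E_r$, so this should hold. Second, the cycle $[Z_{W_r}^{\main}]$ must be taken in the same Chow group in both places. Third, the standing hypothesis $n+\rho>0$ must be in force, so that the Chow pushforwards of \Cref{prop:compatible-support} exist. This is automatic when $m>0$; when $m=0$ the statement is either tautological or deferred to Section~6. I would also note that the hypothesis $d_p\mid N$ of \Cref{prop:negative-zero-locus} holds at $N=\lambda r$ because $|\mu_i|\mid r$ by \eqref{eq:large-r}.
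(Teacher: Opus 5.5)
Your proposal is correct and follows the paper's own proof essentially step for step. \Cref{prop:negative-zero-locus} on the finite-type open, together with \eqref{eq:scaled-root-change} (with properness of $\pi_{R,r}^+$ from \Cref{lem:proper-root-comparison}), reduces the function to $r^m\Theta_r^!Q_Z(\lambda)$; linearity of $\Theta_r^!$, the constant-coefficient identity of \Cref{prop:compatible-support}, and \eqref{eq:negative-main-cycle} then finish the argument, with your use of the explicit formula \eqref{eq:explicit-genus-one-polynomial} (giving polynomiality for every $\lambda\geq1$) and your identification of $W_R^-$, $\pi_{R,r}^-$ with the spaces of \Cref{lem:power-gysin} being only minor refinements of the same route.
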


\begin{proof}
Put $R=\lambda r$.  Proposition \ref{prop:negative-zero-locus}, restricted to
the finite-type open, gives
\begin{equation}
 [W_R^-]^{\vir}=\Theta_R^![W_{R,Z}]^{\vir}.
 \label{eq:revised-display-19}
\end{equation}
The positive comparison is proper by
Lemma~\ref{lem:proper-root-comparison}.  The negative comparison factors
through the closed nilpotent immersion into the base change of that
proper map in \eqref{eq:genuine-power-cartesian-square}, and is therefore
proper as well.  Applying the power-section formula
\eqref{eq:scaled-root-change} to \eqref{eq:revised-display-19} yields,
for each positive integer $\lambda$,
\begin{equation}
 R^m(\pi_{R,r}^-)_*[W_R^-]^{\vir}
 =r^m\Theta_r^!(\pi_{R,r}^+)_*[W_{R,Z}]^{\vir}.
 \label{eq:revised-display-20}
\end{equation}

The right-hand side is eventually polynomial in $\lambda$ by
\Cref{prop:compatible-support}, proving the polynomiality assertion.
The refined Gysin map $\Theta_r^!$ is a fixed homomorphism of rational Chow
groups; in particular it commutes with finite sums and with extraction of
the coefficient of $\lambda^0$.  Taking constant coefficients in
\eqref{eq:revised-display-20}, using \eqref{eq:crumplin-constant}, and then using
\eqref{eq:negative-main-cycle}, gives the complete chain
\[
 \begin{aligned}
 &\CT_\lambda\!\left(
   (\lambda r)^m(\pi_{\lambda r,r}^-)_*
   [W_{\lambda r}^-]^{\vir}\right)\\
 &\quad =r^m\Theta_r^!\CT_\lambda\!\left(
   (\pi_{\lambda r,r}^+)_*[W_{\lambda r,Z}]^{\vir}
   \right)\\
 &\quad =r^m\Theta_r^![Z_{W_r}^{\main}]\\
 &\quad =r^m(\omega_{W_r}^-)_*[K_{W_r}^-]^{\refc}.
 \end{aligned}
\]
This is precisely \eqref{eq:universal-identity} in
$A_*(W_r^-)_\mathbb Q$.
\end{proof}

\begin{remark}[The bridge from the main component to negative contacts]
This identity is the point at which the two independent ingredients of the
paper meet.  Crumplin's genus-one component calculation supplies the constant
term on the positive universal space, while the BNR zero-section description
and Lemma~\ref{lem:power-gysin} transfer it to the original signed data.  The
remaining steps of the paper transport this universal identity to the
geometric pair $(X,D)$ and then apply BNR's root-forgetting normalization.
\end{remark}

\section{Passage from the universal target to \texorpdfstring{$(X,D)$}{(X,D)}}
\label{sec:geometric}

The preceding calculation takes place over the universal line--section target.
This section proves that the relevant fibre products and obstruction theories
are compatible with the classifying map of $(X,D)$, obtains the geometric
identity, and then applies BNR's all-genus root-forgetting pushforward to prove the comparison in
\Cref{thm:main}.

\subsection{Fibre products and comparison between root orders}

Crumplin's Lemma~4.7 gives the analogous universal/geometric Cartesian square
with compatible obstruction theories for the orbifold spaces associated with
the corresponding nonnegative contact data.  The
present section proves the version needed here because our spaces are the
classical zero-section loci inside the spaces associated with the positivised
numerical data and are restricted by the finite-type $2$-base-change squares defined above.

Let
\[
 \mathcal U_X:=\Mbar_{1,n+\rho}(X,\beta)
\]
be the Kontsevich stack fixed in \Cref{sec:setup}.  Let $\mathcal U_A$ be the
Artin stack whose objects are connected, genus-one, $(n+\rho)$-pointed
prestable curves $(C,\mathbf p)$ together with a line--section pair $(L,s)$
of degree $D\cdot\beta$; no stability condition is imposed on this universal
target stack.  The classifying morphism is
\begin{equation}
 u:\mathcal U_X\longrightarrow\mathcal U_A,
 \qquad
 (C,\mathbf p,f)\longmapsto
 (C,\mathbf p;f^*\cO_X(D),f^*s_D).
\label{eq:classifying-map}
\end{equation}
The universal punctured stack and the universal high-age stacks map to
$\mathcal U_A$ by forgetting their logarithmic or root structure and
retaining the coarse curve and descended line--section pair.

For every root order $N$ satisfying the standing assumptions (in particular,
$N=r,R$), write
\[
 O_{X,N}^-:=\Mbar_\Gamma(X_{D,N}).
\]
The coarse line bundle associated with the positivised numerical data,
\[
 f^*\cO_X(D)\Bigl(\sum_{p\in P}d_pp\Bigr),
\]
is independent of $N$.  Because $W_0$ was defined from the geometric data and contains the
complete geometric image, the universal morphism
from $O_{X,N}^-$ factors scheme-theoretically through the corresponding open
$W_N^-$.  Denote this factorisation by
\[
 \psi_N:O_{X,N}^-\longrightarrow W_N^-.
\]
Let $K_{X,r}^-$ be the base change over the finite-type open defined above of the BNR chimera stack along
the classifying map $\mathcal U_X\to\mathcal U_A$.  Because the complete
geometric image lies in $W_r^-$, this base change over that finite-type open is the full
geometric BNR chimera stack over the chosen finite-type support.  Let
$\omega_{X,r}^-:K_{X,r}^-\to O_{X,r}^-$ forget its logarithmic enhancement,
and write $\psi_{K,r}:K_{X,r}^-\to K_{W_r}^-$ for its universal projection.
Finally, write
\[
 \psi_P:\Punct_\Gamma(X\mid D)\longrightarrow
 \Punct_{\Lambda_\Gamma}(\Acal\mid\Dcal)
\]
for the universal punctured projection.

\begin{lemma}[Fibre-product descriptions]
\label{lem:geometric-fibre-products}
There are canonical equivalences
\begin{equation}
 \Punct_\Gamma(X\mid D)
 \simeq\mathcal U_X\times_{\mathcal U_A}
              \Punct_{\Lambda_\Gamma}(\Acal\mid\Dcal),
 \qquad
 O_{X,N}^-\simeq\mathcal U_X\times_{\mathcal U_A}W_N^-
 \quad(N=r,R).
\label{eq:geometric-universal-products}
\end{equation}
Moreover,
\begin{equation}
 K_{X,r}^-
 \simeq\mathcal U_X\times_{\mathcal U_A}K_{W_r}^-
 \simeq O_{X,r}^-\times_{W_r^-}K_{W_r}^-.
\label{eq:geometric-chimera-product}
\end{equation}
In particular, the following square is $2$-Cartesian:
\begin{equation}
\begin{tikzcd}
 K_{X,r}^-\ar[r,"\psi_{K,r}"]\ar[d,"\omega_{X,r}^-"']
   & K_{W_r}^-\ar[d,"\omega_{W_r}^-"]\\
 O_{X,r}^-\ar[r,"\psi_r"']&W_r^-.
\end{tikzcd}
\label{eq:geometric-chimera-cartesian-diagram}
\end{equation}
These are equivalences of stacks, including automorphism group schemes, and
commute with arbitrary base change.
\end{lemma}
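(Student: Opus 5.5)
The plan is to reduce each equivalence to the fact that the target $(X,D)$ enters every moduli problem only through the classifying morphism $(X,D)\to(\Acal,\Dcal)$, which is strict. For the punctured stack I would use the standard ACGS/BNR description. A punctured map to $(X\mid D)$ consists of an ordinary stable map $f$ to $X$ together with a punctured logarithmic enhancement of the composite to $\Acal$. Because $(X,D)\to(\Acal,\Dcal)$ is strict, enhancing $f$ is the same as enhancing the composite, i.e.\ the pair $(f^*\cO_X(D),f^*s_D)$. By \Cref{lem:DF-reconstruction}, such an enhancement is determined by characteristic data together with a Deligne--Faltings identification of exactly this line--section pair. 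This gives the first equivalence in \eqref{eq:geometric-universal-products}, including the comparison of arrows. I would also check that stability agrees on both sides: the universal stack is unstabilised, while stability of the Kontsevich map is pulled back from $\mathcal U_X$. I would cite the strict-base-change arguments of ACGS/BNR rather than reprove them.

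For the orbifold spaces I would run the same argument at the level of root stacks. Since $X_{D,N}=X\times_{\Acal}\Acal_N$, a representable twisted map to $X_{D,N}$ amounts to a twisted curve, a map to $X$, and an $N$th root pair of the pulled-back pair $(f^*\cO_X(D),f^*s_D)$ with the prescribed characters. This identifies $\Mbar_\Gamma(X_{D,N})$ with $\mathcal U_X\times_{\mathcal U_A}\overline O_N^-$. It remains to show that this fibre product factors through the open $W_N^-$. Here I use the twist-and-coarsen map $\overline\Theta_N$ of \Cref{prop:negative-zero-locus}: the descended positive pair is \eqref{eq:canonical-positive-pair}, whose image lies in $W_0$ by \Cref{lem:uniform-slope-bound}, and the subcurve root degrees are at most $B_0/N<1/2$. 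Hence \eqref{eq:crumplin-degree-requirement} imposes no condition, the geometric preimage of the open is everything, and intersecting with the open changes nothing. For $N=R$ one additional step is needed: $W_R^-$ is defined by base change along $\overline\pi_{R,r}^-$, so I must also show that the geometric image at order $R$ lands over $W_r^-$. This follows because tensor power and coarsening preserve the coarse curve and descended pair, using \eqref{eq:full-high-open} and \Cref{lem:root-change-evaluation}.

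For \eqref{eq:geometric-chimera-product}, the first equivalence is the definition of $K_{X,r}^-$ combined with the observation, proved exactly as above, that the complete geometric image lies in $W_r^-$. The second equivalence follows by pasting $2$-Cartesian squares: we have $\mathcal U_X\times_{\mathcal U_A}K_{W_r}^-\simeq(\mathcal U_X\times_{\mathcal U_A}W_r^-)\times_{W_r^-}K_{W_r}^-$, and the inner factor is $O_{X,r}^-$ by the previous step. The square \eqref{eq:geometric-chimera-cartesian-diagram} is then this pasting. Base-change compatibility and the identification of automorphism groups are inherited from the fact that every identification used (Deligne--Faltings reconstruction, the marking twist of \Cref{lem:marking-picard-equivalence}, root stacks as fibre products) is an equivalence of fibred categories.

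The step I expect to be the main obstacle is making the factorisation through $\mathcal U_A$ precise as $2$-fibre products. The maps to $\mathcal U_A$ forget the root data but retain the descended coarse pair, and they require coarsening and descent at the punctures. Coarse stabilisation must also not interfere, since $\mathcal U_A$ is unstabilised while $B_\Gamma$ is stabilised. To handle this, I would first check on $S$-points that descended pairs commute with base change (tame descent, as in \Cref{lem:marking-picard-equivalence}). Then I would verify full faithfulness on arrows using $q_*\cO_\cC=\cO_C$.
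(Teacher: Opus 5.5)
Your proposal is correct and follows the paper's proof essentially step for step. For the punctured stack you use strictness of the classifying map $(X,D)\to(\Acal,\Dcal)$. For the orbifold stack you use the Cartesian square $X_{D,N}=X\times_{\Acal}\Acal_N$ together with containment of the whole geometric image in the bounded open, and for the chimera you use the definition of $K_{X,r}^-$ followed by associativity of $2$-fibre products. Your separate check of the order-$R$ case through $\overline\pi^-_{R,r}$ only spells out what the paper compresses into the remark that $W_0$ contains every descended pair arising from a stable map of class $\beta$.
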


\begin{proof}
We prove the assertions as equivalences of fibre groupoids.  Let $S$ be a
scheme.  An object of
\begin{equation}
 \mathcal U_X\times_{\mathcal U_A}
 \Punct_{\Lambda_\Gamma}(\Acal\mid\Dcal)
 \label{eq:revised-display-21}
\end{equation}
consists of a stable map $f:C\to X$, a universal basic punctured map
$a:(C,M_C^\circ)\to(\Acal,\Dcal)$ with the prescribed signed contacts,
where $M_C^\circ$ is the punctured logarithmic structure.  Writing
$a_{\mathrm{und}}$ for its underlying line--section pair, the remaining
datum is a $2$-isomorphism
\begin{equation}
 a_{\mathrm{und}}
 \simeq\bigl(f^*\mathcal O_X(D),f^*s_D\bigr)
 \label{eq:revised-display-22}
\end{equation}
of line--section pairs on the same pointed curve.  The classifying map of
the smooth divisor is strict, and locally along $D$ is given by the single
chart $1\mapsto$ a local equation of $D$.  Hence \eqref{eq:revised-display-22} identifies the
map of logarithmic structures supplied by $a$ with a unique logarithmic
enhancement of $f$.  This is exactly a basic punctured stable map to
$(X,D)$.  Conversely, composing a basic punctured map to $(X,D)$ with the
classifying map $(X,D)\to(\Acal,\Dcal)$ supplies $a$ and the canonical
isomorphism \eqref{eq:revised-display-22}.  Basicness is unchanged because the classifying map is
strict.  The two constructions are inverse on objects and on arrows;
stability is the stability of the retained map $f$.  This proves the first
equivalence in \eqref{eq:geometric-universal-products}.

For the orbifold assertion use the Cartesian target square
\begin{equation}
\begin{tikzcd}
 X_{D,N}\ar[r]\ar[d]&\Acal_N\ar[d]\\
 X\ar[r]&\Acal .
\end{tikzcd}
 \label{eq:revised-display-23}
\end{equation}
An object of
$\mathcal U_X\times_{\mathcal U_A}W_N^-$ is a stable map
$f:C\to X$, a representable twisted universal map
$a_N:\mathcal C\to\Acal_N$, and an identification of their descended
line--section pairs.  Pull $f$ back to $\mathcal C$ and apply the universal
property of \eqref{eq:revised-display-23}.  This gives a unique map
$f_N:\mathcal C\to X_{D,N}$.  It is representable because the stabilizer
acts faithfully on the $\Acal_N$ factor.  Its coarse map is $f$, so its
stability is equivalent to that of $f$.  Conversely, a representable map
to $X_{D,N}$ gives these data by the two projections.  This proves
\[
 \Mbar_\Gamma(X_{D,N})
 \simeq\mathcal U_X\times_{\mathcal U_A}W_N^-.
\]
The factor through $W_N^-$ is not a restriction of the geometric moduli
problem: by construction of $W_0$, every descended pair arising from a
stable map of class $\beta$ factors through the chosen open.  The orbifold equivalence in
\eqref{eq:geometric-universal-products} therefore describes the complete
geometric moduli problem.

Finally, the geometric BNR chimera stack is defined by base change from the
universal BNR chimera stack.  The entire geometric image lies in
$W_r^-$, giving the first equivalence in
\eqref{eq:geometric-chimera-product}.  The just-proved equivalence
$O_{X,r}^-\simeq\mathcal U_X\times_{\mathcal U_A}W_r^-$ and associativity
of $2$-fibre products give
\[
 \begin{aligned}
 \mathcal U_X\times_{\mathcal U_A}K_{W_r}^-
 &\simeq
 (\mathcal U_X\times_{\mathcal U_A}W_r^-)
   \times_{W_r^-}K_{W_r}^-\\
 &\simeq O_{X,r}^-\times_{W_r^-}K_{W_r}^-.
 \end{aligned}
\]
All constructions use universal properties of fibre products and therefore
act functorially on arrows, identify automorphism group schemes, and commute
with arbitrary base change.
\end{proof}

\begin{lemma}[Comparison of geometric root-stack spaces]
\label{lem:geometric-root-change}
For $R=\lambda r$, tensor power and canonical partial source coarsening give
the $2$-Cartesian square
\begin{equation}
\begin{tikzcd}
 O_{X,R}^-\ar[r,"\psi_R"]\ar[d,"\Pi_{R,r}^-"']&
 W_R^-\ar[d,"\pi_{R,r}^-"]\\
 O_{X,r}^-\ar[r,"\psi_r"']&W_r^-.
\end{tikzcd}
\label{eq:geometric-root-square}
\end{equation}
\end{lemma}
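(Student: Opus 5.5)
The plan is to deduce the square from the fibre-product descriptions of \Cref{lem:geometric-fibre-products} together with a pasting argument, rather than by building the geometric comparison from scratch. First I will define $\Pi_{R,r}^-$ directly. Given a representable twisted stable map $f_R:\mathcal C_R\to X_{D,R}$, compose with the target morphism $X_{D,R}\to X_{D,r}$; this morphism is the base change of $\Acal_R\to\Acal_r$ along $X_{D,r}\to\Acal_r$ and is given by the $\lambda$th tensor power of the root pair. The composite need not be representable, so I then apply the AOV relative coarse space for the kernel of the stabilizer character. This produces a representable map $\mathcal C_r\to X_{D,r}$ with the same coarse curve and the same coarse map $f$. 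Stability is preserved because the coarse map is unchanged. The marking ages are carried to the order-$r$ ages because $|\mu_i|\mid r$. Checking that the square \eqref{eq:geometric-root-square} $2$-commutes then reduces to the fact that both routes perform the same tensor power and the same relative coarsening on the $\Acal_N$ factor. The $X$-factor is untouched on both routes.

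For the Cartesian property I will use \eqref{eq:geometric-universal-products}, which gives $O_{X,N}^-\simeq\mathcal U_X\times_{\mathcal U_A}W_N^-$ for $N=r,R$. The key observation is that $\pi_{R,r}^-:W_R^-\to W_r^-$ is a morphism over $\mathcal U_A$. This holds because tensor power and coarsening retain both the coarse curve and the descended line--section pair, as recorded after \eqref{eq:root-comparison}. Pasting then gives
\[
 O_{X,r}^-\times_{W_r^-}W_R^-
 \simeq(\mathcal U_X\times_{\mathcal U_A}W_r^-)\times_{W_r^-}W_R^-
 \simeq\mathcal U_X\times_{\mathcal U_A}W_R^-
 \simeq O_{X,R}^-.
\]
The remaining task is to check that this composite equivalence agrees with $(\Pi_{R,r}^-,\psi_R)$. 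That follows because in both constructions the order-$r$ object is obtained by applying $\pi_{R,r}^-$ to the universal component, while the stable map $f$ is carried along unchanged.

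The main obstacle will be verifying that $\pi_{R,r}^-$ over $\mathcal U_A$ is strictly compatible with the descended pairs, on arrows and not only on objects. The descended pair on the order-$r$ side is defined through the chosen $r$th-power isomorphism. After tensor power this becomes the $R$th-power isomorphism, and this identification must be canonical so that the $2$-isomorphisms match the identification \eqref{eq:revised-display-22}. I would check this on the node and marking charts \eqref{eq:faithful-root-local-chart}, where the coarse smoothing parameter and the invariant pushforward are unchanged. At the negative markings I would instead appeal to \Cref{lem:root-change-evaluation}, which supplies the $2$-commutativity with $\overline\Theta$ and, through the marking twist, the compatibility of the power isomorphisms. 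Representability of the pasted map to $X_{D,r}$ follows as in the proof of \Cref{lem:geometric-fibre-products}: the stabilizers act faithfully on the $\Acal_r$-factor after coarsening.
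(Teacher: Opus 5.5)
Your proposal is correct and follows essentially the same argument as the paper. The paper also notes that $\pi_{R,r}^-$ lies over $\mathcal U_A$ and pastes the descriptions $O_{X,N}^-\simeq\mathcal U_X\times_{\mathcal U_A}W_N^-$. It then identifies the resulting projection with tensor power followed by partial coarsening, using the target identity $X_{D,R}\simeq X_{D,r}\times_{\Acal_r}\Acal_R$; your explicit age check and your direct construction of $\Pi_{R,r}^-$ before pasting are added detail rather than a different route.
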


\begin{proof}
The comparison $W_R^-\to W_r^-$ retains the same coarse curve and
descended line--section pair, so it is a morphism over $\mathcal U_A$.
The fibre-product descriptions in \Cref{lem:geometric-fibre-products}
therefore give
\[
\begin{aligned}
 O_{X,r}^-\times_{W_r^-}W_R^-
 &\simeq(\mathcal U_X\times_{\mathcal U_A}W_r^-)
                   \times_{W_r^-}W_R^-\\
 &\simeq\mathcal U_X\times_{\mathcal U_A}W_R^-
 \simeq O_{X,R}^-.
\end{aligned}
\]
These are equivalences of fibre categories and include arrows and
automorphisms.  The projection to $O_{X,r}^-$ is the claimed geometric
comparison: the target identity
\begin{equation}
 X_{D,R}\simeq X_{D,r}\times_{\Acal_r}\Acal_R
 \label{eq:target-root-product}
\end{equation}
identifies the pullback of the universal $r$th-root line with the
$\lambda$th tensor power of the universal $R$th-root line, and the source
is partially coarsened by the same character kernel as in the
universal map.  Hence the equivalence identifies the displayed square
with \eqref{eq:geometric-root-square}.  No new logarithmic-extension
argument is needed, and obstruction-theory compatibility is proved
separately in \Cref{lem:root-change-POT}.
\end{proof}

\subsection{Relative obstruction theories}

For $N=r,R$, let
\[
 \pi_N:\mathcal C_N\longrightarrow O_{X,N}^-,
 \qquad f_N:\mathcal C_N\longrightarrow X_{D,N}
\]
be the universal twisted curve and universal map.  Set
$\bar f_N:=p_N\circ f_N$, where $p_N:X_{D,N}\to X$ is the root-stack
projection.  On the geometric chimera, write
$\pi_{K,r}:\mathcal C_{K,r}\to K_{X,r}^-$ and
$f_{K,r}:\mathcal C_{K,r}\to X_{D,r}$ for the universal curve and map,
and put $\bar f_{K,r}:=p_r\circ f_{K,r}$.  On
$\Punct_\Gamma(X\mid D)$, write $\pi_P:\mathcal C_P\to
\Punct_\Gamma(X\mid D)$ for the universal curve and
$f_P:\mathcal C_P\to X$ for the underlying universal map.

\begin{lemma}[Compatibility of the relative obstruction theories]
\label{lem:root-change-POT}
For $N=r,R$, the morphism $\psi_N$ carries the relative perfect obstruction
theory
\begin{equation}
 \mathbb E_N=
 \left(\mathbf R\pi_{N*}\bar f_N^*T_X(-\log D)\right)^\vee,
 \qquad
 \mathbb E_N\longrightarrow\mathbb L_{\psi_N},
\label{eq:relative-POT}
\end{equation}
and these theories form a compatible morphism across the square comparing the two root orders.
The analogous relative complexes on $\psi_{K,r}$ and $\psi_P$ are
\[
 \mathbb E_{K,r}
 :=\bigl(\mathbf R\pi_{K,r*}\bar f_{K,r}^*T_X(-\log D)\bigr)^\vee,
 \qquad
 \mathbb E_P
 :=\bigl(\mathbf R\pi_{P*}f_P^*T_X(-\log D)\bigr)^\vee.
\]
Their arrows to $\mathbb L_{\psi_{K,r}}$ and $\mathbb L_{\psi_P}$ come
from the same universal-target construction.  Together with the compatible
absolute and refined theories in
\Cref{prop:universal-geometric-pullbacks}, these arrows form the compatible
triples used below.
\end{lemma}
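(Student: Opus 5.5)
We would obtain all three obstruction theories by the same mechanism: the $2$-Cartesian target square \eqref{eq:revised-display-23} together with the fibre-product descriptions of \Cref{lem:geometric-fibre-products}. The key input is that the cotangent complex of the classifying map $X\to\Acal$ is governed by $T_X(-\log D)$. Since $X\to\Acal$ is smooth and strict, we have $\mathbb L_{X/\Acal}=\Omega_X(\log D)$. The log tangent sequence $0\to T_X(-\log D)\to T_X\to\cO_D(D)\to0$ identifies the relative tangent bundle of $X\to\Acal$ with $T_X(-\log D)$. By base change along \eqref{eq:revised-display-23}, the relative tangent bundle of $X_{D,N}\to\Acal_N$ is $p_N^*T_X(-\log D)$. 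Consequently, for the universal map $f_N$, the standard relative obstruction theory for maps over a fixed map to $\Acal_N$ is
\[
 \bigl(\mathbf R\pi_{N*}f_N^*T_{X_{D,N}/\Acal_N}\bigr)^\vee=\mathbb E_N .
\]
The arrow $\mathbb E_N\to\mathbb L_{\psi_N}$ is constructed in the usual way. Starting from $\mathbf L f_N^*\mathbb L_{X_{D,N}/\Acal_N}\to\mathbb L_{\mathcal C_N/\mathcal C_{W}}$, where $\mathcal C_W$ is the pulled-back universal curve of $W_N^-$ (the curve is unchanged by the base change), we compose with the adjunction and use that $\pi_N$ is flat and proper of relative dimension one with dualizing sheaf. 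Perfectness in amplitude $[-1,0]$ follows because $\pi_N$ is a family of curves and $T_X(-\log D)$ is locally free. The obstruction property can be checked via the standard infinitesimal lifting criterion: the fibre-product description reduces it to deformations of $f$ relative to fixed deformations of the curve and the line--section pair, which are exactly torsors under $H^0$ and obstructed in $H^1$ of $f^*T_X(-\log D)$.

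Compatibility across the root-order square \eqref{eq:geometric-root-square} goes as follows. Write $\kappa:\mathcal C_R\to\mathcal C_r$ for the partial coarsening. Using \eqref{eq:target-root-product}, we have $\kappa^*\bar f_r^*T_X(-\log D)=\bar f_R^*T_X(-\log D)$, since both are pulled back from the coarse map to $X$. Tame pushforward is exact and $\kappa_*\cO=\cO$, so $\mathbf R\pi_{R*}\bar f_R^*T_X(-\log D)\simeq (\Pi_{R,r}^-)^*\mathbf R\pi_{r*}\bar f_r^*T_X(-\log D)$; in fact both equal the pushforward from the common coarse curve. Hence $(\Pi^-_{R,r})^*\mathbb E_r\simeq\mathbb E_R$. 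The map on cotangent complexes commutes with this isomorphism because the square is Cartesian, so $\mathbb L_{\psi_R}\simeq(\Pi^-_{R,r})^*\mathbb L_{\psi_r}$ and both arrows come from the same target-level map $\mathbb L_{X_{D,r}/\Acal_r}\to\cdots$.

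For $\psi_{K,r}$, the Cartesian square \eqref{eq:geometric-chimera-cartesian-diagram} shows that $\mathbb E_{K,r}$ is the pullback of $\mathbb E_r$ along $\omega^-_{X,r}$. For $\psi_P$, the same log-smooth argument applies, using that $X\to\Acal$ is strict: the relative log tangent bundle is again $T_X(-\log D)$, and on the coarse curve punctures do not change it. The main obstacle is checking that all these arrows commute with the target-level cotangent maps in the derived category, rather than only up to non-canonical isomorphism. We would handle this by constructing every arrow from the single universal morphism $\mathbb L_{X/\Acal}\to\Omega_X(\log D)$ and invoking functoriality of the cotangent complex for Cartesian squares.
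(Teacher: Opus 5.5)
Your proposal follows essentially the paper's route. You identify $T_{X_{D,N}/\Acal_N}\simeq p_N^*T_X(-\log D)$ from the target square and build the standard relative theory from the evaluation differential, checking it by square-zero lifting. You then get $(\Pi^-_{R,r})^*\mathbb E_r\simeq\mathbb E_R$ from tame exactness of the partial coarsening, and you obtain commutativity and the chimera and punctured cases by naturality and $2$-base change.

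One correction: neither $\psi_r$ nor $\pi^-_{R,r}$ is flat, so the Cartesian square does not give $\mathbb L_{\psi_R}\simeq(\Pi^-_{R,r})^*\mathbb L_{\psi_r}$. It gives only the natural base-change arrow $(\Pi^-_{R,r})^*\mathbb L_{\psi_r}\to\mathbb L_{\psi_R}$. A commuting square with that arrow, as in \eqref{eq:POT-map-square}, is all that compatibility requires.
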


\begin{proof}

The target square defining $X_{D,N}$ identifies the relative tangent bundle with the pullback of the
logarithmic tangent bundle of the pair:
\begin{equation}
 T_{X_{D,N}/\Acal_N}
 \simeq p_N^*T_X(-\log D).
 \label{eq:revised-display-25}
\end{equation}
Infinitesimal deformations of $f_N$ with the universal map to $\Acal_N$
fixed are therefore governed by
$R\pi_{N*}\bar f_N^*T_X(-\log D)$.  Since $\pi_N$ is a proper nodal curve and
the sheaf is locally free, this complex is perfect of amplitude $[0,1]$;
its dual is perfect of amplitude $[-1,0]$.

The differential of the universal evaluation morphism constructs a map
\begin{equation}
 \mathbb E_N=
 \bigl(R\pi_{N*}\bar f_N^*T_X(-\log D)\bigr)^\vee
 \longrightarrow\mathbb L_{O_{X,N}^-/W_N^-}.
 \label{eq:revised-display-26}
\end{equation}
More explicitly, let $h:S\to O_{X,N}^-$ be a test morphism and let
$S\hookrightarrow S'$ be a square-zero extension with ideal $J$, equipped
with a compatible lift to $W_N^-$.  The obstruction to lifting the map to
$X_{D,N}$ with its projection to $\Acal_N$ fixed lies in the
hypercohomology group
$\mathbb H^1(S,\mathbf Lh^*\mathbb E_N^\vee\otimes^{\mathbf L}J)$.
If it vanishes, the torsor of lifts and the group of infinitesimal relative
automorphisms are given by the same hypercohomology in degrees $0$ and
$-1$, respectively.
Thus \eqref{eq:revised-display-26} is the standard relative perfect obstruction theory, not merely
an equality of $K$-classes.

Now put $R=\lambda r$ and form the pullback universal curve
\[
 \mathcal C_{r|R}
 :=\mathcal C_r\times_{O_{X,r}^-}O_{X,R}^-.
\]
Let $\bar f_{r|R}:\mathcal C_{r|R}\to X$ be the pullback of
$\bar f_r$.  The partial coarsening in
\Cref{lem:geometric-root-change} is a morphism over $O_{X,R}^-$,
\[
 q:\mathcal C_R\longrightarrow\mathcal C_{r|R}.
\]
The identity $\bar f_R=\bar f_{r|R}\circ q$ gives
\begin{equation}
 \bar f_R^*T_X(-\log D)\simeq q^*\bar f_{r|R}^*T_X(-\log D).
 \label{eq:tame-POT}
\end{equation}
The coarsening is tame, so invariants are exact and
$Rq_*\mathcal O_{\mathcal C_R}=\mathcal O_{\mathcal C_{r|R}}$.  Projection
formula and derived base change consequently give a canonical isomorphism
\begin{equation}
 (\Pi_{R,r}^-)^*R\pi_{r*}\bar f_r^*T_X(-\log D)
 \xrightarrow{\sim}
 R\pi_{R*}\bar f_R^*T_X(-\log D).
 \label{eq:revised-display-27}
\end{equation}
The inverse of its dual is the upper horizontal isomorphism in
\begin{equation}
\begin{tikzcd}
 (\Pi_{R,r}^-)^*\mathbb E_r\ar[r,"\sim"]\ar[d]&
 \mathbb E_R\ar[d]\\
 (\Pi_{R,r}^-)^*\mathbb L_{O_{X,r}^-/W_r^-}\ar[r]&
 \mathbb L_{O_{X,R}^-/W_R^-}.
\end{tikzcd}
\label{eq:POT-map-square}
\end{equation}
To verify commutativity, form the universal evaluation diagram on
$\mathcal C_R$.  Both routes in \eqref{eq:POT-map-square} are obtained by differentiating the
same composite
\[
 \mathcal C_R\xrightarrow{f_R}X_{D,R}
 \longrightarrow X_{D,r}
\]
relative to its composite with $\Acal_R\to\Acal_r$.  Naturality of the
cotangent complex and of the evaluation differential therefore identifies
the two maps.  Hence \eqref{eq:POT-map-square} is a morphism of perfect obstruction theories.
More explicitly, for every square-zero extension $S\hookrightarrow S'$ the two
routes send a deformation of the high-root map to the same obstruction to
lifting the composite map to $X_{D,r}$ with its $\Acal_r$-projection fixed.
The identifications of obstruction classes, torsors of lifts, and infinitesimal
automorphisms are induced by the same evaluation diagram on $\cC_R$.
Consequently the square commutes as a square of arrows to cotangent complexes,
not merely after passing to $K$-theory.

The rooted-chimera and punctured relative complexes are obtained from the
same target diagram by $2$-base change, so the same argument identifies their
relative arrows.  The absolute and refined class identities are established
in \Cref{prop:universal-geometric-pullbacks}; together the two results give
the compatible triples required for virtual base change.
\end{proof}

\begin{proposition}[Universal-to-geometric virtual pullbacks]
\label{prop:universal-geometric-pullbacks}
For $N=r,R$, on the fixed character sectors and the finite-type opens defined by the preceding $2$-base changes,
\begin{equation}
 \begin{aligned}
 [O_{X,N}^-]^{\vir}&=\psi_N^![W_N^-]^{\vir},\\
 [K_{X,r}^-]^{\refc}&=\psi_{K,r}^![K_{W_r}^-]^{\refc},\\
 [\Punct_\Gamma(X\mid D)]^{\refc}
 &=\psi_P^![\Punct_{\Lambda_\Gamma}(\Acal\mid\Dcal)]^{\refc}.
 \end{aligned}
\label{eq:geometric-class-pullbacks}
\end{equation}
These are identities of virtual or refined cycles, not merely isomorphisms
of the underlying complexes.
\end{proposition}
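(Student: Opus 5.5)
The plan is to deduce all three identities from one functoriality statement for virtual pullback (Manolache, \cite[Theorem~4.8 and Corollary~4.9]{Manolache}). For each line of \eqref{eq:geometric-class-pullbacks} I would use the Cartesian description in \Cref{lem:geometric-fibre-products} together with the relative perfect obstruction theories of \Cref{lem:root-change-POT}. The structure is the same in all three cases. The universal object carries a cycle: the orbifold virtual class on $W_N^-$, or the refined class on $K_{W_r}^-$ and on $\Punct_{\Lambda_\Gamma}(\Acal\mid\Dcal)$. The geometric object is its base change along $u:\mathcal U_X\to\mathcal U_A$, and the relative complex $(\mathbf R\pi_*\bar f^*T_X(-\log D))^\vee$ defines $\psi^!$.

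\emph{Step 1: the orbifold line.} On $O_{X,N}^-$ the absolute virtual class comes from the obstruction theory relative to $\Mfrak^{\mathrm{tw}}$ (more precisely, relative to the line-level twisted Picard sector of \Cref{prop:negative-zero-locus}), with complex $(\mathbf R\pi_*f_N^*T_{X_{D,N}})^\vee$. On $W_N^-$ the analogous complex is the section theory $(\mathbf R\pi_*L_N)^\vee$. I would use the target square \eqref{eq:revised-display-23} and \eqref{eq:revised-display-25}, which give the exact sequence
\[
0\to p_N^*T_X(-\log D)\to T_{X_{D,N}}\to f^*T_{\Acal_N}\to0
\]
on the source. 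Its pushforward gives a distinguished triangle among $\mathbb E_N$, the absolute geometric theory, and the pullback of the universal theory. These triangles map compatibly to the cotangent-complex triangle for $O_{X,N}^-\to W_N^-\to\Mfrak^{\mathrm{tw}}$. Manolache's functoriality then gives $[O_{X,N}^-]^{\vir}=\psi_N^![W_N^-]^{\vir}$. This is essentially Crumplin's Lemma~4.7 argument restricted to our $2$-base-change opens. It needs only that $W_N^-$ is finite type with affine stabilizers; this holds by \Cref{lem:proper-root-comparison} and the $n+\rho>0$ convention, with $m>0$ or otherwise treated separately.

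\emph{Step 2: the refined lines.} Here the universal classes are not virtual classes but refined Gysin pullbacks of fundamental classes. For the chimera this is $[K_{W_r}^-]^{\refc}=\theta_{W_r}^![K_{W_r}^+]$. I would first use that $K_{W_r}^+\to\mathcal A(T_r^+)$ is smooth, so $[K^+]$ is the flat pullback of the fundamental class of the Artin fan. I would then define the geometric refined class by BNR's recipe: take the virtual pullback, relative to $\mathfrak M^{\mathrm{tw}}\times\mathcal A$, of $\iota^![\mathcal A(T_r^-)]$ (see \eqref{eq:BNR-offset-refined-class}). The identity to prove reduces to commuting $\psi_{K,r}^!$ with $\iota^!$ and with smooth pullback. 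Commutation with $\iota^!$ holds because refined Gysin maps commute with virtual pullbacks (bivariance, \cite[Theorem~4.3]{Manolache}). Commutation with smooth pullback holds because the relative theory $\mathbb E_{K,r}$ is pulled back from the logarithmic theory of ACGS/BNR. The punctured line is handled identically, with $\psi_P$ and the Cartesian presentation of $\Punct$ relative to $V(T^-)$ from \cite[Definition~1.13]{BNR}.

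\emph{Main obstacle.} The delicate point is verifying that BNR's geometric refined class is \emph{defined} as the virtual pullback of the universal one along the relative logarithmic theory. Equivalently, I must show that the ACGS relative obstruction theory of $\Punct_\Gamma(X\mid D)$ over $\Mfrak\times_{\mathcal A}V(T^-)$ agrees with the pullback of $\mathbb E_P$ under the Cartesian identification of \Cref{lem:geometric-fibre-products}. I would address this by checking that the log tangent sequence of the strict classifying map $(X,D)\to(\Acal,\Dcal)$ identifies $T^{\log}_{X/\Acal}$ with $T_X(-\log D)$, so that the two complexes coincide, including their arrows to $\mathbb L$. The remaining compatibilities then follow from \Cref{lem:root-change-POT}.
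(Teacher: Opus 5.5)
Your proposal follows the paper's route. The orbifold line comes from a compatible triple plus Manolache's functoriality. The two refined lines are, as the paper says, built into BNR's definitions: their Definition~3.14 with equation~(27) for the chimera, and Definition~1.13 (with the ACGS relative theory) for the punctured class. The paper simply cites these together with restriction to the finite-type open. Your ``main obstacle'' is therefore settled by those definitions and the identification $T_{X/\Acal}\simeq T_X(-\log D)$ for the strict smooth classifying map, exactly as you describe. Your extra unpacking through $\iota^!$ and smooth pullback is consistent with this but not needed.

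One step in your Step~1 must be corrected, however. Your displayed sequence uses absolute tangent objects, and you map to the cotangent triangle of $O_{X,N}^-\to W_N^-\to\Mfrak^{\mathrm{tw}}$.
\begin{itemize}
\item The pullback of $T_{\Acal_N}$ to the curve is the two-term complex $[\cO\xrightarrow{\sigma}L_N]$.
\item Pushing it forward gives $(\mathbf R\pi_*f^*T_{\Acal_N})^\vee$. This is not the section theory $(\mathbf R\pi_*L_N)^\vee$ you assign to $W_N^-$.
\item It is also not of amplitude $[-1,0]$: it acquires a nonzero term in degree one wherever the section vanishes identically, coming from the $\bbG_m$-scalings of $L_N$.
\end{itemize}
So Manolache's theorem does not apply to that triple. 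Relatedly, $(\mathbf R\pi_*f_N^*T_{X_{D,N}})^\vee$ is the theory relative to $\Mfrak^{\mathrm{tw}}$, not relative to the Picard sector named in your parenthetical.

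The fix is the one the paper carries out, and the one your parenthetical points toward:
\begin{itemize}
\item Factor the target as $X_{D,N}\to\Acal_N\to B\bbG_m$.
\item Use the triangle $p_N^*T_X(-\log D)\to T_{X_{D,N}/B\bbG_m}\to f^*T_{\Acal_N/B\bbG_m}\simeq L_N$.
\item Take the compatible triple for $O_{X,N}^-\to W_N^-\to\mathfrak P_N^{\mathrm{sec}}$.
\end{itemize}
Since $\mathfrak P_N^{\mathrm{sec}}$ is smooth over the twisted-curve stack, the resulting absolute class on $O_{X,N}^-$ is the usual orbifold virtual class.
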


\begin{proof}
For the orbifold identity, use the common section-theory base
$\mathfrak P_N^{\mathrm{sec}}$, in its negative presentation
$\mathfrak P_N^-$: it retains the twisted curve, negative root line,
descended coarse line, and power isomorphism, but no section.  The equivalence
$\mathfrak P_N^-\simeq\mathfrak P_N^{\mathrm{sec}}$ fixes the map
$W_N^-\to\mathfrak P_N^{\mathrm{sec}}$.  The classifying sequence of targets is
\begin{equation}
 X_{D,N}\longrightarrow\Acal_N\longrightarrow B\bbG_m.
\label{eq:target-classifying-sequence}
\end{equation}
Here the last arrow forgets the section and retains the root line; the
notation $T_{X_{D,N}/B\bbG_m}$ refers to the composite in this sequence.
Let $\pi_N^W:\mathcal C_N^W\to W_N^-$ be the universal twisted curve
and let $\mathcal L_N^W$ be its negative root line.  Define the relative
section and map obstruction complexes, respectively, by
\[
 \mathbb F_N^{\Acal}
 :=\bigl(\mathbf R(\pi_N^W)_*\mathcal L_N^W\bigr)^\vee,
 \qquad
 \mathbb F_N^X
 :=\bigl(\mathbf R\pi_{N*}f_N^*T_{X_{D,N}/B\bbG_m}\bigr)^\vee.
\]
The first complex is the restriction of $\mathbb E_N^-$ from
\Cref{prop:negative-zero-locus} to $W_N^-$.  The second is on
$O_{X,N}^-$; their
obstruction-theory arrows are relative to $\mathfrak P_N^{\mathrm{sec}}$.
There is a relative tangent triangle whose first term is
\begin{equation}
 T_{X_{D,N}/\Acal_N}\simeq p_N^*T_X(-\log D).
\label{eq:relative-log-tangent}
\end{equation}
Pulling this triangle to the universal curve, applying derived pushforward,
and dualizing gives a distinguished triangle
\begin{equation}
 \psi_N^*\mathbb F_N^{\Acal}\longrightarrow
 \mathbb F_N^X\longrightarrow
 \mathbb E_N\longrightarrow\psi_N^*\mathbb F_N^{\Acal}[1],
 \qquad
 \mathbb E_N=(R\pi_{N*}\bar f_N^*T_X(-\log D))^\vee.
\label{eq:universal-geometric-POT-triangle}
\end{equation}
The evaluation differential maps this triangle to the cotangent triangle for
$O_{X,N}^-\to W_N^-\to\mathfrak P_N^{\mathrm{sec}}$.  This is the standard
universal-target construction for orbifold stable maps
\cite[Section~5.2]{ACW}.  It is independent of the genus and restricts to a
fixed character or age sector because such sectors are open and closed.
Manolache's composition theorem for compatible triples gives the first
identity.  Crumplin's equation~(37) records the same universal-to-geometric
formula in arbitrary genus for his standing contact data
\cite[Section~4.4]{Crumplin}; it is used here only as corroboration, not as
the source for the high-age sector.

For the second identity, BNR define the geometric chimera by a $2$-Cartesian
base change from the universal chimera and equip the vertical morphism with
relative obstruction theory
\[
 \bigl(R\pi_{K,r*}f_{K,r}^*T_{X_{D,r}}(-\log D_r)\bigr)^\vee
 \simeq
 \bigl(R\pi_{K,r*}\bar f_{K,r}^*T_X(-\log D)\bigr)^\vee
 =\mathbb E_{K,r}.
\]
They then define the geometric refined chimera class by the corresponding
virtual pullback \cite[Definition~3.14 and equation~(27)]{BNR}.  Restriction
to the finite-type open is open flat base change, giving the second identity.

For the third identity, BNR's refined punctured class is, by definition, the
virtual pullback of the universal refined punctured class through the ACGS
relative obstruction theory \cite[Definition~1.13]{BNR}.  The construction and functoriality of that relative theory are established
in \cite[Lemma~4.1 and Proposition~4.2]{ACGS}.  This gives the last identity.  No derived enhancement of the chimera is
required: applying the
fixed bivariant pullback $\psi_{K,r}^!$ to
\eqref{eq:chimera-refined-class-global} already gives the geometric chimera
class by BNR's definition.
\end{proof}

\begin{remark}[Relation with the universal-target method]
The genus-one component analysis takes place universally.  The geometry
of $(X,D)$ enters through the relative virtual pullbacks and compatible
obstruction theories established above.
\end{remark}

\begin{lemma}[Proper pushforward and virtual base change]
\label{lem:proper-virtual-base-change}
Assume $n+\rho>0$.  On the finite-type opens used in
\eqref{eq:negative-main-cycle} and \eqref{eq:geometric-class-pullbacks},
rational Chow pushforward exists for every proper vertical morphism in the
comparison diagrams and commutes with the refined or virtual pullbacks used
there.  In particular,
\begin{equation}
 \psi_r^!(\omega_{W_r}^-)_*
   =(\omega_{X,r}^-)_*\psi_{K,r}^!,
 \qquad
 \psi_r^!(\pi_{R,r}^-)_*
   =(\Pi_{R,r}^-)_*\psi_R^!.
\label{eq:proper-virtual-base-change}
\end{equation}
The same statement applies to the BNR root-forgetting square in
\Cref{prop:BNR-root-forgetting}.
\end{lemma}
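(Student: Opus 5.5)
The plan is to reduce the lemma to the standard compatibility of Kresch's rational Chow theory with refined and virtual pullbacks along proper representable (or relatively Deligne--Mumford) morphisms, once two things are checked. The first is that every stack in the diagrams has affine stabilizers and is stratified by global quotients. The second is that each square in question is $2$-Cartesian, with compatible obstruction theories on the horizontal arrows.

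For the Chow formalism, the argument of Step~4 of \Cref{prop:compatible-support} is reused. Since $n+\rho>0$, every genus-one component of a source curve carries a marking or a node, so its automorphism group fixing the special points is affine. Root scalars, ghost automorphisms and line automorphisms are affine as well. Hence $W_N^-$, $W_{N,Z}$, $K_{W_r}^\pm$ and their geometric counterparts $O_{X,N}^-$, $K_{X,r}^-$ are finite type with affine stabilizers. Properness of the vertical arrows is already available: $\omega_{W_r}^-$ is finite as a base change of the finite morphism $\omega_{W_r}^+$ (\Cref{prop:positive-space-proper}, \Cref{cor:zero-order-marking-base-change}), and $\pi_{R,r}^-$ is proper and quasi-finite of DM type (\Cref{lem:proper-root-comparison} together with the nilpotent-immersion factorization in the proof of \Cref{lem:power-gysin}). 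Their base changes $\omega_{X,r}^-$ and $\Pi_{R,r}^-$ are therefore proper. Proper pushforward in $A_*(-)_\bbQ$ then exists by \cite[Proposition~3.5.9]{Kresch} and \cite[Theorem~B.17, Proposition~B.18]{BSS}.

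For the commutation itself, the squares \eqref{eq:geometric-chimera-cartesian-diagram} and \eqref{eq:geometric-root-square} are $2$-Cartesian by \Cref{lem:geometric-fibre-products,lem:geometric-root-change}. The relative obstruction theories $\mathbb E_{K,r}$ and $\mathbb E_R$ are the pullbacks of $\mathbb E_r$ along the top arrows, compatibly with the arrows to the cotangent complexes (\Cref{lem:root-change-POT}; for the chimera this holds by construction as a $2$-base change). Manolache's virtual pullback $\psi^!$ is bivariant and commutes with proper pushforward in Cartesian squares whenever the obstruction theory on the top arrow is the pullback of the one on the bottom arrow \cite[Theorem~4.1]{Manolache}. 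This gives both identities in \eqref{eq:proper-virtual-base-change}. The BNR root-forgetting square is handled the same way: it is a $2$-base change along the generalized root stack of Artin fans, with pulled-back obstruction theory.

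The main obstacle is extending Manolache's commutation statement, which is formulated for Deligne--Mumford type morphisms of algebraic stacks, to the present Artin-stack setting. There the targets $W_N^-$ have positive-dimensional affine stabilizers, and the proper vertical maps are only relatively DM. I would address this by working with Kresch's extended Chow groups and the bivariant framework of \cite{BSS}. In that framework, virtual pullback is defined by deformation to the normal cone followed by intersection with the zero section of the obstruction bundle stack, and both operations commute with proper relatively-DM pushforward; the affine-stabilizer hypothesis is exactly what \cite{BSS} requires. If this turns out to be delicate, a fallback is to pull back to a smooth atlas of the finite-type base $W_r^-$, which is a quotient stack. There the squares become Cartesian over a quotient-stack base, and the compatibility can be checked by flat pullback followed by descent, since flat pullback commutes with both operations.
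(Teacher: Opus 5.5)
Your argument for the two displayed identities in \eqref{eq:proper-virtual-base-change} follows the paper's proof. The retained marking excludes elliptic translation stabilizers. The map $\omega_{W_r}^-$ is finite and $\pi_{R,r}^-$ is proper of relative Deligne--Mumford type. The squares are $2$-Cartesian by Lemmas~\ref{lem:geometric-fibre-products} and~\ref{lem:geometric-root-change}, and the relative theories agree as arrows by Lemma~\ref{lem:root-change-POT}. One then applies virtual base change for proper, relatively DM pushforward on finite-type stacks with affine stabilizers. The extension of Manolache that you single out as the main obstacle is not re-derived in the paper; it is quoted from \cite[Appendix~B, Theorem~B.0.1]{HHS}, with \cite[Proposition~3.5.9]{Kresch} and \cite[Theorem~B.17]{BSS} supplying the stratifications and pushforwards, which is what you sketch. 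You should also record that the arrows carrying obstruction theories ($\psi_N$, $\psi_{K,r}$, $\psi_{\mathcal P}$) are of DM type; this holds because their sources are Deligne--Mumford.

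The gap is the sentence saying the BNR root-forgetting square is ``handled the same way''. That covers the geometric square with $\alpha_r$ over $a_r$, whose relative theory is the pulled-back ACGS theory. But the proof of Proposition~\ref{prop:BNR-root-forgetting} also uses the universal square with $\phi_r:\mathcal K\to V_r$ over $\phi:\mathcal P\to V$. There the refined classes are \emph{defined} by smooth pullback, $[\mathcal P]^{\refc}=\phi^*[V]^{\refc}$ and $[\mathcal K]^{\refc}=\phi_r^*[V_r]^{\refc}$, and one needs $(a_r)_*\phi_r^*=\phi^*(\nu_r)_*$. The maps $\phi,\phi_r$ come from the prestable-curve presentation and need not be representable or of DM type. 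So this is not an instance of DM-type virtual base change, and \cite[Proposition~B.18]{BSS} is stated under a representability hypothesis. The paper therefore adds a separate step. It proves $p'_*h'^*=h^*p_*$ for $p$ proper of relative DM type and $h$ flat of pure relative dimension but not necessarily representable, assuming both bases are stratified by global quotients, using the restricted Chow groups of \cite{BSS}. You need this or an equivalent argument. Your fallback does not provide it. Flat pullback to a smooth atlas is not injective on rational Chow groups (already for the two-chart Zariski cover of $\mathbf P^1$), so an identity cannot be checked on an atlas and then descended. Moreover, $W_r^-$ is only known to be stratified by global quotients, not to be one.
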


\begin{proof}
\proofstep{Step 1: the precise proper/virtual base-change statement}
Consider a Cartesian square
\begin{equation}
\begin{tikzcd}
 F'\ar[r,"q"]\ar[d,"f'"']&F\ar[d,"f"]\\
 G'\ar[r,"p"']&G.
\end{tikzcd}
 \label{eq:abstract-virtual-base-change}
\end{equation}
Assume that the stacks are of finite type with affine geometric
stabilizers, that $p$ is proper of relative Deligne--Mumford type, and
that $f$ is of Deligne--Mumford type with a perfect relative obstruction
theory $E\to\mathbb L_f$.  Equip the pullback with the induced theory
$q^*E\to\mathbb L_{f'}$.  Then, with rational coefficients,
\begin{equation}
 q_*f'^!=f^!p_*.
 \label{eq:abstract-virtual-push-pull}
\end{equation}
Here an independently specified theory on $F'$ must be identified with
$q^*E$ as an arrow to the cotangent complex, not only in $K$-theory.
This is the proper-pushforward extension of Manolache's virtual
base-change theorem proved in \cite[Appendix~B, Theorem~B.0.1]{HHS}.
The affine-stabilizer hypothesis supplies the required quotient
stratifications by \cite[Proposition~3.5.9]{Kresch}; the proper rational
pushforwards are those of \cite[Theorem~B.17]{BSS}.  The theorem applies
to the induced obstruction theory, so no flatness of $p$ is required.

\proofstep{Step 2: verify the hypotheses for the comparison diagrams}
All selected universal stacks have affine stabilizers.  Since $n+\rho>0$,
at least one labelled marking is retained; a genus-one component is
marked or attached at a node.  Hence no elliptic translation group
occurs.  Automorphism groups are built from the affine automorphism
groups of pointed rational and pointed elliptic components, line-bundle
tori, and finite root groups.  Geometric stable-map stacks have finite
stabilizers.  The opens are finite type by their constructions.

The maps $\omega_{W_r}^{\pm}$ are finite representable; the root-order
maps are proper of relative Deligne--Mumford type by
\Cref{lem:proper-root-comparison}; and the BNR root-forgetting maps are
proper of relative Deligne--Mumford type by their finite-support root
construction.  The relevant squares are Cartesian by
\Cref{lem:geometric-fibre-products,lem:geometric-root-change} and BNR's
definition of the chimera.  The theories agree as arrows by
\Cref{lem:root-change-POT,prop:universal-geometric-pullbacks}.
Apply \eqref{eq:abstract-virtual-push-pull} to obtain
\eqref{eq:proper-virtual-base-change}.  Ordinary refined Gysin
commutation is the corresponding assertion of
\cite[Proposition~B.18]{BSS}.

\proofstep{Step 3: the smooth universal pullback}
We also use the following version of proper/flat base change.  In a
Cartesian square
\[
\begin{tikzcd}
 Y'\ar[r,"h'"]\ar[d,"p'"']&Y\ar[d,"p"]\\
 S'\ar[r,"h"']&S,
\end{tikzcd}
\]
suppose that $p$ is proper of relative Deligne--Mumford type, that
$h$ is flat of pure relative dimension, and that both $S$ and $S'$
admit stratifications by global quotient stacks.  Then, with rational
coefficients,
\[
 p'_*h'^*=h^*p_*.
\]
Here $h$ need not be representable.  To see this, use the restricted
Chow groups of Bae--Schmitt--Skowera.  The flat pullback and its
compatibility with the natural maps to ordinary Chow groups are the
properties (i) and (iii) preceding their Proposition~B.8; these properties
do not require $h$ to be representable.  Their Proposition~B.16 identifies
the restricted groups for $p$ and $p'$ with ordinary rational Chow groups,
using the assumed quotient stratifications of $S$ and $S'$, respectively.
On restricted groups, the required commutation follows from proper/flat
commutation for the naive cycles on the vector-bundle presentations in
the construction of proper pushforward.  Thus the proof of
\cite[Proposition~B.18]{BSS} applies with the explicit stratification
assumption on $S'$ replacing the representability hypothesis used there
to obtain that assumption.  This observation applies, in particular, to
the smooth morphisms to puncturing substacks below.
\end{proof}

\begin{remark}[Chow theory]
The retained marking excludes elliptic translation stabilizers.  The
preceding proof verifies both proper/virtual commutation and the smooth
universal pullback without assuming that the latter is representable.
\end{remark}

\subsection{The comparison after base change to \texorpdfstring{$(X,D)$}{(X,D)}}

\begin{proposition}[Comparison after base change to $(X,D)$]
\label{prop:geometric-identity}
Assume $m>0$.  Then
\begin{equation}
 r^m\omega_{X,r*}^-[K_{X,r}^-]^{\refc}
 =\CT_\lambda\!\left(
 (\lambda r)^m(\Pi_{\lambda r,r}^-)_*
 [O_{X,\lambda r}^-]^{\vir}\right).
\label{eq:geometric-identity}
\end{equation}
in $A_*(O_{X,r}^-)_\bbQ$.
\end{proposition}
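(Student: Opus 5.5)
The plan is to apply the fixed bivariant operation $\psi_r^!$ to the universal identity of \Cref{prop:universal-identity} and move it through every term using the base-change results of this section. The standing hypothesis $m>0$ forces $n+\rho>0$. Hence \Cref{lem:proper-virtual-base-change} applies, and all the universal Chow groups involved carry proper rational pushforwards commuting with the refined and virtual pullbacks.

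\emph{Left-hand side.} By \Cref{prop:universal-geometric-pullbacks}, $[K_{X,r}^-]^{\refc}=\psi_{K,r}^![K_{W_r}^-]^{\refc}$. The square \eqref{eq:geometric-chimera-cartesian-diagram} is $2$-Cartesian by \Cref{lem:geometric-fibre-products}, and the relative obstruction theories agree as arrows by \Cref{lem:root-change-POT}. The first identity in \eqref{eq:proper-virtual-base-change} then gives
\[
 \omega^-_{X,r*}[K_{X,r}^-]^{\refc}=\psi_r^!(\omega_{W_r}^-)_*[K_{W_r}^-]^{\refc}.
\]

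\emph{Right-hand side.} For each $\lambda\geq1$, \Cref{prop:universal-geometric-pullbacks} with $N=R=\lambda r$ gives $[O_{X,R}^-]^{\vir}=\psi_R^![W_R^-]^{\vir}$. The square \eqref{eq:geometric-root-square} is Cartesian by \Cref{lem:geometric-root-change}, and the second identity in \eqref{eq:proper-virtual-base-change} yields
\[
 (\Pi_{R,r}^-)_*[O_{X,R}^-]^{\vir}=\psi_r^!(\pi^-_{R,r})_*[W_R^-]^{\vir}.
\]
Multiply by $(\lambda r)^m$. Since $\psi_r^!:A_*(W_r^-)_\bbQ\to A_*(O_{X,r}^-)_\bbQ$ is a fixed $\bbQ$-linear map independent of $\lambda$, the eventual polynomiality from \Cref{prop:universal-identity} transfers to the geometric side. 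The map $\psi_r^!$ also commutes with extracting the $\lambda^0$ coefficient. Applying $\psi_r^!$ to \eqref{eq:universal-identity} and substituting the two displayed identities gives \eqref{eq:geometric-identity}.

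\emph{Main obstacle.} The delicate step is legitimately invoking the virtual base-change formula \eqref{eq:abstract-virtual-push-pull} for $\psi_R^!$ across the root-order square. This requires three things:
\begin{itemize}
\item the obstruction theory on $O_{X,R}^-$ must coincide with $(\Pi^-_{R,r})^*$ of the one on $O_{X,r}^-$ as an arrow to the cotangent complex, which is the content of \eqref{eq:POT-map-square};
\item $\pi^-_{R,r}$ must be proper of relative DM type, which holds because it factors through the nilpotent immersion $h$ into the base change of the proper map of \Cref{lem:proper-root-comparison};
\item the bases must have affine stabilizers, which follows from the retained markings.
\end{itemize}
The second item is where I would first check that the negative comparison really agrees with $\widetilde f\circ h$ on the whole $W_R^-$ and not only generically. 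This follows from the $2$-commutative square of \Cref{lem:root-change-evaluation}.
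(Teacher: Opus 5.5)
Your proposal is correct and is essentially the paper's proof. The paper likewise applies the fixed linear operator $\psi_r^!$ to \eqref{eq:universal-identity}, uses \Cref{lem:proper-virtual-base-change} (fed by the Cartesian squares of \Cref{lem:geometric-fibre-products,lem:geometric-root-change}, the arrow-level compatibility of \Cref{lem:root-change-POT}, and the class identifications of \Cref{prop:universal-geometric-pullbacks}) to get exactly your two displayed identities, and then commutes $\psi_r^!$ with extraction of the $\lambda^0$ coefficient by linearity.

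Your extra checks are the hypotheses the paper verifies in \Cref{lem:proper-virtual-base-change,lem:power-gysin} and in the proof of \Cref{prop:universal-identity}. These are that $m>0$ forces $n+\rho>0$, and that $\pi^-_{R,r}=\widetilde f\circ h$ is proper via the nilpotent immersion.
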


\begin{proof}
The Cartesian squares of \Cref{lem:geometric-fibre-products,lem:geometric-root-change},
the compatible relative obstruction theories of \Cref{lem:root-change-POT},
and \Cref{lem:proper-virtual-base-change} give
\begin{equation}
 \psi_r^!(\omega_{W_r}^-)_*[K_{W_r}^-]^{\refc}
   =(\omega_{X,r}^-)_*[K_{X,r}^-]^{\refc},
 \label{eq:revised-display-28}
\end{equation}
\begin{equation}
 \psi_r^!(\pi_{R,r}^-)_*[W_R^-]^{\vir}
   =(\Pi_{R,r}^-)_*[O_{X,R}^-]^{\vir}.
 \label{eq:revised-display-29}
\end{equation}
Here the class identifications are those of
\Cref{prop:universal-geometric-pullbacks}.  Apply the fixed linear operator
$\psi_r^!$ to \eqref{eq:universal-identity} and use the two displayed
identities.  Linearity allows the constant coefficient to be taken before
or after this operation and gives \eqref{eq:geometric-identity}.
This uses compatibility of the obstruction theories, not an assertion that
the geometric virtual classes are fundamental classes.
\end{proof}

\subsection{BNR root-forgetting pushforward and the final comparison}

For a signed cone stack or cone support $S$ used below, let
$\epsilon_S:\Acal(S)\to\Acal^m$ be its ordered puncturing-offset
morphism, with factors indexed by $P$.  Write
\[
 V(S):=\Acal(S)\times_{\Acal^m}\Dcal^m,
 \qquad
 [V(S)]^{\refc}:=\iota^![\Acal(S)],
\]
where the fibre product uses $\epsilon_S$ and the product zero section
$\iota$ from \eqref{eq:product-zero-section}; the refined pullback is taken
in this Cartesian square.  The signed cone stacks $T^-$ and $T_r^-$ retain
all labelled markings.

\begin{proposition}[BNR root-forgetting pushforward on the finite-type open]
\label{prop:BNR-root-forgetting}
Assume $n+\rho>0$.  There is a finite face-closed cone substack
$T^{-,\mathrm{geom}}\subset T^-$ whose Artin fan is open in
$\Acal(T^-)$ and such that the geometric punctured projection factors
scheme-theoretically through $V(T^{-,\mathrm{geom}})$.  Use the finite
face-closed convention of Section~5 and define the rooted support
$T_r^{-,\mathrm{geom}}$ by the $2$-Cartesian square
\begin{equation}
\begin{tikzcd}
 T_r^{-,\mathrm{geom}}\ar[r]\ar[d]
   & T_r^-\ar[d]\\
 T^{-,\mathrm{geom}}\ar[r,hook]
   & T^-.
\end{tikzcd}
\label{eq:rooted-geometric-cone-base-change}
\end{equation}
The right vertical arrow forgets the root structure.  The induced map
\[
 \nu_r^{\mathrm{geom}}:
 V(T_r^{-,\mathrm{geom}})
 \longrightarrow V(T^{-,\mathrm{geom}})
\]
is proper.  Its geometric base change is the root-forgetting map
\[
 \alpha_r:K_{X,r}^-\longrightarrow\Punct_\Gamma(X\mid D)
\]
and is proper.  Moreover,
\begin{equation}
 \alpha_{r*}[K_{X,r}^-]^{\refc}
 =r^{-m}[\Punct_\Gamma(X\mid D)]^{\refc}.
\label{eq:BNR-root-forgetting}
\end{equation}
This equality lies in
$A_*(\Punct_\Gamma(X\mid D))_\bbQ$.  In this proposition
$K_{X,r}^-$ and its refined class are understood in BNR's classical sense.
This assertion does not claim a global all-genus cone bijection outside the
bounded support and does not use BNR's genus-zero identification with an
orbifold mapping stack.
\end{proposition}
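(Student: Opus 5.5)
We follow BNR's all-genus root-forgetting argument, restricted to a finite face-closed support so that every rooted cone satisfies the size condition. The first task is to construct $T^{-,\mathrm{geom}}$. The space $\Punct_\Gamma(X\mid D)$ is of finite type by ACGS, so after a finite stratification of an atlas only finitely many labelled basic punctured types occur. We take their cones, every graph-automorphic copy and every face. Because $\Punct_\Gamma(X\mid D)$ is a fibre product over $\mathcal U_A$ by \Cref{lem:geometric-fibre-products}, the projection $\psi_P$ factors through the universal punctured stack. Its image in $\Acal(T^-)$ lies in the open Artin fan of this support, and it lies in the classical puncturing locus $V(T^-)$ by BNR's presentation. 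This gives the scheme-theoretic factorization through $V(T^{-,\mathrm{geom}})$. Since $r$ satisfies \eqref{eq:large-r}, with $r$ divisible by the finitely many slopes and contacts occurring, each such cone has a rooted lift obeying $|m_e|<r$. Hence \eqref{eq:rooted-geometric-cone-base-change} is the restriction of BNR's generalized root construction to this support.

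Properness then follows exactly as in \Cref{lem:rooted-unrooted-cone-comparison}. The map $\Acal(T_r^{-,\mathrm{geom}})\to\Acal(T^{-,\mathrm{geom}})$ is a generalized root stack attached to a finite Kummer extension of lattices, so it is proper. The puncturing-offset morphisms are compatible: the rooted offset pair of $p$ is the pair whose $r$-th power, or $d_p$-adjusted power, pulls back the unrooted pair. Therefore $V(T_r^{-,\mathrm{geom}})$ is a closed substack of the base change of $V(T^{-,\mathrm{geom}})$ along this root stack, and $\nu_r^{\mathrm{geom}}$ is proper. To identify $\alpha_r$ as the geometric base change, we use BNR's Cartesian presentations of both $K_{X,r}^-$ and $\Punct_\Gamma(X\mid D)$ over the twisted and ordinary curve stacks, together with \eqref{eq:revised-display-14}. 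Cancelling the tropical-curve factors, as in \eqref{eq:revised-display-15}, shows that $K_{X,r}^-$ is the pullback of $V(T_r^{-,\mathrm{geom}})$ along the smooth map from $\Punct_\Gamma(X\mid D)$ to $V(T^{-,\mathrm{geom}})$, followed by the geometric virtual pullback.

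For the class identity \eqref{eq:BNR-root-forgetting}, we first compute at the level of Artin fans:
\[
 \nu^{\mathrm{geom}}_{r*}\,\iota^![\Acal(T_r^{-,\mathrm{geom}})]
 =r^{-m}\,\iota^![\Acal(T^{-,\mathrm{geom}})].
\]
The root stack has degree one on fundamental classes, since it is a birational-type Kummer cover of cone stacks with degree recorded by gerbes. Pushing $[\Acal(T_r)]$ forward therefore gives $[\Acal(T)]$. Under the root map, however, the rooted offset line of each $p\in P$ is an $r$-th root of the pullback of the unrooted offset line; this is the gerby offset of BNR. By \eqref{eq:localized-c1}, applied one factor at a time as in \Cref{lem:power-gysin}, the Gysin pullback along the unrooted zero section equals $r^m$ times the pushforward of the rooted one. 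This produces exactly $r^{-m}$; it is the content of BNR's Proposition~3.13 localized to the finite support. We then transfer the identity to $(X,D)$. Smooth pullback to the curve-stack presentation commutes with proper pushforward by Step~3 of \Cref{lem:proper-virtual-base-change}. Virtual pullback along the ACGS/BNR relative obstruction theory, which equals $\mathbb E_{K,r}$ on the rooted side by tame base change as in \eqref{eq:revised-display-27}, commutes with proper pushforward by \eqref{eq:abstract-virtual-push-pull}. Finally, \Cref{prop:universal-geometric-pullbacks} identifies both sides with the refined classes.

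The main obstacle is the exact $r^{-m}$ computation on the offset pairs. We must check that, on each rooted cone, the rooted offset characteristic $h_{p,r}$ satisfies $r\,h_{p,r}=h_p$. This identity has to hold after saturation and basicification and must be uniform over all cones of the support, including cones with circuits. We will check it on the primitive lattice $\widetilde N_\Theta$ of \eqref{eq:basic-monoid}, using the vertex height relation $x_v=r\widetilde x_v$ of \eqref{eq:revised-display-10}. The former puncture has index one, so no marking twist intervenes.
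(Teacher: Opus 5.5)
Your architecture matches the paper's proof: a finite face-closed support, the generalized root stack $\kappa$ of degree one on fundamental classes, $r$th powers of the offset pairs together with the nilpotent immersion into $Z((\widetilde u_p^{\,r})_p)$ and \Cref{lem:power-gysin} to get $r^{-m}$ on $V$, and then smooth pullback to the universal punctured stack followed by virtual pullback along the ACGS theory. The gap is in how you construct the support.

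\textbf{The gap: the slope bound at the fixed $r$.} Finite type of $\Punct_\Gamma(X\mid D)$ gives you finitely many basic punctured types. However, $r$ was already fixed by \eqref{eq:large-r}, which only guarantees $r>4B_0$ and $\operatorname{lcm}(1,\ldots,4B_0)\mid r$. Your phrase ``with $r$ divisible by the finitely many slopes and contacts occurring'' is therefore unproved. An ACGS finiteness argument gives no bound of the bounded-edge slopes by $4B_0$.

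Without such a bound you cannot ensure BNR's size condition $|m_e|<r$. Hence you cannot ensure that every cone of $T^{-,\mathrm{geom}}$ has a rooted counterpart in $T_r^-$. A cone without one would be absent from $\Acal(T_r^{-,\mathrm{geom}})$. Then $\kappa$ would not be a cone-by-cone Kummer root stack, proper and of degree one onto $\Acal(T^{-,\mathrm{geom}})$, and the $r^{-m}$ formula would break.

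\textbf{The missing input is positivization.} By BNR's Lemma~4.5, positivizing a punctured type shifts each vertex degree by the $d_p$ of its former punctures and turns punctured legs into contact-zero legs. It leaves the bounded-edge slopes and the continuity equations unchanged. The positivized line--section pair of a geometric punctured map is $(M_\Gamma^+,t_\Gamma^+)$ from \eqref{eq:canonical-positive-pair}, which lies over $W_0$. So after forgetting $Z$, its type lies in $T^+_\Sigma$. Then \Cref{lem:uniform-slope-bound} (equivalently \Cref{lem:finite-face-closed-support}) gives $|m_e|\leq 4B_0<r$, hence $m_e\mid r$.

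Restoring the $Z$-legs in all distributions on $T^+_\Sigma$, reversing the degree shift, and adding faces and automorphic copies produces the required support with no new condition on $r$. You could instead strengthen the choice of $r$. If so, you must say this explicitly, because the same $r$ is used in \Cref{prop:universal-identity,prop:geometric-identity}.

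\textbf{Two smaller corrections.} First, the map from $\Punct_\Gamma(X\mid D)$ to $V(T^{-,\mathrm{geom}})$ is not smooth. Only the universal punctured stack restricted to $V(T^{-,\mathrm{geom}})$ is smooth over it. You should obtain $K_{X,r}^-$ from two Cartesian squares, one smooth and one with a relative obstruction theory, as your class computation in fact does. You also need to note that these restricted universal stacks are of finite type with affine stabilizers, using $n+\rho>0$.

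Second, degree one of $\kappa$ should be justified as follows: it is an isomorphism over the common dense zero-face stratum, and both Artin fans are integral. The justification ``degree recorded by gerbes'' is not right, since a generic gerbe would change the degree.
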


\begin{proof}
\proofstep{Step 1: use the already bounded support}
For a vertex $v$, let
$P(v):=\{p\in P:\text{the leg labelled }p\text{ is incident to }v\}$.
Positivising a punctured tropical type changes its vertex degree by
$\sum_{p\in P(v)}d_p$ and replaces its punctured legs by zero-contact
legs.  It changes neither bounded-edge slopes nor the vertex-position
and edge-length equations \cite[Lemma~4.5]{BNR}.  Forget all legs in $Z$
without contracting any component.  The actual positive
line--section pair of a geometric punctured map is the one used to
define $g_\Gamma^+$ and $W_0$ in \Cref{subsec:theorem-spaces}.
Its positive type after forgetting $Z$ therefore belongs to $T^+_\Sigma$ by
\Cref{lem:finite-face-closed-support}.

Restore the $Z$-labelled legs in every possible distribution
on the graphs in that support, and reverse the degree shift.  Include
all faces and graph-automorphic copies.  The resulting finite cone
substack may be taken as $T^{-,\mathrm{geom}}$.  Every geometric
punctured type factors through it.  Its Artin fan is open, so the
factorization holds for families, including nonreduced bases.  In
particular, every bounded-edge slope satisfies $|m_e|\leq4B_0<r$.
No new root bound is chosen after fixing $r$.

By BNR's rooting formulas, every cone of this support has exactly one
rooted counterpart, with $s_e=r/\gcd(r,m_e)$,
$\ell_e=s_e\widetilde\ell_e$, and $x_v=r\widetilde x_v$.
The map
\begin{equation}
 \kappa:\Acal(T_r^{-,\mathrm{geom}})
        \longrightarrow\Acal(T^{-,\mathrm{geom}})
 \label{eq:bounded-Artin-root-map}
\end{equation}
is the generalized root stack of
\cite[Construction~3.10 and Lemma~3.11]{BNR} on this support.
It is proper of relative Deligne--Mumford type, and is an isomorphism
over the common zero-face stratum.  Both Artin fans are integral, so
\begin{equation}
 \kappa_*[\Acal(T_r^{-,\mathrm{geom}})]
       =[\Acal(T^{-,\mathrm{geom}})].
 \label{eq:bounded-Artin-root-degree}
\end{equation}

\proofstep{Step 2: recover BNR's rooting factor with the actual zero loci}
Let $(F_p,u_p)$ and $(\widetilde F_p,\widetilde u_p)$ denote the coarse
and gerby puncturing-offset line--section pairs on these Artin fans.
Let $h_{v_p}$ and $\widetilde h_{v_p}$ be the characteristic height
covectors of the vertex carrying $p$ on the coarse and rooted charts.
The coordinate relation $x_{v_p}=r\widetilde x_{v_p}$ is induced by
$\kappa^*h_{v_p}=r\widetilde h_{v_p}$.  Applying the Deligne--Faltings
construction gives canonical identities
\[
 \kappa^*(F_p,u_p)
     \simeq(\widetilde F_p,\widetilde u_p)^{\otimes r}.
\]
Consequently the true fibre product of $V(T^{-,\mathrm{geom}})$
with the upper Artin fan is $Z((\widetilde u_p^r)_{p\in P})$, not
$Z((\widetilde u_p)_{p\in P})$.  The latter is
$V(T_r^{-,\mathrm{geom}})$ and has a canonical nilpotent closed
immersion into the former.  Apply the powered-section calculation of
\Cref{lem:power-gysin}, now to these offset pairs, together with
\eqref{eq:bounded-Artin-root-degree}.  It gives
\begin{equation}
 (\nu_r^{\mathrm{geom}})_*
       [V(T_r^{-,\mathrm{geom}})]^{\refc}
   =r^{-m}[V(T^{-,\mathrm{geom}})]^{\refc}.
 \label{eq:bounded-BNR-root-factor}
\end{equation}
The morphism is proper: its source is closed in the upper Artin fan,
which is proper over the lower Artin fan, and its target is closed in
the latter.  This also follows from the finite-gerbe/root-stack
factorization of \cite[Proposition~3.13]{BNR}.
The calculation is the same refined-intersection normalization as that
proposition, with the nilpotent thickening made explicit.  It does not
assert that the offset square, or every face of BNR's cube, is Cartesian.
No genus-zero orbifold/chimera isomorphism is used.

\proofstep{Step 3: universal and geometric pullbacks}
Put $V=V(T^{-,\mathrm{geom}})$ and
$V_r=V(T_r^{-,\mathrm{geom}})$, and write $\nu_r:V_r\to V$
for $\nu_r^{\mathrm{geom}}$.  Let $\mathcal P$ be the restriction of
$\Punct_{\Lambda_\Gamma}(\Acal\mid\Dcal)$ to $V$, and let
$\mathcal K$ be the corresponding universal chimera.  BNR's
Definition~3.14 supplies the Cartesian square
\[
\begin{tikzcd}[column sep=large]
 \mathcal K\ar[r,"\phi_r"]\ar[d,"a_r"']&
 V_r\ar[d,"\nu_r"]\\
 \mathcal P\ar[r,"\phi"']&V.
\end{tikzcd}
\]
The maps $\phi$ and $\phi_r$ are smooth of pure relative dimension
$n+\rho$: they arise from the prestable-curve presentation in
\cite[equation~(7) and Definition~3.14]{BNR}, whose relative
dimension in genus one is $3\cdot1-3+n+\rho$.  The universal refined
classes are defined by
\[
 [\mathcal P]^{\refc}=\phi^*[V]^{\refc},\qquad
 [\mathcal K]^{\refc}=\phi_r^*[V_r]^{\refc}.
\]
The selected tropical support bounds the source graphs, so these
universal stacks are of finite type.  Their stabilizers are affine by
the marked-source argument in
\Cref{lem:proper-virtual-base-change}; hence they admit the quotient
stratifications needed for proper/smooth base change.  Applying that
formula and \eqref{eq:bounded-BNR-root-factor} gives
\[
 \begin{aligned}
 (a_r)_*[\mathcal K]^{\refc}
 &=\phi^*(\nu_r)_*[V_r]^{\refc}\\
 &=r^{-m}\phi^*[V]^{\refc}
 =r^{-m}[\mathcal P]^{\refc}.
 \end{aligned}
\]
This is the proof of BNR's Theorem~4.1 restricted to the present
support.  Its inputs here are the root construction and refined
intersection calculation of BNR's Section~3, whose applicability to
this genus-one support was checked in Steps~1 and~2; the genus-zero
orbifold identification in BNR's Theorem~4.2 is not used.

Every geometric punctured map lies over $V$, by Step~1.  Applying BNR's
definition of the geometric chimera gives a second Cartesian square
\[
\begin{tikzcd}[column sep=large]
 K_{X,r}^-\ar[r,"\psi_{\mathcal K}"]\ar[d,"\alpha_r"']&
 \mathcal K\ar[d,"a_r"]\\
 \Punct_\Gamma(X\mid D)\ar[r,"\psi_{\mathcal P}"']&
 \mathcal P.
\end{tikzcd}
\]
Both vertical maps are proper of relative Deligne--Mumford type.  The
relative perfect obstruction theory for $\psi_{\mathcal P}$ is the
ACGS theory
\[
 \bigl(R\pi_{P*}f_P^*T_X(-\log D)\bigr)^\vee,
\]
and its pullback, including the arrow to the cotangent complex, is the
relative theory for $\psi_{\mathcal K}$ by BNR's
Definition~3.14 and equation~(27), or by the tame-coarsening argument of
\Cref{lem:root-change-POT}.  These morphisms are of
Deligne--Mumford type because their geometric source stacks are
Deligne--Mumford.  Their virtual pullbacks define the geometric refined
classes by \Cref{prop:universal-geometric-pullbacks}.  Thus
\Cref{lem:proper-virtual-base-change} gives
\[
 \begin{aligned}
 \alpha_{r*}[K_{X,r}^-]^{\refc}
 &=\psi_{\mathcal P}^!(a_r)_*[\mathcal K]^{\refc}\\
 &=r^{-m}\psi_{\mathcal P}^![\mathcal P]^{\refc}
 =r^{-m}[\Punct_\Gamma(X\mid D)]^{\refc}.
 \end{aligned}
\]
This proves \eqref{eq:BNR-root-forgetting} using separate smooth and
geometric virtual pullbacks.
\end{proof}

\begin{remark}[The BNR rooting factor]
The factor $r^{-m}$ is the same rooting factor appearing in BNR's genus-zero
comparison, but the underlying BNR root-forgetting statement is all-genus.
It should be distinguished from the factor produced by powered evaluation
sections in Lemma~\ref{lem:power-gysin}.  The final proof works because these
two normalizations are tracked separately and then cancel in the passage from
the universal signed identity to the Fan--Wu--You constant term.
\end{remark}

\begin{proof}[Proof of \Cref{thm:main}]
If $m=0$, the puncturing substack is the whole tropical Artin fan and the BNR
refined punctured class is the ordinary logarithmic virtual class.  The
logarithmic/relative comparison identifies its pushforward with the ordinary
relative cycle \cite{AMW}; Fan--Wu--You identify that cycle with the constant
coefficient of the root-stack theory when there are no negative contacts
\cite[after Definition~3.2 and Theorem~3.13]{FWY}.  Thus the theorem holds in this case.  Assume from
now on that $m>0$.

\proofstep{Step 1: compare the maps to the common target}
The maps to the common target satisfy
\begin{equation}
 \tau_r\circ\omega_{X,r}^-=\varrho\circ\alpha_r,
 \qquad
 \tau_r\circ\Pi_{R,r}^-=\tau_R.
\label{eq:final-compatibility}
\end{equation}
Both identities retain the same coarse stable map and the same ordered
markings, so they are identities of morphisms including evaluation at the
last $\rho$ markings.

\proofstep{Step 2: push the geometric identity and cancel the root-forgetting factor}
Push the identity of \Cref{prop:geometric-identity} forward by the proper map $\tau_r$.
Using the first identity in \eqref{eq:final-compatibility} and then
\Cref{prop:BNR-root-forgetting}, the left side becomes
\begin{equation}
\begin{aligned}
 \tau_{r*}\!\left(r^m\omega_{X,r*}^-
 [K_{X,r}^-]^{\refc}\right)
 &=r^m(\tau_r\circ\omega_{X,r}^-)_*
 [K_{X,r}^-]^{\refc}\\
 &=r^m\varrho_*\alpha_{r*}[K_{X,r}^-]^{\refc}\\
 &=r^m r^{-m}\varrho_*
 [\Punct_\Gamma(X\mid D)]^{\refc}\\
 &=\varrho_*[\Punct_\Gamma(X\mid D)]^{\refc}.
\end{aligned}
\label{eq:final-left-chain}
\end{equation}
Thus the outer factor $r^m$ cancels the factor in BNR's root-forgetting pushforward, with no
residual contact or stabilizer factor.

For $R=\lambda r$, the second identity in
\eqref{eq:final-compatibility} gives
\begin{equation}
\begin{aligned}
 &\tau_{r*}\CT_\lambda\!\left(
 (\lambda r)^m(\Pi_{\lambda r,r}^-)_*
 [O_{X,\lambda r}^-]^{\vir}\right)\\
 &\qquad=\CT_\lambda\!\left(
 (\lambda r)^m(\tau_r\circ\Pi_{\lambda r,r}^-)_*
 [O_{X,\lambda r}^-]^{\vir}\right)\\
 &\qquad=\CT_\lambda\!\left(
 (\lambda r)^m\tau_{\lambda r,*}
 [\Mbar_\Gamma(X_{D,\lambda r})]^{\vir}\right).
\end{aligned}
\label{eq:final-right-chain}
\end{equation}
The first equality uses additivity of proper pushforward, which permits
coefficientwise application to the eventual Chow-valued polynomial.
Combining \eqref{eq:geometric-identity},
\eqref{eq:final-left-chain}, and \eqref{eq:final-right-chain} proves
\begin{equation}
 \varrho_*[\Punct_\Gamma(X\mid D)]^{\refc}
 =\CT_\lambda\!\left(
 (\lambda r)^m\tau_{\lambda r,*}
 [\Mbar_\Gamma(X_{D,\lambda r})]^{\vir}\right).
\label{eq:punctured-root-constant}
\end{equation}

\proofstep{Step 3: replace the cofinal parameter by the root order}
By definition, the expression inside the constant coefficient in
\eqref{eq:punctured-root-constant} is $F_\Gamma(\lambda r)$.  Write its
eventual polynomial as $F_\Gamma(N)=\sum_j a_jN^j$.  Then
\begin{equation}
 F_\Gamma(\lambda r)=\sum_j a_jr^j\lambda^j,
 \qquad
 \CT_\lambda F_\Gamma(\lambda r)=a_0
 =\CT_NF_\Gamma(N).
\label{eq:final-cofinal-calculation}
\end{equation}
Thus the cofinal substitution preserves the constant coefficient; no
limiting argument is involved.

\proofstep{Step 4: apply the Fan--Wu--You constant-term theorem}
Fan--Wu--You \cite[Theorem~3.13]{FWY} identify
\begin{equation}
 \CT_NF_\Gamma(N)=\mathfrak c_\Gamma(X/D)
\label{eq:final-FWY}
\end{equation}
in the Chow group of the same evaluation-enhanced target $B_\Gamma$.
Equations \eqref{eq:punctured-root-constant}--\eqref{eq:final-FWY} therefore
give both equalities in \eqref{eq:main-comparison}.  By the calculation in \Cref{subsec:expected-dimension}, every term lies in
$A_{d_\Gamma}(B_\Gamma)_\mathbb Q$, where $d_\Gamma$ is the number in
\eqref{eq:vdim}.  This completes the proof.
\end{proof}

Department of Mathematics, Louisiana State University, 303 Lockett Hall, Baton Rouge, LA, United States of America 70803

\medskip

\textit{E-mail}: yuwang@lsu.edu

\end{document}